\theoremstyle{plain}
\newtheorem{theorem}{Theorem}[section]
\newtheorem{proposition}[theorem]{Proposition}
\newtheorem{corollary}[theorem]{Corollary}
\newtheorem{example}[theorem]{Example}
\numberwithin{equation}{section}
\theoremstyle{definition}
\newtheorem{remark}[theorem]{Remark}
\theoremstyle{remark}
\newcommand{\fb}{{\mathfrak b}}
\newcommand{\fa}{{\mathfrak a}}
\newcommand{\free}{{\mathbb F}^{+}_{d}}
\newcommand{\bbC}{{\mathbb C}}
\newcommand{\bH}{{\mathbf H}}
\newcommand{\bK}{{\mathbf K}}
\newcommand{\bc}{{\mathbf c}}
\newcommand{\boldf}{{\mathbf f}}
\newcommand{\bv}{{\mathbf v}}
\newcommand{\cA}{{\mathcal A}}
\newcommand{\cB}{{\mathcal B}}
\newcommand{\cC}{{\mathcal C}}
\newcommand{\cE}{{\mathcal E}}
\newcommand{\cH}{{\mathcal H}}
\newcommand{\cI}{{\mathcal I}}
\newcommand{\cK}{{\mathcal K}}
\newcommand{\cL}{{\mathcal L}}
\newcommand{\cM}{{\mathcal M}}
\newcommand{\cN}{{\mathcal N}}
\newcommand{\cO}{{\mathcal O}}
\newcommand{\cR}{{\mathcal R}}
\newcommand{\cS}{{\mathcal S}}
\newcommand{\cT}{{\mathcal T}}
\newcommand{\cU}{{\mathcal U}}
\newcommand{\cV}{{\mathcal V}}
\newcommand{\cX}{{\mathcal X}}
\newcommand{\cY}{{\mathcal Y}}
\newcommand{\sbm}[1]{\left[\begin{smallmatrix} #1
		\end{smallmatrix}\right]}
\newcommand{\bev}{\boldsymbol{\rm ev}}
\begin{document}

\title[nc Reproducing kernel Hilbert spaces]
{Noncommutative reproducing kernel Hilbert spaces}
\author[J.A.\ Ball]{Joseph A. Ball}
\address{Department of Mathematics,
Virginia Tech,
Blacksburg, VA 24061-0123, USA}
\email{joball@math.vt.edu}
\author[G.\ MarX]{Gregory Marx}
\address{Department of Mathematics,
Virginia Tech,
Blacksburg, VA 24061-0123, USA}
\email{marxg@vt.edu}
\author[V.\ Vinnikov]{Victor Vinnikov}
\address{Department of Mathematics, Ben-Gurion University of the 
Negev, Beer-Sheva, Israel, 84105}
\email{vinnikov@cs.bgu.ac.il}

\begin{abstract}
The theory of positive kernels and associated reproducing kernel Hilbert spaces, 
especially in the setting of holomorphic functions, has been an 
important tool for the last several decades in a number of areas of complex analysis and 
operator theory. An interesting generalization of holomorphic 
functions, namely free noncommutative functions (e.g.,
functions of square-matrix arguments of arbitrary size satisfying 
additional natural compatibility conditions), is 
now an active area of research, with motivation and applications from 
a variety of areas (e.g., noncommutative functional calculus, free 
probability, and optimization theory in linear systems engineering).  
The purpose of this article is to develop a 
theory of positive kernels and associated reproducing kernel Hilbert 
spaces for the setting of free noncommutative function theory.
 \end{abstract}

\subjclass{47B32; 47A60}
\keywords{Reproducing kernel Hilbert space, contractive multiplier, 
free noncommutative function, formal power series, completely 
positive and completely bounded maps}

\maketitle

\tableofcontents

\section{Introduction}  \label{S:Intro}
\setcounter{equation}{0}

The goal of the present paper is to incorporate the classical theory 
of positive kernels and reproducing kernel Hilbert spaces (see 
\cite{Aron, AMcC-book}) into the 
new setting of free noncommutative function theory (see 
\cite{KVV-book}),  

We use the following operator-valued adaptation of the notion of 
positive kernel developed in some depth by Aronszajn in \cite{Aron}.
Let $\Omega$ be a point set and $K$ a function from the Cartesian 
product set $\Omega \times \Omega$ into the space $\cL(\cY)$ of 
bounded linear operators on a Hilbert spaces $\cY$.  We say that $K$ 
is a \textbf{positive kernel} if
\begin{equation}   \label{posker1}
  \sum_{i,j = 1}^{N} \langle K(\omega_{i}, \omega_{j}) y_{j}, y_{i} 
  \rangle_{\cE} \ge 0 
\end{equation}
for all $\omega_{1}, \dots, \omega_{N} \in \Omega$, $y_{1}, \dots, 
y_{N} \in \cY$, $N = 1,2,\dots$. Equivalent conditions are:
\begin{itemize}
    \item There is a Hilbert space $\cH(K)$ consisting of 
    $\cY$-valued functions on $\Omega$ such that
$K$ has the following \textbf{reproducing kernel} 
	property with respect to $\cH(K)$:
    \begin{enumerate}
	\item   for any $\omega \in \Omega$ and 
	$y \in \cY$ the 
	function $K_{\omega, y}$ given by $K_{\omega, y}(\omega') = 
	K(\omega', \omega) y$ belongs to $\cH(K)$, and
	\item for all $f \in \cH(K)$ and $y \in \cY$, the reproducing property
\begin{equation}   \label{posker2}
   \langle f,\, K_{\omega, y} \rangle_{\cH(K)} = \langle f(\omega), y 
   \rangle_{\cY}
 \end{equation}
 holds.
\end{enumerate}

\item There is a Hilbert space $\cX$ and  a function $H \colon 
\Omega \to \cL(\cH(K), \cY)$ so that the following \textbf{Kolmogorov 
decomposition} holds:
\begin{equation} \label{posker3}
  K(\omega', \omega) = H(\omega') H(\omega)^{*}.  
\end{equation}
\end{itemize}
Moreover, in case $\Omega$ is a domain in ${\mathbb C}$ (or more 
generally in ${\mathbb C}^{d}$), then analyticity of $K$ in its first 
argument leads to analyticity of the elements $f$ of $\cH(K)$ and 
vice versa.  Such kernels and associated reproducing kernel Hilbert 
spaces actually have origins from the early part of the twentieth 
century (we mention in particular the original works \cite{Moore, 
Zaremba} as well as the introduction in \cite{Aron} and the survey 
paper \cite{Szafraniec} for an overview of the history); in 
particular, one can argue that the Kolmogorov decomposition property 
\eqref{posker3} is actually due to Moore \cite{Moore} but we follow 
what has become standard terminology in operator theory circles.
Over the following decades, reproducing kernel Hilbert spaces have served as a powerful 
tool of analysis in a number of 
areas (in particular, complex analysis and operator theory)---the recent book
\cite{AMcC-book} gives a  glimpse of the operator-theory side of this  activity.

Much more recent is the axiomatization of what is now being called 
\textbf{free noncommutative function theory}  (see \cite{KVV-book}).
One version of this  theory can be viewed as an extension of the 
theory of holomorphic functions of several complex variables $z = 
(z_{1}, \dots, z_{d})$ to a 
theory of functions of matrix tuples $Z = (Z_{1}, \dots, Z_{d})$, 
where now the tuple $Z = (Z_{1}, \dots, Z_{d})$ consists of freely 
noncommuting  $n \times n$ matrices with size $n \in {\mathbb N}$ 
also allowed to be arbitrary.  In brief, a noncommutative function is 
a mapping from $n \times n$ matrices over the domain set to $n \times 
n$ matrices over the range set which respects direct sums and 
similarities---see Section \ref{S:nc} for precise definitions and the 
monograph \cite{KVV-book} for a complete treatment. This axiomatic 
framework has appeared in the work of Taylor \cite{Taylor72} (see 
also \cite{Taylor73})
as a setting for the study of a functional calculus for tuples of 
freely noncommuting matrix variables, and independently in the work 
of Voiculescu and collaborators  (see \cite{Voiculescu1, Voiculescu2}) in the context 
of the needs of a free-probability theory, as well as in the work of 
Helton-Klep-McCullough and collaborators (see \cite{HKMcC1, HKMcC2, 
HMcC, HMcCV, BGtH}) on 
free noncommutative Linear-Matrix-Inequality relaxations and 
connections with stability analysis and optimization problems in 
linear systems engineering. It is also closely 
connected with the functional calculus arising from plugging in 
freely noncommuting operators for the free indeterminates in a formal 
power series, making use of the tensor products for the operator 
multiplication in each term of the series  (see \cite{BGM2}).  Such formal power 
series appeared much earlier in connection with the theory 
of automata and formal languages in the work of Fliess, Kleene, and 
Sch\"utzenberger in the middle part of the last century (see 
\cite{BR} for an overview), and have also 
occurred in Multidimensional System Theory as the transfer functions 
for  input/state/output linear systems with evolution along a 
tree \cite{BGM1}.  The recent monograph \cite{KVV-book} of Kaliuzhnyi-Verbovetskyi and the 
current third author completes the work of Taylor by developing the 
theory of free noncommutative functions from first principles. In 
particular, there it is shown how the ``respects direct sums'' and 
``respects similarities'' properties combined with some mild 
additional assumptions (``local boundedness'') leads to Taylor-Taylor 
series developments (at least locally) for a general noncommutative 
function, as is obtained globally when one starts with a formal power 
series and plugs in the components of a freely noncommutative operator tuple as arguments.
One can also view Free Noncommutative Function theory as a nonlinear 
extension of Operator Space theory (see \cite{BlM, ER, Paulsen, 
Pisier})---we explore some aspects of these connections in Section 
\ref{S:special} below.

In the present paper we extend the theory of 
noncommutative functions in \cite{KVV-book} to a theory of 
noncommutative kernels.  The affine version of such kernels already 
appears in \cite{KVV-book} in connection with the first order 
difference-differential calculus for noncommutative functions.   To 
develop a theory of kernels generalizing that of Aronszajn for the 
classical case, what is actually needed is a modification of these 
kernels (called here simply \textbf{nc kernels})
which treats the second variable as a conjugate variable (see 
condition \eqref{kerHerintertwine} below).
In analogy with the definition of noncommutative function, noncommutative 
kernels are required to satisfy a ``respect direct sums'' and 
``respect similarities'' condition, now with respect to both variables.
In addition, noncommutative (affine or general) kernels have values in 
$\cL(\cA, \cL(\cY))$ rather than just in $\cL(\cY)$, where $\cA$ is 
the vector space of point variations in the case of affine kernels, 
and is required to carry some additional order structure coming from 
another relevant $C^{*}$-algebra, or more generally, from an 
operator system $\cA$ (closed selfadjoint subspace of $\cL(\cE)$ for some 
Hilbert space $\cE$).   It turns out to be useful to introduce 
also the notion of {\em global function} and {\em global kernel}, 
where one discards the ``respects similarities'' constraint and demands only 
the ``respects direct sums'' condition. Then the analogue of an Aronszajn 
positive kernel is a {\em completely positive noncommutative (or 
global) kernel} for which one insists that the map $K(Z,Z) \colon 
\cA \to \cL(\cY)$ be a positive map from $\cA$ into $\cL(\cY)$ for 
each fixed $Z$ in the underlying noncommutative/global point set.  
Such positive kernels can be seen as more elaborate 
versions of the notion of {\em completely positive map} as occurs in 
the operator algebra literature (see \cite{Paulsen}) as well as of 
the notion of {\em completely positive kernel} as introduced and 
developed by Barreto-Bhat-Liebscher-Skeida \cite{BBLS} (see Section 
\ref{S:special} below for details).  

Specifically, given a completely positive
global or noncommutative kernel, we obtain a 
complete analogue of conditions \eqref{posker1}, \eqref{posker2}, 
\eqref{posker3} and their mutual equivalences (see Theorem 
\ref{T:cpker} below). 
In particular, the new condition (2) involves the existence of a 
Hilbert space whose elements now consist of noncommutative functions 
from the $C^{*}$-algebra $\cA$ to the Hilbert coefficient space $\cY$
which is also equipped with a canonical unitary $*$-representation $\sigma 
\colon \cA \to \cH(K)$.  The kernel elements $K_{\omega, y}$ in 
condition \eqref{posker2} are now more elaborate and reproduce not 
only the functional values of an element $f$ of the space $\cH(K)$ 
but also the functional value of the result of the action of an 
element $a \in \cA$ via 
the representation acting on $f$ ($\sigma(a) f$).   
We also obtain an equivalent ``lifted-norm'' 
characterization of such a spaces, where the factor $H$ in the 
Kolmogorov decomposition \eqref{posker3} has a prominent role
(see Section \ref{S:LiftedNorm}).

We also develop a number of additional structure properties of a 
space $\cH(K)$.
Section \ref{S:special} develops
automatic complete contractivity and complete boundedness properties
for the maps $K(Z,Z)$ and $f(Z)$ (for each $f \in \cH(K)$) and 
relates these results with similar such results in the 
operator-algebra literature (corresponding to the case where the point 
set $\Omega$ is the noncommutative envelope of a single point set $\Omega_1
 = \{s_{0}\}$. Section \ref{S:smooth} builds on results from \cite{KVV-book} on 
smoothness properties for noncommutative functions to develop the 
correspondence between smoothness properties of the kernel function 
$K$ and smoothness properties for elements $f$ of $\cH(K)$. Section 
\ref{S:NFRKHS} makes the identification between the notion of {\em 
formal reproducing kernel Hilbert space} introduced earlier by two of 
the present authors in \cite{NFRKHS} (see also \cite{BVMemoir}) and a 
{\em noncommutative reproducing kernel Hilbert space} introduced here.

Historically,  an important notion in applications of reproducing 
kernel Hilbert spaces to operator theory has been that of a 
{\em multiplier} between two reproducing kernel Hilbert spaces, 
i.e., an operator-valued function $S$ so that the operator $M_{S} 
\colon f(z) \mapsto S(z) f(z)$ maps the reproducing kernel Hilbert 
space $\cH(K')$ to the reproducing kernel Hilbert space $\cH(K)$.
Our final Section \ref{S:mult} develops the analogue of this notion 
for the free noncommutative setting.  In particular,  there is a free 
noncommutative analogue of the de Branges-Rovnyak kernel 
$$
 K_{S}(z,w) = K(z,w) - S(z) K'(z,w) S(w)^{*},
$$
complete positivity of which gives a characterization of the 
noncommutative function $S$ being a contractive multiplier from 
$\cH(K')$ into $\cH(K)$.  We also obtain the free noncommutative 
analogue of the theory of de Branges-Rovnyak spaces and the de 
Branges-Rovnyak theory of minimal decompositions and Brangesian 
complementary spaces.  In our followup paper \cite{BMV2} we use various 
parts of the material in the present paper to develop further the 
work of Agler-McCarthy \cite{AMcC-global, AMcC-Pick} on noncommutative Pick
interpolation and transfer-function realization and introduce a free noncommutative version
of ``complete Pick kernel''(see \cite{AMcC-book} for the classical 
case), as well as develop the interpolation and transfer-function 
realization theory for the more general noncommutative Schur-Agler 
class over the noncommutative domain ${\mathbb D}_{Q}$ associated 
with any noncommutative defining function $Q$.

\section{Global and noncommutative function theory}  \label{S:nc}

\subsection{Noncommutative sets} \label{S:ncsets}

Let $\cS$ be a set.  We define $\cS_{\rm nc}$ to be the disjoint 
union of $n \times n$ matrices over $\cS$:
$$
 \cS_{\rm nc} = \amalg_{n=1}^{\infty} \cS^{n \times n}.
$$
We let $\cS_n$ denote the intersection $\cS_{n} = \cS_{\rm nc} \cap 
\cS^{n \times n}$.
A subset $\Omega$ of $\cS_{\rm nc}$ is said to be a 
\textbf{noncommutative (nc) set} 
if $\Omega$ is closed under direct sums:
$$
Z  \in \Omega_{n},\, W \in \Omega_{m} \Rightarrow
Z \oplus W = \left[ \begin{smallmatrix} Z & 0 \\ 0 & W 
\end{smallmatrix} \right] \in \Omega_{n+m}.
$$

If $\Omega$ is a subset of $\cS_{\rm nc}$ which is not already a nc 
subset of $\cS_{\rm nc}$, it is convenient to introduce the notation 
$[\Omega]_{\rm nc}$ for the \textbf{nc envelope of $\Omega$}, i.e., the 
smallest nc subset of $\cS_{\rm nc}$ which contains $\Omega$.  
More precisely (see \cite[Proposition 2.9]{BMV2}), a 
point $Z \in \cS_{\rm nc}$ is in $[\Omega]_{\rm nc}$ exactly when it 
has a representation as $Z = \sbm{ Z^{(1)} & & \\ & \ddots & \\ & & 
Z^{(N)}}$ where each $Z^{(j)} \in \Omega$ ($j = 1, \dots, N$), or  
equivalently, $[\Omega]_{\rm nc}$ is the intersection of all 
noncommutative subsets of $\cS_{\rm nc}$ containing $\Omega$.

We shall from time to time assume that the set $\cS$ carries some 
additional structure. Examples are as follows:
\begin{itemize}
    \item $\cS$ is a \textbf{vector space $\cV$ over the complex numbers 
    ${\mathbb C}$}:  One can use the module structure of $\cV$ over 
    ${\mathbb C}$ to make sense of a matrix multiplication of the form
    $\alpha \cdot X \cdot \beta$ where $\alpha \in {\mathbb C}^{n \times m}$, $X 
    \in \cV^{m \times k}$, $\beta \in {\mathbb C}^{k \times n}$.
    Note that more generally one can replace $\cV$ with a module 
    $\cM$ over a unital abelian ring $\cR$ and then replace the role 
    of ${\mathbb C}$ with the ring $\cR$ (as is done in 
    \cite{KVV-book}), but we shall be mainly 
    interested in the case where $\cV$ is equipped with some 
    additional structure which forces a complex vector-space 
    structure on $\cV$.

 \item $\cV$ is a \textbf{concrete operator space}, i.e., there is a 
 Hilbert space $\cH$ such that $\cV$ is equal to a linear subspace of 
 $\cL(\cH)$ (our notation for the space of bounded linear operators 
 on $\cH$).   In this case $\cV$ inherits a norm from $\cL(\cH)$.  
 Moreover matrices over $\cL(\cH)$, i.e. $\cL(\cH)^{n \times m}$, can 
 be viewed as the space of bounded linear operators from the 
 Hilbert-space direct sum   $\cH^{m} = \cH \oplus \cdots \oplus \cH$ 
 ($m$-fold direct sum) into  $\cH^{n}$ and hence inherits a canonical 
 operator norm from its canonical identification with $\cL(\cH^{m}, 
 \cH^{n})$.  If $\cV$ is a subspace of $\cL(\cH)$, then there is a 
 canonical norm on the space $\cV^{n \times m}$ of $n \times m$ 
 matrices over $\cV$, namely, the norm inherited from it being a 
 subspace of $\cL(\cH^{m}, \cH^{n})$.  Thus any concrete operator 
 space $\cV$ comes equipped not only with a Banach-space norm but 
 also the space of matrices $\cV^{n \times m}$ comes equipped with 
 its own norm in this way.  These norms satisfy the {\em Ruan axioms}.
 Conversely, any Banach space $\cV$ such that the each space of matrices 
 $\cV^{n \times m}$ over $\cV$ is equipped with a norm $\| \cdot 
 \|_{n,m}$ with the collection of such norms satisfying the Ruan 
 axioms (such an object is called an (abstract) \textbf{operator 
 space}) is completely isometrically isomorphic to a concrete operator 
 space (the analogue of the Gelfand-Naimark theorem for operator 
 spaces) by a theorem of Ruan---see e.g.\ \cite[Theorem 2.3.5]{ER} 
 or \cite[Theorem 13.4]{Paulsen}.  Here a linear map 
 $\varphi$ between operator spaces $\cV$ and $\cV_{0}$ is said to be 
 \textbf{completely isometric} if not only $\varphi \colon \cV \to 
 \cV_{0}$ is isometric but also $\varphi^{(n,n)} \colon \cV^{n 
 \times n} \to \cV_{0}^{n \times n}$ is isometric for all $n \in 
 {\mathbb N}$, where in general we set
 \begin{equation}   \label{id-n,m}
  \varphi^{(n, m)} = {\rm id}_{{\mathbb C}^{n \times m}} 
  \otimes \varphi \colon [ v_{ij}]_{\begin{smallmatrix} i=1, 
  \dots, n \\ j=1, \dots, m \end{smallmatrix}} \mapsto 
  [\varphi(v_{ij})]_{\begin{smallmatrix} i=1, 
  \dots, n \\ j=1, \dots, m \end{smallmatrix}}
 \end{equation}
 for any natural numbers $n,m$.

 As a particular case of this setup as well as that in the previous 
 bullet, consider the case where $\cV = {\mathbb 
 C}^{d}$ (i.e., $d$-tuples $z = (z_{1}, \dots, z_{d})$ of complex 
 numbers).  Then it is convenient to make the identification $({\mathbb 
 C}^{d})^{ n \times n}$ of $n \times n$ matrices over ${\mathbb 
 C}^{d}$ with the space $({\mathbb C}^{n \times n})^{d}$ consisting 
 of $d$-tuples of $n \times n$ complex matrices.  Note that 
$\cV$ can be considered as a subspace of $\cL({\mathbb C}^{d})$ by the 
simple device of embedding the $d$-tuple of complex numbers as the 
first row of a $d \times d$ matrix with the other rows set equal to 
zero.  This embedding gives the canonical identification of ${\mathbb 
C}^{d}$ with an operator space.  Note that one could equally well 
identify ${\mathbb C}^{d}$ with the set of $d \times d$ matrices 
having all but the first column equal to zero.  One of the quirks of 
the theory is that, while these two operator spaces are isometric at 
level 1, they fail to be  isometric to each other at higher levels, i.e., 
there is no completely isometric map of ${\mathbb C}^{d}_{\rm row}$ 
onto ${\mathbb C}^{d}_{\rm col}$ (see \cite{ER} for details).
 
 \item $\cV$ is a \textbf{concrete operator system}, i.e., a 
 selfadjoint linear subspace of $\cL(\cH)$ containing the identity 
 operator $I_{\cH}$ on $\cH$. Any concrete operator system $\cV$, as 
 well as the space $\cV^{n \times n}$ consisting of square matrices 
 over $\cV$ of any size $n \times n$, is equipped with a special 
 cone, the cone of positive semidefinite elements, inducing an 
 ordering on $\cV^{n \times n}$ satisfying certain compatibility 
 conditions.  The Choi-Effros Theorem (see \cite[Theorem 
 13.1]{Paulsen} says that any such \textbf{abstract operator system} 
 is then completely order isomorphic to a concrete operator system.

 \item $\cV$ is a \textbf{concrete operator algebra}, i.e., a (not 
 necessarily selfadjoint) norm-closed subalgebra of $\cL(\cH)$ for 
 some Hilbert space $\cH$. Again the space of matrices $\cV^{n \times 
 m}$ over $\cV$ comes equipped with norms satisfying certain 
 compatibility conditions which also respect the algebra structure.  
 The converse result, that any such \textbf{abstract operator algebra} 
 is completely isometrically isomorphic to a concrete operator 
 algebra, is due to Blecher-Ruan-Sinclair (see \cite[Corollary 
 16.7]{Paulsen} or the original \cite{BRS}).

\end{itemize}

\subsection{Global and noncommutative functions}  \label{S:ncfunc}
We suppose that $\cS$ and $\cS_{0}$ are two sets and that $\Omega$ is 
a nc subset of $\cS_{\rm nc}$.  We say that a function $f$ from the 
nc set $\Omega \subset \cS_{\rm nc}$ into $\cS_{0,{\rm nc}}$
 is a \textbf{global function}  if 
\begin{itemize}
    \item $f$ is \textbf{graded}:  $f \colon \Omega_{n} = \Omega \cap 
    \cS^{n \times n} \to 
    \cV_{0, n}: = (\cS_{0})^{n \times n}$, and
    \item $f$ \textbf{respects direct sums}:
    $$ f\left( \left[ \begin{smallmatrix} Z & 0 \\ 0 & W  
\end{smallmatrix} \right] \right) = \left[ \begin{smallmatrix} f(Z) & 
0 \\ 0 & f(W) \end{smallmatrix} \right] \text{ for all }
Z \in \Omega_{n},\, W \in \Omega_{m}, n,m \in {\mathbb N}.
$$
\end{itemize}

 For the next definition, we assume that  $\cV$ and $\cV_{0}$ are 
 vector spaces over  ${\mathbb C}$ and we let 
 $\Omega$ be a nc subset of $\cV_{\rm nc}$. 
 Then, as explained above in Subsection \ref{S:ncsets},
 for $\alpha \in {\mathbb C}^{n 
 \times m}$, $V \in \cV_{0}^{m \times k}$ and $\beta \in {\mathbb C}^{k 
 \times \ell}$, we can use the module structure of $\cV_{0}$ over 
 ${\mathbb C}$ to make sense of the matrix multiplication $\alpha 
 \cdot V \cdot \beta \in \cV_{0}^{n \times \ell}$ and similarly $\alpha Z \beta$ 
 makes sense as an element of  $\cV^{n \times \ell}$ for $Z \in \cV^{m \times k}$.
  Given a function $f \colon \Omega \to 
 \cV_{0,nc}$, we say that $f$ is a \textbf{noncommutative (nc) function} if
 \begin{itemize}
     \item $f$ is \textbf{graded}, i.e., $f \colon \Omega_{n} \to 
     \cV_{0,n}$, and
     \item $f$  \textbf{respects intertwinings}:  
     \begin{equation}  \label{funcintertwine}
	 Z \in \Omega_{n}, \, 
     \widetilde Z \in \Omega_{m}, \, \alpha \in {\mathbb C}^{m \times n} 
     \text{ with } \alpha Z = \widetilde Z \alpha \Rightarrow \alpha f(Z) = f(\widetilde Z) 
     \alpha.
   \end{equation}
 \end{itemize}
 It can be shown (see  \cite[Section I.2.3]{KVV-book}) that equivalently:  
 $f$ is a nc function if and only if 
 \begin{itemize}
     \item $f$ is a \textbf{global function}, 
 i.e., $f$ is graded and $f$ respects direct sums, and 
 \item $f$  \textbf{respects 
 similarities}, i.e.: whenever $Z, \widetilde Z \in \Omega_{n}$, 
 $\alpha \in {\mathbb C}^{n \times n}$ with $\alpha$ invertible such that 
 $\widetilde Z = \alpha Z \alpha^{-1}$, then $f(\widetilde Z) = \alpha f(Z) \alpha^{-1}$.
 \end{itemize}
 
 In the concrete case where $\cV = {\mathbb C}^{d}$ for some positive 
 integer $d$ and we make the identification $({\mathbb C}^{d})^{n 
 \times n} \cong ({\mathbb C}^{n \times n})^{d}$ as discussed in 
 Subsection \ref{S:ncsets} above, then we view a nc function $f 
 \colon \Omega \to \cV_{0, {\rm nc}}$ as a function on 
$d$ matrix arguments $Z = (Z_{1}, \dots, Z_{d})$, where each $Z_{j} 
\in {\mathbb C}^{n \times n}$ and $n$ is free to vary over all natural numbers.

 The class of nc functions from $\Omega_{\rm nc}$ to $\cV_{0, \rm nc}$  
 is one of the main objects of study in the book \cite{KVV-book} where the 
 notation $\cT(\Omega; \cV_{0, \rm{nc}})$ is used for the set of all such 
 functions. In particular, there it is shown that under mild 
 boundedness conditions, such functions have a local (Brook) Taylor series 
 expansion, as already previewed in earlier work of Joseph Taylor 
 \cite{Taylor72}) and as to be suggested by the letter $\cT$ in the notation 
 $\cT(\Omega; \cV_{0, {\rm nc}})$ for this class.

\subsection{Global and noncommutative kernels}  \label{S:ncker}

Let $\cS$ be a set as in the previous subsection with associated 
noncommutative set $\cS_{\rm nc}$ and let $\Omega$ be a nc subset of 
$\cS_{\rm nc}$ as above. Let $\cV_{0}$ and $\cV_{1}$ be 
two vector spaces. Suppose that $K$ is a 
function from $\Omega \times \Omega$ to $\cL(\cV_{1}, 
\cV_{0})_{\rm nc}$ (where $\cL(\cV_{1}, \cV_{0})$ is the space of 
linear operators from $\cV_{1}$ to 
$\cV_{0}$).  We say that $K$ is a \textbf{global kernel} if 
\begin{itemize}
    \item $K$ is \textbf{graded} in the sense that  
\begin{equation} \label{kergraded}
    Z \in \Omega_{n}, \, W \in \Omega_{m}
      \Rightarrow K(Z,W) \in 
    \cL(\cV_{1}^{n \times m}, \cV_{0}^{n \times m}),
 \end{equation}
 and 
 \item $K$ \textbf{respects direct sums} in the sense that
 \begin{equation} \label{kerds}
     K\left( \sbm{ Z & 0 \\ 0 & \widetilde Z}, \sbm{W & 0 \\ 0 & 
 \widetilde W} \right) \left( \sbm{ P_{11} & P_{12} \\ 
P_{21} & P_{22}} \right) = \sbm{ K(Z,W)(P_{11}) & K(Z, \widetilde W)(P_{12}) \\ 
K(\widetilde Z,W)(P_{21}) & K(\widetilde Z, \widetilde W)(P_{22}) }
    \end{equation}.
\end{itemize}
We note that the direct sum condition \eqref{kerds} can be iterated 
to arrive at the more general form:  $K$  respects direct sums if 
and only if: {\em
whenever  $Z_{i} \in \Omega_{n_{i}}$, $W_{j} \in \Omega_{m_{j}}$ and
    $P_{ij} \in \cV_{1}^{n_{i} \times m_{j}}$ for $i=1, 
    \dots, N$, $j=1, \dots, M$ and we set
$Z = \sbm{ Z^{(1)} & & \\ & \ddots & \\ & & Z^{(N)}} \in \Omega_{n}$,
$W = \sbm{ W^{(1)} & & \\ & \ddots & \\ & & W^{(M)}}\in \Omega_{m}$ 
and  $P = [P_{ij}]_{1 \le i \le N, 1 \le j \le M} \in \cV_{1}^{n \times 
m}$ where we set $n=n_{1} + \cdots + n_{N}$, $m = m_{1} + \cdots + 
m_{M}$,  then}
\begin{equation} \label{kerds-iterated}
 K(Z,W)(P) = \left[ K(Z^{(i)},W^{(j)})(P_{ij}) \right]_{1 
\le i \le N, 1 \le j \le M}.
\end{equation}

\begin{example}  \label{E:globalker}
{\em One way to generate a global kernel is as follows (see \cite[Remark 
3.6 page 41]{KVV-book} for a more general setting). 
Let $\cV$, $\cV_{0}$, $\cX$ be vector spaces, let $H$ be a global function from 
$\Omega$ to $\cL(\cX, \cV_{0})_{\rm nc}$, let  $G$ be a global 
function from $\Omega$ to $\cL(\cV_{0}, \cX)_{\rm nc}$, and suppose 
that $\sigma \colon \cV_{1} \to \cX$ is a linear map.
Define $K$ by
\begin{equation}   \label{genker}
  K(Z,W)(P) = H(Z) ({\rm id}_{{\mathbb C}^{n \times m}} \otimes \sigma)(P) G(W)
\end{equation}
for $Z \in \Omega_{n}$, $W \in \Omega_{m}$,  $P \in (\cV_{1})^{n 
\times m}$
where we use the notation \eqref{id-n,m}.
Then one can check that $K$ so defined is a global kernel from 
$\Omega \times \Omega$ to $\cL(\cV_{1}, \cV_{0})_{\rm nc}$.}
\end{example}

For completeness we mention the following notion of what we shall call 
{\em affine noncommutative kernels} defined as follows.
Assume that the underlying set $\cS$ is replaced by a vector space $\cV$ 
with  $\Omega$ is a nc subset of $\cV_{\rm nc}$, and let 
$\cV_{1}$ and $\cX_{0}$ be vector spaces.  We say that the 
function $K$ from $\Omega \times \Omega$ to $\cL(\cV_{1}, 
\cL(\cX_{0}))_{\rm nc}$ 
is a \textbf{nc affine kernel} if
\begin{itemize} 
    \item $K$ is a \textbf{graded kernel} (see \eqref{kergraded}),
and 
\item $K$ \textbf{respects intertwinings} in the sense that
\begin{align} 
    & Z \in \Omega_{n},\, \widetilde Z \in \Omega_{\widetilde n}, \,
    \alpha \in {\mathbb C}^{\widetilde n \times n} \text{ such that } \alpha Z = 
    \widetilde Z \alpha,  \notag \\
    &  W \in \Omega_{m},\, \widetilde W \in \Omega_{\widetilde m}, \,
    \beta \in {\mathbb C}^{m \times \widetilde m} \text{ such that }  W \beta = 
    \beta \widetilde W, \notag \\
&  P \in \cV_{1}^{n \times m} \Rightarrow \alpha K(Z,W)(P) \beta = 
K(\widetilde Z, \widetilde W)(\alpha P \beta).
\label{kerintertwine}
\end{align}
\end{itemize}
An equivalent set of conditions is:
\begin{itemize}
    \item $K$ is \textbf{global kernel}, i.e., $K$ is graded 
    \eqref{kergraded} and respects direct sums \eqref{kerds}, and
    \item $K$ \textbf{respects similarities} in the following sense:
\begin{align}
    & Z, \widetilde Z \in \Omega_{n}, \, \alpha \in {\mathbb C}^{n \times 
    n} \text{ invertible  with } \widetilde Z = \alpha Z \alpha^{-1}, \notag \\
    & W, \widetilde W \in \Omega_{m}, \, \beta \in {\mathbb C}^{m \times 
    m} \text{ invertible  with } \widetilde W = \beta^{-1} W \beta,\notag  \\
   & P \in \cV_{1}^{n \times m} \Rightarrow K(\widetilde Z, \widetilde 
    W)(P) = \alpha \, K(Z,W)( \alpha^{-1} P \beta^{-1})\, \beta.
\label{kersim'}
\end{align}
\end{itemize}
One can check that the formula \eqref{genker} gives rise to an affine 
kernel in the case where $H$ and $G$ are  nc functions (rather than 
just global functions). These kernels arise in a natural way in the 
noncommutative differential-difference calculus worked out in 
\cite{KVV-book}.

Our main interest however will be in the following variant of affine kernels 
which we shall call simply {\em noncommutative (nc) kernels}.
We again assume that $\Omega$ is a nc subset of $\cV_{\rm nc}$ for a 
vector space $\cV$, and that $\cV_{1}$ and $\cV_{0}$ are vector 
spaces.   We then say that a function $K \colon \Omega \times \Omega \to 
\cL(\cV_{1}, \cV_{0})_{\rm nc}$ is a  \textbf{nc kernel} if 
the following  conditions hold:
\begin{itemize}
\item $K$ is \textbf{graded} (see \eqref{kergraded}), and
\item $K$ \textbf{respects intertwinings} in the following sense:
\begin{align}
   & Z \in \Omega_{n},\, \widetilde Z \in \Omega_{\widetilde n},\, \alpha \in 
    {\mathbb C}^{\widetilde n \times n} \text{ such that } \alpha Z = \widetilde Z 
    \alpha, \notag \\
 & W \in \Omega_{m},\, \widetilde W \in \Omega_{\widetilde m}, \, \beta \in {\mathbb 
 C}^{\widetilde m \times m} \text{ such that } \beta W = \widetilde W 
 \beta, \notag \\
 & P \in \cV_{1}^{n \times m} \Rightarrow  \alpha \, K(Z,W)(P) \, \beta^{*} = 
 K(\widetilde Z, \widetilde W) (\alpha P \beta^{*}).
 \label{kerHerintertwine}
 \end{align}
\end{itemize}

An equivalent set of conditions is: 
\begin{itemize}
    \item $K$ is \textbf{graded}, i.e., satisfies \eqref{kergraded}, 
    
    \item $K$ \textbf{respects direct sums} in the following sense:
\begin{align}
   & Z \in \Omega_{n}, \,\widetilde Z \in \Omega_{\widetilde n},\,
    W \in \Omega_{m}, \, W \in \Omega_{\widetilde m},  \,
    P = \sbm{ P_{11} & P_{12} \\ P_{21} & P_{22} } \in 
    \cV_{1}^{(n+m) \times (\widetilde n+ \widetilde m)} \Rightarrow 
    \notag \\
  &  K\left( \sbm{ Z & 0 \\ 0 & \widetilde Z}, \sbm {W & 0 \\ 0 & 
    \widetilde W} \right) \left( \sbm{ P_{11} & P_{12} \\ P_{21} & 
    P_{22} } \right) =
    \sbm {K(Z,W)(P_{11}) & K(Z, \widetilde W)(P_{12}) \\
	  K(\widetilde Z, W)(P_{21}) & K(\widetilde Z, \widetilde 
	  W)(P_{22})  } .
\label{kerdirsum} 
\end{align}

\item $K$ \textbf{respects similarities}:  
\begin{align}
  &  Z, \widetilde Z \in \Omega_{n}, \, \alpha \in {\mathbb C}^{n \times n} 
    \text{ invertible with } \widetilde Z =  \alpha  Z  \alpha^{-1}, \notag \\
  & W, \widetilde W \in \Omega_{m}, \beta \in {\mathbb C}^{m \times m} 
  \text{ invertible with } \widetilde W = \beta W \beta^{-1}, \notag \\
& P \in \cV_{1}^{n \times m} \Rightarrow  K(\widetilde Z, \widetilde W)(P) =
\alpha \, K(Z,W)(\alpha^{-1} P \beta^{-1*})\,  \beta^{*}.
\label{kersim}
\end{align}
\end{itemize}
We denote the class of all such nc kernels by $\widetilde 
\cT^{1}(\Omega; \cV_{0, {\rm nc}}, \cV_{1, {\rm nc}})$.

\subsection{Completely positive global/nc kernels}   \label{S:cpker}

We now assume that the vector spaces 
$\cV_{1}$ and $\cV_{0}$ are {\em operator systems} carrying an 
associated conjugate-linear involution $P \mapsto P^{*}$.
Then if $P$ is a square matrix over either $\cV_{1}$ or $\cV_{0}$ 
there is a well-defined notion of positivity $P \succeq 0$ arising 
from the operator-space structure (see Section \ref{S:ncsets} for 
some discussion).

We say that a nc kernel $K \in\widetilde \cT^{1}(\Omega; \cV_{0, {\rm nc}}, \cV_{1, 
{\rm nc}})$ is  a \textbf{completely positive noncommutative (cp nc) 
kernel}  if in addition, for all $n \in 
{\mathbb N}$ we have 
\begin{equation} \label{kercp}
    Z \in \Omega_{n},\,  P \succeq 0 \text{ in }
    \cV_{1}^{n \times n} \Rightarrow K(Z,Z)(P) \succeq 0 \text{ in } 
    \cV_{0}^{n \times n}.
\end{equation}
In case $\Omega \subset \cS_{\rm nc}$ where $\cS$ does not 
necessarily carry any vector space structure and $K \colon \Omega 
\times \Omega \to \cL(\cV_{1}, \cV_{0})_{\rm nc}$ is a global kernel,
we say that $K$ is a \textbf{completely positive global (cp global) 
kernel} if $K$ respects direct sums \eqref{kerdirsum} and $K$ 
satisfies \eqref{kercp}.

In case $\cV_{1} = \cA$ and $\cV_{0} = \cB$ are $C^{*}$-algebras, we 
have the following equivalent formulation of the complete positivity 
condition.  

\begin{proposition}   \label{P:cpkernel}
    The global kernel $K \colon \Omega \times \Omega \to \cL(\cA, 
    \cB)_{\rm nc}$ is cp if and only if: for any $Z^{(j)} \in 
    \Omega_{n_{j}}$, $P_{j} \in \cA^{N \times n_{j}}$,  
    $b_{j} \in \cB^{n_{j}}$ for $n_{j} \in {\mathbb N}$,   $j=1,2,\dots, N$, 
    and $N=1,2,\dots$, it 
    holds that
\begin{equation}  \label{cpker-expanded}
    \sum_{i,j=1}^{N} b_{i}^{*} K(Z^{(i)}, Z^{(j)})(P_{i}^{*}P_{j}) b_{j} 
    \succeq 0.
\end{equation}
\end{proposition}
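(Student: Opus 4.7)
The plan is to prove each direction by either aggregating the data into a single positive matrix at a direct-sum point, or decomposing a positive matrix into ``rank-one'' pieces.

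For the forward implication, suppose $K$ is cp and we are given $Z^{(j)} \in \Omega_{n_j}$, $P_j \in \cA^{N \times n_j}$, $b_j \in \cB^{n_j}$. First I would set $n := n_1 + \cdots + n_N$, form the direct-sum point $\mathbf{Z} := \sbm{Z^{(1)} & & \\ & \ddots & \\ & & Z^{(N)}} \in \Omega_n$, and assemble the block-row $\widetilde{\mathbf{P}} := [P_1 \; \cdots \; P_N] \in \cA^{N \times n}$. Then $\mathbf{P} := \widetilde{\mathbf{P}}^{*} \widetilde{\mathbf{P}}$ is positive in the $C^{*}$-algebra $M_n(\cA) = \cA^{n \times n}$ with $(i,j)$-block $P_i^{*} P_j$. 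The cp hypothesis \eqref{kercp} forces $K(\mathbf{Z},\mathbf{Z})(\mathbf{P}) \succeq 0$ in $\cB^{n \times n}$, and the iterated direct-sum identity \eqref{kerds-iterated} expresses this as the block matrix $[K(Z^{(i)},Z^{(j)})(P_i^{*} P_j)]_{i,j=1}^{N}$. Conjugating by $\mathbf{b} := \mathrm{col}_{j=1}^{N}(b_j) \in \cB^{n}$ then delivers the required inequality $\sum_{i,j} b_i^{*} K(Z^{(i)},Z^{(j)})(P_i^{*} P_j) b_j \succeq 0$.

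For the backward implication, I would fix $Z \in \Omega_n$ and $P \succeq 0$ in $\cA^{n \times n}$, and let $\mathbf{b} \in \cB^n$ be arbitrary. Factor $P = R^{*} R$ in $M_n(\cA)$ by taking $R = P^{1/2}$, then decompose $R$ into its rows $r_1,\ldots,r_n \in \cA^{1 \times n}$ and observe that $P = \sum_{k=1}^{n} r_k^{*} r_k$ by direct computation of the $(i,j)$-entries. For each $k$, apply the sum condition with $N = 1$, $Z^{(1)} = Z$, $n_1 = n$, $P_1 = r_k$, $b_1 = \mathbf{b}$ to get $\mathbf{b}^{*} K(Z,Z)(r_k^{*} r_k) \mathbf{b} \succeq 0$ in $\cB$. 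Summing over $k$ and using linearity of $K(Z,Z)$ (built into the definition of a global kernel) yields $\mathbf{b}^{*} K(Z,Z)(P) \mathbf{b} \succeq 0$ for every $\mathbf{b} \in \cB^n$.

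The main obstacle is the last step: the passage from positivity of every scalar form $\mathbf{b}^{*} K(Z,Z)(P) \mathbf{b}$ in $\cB$ to positivity of $K(Z,Z)(P)$ in $M_n(\cB) = \cB^{n \times n}$. I will handle this by invoking the standard $C^{*}$-theoretic fact that $M \in M_n(\cB)$ is positive if and only if $\mathbf{b}^{*} M \mathbf{b} \succeq 0$ in $\cB$ for every $\mathbf{b} \in \cB^{n}$, a consequence of fixing a faithful non-degenerate representation $\cB \hookrightarrow B(\cH)$ and using density of $\{b\eta : b \in \cB,\, \eta \in \cH\}$ in $\cH$. Once this is in hand, the remainder of the argument is bookkeeping with the direct-sum property of a global kernel and the Gram factorization of positive elements in $M_n(\cA)$.
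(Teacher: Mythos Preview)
Your argument is correct and matches the paper's approach: the forward direction (aggregate into the direct-sum point $\mathbf Z$, apply \eqref{kercp} to the Gram matrix $\widetilde{\mathbf P}^{*}\widetilde{\mathbf P}$, then compress by the column $\mathbf b$) is exactly the paper's proof, and for the converse you have fleshed out the paper's terse ``take $N=1$'' by decomposing $P^{1/2}$ into rows and invoking the standard criterion that $M\in M_n(\cB)$ is positive iff $\mathbf b^{*}M\mathbf b\succeq 0$ for every $\mathbf b\in\cB^{n}$.

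One minor caveat on that last criterion: your sketch via density of $\{b\eta:b\in\cB,\ \eta\in\cH\}$ in $\cH$ is not quite sufficient, since you need density of vectors of the form $(b_1\eta,\ldots,b_n\eta)$ with a \emph{common} $\eta$ in $\cH^{n}$; this is what a cyclic (or the universal) representation provides. The fact itself is standard, so this does not affect the validity of your proof.
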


\begin{proof}  For sufficiency, simply take $N=1$ in condition 
    \eqref{cpker-expanded} to recover condition \eqref{kercp}.
    
    For necessity, note that that the left hand side of 
    \eqref{cpker-expanded} can be rewritten as
\begin{equation}   \label{cpkereval-compact}
b^{*} K(Z,Z)(P^{*}P) b
\end{equation}
if we set
$$
b = \sbm{b_{1} \\ \vdots \\ b_{N}},\, Z = \sbm{ Z^{(1)} & & \\ & \ddots 
& \\ & & Z^{(N)}},\, P = \begin{bmatrix} P_{1} & \cdots & P_{N} 
\end{bmatrix}
$$
as a consequence of \eqref{kerds-iterated}.
The arbitrariness of $b \in \cB^{n}$ ($n = \sum_{j=1}^{N} n_{j}$) then implies 
that $K(Z,Z)(P^{*}P) \succeq 0$.
 \end{proof}
 
\begin{remark}  \label{R:cp-cb} 
    Let $\cV_{0}$ and $\cV_{1}$ be operator systems.  Recall (see 
    e.g.\ \cite{Paulsen}) that a 
    map $\varphi \colon \cV_{1} \to \cV_{0}$ between operator systems 
    $\cV_{1}$ and $\cV_{0}$ is said to be \textbf{completely 
    positive} (cp) if, for every $N \in {\mathbb N}$ (with notation 
    as in \eqref{id-n,m})
    \begin{align}
   & \left( {\rm  id}_{{\mathbb C}^{N \times N}} \otimes 
   \varphi\right)( P) \ge 0 \text{ in } (\cV_{0})^{N \times 
    N}  \notag \\
    &  \text{ whenever } P \ge  0 \text{ in } (\cV_{1})^{N \times N}.
    \label{cp}
  \end{align}
  Similarly, if $\varphi \colon \cO_{1} \to \cO_{0}$ is a linear map 
  between operator spaces $\cO_{1}$ and $\cO_{0}$, $\varphi$ is said to 
  be \textbf{completely bounded} if there is a constant $M < \infty$ 
  so that
  \begin{equation}   \label{cb}
  \| ({\rm id}_{{\mathbb C}^{N \times N}}\otimes \varphi)( P) \|_{\cO_{0}^{N \times N}} 
  \le M \| P \|_{\cO_{1}^{N \times N}} \text{ for all } P \in 
  \cO_{1}^{N \times N}.
\end{equation}
The smallest such constant $M$ is called the \textbf{completely 
bounded norm} of the map $\varphi$.  It is well known (see e.g. 
\cite{Paulsen}) that a completely positive map $\varphi \colon 
\cV_{1} \to \cV_{0}$ between operator 
systems is automatically completely bounded with completely bounded 
norm equal to $\| \varphi(1_{\cV_{1}}) \|$:
\begin{equation}  \label{cb=b}
    \| \varphi \| = \| \varphi \|_{\rm cb} = \| \varphi(1_{\cV_{1}}) \|
    \text{ for all } A \in \cO_{1}^{N \times N}.
\end{equation}

If $K$ is a cp global/nc kernel on the nc set $\Omega$, $n \in 
{\mathbb N}$ and $Z$ is any point in $\Omega_{n}$, the identity
$$
 \left[ K(Z,Z)\left( P_{ij} \right) \right]_{i,j=1, \dots, N} =
 K\left( \bigoplus_{1}^{N} Z,  \bigoplus_{1}^{N} Z \right)\left( [ 
 P_{ij} ]_{i,j=1, \dots, N} \right),
$$
a consequence of the ``respects direct sums'' condition 
\eqref{kerds-iterated} satisfied by $K$, shows that the map $K(Z,Z)$ 
is completely positive.  By the general fact \eqref{cb=b}, it 
follows that $K(Z,Z)$ is also completely bounded with completely 
bounded norm equal to its norm equal to $\|K(Z,Z)(I)\|$:
\begin{equation}   \label{cpkercb}
    \| K(Z,Z)\| = \| K(Z,Z)\|_{\rm cb} = \| K(Z,Z)(I) \|.
\end{equation}
\end{remark}
 
 \begin{remark}  \label{R:kermodel}
One can get cp global kernels and cp nc kernels by specializing Example \ref{E:globalker} 
as follows.  We assume that $\cV_{1}$ and $\cV_{0}$ are operator 
systems.   By definition (or by the Choi-Effros theorem if one views $\cV_{0}$ 
as an abstract operator system), there is no loss of generality in 
viewing $\cV_{0}$ as a subspace of a $C^{*}$-algebra $\cL(\cY)$ 
for some Hilbert space $\cY$.  Let us assume that $\cX$ is another 
Hilbert space, that $H$ is a global/nc 
function from $\Omega$ to $\cL(\cX,\cY)_{\rm nc}$, and that $\sigma$ is a 
cp map (as defined in \eqref{cp}) from $\cV_{1}$ into $\cL(\cX)$.  Then one can check that 
the function $K$ defined by
\begin{equation}   \label{hereditary}
K(Z,W)(P) = H(Z) \, ({\rm id}_{{\mathbb C}^{n \times m}} \otimes 
\sigma)(P) \, H(W)^{*}
\end{equation}
for $Z \in \Omega_{n}$, $W \in \Omega_{m}$, $P \in \cV_{1}^{n \times 
m}$ is a cp global/nc kernel $K$ from $\Omega \times \Omega$ to $\cL(\cV_{1}, 
\cL(\cY))_{\rm nc}$.  In case $\cV_{1}$ is also a $C^{*}$-algebra, 
then a consequence of Theorem \ref{T:cpker} below 
(specifically, of the equivalence (1) $\Leftrightarrow$ (3) there)
is that \eqref{hereditary} is the form for any cp global/nc kernel.
\end{remark}
 
  \begin{remark} \label{R:Arveson}
      Suppose that $\cV_{0}$ and $\cV_{1}$ are operator systems and  
      that the function $K$ from $\Omega \times \Omega$ into
      $\cL(\cV_{1}, \cV_{0})_{\rm nc}$ is a cp global/nc kernel.  As was mentioned in the 
      previous remark, there is no loss of generality in viewing $\cV_{0}$ 
      as a subspace of a full $C^{*}$-algebra $\cL(\cY)$, so there is no 
      loss of generality in assuming that $\cV_{0} = \cL(\cY)$ in the definition of cp 
      global/nc kernel.  An interesting open question is:  {\em given a cp 
      global/nc kernel from $\Omega \times \Omega$ to $\cL(\cV_{1}, \cL(\cY))_{\rm nc}$ 
      where $\cV_{1} \subset \cL(\cE)$ for some Hilbert space $\cE$,  is there a cp 
      global/nc kernel $\widetilde K$ from $\Omega \times \Omega$ to 
      $\cL(\cL(\cE), \cL(\cY))_{\rm nc}$ so that $\widetilde K(Z,W)(P) = 
      K(Z,W)(P)$ for all $Z \in \Omega_{n}$, $W \in \Omega_{m}$, $P 
      \in \cV_{1}^{n \times m}$?} If this question also has an 
      affirmative answer, then there is no loss of generality in 
      assuming that $\cV_{1} = \cL(\cE)$ is also a full 
      $C^{*}$-algebra in the definition of cp global/nc kernel.
      The special case of this question where one takes $\Omega$ to 
      be the nc envelope of a singleton set $\{\omega_{0}\}$ has a 
      positive answer by the Arveson-Wittstock Hahn-Banach extension 
      theorem (see \cite{ERbook, Paulsen}).  A positive answer to 
      this question for the general case would in turn guarantee that \eqref{hereditary} 
      is the form for a general cp global/nc kernel, even without the 
      assumption that $\cV_{1} = \cA$ is a $C^{*}$-algebra.
     \end{remark}

\section{Global/noncommutative reproducing kernel Hilbert spaces} 
\label{S:RKHS}

\subsection{Main result}   \label{S:main}

To formulate our main result, we assume without loss of generality 
(see Remark \ref{R:Arveson}) that the operator system 
$\cV_{0}$ is presented to us 
in concrete form as the set of bounded linear operators $\cL(\cY)$
on a Hilbert space $\cY$ (a \textbf{full} $C^{*}$-algebra).  Then we have the following 
characterization of cp global kernels and of cp nc kernels.
Part of the result is that the analogue of representation 
\eqref{genker} for cp kernels gives a complete characterization of cp 
global/nc kernels (see statement (3) in the statement of Theorem 
\ref{T:cpker}).

\begin{theorem}   \label{T:cpker}  
    Suppose that $\Omega$ is a nc subset of $\cS_{nc}$ for some set 
    $\cS$, $\cY$ is a Hilbert space, $\cA$ is a $C^{*}$-algebra and
    $K \colon \Omega \times \Omega \to \cL(\cA, \cL(\cY))_{\rm nc}$ is a 
    given function.  Then the following are equivalent.
\begin{enumerate}
   \item $K$ is a cp global kernel.
   
   \item There is a Hilbert space $\cH(K)$ whose elements are global 
   functions $f \colon \Omega \to \cL(\cA, \cY)_{\rm nc}$ such that:
   \begin{enumerate}
   \item For each $W \in \Omega_{m}$, $v \in \cA^{1 \times m}$, and $y \in \cY^{m}$, the function 
 $$
 K_{W,v,y} \colon \Omega_{n} \to \cL(\cA, \cY)^{n \times n} \cong 
 \cL(\cA^{n}, \cY^{n})
 $$
 defined by
 \begin{equation}   \label{kerel}
 K_{W,v,y}(Z) u = K(Z,W)(uv) y
 \end{equation}
 for $Z \in \Omega_{n}$, $u \in \cA^{n}$ belongs to $\cH(K)$.
 
 \item The kernel elements $K_{W,v,y}$ as in \eqref{kerel} have the reproducing property:  for 
 $f \in \cH(K)$, $W \in \Omega_{m}$, $v \in \cA^{1 \times m}$, 
 $y \in \cY^{m}$,
 \begin{equation}   \label{reprod}
  \langle f(W)(v^{*}), y \rangle_{\cY^{m}} = \langle f, K_{W,v,y} 
  \rangle_{\cH(K)}.
 \end{equation}
 
  \item $\cH(K)$ is equipped with a unital $*$-representation  
  $\sigma$ mapping $\cA$ to  $\cL(\cH(K))$  such that
 \begin{equation}  \label{rep1}
     \left( \sigma(a) f\right)(W)(v^{*}) = f(W)(v^{*}a)
 \end{equation}
 for $a \in \cA$, $W \in \Omega_{m}$, $v \in \cA^{1 \times m}$, with 
 action on kernel elements $K_{W,v,y}$ given by
 \begin{equation}   \label{kernelaction}
     \sigma(a) \colon K_{W,v,y} = K_{W, av, y}.
  \end{equation}
 \end{enumerate}
 
 \item $K$ has a Kolmogorov decomposition:  there is a Hilbert space 
 $\cX$ equipped with a unital $*$-representation $\sigma \colon \cA 
 \to \cL(\cX)$ together with a global function $H \colon \Omega \to 
 \cL(\cX, \cY)_{\rm nc}$ so that
 \begin{equation}   \label{Koldecom}
     K(Z,W) (P) = H(Z) ( {\rm id}_{{\mathbb C}^{n \times m}} \otimes \sigma) (P) H(W)^{*}
 \end{equation}
 for all $Z \in \Omega_{n}$, $W \in \Omega_{m}$, $P \in \cA^{n \times 
 m}$.
\end{enumerate}
    
Furthermore, statements (1), (2), and (3) remain equivalent if it is assumed that 
$\Omega \subset \cV_{\rm nc}$ for a vector spaces $\cV$, $K$ is taken 
to be a cp nc kernel in statement (1), the elements $f$ of $\cH(K)$ 
are taken to be nc functions from $\Omega$ to $\cL(\cA, \cY)_{\rm 
nc}$ in statement (2), and $H$ is taken to be a nc function from 
$\Omega$ to $\cL(\cX, \cY)_{\rm nc}$ in statement (3).
\end{theorem}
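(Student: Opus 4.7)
The plan is to prove the cycle $(3) \Rightarrow (1) \Rightarrow (2) \Rightarrow (3)$ for global kernels; each step preserves the extra similarity invariance needed in the nc case, so the nc version is automatic. Step $(3) \Rightarrow (1)$ is a direct verification: since $\sigma$ is a $*$-homomorphism, $(\mathrm{id} \otimes \sigma)(P_0^* P_0) = (\mathrm{id} \otimes \sigma)(P_0)^*(\mathrm{id} \otimes \sigma)(P_0) \succeq 0$, so flanking by $H(Z)$ and $H(Z)^*$ in \eqref{Koldecom} gives $K(Z,Z)(P) \succeq 0$ for any positive $P = P_0^* P_0$, while grading, direct sums, and similarity of $K$ descend from those of $H$ combined with the blockwise behavior of $\mathrm{id} \otimes \sigma$.

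For $(1) \Rightarrow (2)$ I would run a GNS-style construction. On the free complex vector space on symbols $K_{W,v,y}$ (multilinear in $v$ and $y$), impose the sesquilinear form
\[
\langle K_{W,v,y},\, K_{W',v',y'}\rangle_0 \;:=\; \langle K(W',W)(v'^* v)\, y,\, y'\rangle_{\cY^{m'}}.
\]
For a finite sum $f = \sum_i c_i K_{W^{(i)}, v^{(i)}, y^{(i)}}$, the iterated direct-sum identity \eqref{kerds-iterated} collapses $\|f\|_0^2$ into the single expression $\langle K(W,W)(v^* v)\, \hat y,\, \hat y\rangle$, where $W = \bigoplus_i W^{(i)}$, $v = [v^{(1)}\ \cdots\ v^{(N)}]$, and $\hat y$ stacks the $c_i y^{(i)}$; this quantity is nonnegative by the complete positivity clause \eqref{kercp}. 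Quotient by the null space and complete to form $\cH(K)$. Realize each class as an $\cL(\cA,\cY)_{\rm nc}$-valued function by setting
\[
f(Z)(u) \;:=\; \sum_i c_i\, K(Z, W^{(i)})(u v^{(i)})\, y^{(i)},
\]
with well-definedness on the quotient coming from Cauchy--Schwarz applied to the semi-definite form, and extension to the completion from the estimate $\|f(Z)(u)\|_{\cY^n} \le \|f\|_{\cH(K)} \cdot \|K_{Z, u^*,\,\cdot\,}\|$; grading, direct sums, and (in the nc case) similarity of $f$ transfer from $K$. Define $\sigma$ on kernel elements by \eqref{kernelaction}; the operator inequality $a^* a \le \|a\|^2 \cdot 1_{\cA}$, conjugated by the consolidated $v$ and pushed through the cp map $K(W,W)$, yields $\|\sigma(a) f\|_0 \le \|a\|\, \|f\|_0$ and hence boundedness on $\cH(K)$, while $\sigma(ab) = \sigma(a)\sigma(b)$ and $\sigma(a^*) = \sigma(a)^*$ are immediate on kernel elements from the defining formula. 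The reproducing property \eqref{reprod} and the action formula \eqref{rep1} then come straight out of the definitions.

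For $(2) \Rightarrow (3)$, take $\cX = \cH(K)$ with the given unital $*$-representation, and define $H(Z) \in \cL(\cH(K),\cY)^{n\times n}$ entrywise by
\[
H(Z)_{ik} f \;:=\; \bigl(f(Z)(e_k)\bigr)_i,
\]
where $e_k \in \cA^n$ is the $k$-th standard column. The reproducing property gives $(H(W)^* y)_k = K_{W, e_k^T, y}$; \eqref{kernelaction} then turns $(\mathrm{id}\otimes\sigma)(P)\, H(W)^* y$ into the vector whose $i$-th entry is $K_{W, P_{i,\cdot}, y}$; and applying $H(Z)$ together with the identity $\sum_i e_i P_{i,\cdot} = P$ plus linearity of $K(Z,W)(\cdot)$ in its middle argument reassembles the output into $K(Z,W)(P)\, y$, verifying \eqref{Koldecom}. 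Grading, direct sums, and similarity of $H$ are read off entrywise from those of the functions in $\cH(K)$.

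The main hurdle is the GNS step $(1)\Rightarrow(2)$: positive semidefiniteness of $\langle\cdot,\cdot\rangle_0$, well-definedness of $\sigma(a)$ on the quotient, and identification of abstract completion vectors with bona fide $\cL(\cA,\cY)_{\rm nc}$-valued functions on $\Omega$ all hinge on the same trick of using \eqref{kerds-iterated} to absorb a finite combinatorial sum into a single evaluation $K(Z,Z)(\cdot)$ at a direct-sum point $Z \in \Omega_M$, reducing each inequality to the single-point complete positivity clause \eqref{kercp}.
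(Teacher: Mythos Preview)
Your proposal is correct and follows essentially the same approach as the paper: both prove the cycle $(3)\Rightarrow(1)\Rightarrow(2)\Rightarrow(3)$, with $(1)\Rightarrow(2)$ via the GNS-type construction on the span of kernel elements (using the direct-sum consolidation and the inequality $a^*a \preceq \|a\|^2 1_{\cA}$ exactly as you describe), and $(2)\Rightarrow(3)$ by taking $\cX=\cH(K)$ and $H(Z)=\bev_{Z,1_{\cA^{n\times n}}}$, which is precisely your entrywise formula $H(Z)_{ik}f=(f(Z)(e_k))_i$ rewritten. The only cosmetic difference is that the paper packages the verification of \eqref{Koldecom} in terms of the higher-rank kernel elements $K_{W,V,y}$ and the operators $\bev_{W,U}$, whereas you compute it directly row by row; the content is identical.
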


We prove (1) $\Rightarrow$ (2), (2) $\Rightarrow$ (3), and (3) 
$\Rightarrow$ (1).

\begin{proof}[Proof of (1) $\Rightarrow$ (2)]
    Assume first that $K$ is a cp global kernel and that we have 
    chosen a $Z \in \Omega_{n}$, $W \in \Omega_{m}$,$u \in \cA^{n}$, $v \in \cA^{1 \times m}$ and $y 
    \in \cY^{m}$.    The estimates
\begin{align*}
    \|K(Z,W)(uv) y \|_{\cY^{n}} & \le \|K(Z,W)(uv)\|_{\cL(\cY^{m}, 
    \cY^{n})} \| y  \|_{\cY^{m}} \\
    & \le \| K(Z,W)\|_{\cL(\cA^{n \times m}, \cL(\cY^{m}, \cY^{n}))} 
    \| u v \|_{\cA^{n \times m}} \| y \|_{\cY^{m}} \\
    & \le \left( \| K(Z,W)\|_{\cL(\cA^{n \times m}, \cL(\cY^{m}, 
    \cY^{n}))} \| v \|_{\cA^{1 \times m}} \| y \|_{\cY^{m}} \right)
    \| u \|_{\cA^{n}}
 \end{align*}
 show that the formula \eqref{kerel} defines a bounded operator 
 $K_{W,v,y}(Z)$ from $\cA^{n}$ to $\cY^{n}$.
 
  One can then use the 
    assumption that $K$ is a global kernel to check that each $K_{W, 
    v, y}$ is a global function as follows.  Given a $K_{W, v, y}$ 
    ($W \in \Omega_{m}$, $v \in \cA^{1 \times m}$, $y \in \cY^{m}$), 
    for $Z_{1} \in \Omega_{n_{1}}$, $Z_{2} \in \Omega_{n_{2}}$, 
    $u_{1} \in \cA^{n_{1}}$, $u_{2} \in \cA^{n_{2}}$, we have
    \begin{align*}
K_{W,v,y}\left( \sbm{Z_{1}& 0 \\ 0 & Z_{2}} \right) \begin{bmatrix} u_{1} \\ 
u_{2} \end{bmatrix} & =
K\left( \sbm{Z_{1} & 0 \\ 0 & Z_{2}}, W\right)\left( \begin{bmatrix} u_{1}v \\ 
u_{2} v \end{bmatrix} \right) y \\
& = \begin{bmatrix} K(Z_{1},W)(u_{1}v) y \\ K(Z_{2},W)(u_{2}v) y 
\end{bmatrix} \\
& = \begin{bmatrix} K_{W,v,y}(Z_{1}) & 0 \\ 0 & K_{W,v,y}(Z_{2}) 
\end{bmatrix} \begin{bmatrix} u_{1} \\ u_{2} \end{bmatrix}.
\end{align*}
and we conclude that $K_{W,v,y}$ respects direct sums.  

We define a 
linear space $\cH^{\circ}(K)$ as the span of such kernel elements
$$
\cH^{\circ}(K) = {\rm span}\{K_{W,v,y} \colon W \in \Omega_{m},\, v 
\in \cA^{1 \times m},\, y \in \cY^{m},\, m=1,2,\dots \}
$$
with inner product of two kernel elements given by
\begin{equation}   \label{kerinnerprod'}
\langle K_{W,v,y},\, K_{W',v',y'} \rangle_{\cH^{\circ}(K)} = 
\langle K(W',W)(v^{\prime *}v)y, y' \rangle_{\cY^{m'}}
\end{equation}
and then extend to any two elements of $\cH^{\circ}(K)$ by 
sesquilinearity.  The fact that $K$ satisfies the cp condition 
(expressed in the form \eqref{cpker-expanded}) implies that the inner 
product is positive semidefinite.  We may then take the completion to 
arrive at a pseudo-Hilbert space 
$\cH(K)$. Elements of the completion $f$ can again be viewed as 
global functions from $\Omega$ to $\cL(\cA, \cY)$ via the reproducing 
formula \eqref{reprod}; indeed one checks directly that 
\eqref{reprod} holds for $f = K_{W',v'y'}$ ($W' \in \Omega_{m'}$, $v' 
\in \cA^{1 \times m'}$, $y' \in \cY^{m'}$):
\begin{align}
    \langle f, K_{W,v,y} \rangle_{\cH(K)} & =
    \langle K_{W',v'y'}, K_{W,v,y} \rangle_{\cH(K)} \notag \\
    & = \langle K(W,W')(v^{*}v') y', y \rangle_{\cY^{m}} \notag  \\
    & = \langle K_{W',v',y'}(W)( v^{*}), y \rangle_{\cY^{m}} \notag \\
    & = \langle f(W) (v^{*}), y \rangle_{\cY^{m}}.
    \label{func}
 \end{align}
 We then justify the validity of the formula for the case that $f$ is a 
 finite linear combination of kernel elements by linearity; when $f$ 
 is the $\cH(K)$-limit of a sequence of finite linear combination of 
 kernel elements, we simply use the formula \eqref{reprod} to define 
 $f(W)$.  We note that the identification $f \mapsto ( W \mapsto f(W))$
 of $f \in \cH(K)$ with the function $W \mapsto f(W)$ defined by 
 \eqref{func} is well-defined:  if $ \langle f, f \rangle_{\cH(K)} = 
 0$, then a consequence of the Cauchy-Schwarz inequality is that $f$ 
 is orthogonal to $K_{W,v,y}$ for all $W,v,y$; consequently, for all 
 $W \in \Omega_{m}$ and $y \in \cY^{m}$ we have
 \begin{align*}
     \langle f(W)(v^{*}), y \rangle_{\cY^{m}} & =
     \langle f, K_{W,v,y} \rangle_{\cH(K)} = 0
 \end{align*}
 so that the associated function $W \mapsto f(W)$ is zero.
 Similarly, the correspondence $f \mapsto (W \mapsto f(W))$ is 
 injective: if $f \in \cH(K)$ and $f(W) = 0$ for all 
 $W$, then it follows that $\langle f(W) v^{*}, y \rangle_{\cY^{m}} = 
 0$ for all $W \in \Omega^{m}$, $v^{*} \in \cA^{1 \times m}$, $y \in 
 \cY^{m}$ for $m=1,2,\dots$ whence it follows from \eqref{reprod} that
 $f$ is orthogonal to all kernel elements $K_{W,v,y}$.  As such 
 kernel elements have dense span in $\cH(K)$ by construction, it 
 follows that $f$ is the zero element of $\cH(K)$. With this 
 identification, it follows that in fact $\cH(K)$ is a Hilbert space, 
 i.e., any element with zero self inner-product is the zero element 
 of the space.
 As we have already seen that kernel elements are global 
 functions, it follows by linearity and taking limits that each 
 element $f$ of $\cH(K)$ is also a global function from $\Omega$ to 
 $\cL(\cA, \cY)_{\rm nc}$.
 
 For $a \in \cA$, we define an action $\sigma(a)$ on kernel elements 
 $K_{W,v,y}$ by
 $$
 \sigma(a) \colon K_{W,v,y} \mapsto K_{W, a v, y}.
 $$
 It is easily checked that $\sigma$ is additive, multiplicative and 
 unital:
 $$
 \sigma(a_{1} + a_{2}) = \sigma(a_{1}) + \sigma(a_{2}),\quad
 \sigma(a_{1} a_{2}) = \sigma(a_{1}) \circ \sigma(a_{2}), \quad
 \sigma(1_{\cA}) = I_{\cH(K)}.
 $$
 We check that $\sigma$ respects adjoints:
 \begin{align*}
     \langle \sigma(a) K_{W,v,y},\, K_{W',v',y'} \rangle_{\cH(K)} & =
     \langle K_{W, av, y}, \, K_{W',v',y'} \rangle_{\cH(K)} \\
     & = \langle K(W',W)(v^{\prime *} (av)) y, \, y' 
     \rangle_{\cY^{m'}} \\
     & = \langle K(W',W)( (a^{*}v^{\prime})^{*} v) y, \, y' 
     \rangle_{\cY^{m'}} \\
     & = K_{W,v,y},\, K_{W',a^{*}v', y'} \rangle_{\cH(K)} \\
     & = \langle K_{W,v,y}, \, \sigma(a^{*}) K_{W',v',y'} 
     \rangle_{\cH(K)}
 \end{align*}
 and it follows that $\sigma(a)^{*} = \sigma(a^{*})$.  
 
 Given $a \in \cA$, we extend the action $\sigma(a)$ to the span 
 $\cH^{\circ}(K)$ by linearity.  We claim that $\sigma(a)$ is bounded 
 and therefore extends to a bounded operator on all of $\cH(K)$.  
 Indeed, we note that, for $f = \sum_{j=1}^{N} K_{W_{j}, v_{j}, 
 y_{j}} \in \cH^{\circ}(K)$ where say $W_{j} \in \Omega_{m_{j}}$, 
 $v_{j} \in \cA^{1 \times m_{j}}$, 
 \begin{align*}
   & \| a \|^{2} \|f\|_{\cH(K)}^{2} -  \| \sigma(a) f \|^{2}_{\cH(K)} = \\ 
   &  \| a \|^{2}  \left\langle  \sum_{j=1}^{N} K_{W_{j}, v_{j}, y_{j}},  
     \sum_{i=1}^{N} K_{W_{i}, v_{i}, y_{i}} \right\rangle_{\cH(K)} -
     \left\langle  \sum_{j=1}^{N} K_{W_{j}, av_{j}, y_{j}},  
     \sum_{i=1}^{N} K_{W_{i}, av_{i}, y_{i}} \right\rangle_{\cH(K)} \\
& = \sum_{i,j=1}^{N} \langle K(W_{i}, W_{j}) (v_{i}^{*}
( \| a \|^{2} 1_{\cA} - a^{*} a) v_{j}) y_{j}, y_{i}\rangle_{\cY^{m_{i}}} \ge 0
     \end{align*}
 since $\|a \|^{2} 1_{\cA} - a^{*} a$ is a positive element of $\cA$ and the 
 kernel $K$ satisfies the cp condition \eqref{cpker-expanded}.  We 
 conclude that $\| \sigma(a) \| \le \| a\|$ and hence $\sigma$ 
 extends to a unital $*$-representation $\sigma \colon \cA \mapsto 
 \cL(\cH(K))$.
 
 We next compute
 \begin{align*}
     \langle \left( \sigma(a) f\right)(W)(v^{*}), y \rangle_{\cY^{m}} 
     & = \langle \sigma(a) f, K_{W,v,y} \rangle_{\cH(K)} \text{ (by 
     \eqref{reprod})} \\
     & = \langle f, K_{W, a^{*}v,y} \rangle_{\cH(K)} \\
     & = \langle f(W)(v^{*}a), y \rangle_{\cY^{m}}
\end{align*}
and the formula \eqref{rep1} follows.

In case $\Omega \subset \cV_{nc}$ for a vector space $\cV$ and $K$ 
is a cp nc kernel, we verify that the kernel elements $K_{W,v,y}$ are 
actually nc functions as follows.  Suppose first that $f = K_{W,v,y}$ 
(with $W \in \Omega_{m}$, $v \in \cA^{1 \times m}$, $y \in \cY^{m}$) 
is a kernel element, and suppose that we are given $W' \in 
\Omega_{n}$, $\widetilde W' \in \Omega_{\widetilde n}$, and an $\alpha \in 
{\mathbb C}^{\widetilde n \times n}$ with $\alpha W' = \widetilde W' \alpha$.  
We then use the assumed intertwining property for $K$ 
\eqref{kerintertwine} to deduce the intertwining property 
\eqref{funcintertwine} for $f$: for $y' \in \cY^{\widetilde n}$, we 
have
\begin{align}  \
   \langle \alpha f(W')  v^{\prime *}, y' \rangle_{\cY^{\widetilde n}} &
   = \langle \alpha K_{W,v, y}(W') v^{\prime *}, y' 
   \rangle_{\cY^{\widetilde n}} \notag \\
&  = \langle \alpha K(W',W)(v^{\prime *}v) y, y' \rangle_{\cY^{\widetilde n}} 
\notag \\
 & = \langle K(\widetilde W',W)(\alpha v^{\prime *}v) y, y' 
 \rangle_{\cY^{\widetilde n}} \notag \\
 & = \langle K_{W,v,y}(\widetilde W')(\alpha v^{\prime *}), y' 
 \rangle_{\cY^{\widetilde n}} \notag \\
 & = \langle f(\widetilde W')(\alpha v^{\prime *}), y' 
 \rangle_{\cY^{\widetilde n}}
 \label{Kfnc}
 \end{align}
 The general case now follows by linearity and taking limits.

This completes the verification of statement (2) in the Theorem.
\end{proof}

\begin{proof}[Proof of (2) $\Rightarrow$ (3)]
    Given a function $K \colon \Omega \times \Omega \to 
    \cL(\cA, \cL(\cY))_{\rm nc}$ together with a Hilbert space of 
    global functions $\cH(K)$ for which properties (a), (b), (c) in 
    statement (2) of the Theorem hold, we must construct a Hilbert space $\cX$ equipped with a unital 
$*$-representation $\sigma \colon \cA \to \cL(\cX)$ together with a 
global function $H \colon \Omega \to \cL(\cX, \cY)_{\rm nc}$ so that 
we recover the given kernel $K$ from the Kolmogorov decomposition 
formula \eqref{Koldecom}.   To this end it is natural to choose 
$\cX = \cH(K)$ which is already equipped with the unital 
$*$-representation $\sigma$ given by \eqref{rep1} or 
\eqref{kernelaction}.  It remains to construct $H$.  

For this purpose it is convenient introduce some general notation.  
For $W \in \Omega_{m}$ and $u \in 
\cA^{m}$, we define the \textbf{directional point-evaluation operator}
$\bev_{W,u} \colon \cH(K) \to \cY^{m}$ by
\begin{equation}   \label{evWu}
    \bev_{W,u}(f) = f(W) u.
\end{equation}
More generally, for $W \in \Omega_{m}$ and $U = \begin{bmatrix} u_{1} 
& \cdots & u_{N}\end{bmatrix} \in \cA^{m \times N}$, we define 
$\bev_{W,U} \colon \cH(K)^{N} \to \cY^{m}$ by
\begin {equation}   \label{evWU}
   \bev_{W,U} = \bev_{W, \sbm{u_{1} & \cdots & u_{N}}} : =
   \begin{bmatrix} \bev_{W,u_{1}} & \cdots & \bev_{W, u_{N}} 
   \end{bmatrix},
\end{equation}
i.e.,  for $f = \sbm{f_{1} \\ \vdots \\ f_{N}} \in \cH(K)^{N}$, $W 
\in \Omega_{m}$, and $U = \begin{bmatrix} u_{1} & \cdots & u_{N} \end{bmatrix} \in 
\cA^{m \times N}$ we define
$$
\bev_{W,U} (f) = \sum_{i=1}^{N} \bev_{W,u_{i}} f_{i}  =  \sum_{i=1}^{N} f_{i}(W) u_{i}.
$$

For $W \in \Omega_{m}$ and $V = \sbm{ v_{1} \\ \vdots \\ v_{N}} \in 
\cA^{N \times m}$ (so each $v_{j} \in \cA^{1 \times m}$ for $j=1, 
\dots, N$), and $y \in \cY^{m}$, let us define $K_{W,V,y} \in 
\cH(K)^{N}$ by
$$
   K_{W,V,y} = \begin{bmatrix} K_{W,v_{1},y} \\ \vdots \\ K_{W, 
   v_{N},y}\end{bmatrix}.
$$
Then, for $f = \sbm{ f_{1} \\ \vdots \\ f_{N}} \in \cH(K)^{N}$, $V = 
\sbm{v_{1} \\ \vdots \\  v_{N} } \in \cA^{N \times m}$, $W \in 
\Omega_{m}$, and $y \in \cY^{m}$, we compute
$$
    \langle \bev_{W,V^{*}}f, y \rangle_{\cY^{m}}  = \sum_{i=1}^{N} 
    \langle f_{i}(W) v_{i}^{*}, y \rangle_{\cY^{m}} =
    \sum_{i=1}^{N} \langle f_{i}, K_{W, v_{i},y} \rangle_{\cH(K)}
    = \langle f, K_{W,V,y} \rangle_{\cH(K)^{N}}
$$
and we conclude that
\begin{equation}   \label{bevWV**}
    (\bev_{W,V^{*}})^{*} y = K_{W,V,y}
\end{equation}
Thus, for $Z \in \Omega_{n}$, $U = \sbm{ u_{1} \\ \vdots \\ u_{N}} 
\in \cA^{N \times n}$, $y' \in \cY^{n}$, $W \in 
\Omega_{m}$, $V = \sbm{ v_{1} \\ \vdots \\ v_{N}} \in \cA^{N 
\times m}$, and $y \in \cY^{m}$, we have
\begin{align}
\langle K_{W,V,y}, K_{Z,U,y'} \rangle_{\cH(K)^{N}} & =
\langle \bev_{Z,U^{*}}(K_{W,V,y}), \, y' \rangle_{\cY^{n}} \notag \\
& = \sum_{i=1}^{N} \langle \bev_{Z,u_{i}^{*}}(K_{W,v_{i},y}),\, y' 
\rangle_{\cY^{n}} \notag \\
& = \sum_{i=1}^{N} \langle K(Z,W) (u_{i}^{*} v_{i}) y, y' 
\rangle_{\cY^{n}} \notag \\
& = \langle K(Z,W) \left( \sum_{i=1}^{N} u_{i}^{*} v_{i}\right) y, y' 
\rangle_{\cY^{n}} \notag \\
& = \langle K(Z,W) (U^{*} V) y, y' \rangle_{\cY^{n}},
\label{kerinnerprod''}
\end{align}
the higher-rank generalization of the formula \eqref{kerinnerprod'}.

Next, let us note how the formula \eqref{kernelaction} for the action 
of $\sigma$ extends to the higher-rank case: for $W \in 
\Omega_{m}$, $V \in \cA^{N \times m}$, $y \in {\mathbb C}^{m}$, $P 
= [P_{ij}]_{1 \le i \le N', 1 \le j \le N} \in \cA^{N' \times N}$, 
and $y \in \cY^{m}$, we have
\begin{equation}   \label{kernelaction'}
    ({\rm id}_{{\mathbb C}^{N' \times N}} \otimes \sigma)(P) 
    K_{W,V,y} = K_{W, PV, y}.
\end{equation}

For $Z \in \Omega_{n}$ and $f \in \cH(K)^{n}$, we define $H(Z) \colon 
\cH(K)^{n} \to \cY^{n}$ simply as
\begin{equation} \label{defH}
    H(Z) = \bev_{Z,\,  1_{\cA^{n \times n}}} \colon \cH(K)^{n} \to 
    \cY^{n}.
\end{equation}
Then, for $Z \in \Omega_{n}$, $P  \in \cA^{n \times m}$, and $W \in 
\Omega_{m}$, we compute
\begin{align}
  &  H(Z) \left( {\rm id}_{{\mathbb C}^{n \times m}} \otimes 
    \sigma \right)(P) H(W)^{*} y = H(Z) \left( {\rm id}_{{\mathbb C}^{n 
    \times m}} \otimes \sigma \right)(P) K_{W,\, 1_{\cA^{m \times 
    m}}, y} \notag \\
   & \quad \quad \quad \quad \text{ (by \eqref{defH} and 
   \eqref{bevWV**}} \notag \\
& \quad = H(Z) K_{W, P, y} \text{ (by \eqref{kernelaction'})} \notag \\
& \quad = K(Z,W) \left( 1_{\cA^{n \times n}} \cdot P \right) y \text{ (by 
\eqref{defH} and \eqref{kerinnerprod''})} \notag  \\
& \quad = K(Z,W)(P)y
\label{modelKoldecom}
\end{align}
and it follows that $H$ defined as in \eqref{defH} provides the 
sought-after Kolmogorov decomposition \eqref{Koldecom} for $K$.

 It still remains to check that $H$ is a global function given that 
 each $f$ in $\cH(K)$ is a global function. In general let us use the 
 notation 
 \begin{equation}   \label{notation}
  E^{(n)}_{j} = j\text{-th column of } I_{n}
  \end{equation}
  where $I_{n}$ is the $n \times n$ identity matrix over ${\mathbb C}$.
 For $Z \in \Omega_{n}$ 
 $\widetilde Z \in \Omega_{\widetilde n}$, $f \in \cH(K)^{n}$, and 
 $\widetilde f \in \cH(K)^{\widetilde n}$, we compute
 \begin{align}
  &   H\left( \sbm{ Z & 0 \\ 0 & \widetilde Z} \right) \begin{bmatrix} 
     f \\ \widetilde f \end{bmatrix} =
 \left( \bev_{\sbm{ Z & 0 \\ 0 & \widetilde Z},\, 1_{\cA^{(n + 
 \widetilde n) \times (n + \widetilde n)}}} \right)  \begin{bmatrix} 
     f \\ \widetilde f \end{bmatrix}  \notag \\
     & \quad = \sum_{i=1}^{n} f_{i}\left(\sbm{ Z & 0 \\ 0 & 
     \widetilde Z} \right) \left(E^{(n + \widetilde n)}_{i} \otimes 
     1_{\cA}\right)
 + \sum_{j=1}^{\widetilde n} \widetilde f_{j}\left( \sbm{ Z & 0 \\ 0 
 & \widetilde Z} \right) \left( E^{(n + \widetilde n)}_{n+j} \otimes 
 1_{\cA} \right)  \notag \\
 & \quad  = \sum_{i=1}^{n} \begin{bmatrix} f_{i}(Z) & 0 \\ 0 & 
 f_{i}(\widetilde Z) \end{bmatrix} \left( E^{(n + \widetilde n)}_{i} 
  \otimes 1_{\cA} \right) + \sum_{j=1}^{\widetilde n} \begin{bmatrix} \widetilde 
 f_{j}(Z) & 0 \\ 0 & \widetilde f_{j}(\widetilde Z) \end{bmatrix}  
 \left( E^{(n + \widetilde n)}_{n+j} \otimes 1_{\cA} \right)  \notag \\
 &  \quad = \begin{bmatrix} \sum_{i=1}^{n} f_{i}(Z) (E^{(n)}_{i} 
 \otimes 1_{\cA}) \\
   \sum_{j=1}^{\widetilde n} \widetilde f_{j}(\widetilde Z) 
   (E^{(\widetilde n)}_{j} \otimes 1_{\cA}) \end{bmatrix}  =
   \begin{bmatrix} H(Z) & 0 \\ 0 & H(\widetilde Z) \end{bmatrix}
       \begin{bmatrix} f \\ \widetilde f \end{bmatrix}
\label{compute1}
 \end{align}
 where we used that each $f_{i}$ and $\widetilde f_{j}$ are global 
 functions in the third line of the computation. 
 It follows that $H$ is a global function as wanted.

Finally, we now add the assumption that $\Omega \subset \cV_{\rm nc}$ 
where $\cV$ is a vector space and assume that each $f \in \cH(K)$ is 
a nc function.  The goal is to show that then $H$ is a nc function.
Toward this end, we suppose that $Z \in \Omega_{N}$, $\widetilde Z 
\in \Omega_{M}$ and $\alpha \in {\mathbb C}^{M \times N}$ are such that 
$\alpha Z = \widetilde Z \alpha$, and that $f = \sbm{ f_{1} \\ \vdots \\ f_{N}} 
\in \cH(K)^{N}$.  The following computation verifies the 
desired result:
\begin{align}
  &  \alpha H(Z)(f)  = \alpha \left( \sum_{j=1}^{N} f_{j}(Z) (E^{(N)}_{j} 
    \otimes 1_{\cA}) \right)   
     = \sum_{j=1}^{N} f_{j}(\widetilde Z)(\alpha E^{(N)}_{j} \otimes 
    1_{\cA})  \notag \\
    & \quad  = \sum_{j=1}^{N} f_{j}(\widetilde Z) \left(\sum_{i=1}^{M} \alpha_{ij} 
    E^{(M)}_{i} \otimes 1_{\cA} \right) 
     = \sum_{i=1}^{M} \sum_{j=1}^{N} f_{j}(\widetilde Z)(\alpha_{ij} 
    E^{(M)}_{i} \otimes 1_{\cA}) \notag \\
    &\quad  = \sum_{i=1}^{M} \left( \sum_{j=1}^{N} \alpha_{ij} f_{j}(\widetilde 
    Z) (E^{(M)}_{i} \otimes 1_{\cA}) \right)  = H(\widetilde Z) (\alpha f).
    \label{compute2}
\end{align}
This completes the proof of (2) $\Rightarrow$ (3) in Theorem 
\ref{T:cpker}.
\end{proof}

\begin{proof}[Proof of (3) $\Rightarrow$ (1)]
    We suppose that the kernel $K(Z,W)$ has a Kolmogorov 
    decomposition \eqref{Koldecom}.  
For $P \succeq 0$ in $\cA^{n \times n}$, write $P$ in factored form 
as $P = R^{*}R$ for some $R \in \cA^{n \times n}$.
Then the computation
\begin{align*}
    K(Z,Z)(P) & =   H(Z) ({\rm id}_{{\mathbb C}^{n \times n}} \otimes 
    \sigma)(R^{*}R) H(Z)^{*} \\
 & = H(Z)\left( ({\rm id}_{{\mathbb C}^{n \times n}} \otimes 
 \sigma)(R) \right)^{*} ({\rm id}_{{\mathbb C}^{n \times n}} \otimes 
 \sigma)(R)  H(Z)^{*}
 \end{align*}
shows that $K(Z,Z)(P)$ is positive.

We check that $K$ is a global kernel if $H$ is a global function. 
Indeed, for $Z^{(i)} \in \Omega_{n_{i}}$, $W^{(j)} \in \Omega_{m_{j}}$ and 
$P_{ij} \in \cA^{n_{i} \times m_{j}}$ for $i,j = 1,2$,  we have
\begin{align*}
    & K\left( \sbm{ Z^{(1)} & 0 \\ 0 & Z^{(2)}}, \sbm{ W^{(1)} & 0 \\ 0 & 
    W^{(2)}} \right)\left( \sbm{ P_{11} & P_{12} \\ P_{21} & P_{22}} 
    \right) \\ 
    &  = 
    H\left( \sbm{Z^{(1)} & 0 \\ 0 & Z^{(2)}} \right) \left( {\rm 
    id}_{{\mathbb C}^{(n_{1} + n_{2}) \times (m_{1} \times m_{2})}}
    \otimes \sigma \right) \left( \sbm{ P_{11} & P_{12} \\ P_{21} & P_{22} } 
    \right)  H \left( \sbm{ W^{(1)}& 0 \\ 0 & W^{(2)}} \right)^{*} 
    \\
  &  = \begin{bmatrix} H(Z^{(1)}) & 0 \\ 0 & H(Z^{(2)}) 
\end{bmatrix}  \begin{bmatrix} ({\rm id} \otimes \sigma)(P_{11}) & 
({\rm id} \otimes \sigma)(P_{12}) \\ 
({\rm id}  \otimes \sigma)(P_{21}) & ({\rm id} \otimes \sigma)(P_{22}) 
\end{bmatrix}  \begin{bmatrix} H(W^{(1)})^{*} & 0 \\ 0 & 
H(W^{(2)})^{*} \end{bmatrix} \\
& = \begin{bmatrix} K(Z^{(1)}, W^{(1)})(P_{11}) &
 K(Z^{(1)}, W^{(2)})(P_{12}) \\  K(Z^{(2)}, W^{(1)})(P_{21}) &
  K(Z^{(2)}, W^{(2)})(P_{22}) \end{bmatrix}
\end{align*}
as required.

Finally, we suppose that $\Omega \subset \cV_{\rm nc}$ for a vector 
space $\cV$ and that $H$ is a nc function.  The following calculation 
shows that then $K$ is a nc kernel.  We suppose that we are given
$Z \in \Omega_{n}$, $\widetilde Z \in \Omega_{\widetilde n}$ and $\alpha 
\in {\mathbb C}^{\widetilde n \times n}$ such that $\alpha Z = \widetilde 
Z \alpha$, along with $W \in \Omega_{m}$, $\widetilde W \in 
\Omega_{\widetilde m}$ and $\beta \in {\mathbb C}^{\widetilde m \times 
m}$ with $\beta W = \widetilde W \beta$.  We then use the Kolmogorov 
decomposition to compute
\begin{align*}
    \alpha \, K(Z,W)(P) \, \beta^{*} & = \alpha H(Z) ( {\rm id}_{{\mathbb C}^{n 
    \times m}} \otimes 
    \sigma)(P) H(W)^{*} \beta^{*} \\
& = H(\widetilde Z) \alpha \left( {\rm id}_{{\mathbb C}^{n \times m}} \otimes \sigma)(P) \right) 
\beta^{*} H(\widetilde W)^{*} \\
& = H(\widetilde Z) ( {\rm id}_{{\mathbb C}^{\widetilde n \times 
\widetilde m}} \otimes \sigma)(\alpha P \beta^{*})  H(\widetilde W)^{*} \\
& = K(\widetilde Z, \widetilde W)(\alpha P \beta^{*}).
\end{align*}
This completes the proof of (3) $\Rightarrow$ (1) in Theorem 
\ref{T:cpker}.
\end{proof}

\begin{remark}  \label{R:modelKol}
     The proof of (2) $\Rightarrow$ (3) in Theorem \ref{T:cpker} 
    implies that we may take a canonical form for the nc Kolmogorov 
    decomposition \eqref{Koldecom}, namely:  we may take the state 
    space $\cX$ in \eqref{Koldecom} to be the nc reproducing kernel 
    Hilbert space $\cH(K)$, the representation $\sigma_{\cX}$ to be the 
    canonical representation $\sigma$ \eqref{rep1} on $\cH(K)$, and 
    the nc function $H \colon \Omega \to \cL(\cH(K), \cY)_{\rm nc}$  
    to have the concrete form \eqref{defH}.
\end{remark}

To complement the understanding of global/nc kernels, we present the 
following converse to statement (2) in Theorem \ref{T:cpker}.

\begin{theorem}  \label{T:RKHS}
Suppose that $\cH$ is a Hilbert space whose elements consist of 
global functions $f$ from the nc set $\Omega  \subset \cS_{\rm nc}$ to
$\cL(\cA, \cY)$ (where $\cA$ is a $C^{*}$-algebra and $\cY$ is a 
coefficient Hilbert space) such that
\begin{enumerate}
    \item for each $W \in \Omega_{m}$, the map $f \mapsto f(W)$ is 
    bounded as an operator from $\cH$ to $\cL(\cA, \cY)^{m \times m} \cong 
    \cL(\cA^{m}, \cY^{m})$, and
     \item the mapping $\sigma \colon \cA \to \cL(\cH)$ given by
\begin{equation}   \label{rep2}
    (\sigma(a) f)(W)(u) = f(W)(ua)
\end{equation}
(for $f \in \cH$, $W \in \Omega_{m}$, $u \in \cA^{m}$)
defines a unital $*$-representation of $\cA$.
\end{enumerate}
Then there is a cp global kernel $K$ so that $\cH$ is isometrically 
equal to the global reproducing kernel Hilbert space $\cH(K)$ defined 
as in statement (2) of Theorem \ref{T:cpker}.  Furthermore, if 
$\Omega \subset \cV_{\rm nc}$ for a vector space $\cV$ and the 
elements of $\cH$ are nc functions, then $K$ is a cp nc kernel.
\end{theorem}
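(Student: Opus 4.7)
The strategy is to use $\cH$ itself as the state space of a Kolmogorov-type decomposition, read off the kernel $K$ from that decomposition, and then appeal to Theorem \ref{T:cpker} for both complete positivity and the isometric identification $\cH = \cH(K)$.

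First, by hypothesis (1) the evaluation maps $\bev_{W,u} \colon f \mapsto f(W) u$ are bounded from $\cH$ to $\cY^m$ for each $W \in \Omega_m$ and $u \in \cA^m$, so the Riesz representation theorem produces, for every $W \in \Omega_m$, $v \in \cA^{1 \times m}$, $y \in \cY^m$, a unique element $K_{W,v,y} \in \cH$ with $\langle f, K_{W,v,y}\rangle_{\cH} = \langle f(W) v^*, y\rangle_{\cY^m}$. The short Riesz computation from the proof of (1)$\Rightarrow$(2) of Theorem \ref{T:cpker}, combined with the representation hypothesis (2), yields the compatibility $\sigma(a) K_{W,v,y} = K_{W, av, y}$ for all $a \in \cA$.

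Next, I will define $H(Z) \colon \cH^n \to \cY^n$ by the canonical formula \eqref{defH}, which is bounded by hypothesis (1) and respects direct sums by the computation \eqref{compute1} applied to each global function in $\cH$, and set
\[
K(Z,W)(P) := H(Z)\,\bigl({\rm id}_{\C^{n \times m}} \otimes \sigma\bigr)(P)\,H(W)^*, \qquad P \in \cA^{n \times m}.
\]
By the implication (3)$\Rightarrow$(1) of Theorem \ref{T:cpker}, $K$ is automatically a cp global kernel.

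The main check is then that the Riesz elements $K_{W,v,y}$ coincide with the formula-\eqref{kerel} kernel elements built from $K$, i.e., that $K_{W,v,y}(Z) u = K(Z,W)(uv) y$ for all $Z \in \Omega_n$ and $u \in \cA^n$. To see this one expands the right-hand side via the Kolmogorov formula, uses the higher-rank Riesz identity \eqref{bevWV**} to rewrite $H(W)^* y = K_{W, 1_{\cA^{m \times m}}, y}$, absorbs $({\rm id} \otimes \sigma)(uv)$ into the kernel arguments by $\sigma(a) K_{W,v',y} = K_{W, a v', y}$, and then pairs against an arbitrary $y' \in \cY^n$; the reproducing property and complex linearity of $v' \mapsto K_{Z,v',y'}$ reduce the resulting inner product to $\langle K_{W,v,y}(Z) u, y'\rangle$. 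Density of the kernel elements in $\cH$ is automatic, since any $f \in \cH$ orthogonal to all $K_{W,v,y}$ satisfies $f(W) = 0$ for every $W \in \Omega$ and hence is the zero element of the function space. As the kernel elements also span a dense subspace of $\cH(K)$ by construction and their pairwise inner products agree in the two spaces by reproducing, $\cH$ and $\cH(K)$ coincide isometrically.

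Finally, for the nc case, if $\Omega \subset \cV_{\rm nc}$ and every element of $\cH$ is an nc function, then computation \eqref{compute2} adapts to show that $H$ is also an nc function, and the nc form of (3)$\Rightarrow$(1) in Theorem \ref{T:cpker} then gives that $K$ is a cp nc kernel. The main obstacle is the matching step above: it requires threading the Riesz representation, the $*$-representation $\sigma$, and the Kolmogorov formula together at the ``rank-one'' inputs $P = uv$, but once the commutation $\sigma(a) K_{W,v,y} = K_{W,av,y}$ is in hand, the rest is essentially bookkeeping.
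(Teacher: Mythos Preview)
Your proof is correct but follows a genuinely different route from the paper's. The paper builds $K$ directly from the Riesz kernel elements: it first shows, using the $*$-representation hypothesis, that $\langle K_{W,v,y}, K_{W',v',y'}\rangle_{\cH}$ depends on $(v,v')$ only through $v'^{*}v$, then defines $K(W',W)(V'^{*}V) = (\bev_{W',V'^{*}})(\bev_{W,V^{*}})^{*}$ and checks by hand that this is linear in its third argument, Hermitian, graded, respects direct sums, and is completely positive; the ``respects intertwinings'' property is verified separately from the nc-function hypothesis on elements of $\cH$. Your approach bypasses all of these direct verifications by writing down $H(Z) = \bev_{Z,1_{\cA^{n\times n}}}$ and $K(Z,W)(P) = H(Z)({\rm id}\otimes\sigma)(P)H(W)^{*}$ at the outset and invoking (3)$\Rightarrow$(1) of Theorem~\ref{T:cpker}; this is essentially the lifted-norm construction of Theorem~\ref{T:lifted} and Remark~\ref{R:3to2} applied with $\cH$ as its own state space (and the identification $f_{h}=h$).

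What each buys: your argument is cleaner and shorter, since the Kolmogorov form hands you the cp global/nc kernel properties for free and reduces the work to the single ``matching'' computation $K_{W,v,y}(Z)u = K(Z,W)(uv)y$. The paper's argument, while more laborious, makes the explicit evaluation-operator formula $K(W',W)(V'^{*}V) = (\bev_{W',V'^{*}})(\bev_{W,V^{*}})^{*}$ transparent, which is reused later (for instance in Theorems~\ref{T:Bergmann} and~\ref{T:AlpayDym}); it also makes more visible exactly where the $*$-property of $\sigma$ enters (namely in forcing the $(v,v')$-dependence to factor through $v'^{*}v$), whereas in your approach that role is somewhat hidden inside the identity $({\rm id}\otimes\sigma)(P)^{*}=({\rm id}\otimes\sigma)(P^{*})$.
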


\begin{remark} \label{R:RKHS}  Part of the assumption in condition 
    (2) in Theorem \ref{T:RKHS} is that the space $\cH$ is invariant 
    under the $\cA$ action $f \mapsto \sigma(a) f$ given by formula 
    \eqref{rep2}.  If one assumes only this invariance condition, one 
    gets as a consequence of the closed graph theorem that each 
    $\sigma(a)$ is a bounded linear operator on $\cH$.  It is also 
    immediate that $\sigma$ is multiplicative:  $\sigma(a_{1} a_{2}) 
    = \sigma(a_{1}) \sigma(a_{2})$.  However in general it need not 
    be the case that $\sigma$ preserve adjoints ($\sigma(a)^{*} = 
    \sigma(a^{*})$).  Part of the content of the hypothesis in
    statement (2) in Theorem \ref{T:RKHS} is that this is the case. 
    \end{remark}
    
    \begin{proof}[Proof of Theorem \ref{T:RKHS}]
Choose $W \in \Omega_{m}$, $v \in \cA^{1 \times m}$, and $y \in \cY^{m}$.  Then 
$f(W) \in \cL(\cA, \cY)^{m \times m} \cong \cL(\cA^{m}, \cY^{m})$ so
$f(W)(v^{*}) \in \cY^{m}$.  As $f \to f(W)$ is bounded as a linear 
operator from $\cH$ to $\cL(\cA^{m}, \cY^{m})$, it follows that
$f \mapsto \langle f(W) v^{*}, y \rangle_{\cH}$ is a bounded linear 
functional on $\cH$.  By the Riesz-Frechet theorem there therefore is 
a $K_{W,v,y} \in \cH$ so that
\begin{equation}   \label{reprod3}
    \langle f(W)v^{*}, y \rangle_{\cY^{m}} = \langle f, K_{W,v,y} 
    \rangle_{\cH}.
\end{equation}
We call these special functions $K_{W,v,y}$ in $\cH$ \textbf{kernel 
elements}.  It is now a simple matter to compute the action of 
$\sigma(a)$ on a kernel element (for any $a \in \cA$):
\begin{align*}
    \langle \sigma(a) f, K_{W, v, y} \rangle_{\cH} & = 
    \langle (\sigma(a) f)(W)(v^{*}), y \rangle_{\cY^{m}}  = 
 \langle f(W)(v^{*} a), y \rangle_{\cY^{m}} \\
 & = \langle f,\, K_{W, a^{*} v, y} \rangle_{\cH}
 \end{align*}
 from which we conclude that
\begin{equation}  \label{*rep}
\sigma(a)^{*} \colon K_{W,v,y} \mapsto K_{W, a^{*}v, y}.
\end{equation}
By hypothesis, $\sigma(a)^{*} = \sigma(a^{*})$.  By replacing $a$ by 
$a^{*}$ we see that
\begin{equation}  \label{ker-rep}
  \sigma(a) \colon K_{W,v,y} \mapsto K_{W, av, y}.
\end{equation}

For $V = \sbm{ v_{1} \\ \vdots \\ v_{N}} \in \cA^{N \times m}$, $W \in 
\Omega_{m}$ and $y \in \cY^{m}$, we define $K_{W,V,y} \in \cH^{N} = 
\bigoplus_{j=1}^{N} \cH$ by
\begin{equation}   \label{ker-enlarged}
  K_{W,V,y} = \begin{bmatrix}  K_{W,v_{1},y} \\ \vdots \\ K_{W, 
  v_{N},y} \end{bmatrix}.
\end{equation}
We extend the representation $\sigma$ to elements of $\cA^{M \times 
N}$ in the entrywise way:
$$
  ({\rm id}_{{\mathbb C}^{M \times 
  N}} \otimes \sigma)(P) : = [ \sigma(P_{ij})] \in \cL(\cH)^{M \times 
  N}.
$$
Then the formula extends to the matricial form
\begin{equation}   \label{ker-matrep}
    \sigma(U) K_{W,V,y} = K_{W,\, U V, \, y} \in \cH^{N'}
\end{equation}
for $W \in \Omega_{m}$, $V \in \cA^{N \times m}$, $y \in \cY^{m}$, $U 
\in \cA^{N' \times N}$.
Furthermore, non-square versions of the unital $*$-representation properties are 
preserved:
\begin{align*}
   & ({\rm id}_{{\mathbb C}^{M \times N}} \otimes \sigma)(P)^{*}  = 
  ( {\rm id}_{{\mathbb C}^{N \times M}} \otimes 
    \sigma)(P^{*}),\\
 & ({\rm id}_{{\mathbb C}^{M \times N}} \otimes \sigma)(P_{1}P_{2}) = 
 ({\rm id}_{{\mathbb C}^{M \times K}} \otimes \sigma)(P_{1}) ({\rm 
 id}_{{\mathbb C}^{K \times N}} \otimes \sigma)(P_{2}) \\
 & \quad 
\text{ if } P_{1} \in \cA^{M \times K},\, P_{2} \in \cA^{K \times N}, \\
& ({\rm id}_{{\mathbb C}^{N \times N}} \otimes \sigma) (I_{N} \otimes 1_{\cA}) = I_{\cH^{N}}.
\end{align*}

For 
$$ W \in \Omega_{m}, \quad  v \in \cA^{1 \times m}, \quad y \in 
\cY^{m},  \quad
W' \in \Omega_{m'}, \quad  v \in \cA^{1 \times m'}, \quad y \in 
\cY^{m'},
$$
we can now compute the inner product of the associated kernel 
elements of $\cH$ by using these matricial versions of the 
representation $\sigma$ as follows:
\begin{align}
    \langle K_{W,v,y}, K_{W', v', y'} \rangle_{\cH} & =
    \langle \sigma(v) K_{W, I_{m} \otimes 1_{\cA}, y}, \,
       \sigma(v') K_{W', I_{m'} \otimes 1_{\cA}, y'} \rangle_{\cH} 
       \notag \\
 & = \langle \sigma(v')^{*} \sigma(v) K_{W, I_{m} \otimes 1_{\cA}, y}, \,
  K_{W', I_{m'} \otimes 1_{\cA}, y'} \rangle_{\cH^{m'}} \notag  \\
  & =  \langle \sigma(v^{\prime *} v) K_{W, I_{m} \otimes 1_{\cA}, y}, \,
  K_{W', I_{m'} \otimes 1_{\cA}, y'} \rangle_{\cH^{m'}}.
  \label{vprime*v}
\end{align}
We conclude that the result depends on $v$ and $v'$ only through the 
product $v^{\prime *} v$.

For $W \in \Omega_{m}$ and $u \in \cA^{m}$, let us define 
\textbf{directional point-evaluation operator}  by the same formula 
\eqref{evWu} as used for the case where $\cH = \cH(K)$, namely 
 define $\boldsymbol{\rm ev}_{W,u} \in \cL(\cH, \cY^{m})$ by 
\begin{equation}   \label{LWv}
   \bev_{W,u} \colon f \mapsto f(W)(u).
\end{equation}
From the formula \eqref{reprod3} we see that  
$K_{W,v,y} = (\bev_{W,v^{*}})^{*} y$ and hence
\begin{align*}
&  \langle K_{W,v,y}, K_{W', v',y'} \rangle_{\cH}: = \langle  
  (\bev_{W,v^{*}})^{*} y,  \, (\bev_{W', v^{\prime *}})^{*} y' 
  \rangle_{\cH}  \\
  & \quad = \langle 
  (\bev_{W',v^{\prime *}})(\bev_{W,v^{*}})^{*}y, y' \rangle_{\cY^{m'}}.
\end{align*}
	
As we have already observed from \eqref{vprime*v} that the dependence of the 
inner product $\langle K_{W,v,y}, K_{W',v',y'} \rangle_{\cH}$ on $v, v'$ is only 
through the product $v^{\prime *} v \in \cA^{m' \times m}$, we deduce that there is an 
$\cL(\cY^{m}, \cY^{m'})$-valued function $K^{\circ}$ of three arguments 
$(W',W, v^{\prime*}v)$ defined by
\begin{equation} \label{graded1}
 K^{\circ}(W',W, v^{\prime *}v) =  (\bev_{W',v^{\prime 
 *}})(\bev_{W,v^{*}})^{*} \in 
 \cL(\cY^{m}, \cY^{m'}) \cong \cL(\cY)^{m' \times m}.
\end{equation}
For a given $W' \in \Omega_{m'}$ and $W \in \Omega_{m}$, the function 
$K^{\circ}(W',W, P)$ is defined only for $P \in \cA^{m' \times m}$ 
having a column-row vector factorization $P = v^{\prime *} v$ ($v' \in 
\cA^{1 \times m'}$ and $v \in \cA^{1 \times m}$). 
We 
extend $K(W',W, \cdot)$ so as to be defined on all of $\cA^{m' \times 
m}$ by making use of the  higher-rank kernel elements $K_{W,V,y}$ 
with $V \in \cA^{N \times m}$ \eqref{ker-enlarged}.  
Given two such kernel elements $K_{W,V,y}$ and $K_{W',V',y'}$ (where 
$V \in \cA^{N \times m}$ and $V' \in \cA^{N \times m'}$), the 
$\cH^{N}$-inner product works out to be
\begin{align}
&\langle K_{W,V,y},\, K_{W',V',y'} \rangle_{\cH^{N}}  =
\sum_{i=1}^{N} \langle K_{W,v_{i},y},\, K_{W', v'_{i},y'} 
\rangle_{\cH} \notag \\
& \quad = \sum_{i=1}^{N} \langle K^{\circ}(W',W, v^{\prime *}_{i} 
v_{i})y, y'\rangle_{\cY^{m'}} =: \langle K^{\circ}(W', W, V^{\prime 
*}V) y, y' \rangle_{\cY^{m'}}.
\label{Vprime*V}
\end{align}
We conclude that the inner product $\langle K_{W,V,y}, 
K_{W',V',y'} \rangle_{\cH^{N}}$ has $(V,V')$ dependence only as a function 
of the matrix product $V^{\prime *} V$. We then use linearity to extend the function 
$K^{\circ}$ (still denoted as $K^{\circ}$) 
defined above to allow the third argument to have any 
factorization $V^{\prime *} V$ with $V \in \cA^{N \times m}$ and $V' 
\in \cA^{N \times m'}$ for some common $N \in {\mathbb N}$
if $W \in \Omega_{m}$ and $W' \in \Omega_{m'}$ with the result that 
the identity \eqref{Vprime*V} holds.

We wish to check next that $K^{\circ}$ is linear in its third argument $P = 
V^{\prime *} V \in \cA^{m' \times m}$.
Toward this end, fix $W \in 
\Omega_{m}$ and $W' \in \Omega_{m'}$ and suppose that we have 
$\cA$-matrices
$$
V_{1} \in \cA^{N_{1} \times m}, \quad
V_{1}' \in \cA^{N_{1} \times m'}, \quad 
V_{2} \in \cA^{N_{2} \times  m}, \quad
V_{2}' \in \cA^{N_{2} \times m'}.
$$
We then compute, for $y \in \cY^{m}$ and $y' \in \cY^{m'}$,
\begin{align*}
   & \langle K(W',W, V_{1}^{\prime *} V_{1} + V_{2}^{\prime *} V_{2}) 
    y, y' \rangle_{\cY^{m'}}  =
  \left\langle K\left(W', W, \left( \sbm{ V_{1}' 
 \\ V_{2}' }\right)^{*} \sbm{V_{1} \\ V_{2} } \right) y, y' 
 \right\rangle_{\cY^{m'}}  \\
 & \quad = \left\langle K_{W, \sbm{ V_{1} \\ V_{2}}, y}, \,
 K_{W', \sbm{ V_{1}' \\ V_{2}'}, y'} \right\rangle_{\cH^{N_{1} + 
 N_{2}}} \\
 & \quad = \langle K_{W, V_{1},y}, \, K_{W',V_{1}',y'} 
 \rangle_{\cH^{N_{1}}} + \langle K_{W,V_{2},y}, \, K_{W', V_{2}', y'} 
 \rangle_{\cH^{N_{2}}}  \\
 & \quad = \langle \left( K(W',W, V_{1}^{\prime *} V_{1}) + 
 K(W', W,  V_{2}^{\prime *} V_{2}) \right) y, y' \rangle_{\cY^{m'}}.
\end{align*}
thereby proving the additivity property
$$
  K(W',W, P_{1} + P_{2}) = K(W',W, 
  P_{1}) + K(W',W, P_{2}) \text{ for }
  P_{1}, P_{2} \in \cA^{m' \times m}.
$$
We therefore write
\begin{equation}   \label{graded}
K(W',W)(P) = K^{\circ}(W',W,P) \text{ where } K(W',W) \in \cL(\cA^{m' \times 
m}, \cL(\cY)^{m' \times m})
\end{equation}
in case $W' \in \Omega_{m'}$, $W \in \Omega_{m}$, the generalization 
of \eqref{graded1} to the case where $P$ has higher rank over $\cA$.
One can check that $K(W',W)$ is indeed a bounded operator by using
the identity \eqref{Vprime*V} and
the fact that the map $f \in \cH$ to $f(W) \in \cL(\cA^{m}, \cY^{m})$ is bounded 
(for $W \in \Omega_{m}$). 
We now rewrite the formula \eqref{Vprime*V} as
\begin{equation}  \label{Vprime*V'}
    \langle K_{W,V,y}, K_{W',V',y'} \rangle_{\cH^{N}} = 
    \langle K(W',W)(V^{\prime *}V) y, y' \rangle_{\cY^{m'}}
\end{equation}
for $W \in \Omega_{m}$, $V \in \cA^{N \times m}$, $y \in \cY^{m}$, 
$W' \in \Omega_{m'}$, $V' \in \cA^{N \times m'}$, $y' \in \cY^{m'}$.  
If we introduce the higher-rank version of the directional 
point-evaluation operator $\bev_{W,u}$ 
defined in \eqref{LWv}, namely, the operator $\bev_{W,U} \in 
\cL(\cH^{N}, \cY^{m})$ given by
\begin{equation}   \label{LWV}
    \bev_{W,U} = \begin{bmatrix} \bev_{W,u_{1}} & \cdots & 
    \bev_{W,u_{N}} \end{bmatrix} \colon f \mapsto \sum_{i=1}^{N} 
    f_{i}(W)(u_{i}) \text{ if } U = \begin{bmatrix} u_{1} & \cdots 
    & u_{n} \end{bmatrix}
\end{equation}
then we have a succinct formula for the kernel $K$:
\begin{equation}   \label{graded2}
    K(W',W)(V^{\prime *} V) = ( \bev_{W',V^{\prime *}}) (\bev_{W,V^{*}})^{*}.
\end{equation}
From this formula we immediately see that $K$ is Hermitian, i.e.
\begin{equation}   \label{Hermitian}
    K(Z,W)(P)^{*} = K(W,Z)(P^{*}).
\end{equation}

To verify that $K$ is a cp kernel, simply note that
$$
  \langle K(W,W)(V^{*}V) y, y \rangle_{\cY^{m}}
  = \| K_{W,V,y} \|^{2}_{\cH^{N}} \ge 0
$$
for $W \in \Omega_{m}$, $V \in \cA^{N \times m}$, $y \in \cY^{m}$.
It is also possible to verify the expanded cp condition 
\eqref{cpker-expanded} by considering the norm-squared of a linear 
combination of kernel elements in $\cH^{N}$, but, as explained in 
Proposition \ref{P:cpkernel}, this follows automatically once we 
verify that $K$ is a global kernel.

The graded property of  $K$ (property \eqref{kergraded}) was already 
noted above (see \eqref{graded}).  Therefore to check that $K$ is a 
global kernel, it remains only to check  the ``respects direct sums'' condition 
\eqref{kerds}.   Since we have already noted that $K$ is Hermitian 
(see \eqref{Hermitian}), it suffices to check that $K$ respects 
direct sums in the first argument, i.e.:
\begin{equation}  \label{toshow1}
    K\left( \sbm{ Z & 0 \\ 0 & \widetilde Z }, W \right)\left( \sbm{ P 
     \\ \widetilde P} \right) =
    \begin{bmatrix} K(Z,W)(P) \\ K(\widetilde Z, W) (\widetilde P)  
	\end{bmatrix}.
\end{equation}
for $Z \in \Omega_{n}$, $\widetilde Z \in \Omega_{\widetilde n}$, $W 
\in \Omega_{m}$, $P \in \cA^{n \times m}$, $\widetilde P \in 
\cA^{\widetilde n \times m}$.

Toward this end, we choose any factorization
$$
   \begin{bmatrix} P \\ \widetilde P \end{bmatrix} = \begin{bmatrix} 
   U^{*} \\ \widetilde U^{*} \end{bmatrix} V
$$
with $U \in \cA^{N \times n}$, $\widetilde U \in \cA^{N \times 
\widetilde n}$, $V \in \cA^{N \times m}$.  Making use of the formula 
\eqref{graded2}, we see that the  desired identity \eqref{toshow1} 
comes down to
$$
    \left( \bev_{\sbm{Z & 0 \\ 0 & \widetilde Z}, \sbm{ U^{*} \\ 
    \widetilde U^{*}}} \right) \left( \bev_{W,V^{*}} \right)^{*}
= \begin{bmatrix} \left( \bev_{Z,U} \right) \left( 
\bev_{W,V^{*}}\right)^{*} \\ ( \bev_{\widetilde Z, \widetilde U^{*}}) 
( \bev_{W, V^{*}} )^{*} \end{bmatrix}
= \begin{bmatrix}  \bev_{Z,U^{*}} \\ \bev_{\widetilde Z, \widetilde 
U^{*}} \end{bmatrix} (\bev_{W,V^{*}})^{*}
$$
Canceling off the common right factor leaves us with
\begin{equation}   \label{toshow2}
\bev_{\sbm{ Z & 0 \\ 0 & \widetilde Z}, \sbm{ U^{*} \\ \widetilde U^{*}}} =
\begin{bmatrix} \bev_{Z,U^{*}} \\ \bev_{\widetilde Z, \widetilde U^{*}} 
\end{bmatrix}.
\end{equation}
To verify \eqref{toshow2}, it suffices to show that it holds when 
applied to a generic element $f$ of $\cH^{N}$, namely, we wish to 
verify
\begin{equation}   \label{toshow3}
\bev_{\sbm{ Z & 0 \\ 0 & \widetilde Z}, \sbm{ U^{*}\\ \widetilde U^{*}}} f = 
\begin{bmatrix} \bev_{Z,U^{*}} \\ \bev_{\widetilde Z, \widetilde U^{*}} \end{bmatrix}f
\end{equation} 
for all $f \in \cH^{N}$.  Let us write out the rows of $U^{*}$ and of 
$\widetilde U^{*}$ as $u_{1}^{*}, \dots, u_{n}^{*}$ and $\widetilde 
u_{1}^{*}, \dots, \widetilde u_{\widetilde n}^{*}$ os that
$$
   U^{*} = \sbm{ u_{1}^{*} \\ \vdots \\ u_{n}^{*}}, \quad
\widetilde U = \sbm{\widetilde u_{1}^{*} \\ \vdots \\ \widetilde u_{\widetilde n}^{*}}.
$$
Then verification of \eqref{toshow3} amounts to a mild generalization 
of the  computation \eqref{compute1} where $1_{\cA^{(n + \widetilde n) \times (n + 
\widetilde n)}}$ is replaced by $\sbm{ U^{*} \\ \widetilde U^{*}}$ as 
well as $E_{i}^{(n)} \otimes 1_{\cA}$ replaced by $u_{i}^{*}$ and $E_{j}^{(\widetilde 
n)} \otimes 1_{\cA}$ replaced by $\widetilde u_{j}^{*}$.

Let us now assume that each element $f$ of $\cH$ respects 
intertwinings \eqref{funcintertwine}.  We claim that the kernel $K$ respects 
intertwinings in the first argument, i.e.,
\begin{align}
 &   Z \in \Omega_{n},\, \widetilde Z \in \Omega_{\widetilde n}, \, \alpha 
    \in {\mathbb C}^{\widetilde n \times n} \text{ such that } \alpha Z = 
    \widetilde Z \alpha, \, W \in \Omega_{m}, \, P \in \cA^{n \times m} 
    \notag \\
    & \quad 
    \Rightarrow \alpha \, K(Z,W)(P)  = K(\widetilde Z, W)(\alpha P).
    \label{toshow-a}
\end{align}
From the formula \eqref{graded2}, we see that the conclusion of 
\eqref{toshow-a}
is equivalent to 
$$
  \alpha \left( \bev_{Z, V'^{*}}\right) \left( \bev_{W, V^{*}} \right)^{*} 
  = \left( \bev_{\widetilde Z, \alpha V'^{*}} \right) \left( \bev_{W, 
  V^{*}}\right)^{*}.
$$
Canceling off the common right factor converts this to
$$  \alpha \left( \bev_{Z,V'^{*}} \right) = \bev_{\widetilde Z, \alpha 
V^{\prime *}},
$$
i.e., 
\begin{equation}  \label{toshow-b}
    \alpha \left( \bev_{Z, V^{\prime *}} \right) f = \left( 
    \bev_{\widetilde Z, \alpha V^{\prime *}} \right) f
\end{equation}
for a general $f = \sbm{f_{1} \\ \vdots \\ f_{N}} \in \cH^{N}$ ($N$ equal to the number of rows in 
$V^{\prime *}$).  From the definition \eqref{LWV} of $\bev_{Z, 
V^{\prime *}}$, we see that \eqref{toshow-b} amounts to
\begin{equation}  \label{toshow-c}
    \alpha \, \sum_{i=1}^{N} f_{i}(Z) v_{i}^{\prime *} = \sum_{i=1}^{N} 
    f_{i}(\widetilde Z) \alpha v_{i}^{\prime *}.
\end{equation}
We now recall the hypothesis in \eqref{toshow-a}, namely, that $Z, \, 
\widetilde Z$ are in $\Omega$ with $\alpha Z =\widetilde Z \alpha$. As 
each $f_{i} \in \cH$ as a nc function, each $f_{i}$ in particular 
respects intertwinings \eqref{funcintertwine}. Hence
$$
\alpha \sum_{i=1}^{N} f_{i}(Z) v_{i}^{\prime *} = \sum_{i=1}^{N} \left( \alpha 
f_{i}(Z) \right)v_{i}^{\prime *} = \sum_{i=1}^{N} \left( f_{i}(\widetilde Z) \alpha \right)v_{i}^{\prime *} =
\sum_{i=1}^{N} f_{i}(\widetilde Z) (\alpha v_{i}^{\prime *} )
$$
and \eqref{toshow-c} (and then also \eqref{toshow-a}) follows as 
claimed. 
The Hermitian property \eqref{Hermitian} of $K$ then implies that $K$ 
has the full kernel ``respects intertwining'' property 
\eqref{kerHerintertwine}, and hence $K$ is a nc kernel.

From the formulas \eqref{Vprime*V'}, \eqref{reprod3}, \eqref{*rep}, 
we see that $K$ meets all the conditions in part (2) of Theorem 
\ref{T:cpker} to serve as the reproducing kernel for the functional 
Hilbert space $\cH$, so $\cH = \cH(K)$ identically and isometrically.
\end{proof}

Theorem \ref{T:RKHS} gives the existence of a reproducing kernel for 
a space satisfying the hypotheses of the theorem but does not give 
much information on how to actually compute $K$.  The next two 
results fill in this gap.

In the next result we use the following convention.  If ${\mathbf y}$ 
is a vector in $\cY^{m}$, we can view ${\mathbf y}$ as an operator 
from ${\mathbb C}$ to $\cY^{m}$ (the {\em column} operator space 
structure for the Hilbert space $\cY$).  The adjoint operator 
${\mathbf y}^{*} \colon \cY^{m} \to {\mathbb C}$ is then given by 
\begin{equation}   \label{conv}
{\mathbf y}^{*} \colon y \mapsto 
\langle y, {\mathbf y} \rangle_{\cY^{m}}.
\end{equation}
We apply this notion in 
particular to the case where ${\mathbf y} = f_{i}(W)(v_{j}^{*})$ where 
$f_{i}$ is a global/nc function on $\Omega$, $W \in \Omega_{m}$, 
and $v_{j}^{*}$ is a vector in $\cA^{m}$ (so $f_{i}(W) \in \cL(\cA^{m}, \cY^{m})$ 
and $f_{i}(W)(v_{j}^{*}) \in \cY^{m}$).  Finally if $V = \sbm{ 
v_{1} \\ \vdots \\ v_{N}}$ is an $N \times m$ matrix over $\cA$, we 
let $f_{i}(W)(V^{*})$ denote the block row matrix
$$
  f_{i}(W)(V^{*}) = \begin{bmatrix} f_{i}(W)(v^{*}_{1}) & 
  \cdots & f_{i}(W)(v^{*}_{N})\end{bmatrix} \in \cY^{m \times 
  N}.
$$

\begin{theorem}  \label{T:Bergmann}
    Suppose that $\cH$ is a Hilbert space of global/nc functions from 
    $\Omega$ to $\cL(\cA, \cY)_{\rm nc}$ equipped with a unital 
    $*$-representation $\sigma$ \eqref{rep2} as in Theorem \ref{T:RKHS}.
    Let $\{f_{i}\}_{i \in \cI}$ be an orthonormal basis for $\cH$.
    Then the kernel function $K(Z,W)(P)$ for $\cH$ is given by
 \begin{equation}  \label{Bergman-ker}
     K(W',W)(V^{\prime *} V) = \sum_{i \in \cI} \left(f_{i}(W') 
     (V^{\prime *}) \right) \left(f_{i}(W)(V^{*}) \right)^{*} \in 
     \cL(\cY^{m}, \cY^{m'})
  \end{equation}
  for $W' \in \Omega_{m'}$,  $W \in \Omega_{m}$, $V' \in \cA^{N \times 
  m' }$, $V \in \cA^{N \times m}$, with the series converging in the 
  weak operator topology. 
 \end{theorem}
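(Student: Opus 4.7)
The plan is to invoke Theorem \ref{T:RKHS} to identify $\cH$ with its reproducing kernel Hilbert space $\cH(K)$ and then apply Parseval's identity to the orthonormal basis $\{f_i\}_{i \in \cI}$ in order to expand the kernel elements. First I would fix $W \in \Omega_m$, $W' \in \Omega_{m'}$, $V \in \cA^{N \times m}$, $V' \in \cA^{N \times m'}$, $y \in \cY^m$, and $y' \in \cY^{m'}$, and write out $\langle K(W',W)(V^{\prime *}V) y, y'\rangle_{\cY^{m'}}$ using formula \eqref{Vprime*V'} from the proof of Theorem \ref{T:RKHS}, obtaining $\langle K_{W,V,y}, K_{W',V',y'}\rangle_{\cH^N}$. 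Decomposing this into components along the direct sum $\cH^N$ and using the orthonormal basis $\{f_i\}$ of each summand gives, by Parseval,
\begin{equation*}
\langle K_{W,V,y}, K_{W',V',y'}\rangle_{\cH^N} = \sum_{i \in \cI} \sum_{j=1}^{N} \langle K_{W,v_j,y}, f_i\rangle_{\cH} \, \langle f_i, K_{W',v'_j,y'}\rangle_{\cH},
\end{equation*}
where $v_1,\dots,v_N$ and $v'_1,\dots,v'_N$ denote the rows of $V$ and $V'$ respectively.

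Next I would apply the reproducing property \eqref{reprod} (valid for $\cH$ in the role of $\cH(K)$) to rewrite each inner product as a pairing in $\cY^m$ or $\cY^{m'}$, namely $\langle f_i, K_{W,v_j,y}\rangle_\cH = \langle f_i(W)(v_j^*), y\rangle_{\cY^m}$ and similarly for the primed version. Under the convention $\mathbf{y}^{*}z = \langle z, \mathbf{y}\rangle_{\cY^m}$ stated before the theorem, the $i$th summand in the Parseval expansion equals
\begin{equation*}
\sum_{j=1}^{N} \overline{\langle f_i(W)(v_j^*), y\rangle}\,\langle f_i(W')(v_j^{\prime *}), y'\rangle = \bigl\langle f_i(W')(V^{\prime *})\bigl(f_i(W)(V^*)\bigr)^{*} y,\, y'\bigr\rangle_{\cY^{m'}},
\end{equation*}
since $f_i(W)(V^{*})$ is the block row $[f_i(W)(v_1^*)\ \cdots\ f_i(W)(v_N^*)]\in \cY^{m\times N}$ viewed as an operator from $\mathbb{C}^N$ to $\cY^m$, and similarly for $f_i(W')(V^{\prime *})\in \cL(\mathbb{C}^N,\cY^{m'})$.

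Combining these two steps yields
\begin{equation*}
\bigl\langle K(W',W)(V^{\prime *}V) y,\, y'\bigr\rangle_{\cY^{m'}} = \sum_{i\in\cI} \bigl\langle f_i(W')(V^{\prime *})\bigl(f_i(W)(V^*)\bigr)^{*} y,\, y'\bigr\rangle_{\cY^{m'}}
\end{equation*}
for every $y\in\cY^m$ and $y'\in\cY^{m'}$, which is precisely the assertion that the series \eqref{Bergman-ker} converges in the weak operator topology to $K(W',W)(V^{\prime *}V)$. The convergence is a direct consequence of Parseval's identity (the partial sums converge in $\cH^N$, hence the scalar inner products converge), so no extra analytic work is needed.

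I do not anticipate any serious obstacle; the only mildly delicate point is the bookkeeping around the convention \eqref{conv} and the passage between the componentwise description of $\cH^N$ and the block-matrix notation $f_i(W)(V^*)$, together with verifying that a single $N$ suffices (which it does, since the partial sums on the right all land in $\cL(\cY^m,\cY^{m'})$ and convergence is only claimed in WOT). Every other ingredient is already supplied by the earlier results: the reproducing formula \eqref{reprod}, the identification $\cH = \cH(K)$ from Theorem \ref{T:RKHS}, and the inner-product identity \eqref{Vprime*V'}.
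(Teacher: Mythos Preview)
Your proposal is correct and follows essentially the same route as the paper: both arguments start from the inner-product identity \eqref{Vprime*V'} (equivalently \eqref{KerInProd}), apply Parseval's identity with respect to the orthonormal basis $\{f_i \otimes E_j^{(N)}\}$ of $\cH(K)^N$, and then use the reproducing property \eqref{reprod} together with the convention \eqref{conv} to identify each summand as the required rank-one operator. The only cosmetic difference is that you phrase the Parseval step as ``decomposing into components along the direct sum'' rather than explicitly naming the basis vectors $f_i \otimes E_j^{(N)}$.
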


 \begin{proof}  As we have seen in \eqref{Vprime*V}, for $y \in 
     \cY^{m}$, $W \in \Omega_{m}$, $V \in \cA^{N \times m}$, $y' \in \cY^{m'}$, $W' \in 
     \Omega_{m'}$, and $V' \in \cA^{N \times m'}$ we have
  \begin{equation}   \label{KerInProd}
  \langle K(W',W)(V^{\prime *} V) y, y' \rangle_{\cY^{m'}} =
  \langle K_{W,V,y},\, K_{W',V',y'} \rangle_{\cH(K)^{N}}
  \end{equation}
  where $K_{W,V,y}$ is the element of $\cH(K)^{N}$ with the 
  reproducing property
  \begin{equation}        \label{reprod5}
  \left\langle \sbm{h_{1} \\ \vdots \\ h_{n}}, \, K_{W,V,y} 
  \right\rangle_{\cH(K)^{N}} = \sum_{j=1}^{N} \langle h_{j}(W) v_{j}^{*}, 
  y\rangle_{\cY^{m}}.
  \end{equation}  
  if we write out $V \in \cA^{N \times m}$ in terms of its rows as
  $V = \sbm{v_{1} \\ \vdots \\ v_{N}}$. Recalling the notation 
  \eqref{notation}, we see that  
  $\{f_{i} \otimes E^{(N)}_{j} \colon i \in \cI, 1 \le j \le N\}$ is an orthonormal basis for 
  $\cH(K)^{N}$.   By the Parseval equality,
 \begin{align*}
   &  \langle K_{W,V,y}, \, K_{W',V',y'} \rangle_{\cH(K)^{N}} \\
     & \quad  = \sum_{i \in \cI} \sum_{j=1}^{N} 
  \langle f_{i}\otimes E^{(N)}_{j}, K_{W',V',y}\rangle \cdot
  \langle K_{W,V,y}, f_{i}\otimes E^{(N)}_{j} \rangle.
  \end{align*}
  By the reproducing property \eqref{reprod5},
  \begin{align*}
      \langle f_{i} \otimes E^{(N)}_{j}, K_{W',V',y'} 
      \rangle_{\cH(K)^{N}} & = 
   \langle f_{i}, K_{W',v'_{j},y'} \rangle_{\cH(K)} \\
   & = \langle f_{i}(W') v^{\prime *}_{j}, y' \rangle_{\cY^{m'}}.
  \end{align*}
  Similarly
  $$
  \langle K_{W,V,y}, f_{i} \otimes E^{(N)}_{j} \rangle_{\cH(K)^{N}} =
  \langle y, f_{i}(W) v_{j}^{*} \rangle_{\cY^{m}}.
 $$
 Hence, making use of the convention \eqref{conv} applied to the 
 vectors $f_{i}(W) v_{j}^{*}$, we have 
 \begin{align*}
    & \langle K_{W,V,y}, K_{W',V',y'} \rangle_{\cH(K)^{N}} =
     \sum_{i \in \cI} \sum_{j=1}^{N} \langle f_{i}(W') v^{\prime 
     *}_{j}, y' \rangle_{\cY^{m'}}  \langle y, f_{i}(W) v_{j}^{*} 
     \rangle_{\cY^{m}} \\
     & = \sum_{i \in \cI} \sum_{j=1}^{N} \langle \left( 
     f_{i}(W')(v_{j}^{\prime *}) \right) \left( f_{i}(W)(v_{j}^{*}) 
     \right)^{*} y, y' \rangle_{\cY^{m'}} \\
     & = \sum_{i \in \cI} \langle \left( f_{i}(W')(V^{\prime *}) 
     \right)  \left( f_{i}(W)(V^{*}) \right)^{*} y, y' 
     \rangle_{\cY^{m'}}
  \end{align*}
  Recalling the formula \eqref{KerInProd} now leads to the expression 
  \eqref{Bergman-ker} for the kernel function $K(W',W)(V^{\prime *} V)$.  The 
  arbitrariness of the vectors $y$ and $y'$  in the preceding 
  analysis leads to the conclusion 
  that the series in \eqref{Bergman-ker} converges in the weak 
  topology.
     \end{proof}
     
  In case $\cH$ is finite-dimensional, one can get explicit formulas 
  for the kernel function from an arbitrary basis (not necessarily 
  orthonormal).
  
  \begin{theorem} \label{T:AlpayDym}
      Let $\cH$ be a finite-dimensional Hilbert space consisting of 
      $\cL(\cA, \cY)$-valued global/nc functions and equipped with a unital 
      $*$-representa\-tion $\sigma$ as in \eqref{rep2}, and let 
      $\{f_{1}, \dots, f_{S}\}$ be a basis (not necessarily 
      orthonormal or orthogonal) for $\cH$.  Let us introduce the 
      gramian matrix
      $$
          G = [ \langle f_{j}, f_{i} \rangle ]_{i,j = 1, \dots, S}
$$
for the basis $\{f_{i}\}_{i=1, \dots, S}$.  Then, with the same 
conventions as used in the statement of Theorem \ref{T:Bergmann}, the kernel function 
$K$ for $\cH$ (existence of which is guaranteed by Theorem 
\ref{T:RKHS}) is given by
\begin{equation}   \label{Ker-FinDim}
    K(W',W)(V^{\prime *} V) = \sum_{i,j=1}^{S} \left( 
    f_{i}(W')(V^{\prime *}) \right) \left( G^{-1} \right)_{ij} \left( 
    f_{j}(W)(V^{*}) \right)^{*}.
\end{equation}
  \end{theorem}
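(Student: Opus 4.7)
The plan is to reduce Theorem \ref{T:AlpayDym} to Theorem \ref{T:Bergmann} by orthonormalizing the given basis. Since $\cH$ is finite-dimensional with basis $\{f_1, \dots, f_S\}$, the gramian $G = [\langle f_j, f_i \rangle]_{i,j=1}^{S}$ is Hermitian and strictly positive definite, hence invertible. Choose any $S \times S$ complex matrix $A$ satisfying $A^* G A = I_S$ (for instance $A = G^{-1/2}$), and set
\begin{equation*}
e_k = \sum_{i=1}^S A_{ik}\, f_i, \qquad k = 1, \dots, S.
\end{equation*}
Each $e_k$ is again a global/nc function in $\cH$, since the defining axioms (gradedness together with respecting direct sums and, when relevant, similarities/intertwinings) are preserved under taking linear combinations; and invertibility of $A$ ensures that $\{e_k\}_{k=1}^S$ is again a basis for $\cH$.

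Next I would verify that $\{e_k\}$ is orthonormal. Unfolding the inner product using $\langle f_i, f_j\rangle_\cH = G_{ji}$, one computes
\begin{equation*}
\langle e_k, e_\ell \rangle_\cH = \sum_{i,j} A_{ik}\, \overline{A_{j\ell}}\, G_{ji} = (A^* G A)_{\ell k} = \delta_{k\ell}.
\end{equation*}
Since $\cH$ already carries the unital $*$-representation $\sigma$ hypothesized in Theorem \ref{T:RKHS}, the space $\cH$ with orthonormal basis $\{e_k\}$ satisfies all the hypotheses of Theorem \ref{T:Bergmann}. Applying that theorem (with convergence trivial because the sum is finite) gives
\begin{equation*}
K(W',W)(V^{\prime *} V) = \sum_{k=1}^{S} \bigl( e_k(W')(V^{\prime *}) \bigr)\, \bigl( e_k(W)(V^{*}) \bigr)^{*}.
\end{equation*}

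Finally, substituting the expansions $e_k(W')(V^{\prime *}) = \sum_i A_{ik}\, f_i(W')(V^{\prime *})$ and $\bigl(e_k(W)(V^{*})\bigr)^{*} = \sum_j \overline{A_{jk}}\, \bigl(f_j(W)(V^{*})\bigr)^{*}$, and interchanging the order of summation, the coefficient attached to $\bigl( f_i(W')(V^{\prime *}) \bigr) \bigl( f_j(W)(V^{*}) \bigr)^{*}$ becomes $\sum_k A_{ik}\,\overline{A_{jk}} = (AA^*)_{ij}$. Rearranging $A^* G A = I_S$ yields $AA^* = G^{-1}$, so this coefficient is exactly $(G^{-1})_{ij}$, producing formula \eqref{Ker-FinDim}. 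There is no substantive obstacle: the argument is essentially a finite-dimensional change-of-basis identity for reproducing kernels, and the only care required is in matching conjugation/transposition conventions in the gramian and in confirming that the linear combinations $e_k$ retain membership in $\cH$ as genuine global/nc functions.
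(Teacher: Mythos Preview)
Your argument is correct and takes a genuinely different route from the paper. The paper proves the formula directly: it expands an arbitrary $f\in\cH$ in the basis $\{f_i\}$, solves the linear system $\langle f,f_i\rangle = \sum_j G_{ij}\alpha_j$ for the coefficients to obtain $f=\sum_{i,j}(G^{-1})_{ij}\langle f,f_j\rangle f_i$, reads off the kernel element $K_{W,v,y}$ from the reproducing property, and then computes $\langle K_{W,v,y},K_{W',v',y'}\rangle$ to extract \eqref{Ker-FinDim}. Your approach instead reduces to Theorem~\ref{T:Bergmann} by a change of basis: orthonormalize via $e_k=\sum_i A_{ik}f_i$ with $A^*GA=I$, apply the orthonormal-basis kernel formula, and unwind using $AA^*=G^{-1}$. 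Your route is slightly more conceptual (it makes transparent that the result is nothing but Theorem~\ref{T:Bergmann} transported through an arbitrary basis), while the paper's route is self-contained and does not invoke Theorem~\ref{T:Bergmann} at all. Both are short; neither has an advantage in generality.
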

  
  \begin{proof}
     Any $f \in \cH$ has an expansion $f = \sum_{i=1}^{S} \alpha_{i} 
     f_{i}$ in terms of the basis $\{f_{i}\}_{i=1}^{S}$.  We set up 
     a system of equations in order to solve for the coefficients 
     $\{\alpha_{i} \in \bbC \colon i=1, \dots, S\}$:
 $$
     \langle f, f_{i} \rangle_{\cH}  =  \sum_{j=1}^{S} \alpha_{j} 
     \langle f_{j}, f_{i} \rangle_{\cH} 
      = \sum_{j=1}^{S} G_{ij} \alpha_{j}.
 $$
 Solving for the $\alpha_{j}$'s gives
 $$
   \begin{bmatrix} \alpha_{1} \\ \vdots \\ \alpha_{S} \end{bmatrix} =
       G^{-1} \begin{bmatrix} \langle f, f_{1}\rangle \\ \vdots \\ 
       \langle f, f_{S} \rangle \end{bmatrix}.
 $$
 or 
 $$
      \alpha_{i} = \sum_{j=1}^{S} \left( G^{-1} \right)_{ij} \langle 
      f, f_{j} \rangle.
 $$
 Thus $f = \sum_{i=1}^{S} \alpha_{i} f_{i}$ is given by
 $$
   f = \sum_{j=1}^{S} \alpha_{i} f_{i} = \sum_{i,j=1}^{S} \left( 
   G^{-1} \right)_{ij} \langle f, f_{j}\rangle_{\cH} f_{i}.
 $$
 Hence, for $W \in \Omega_{m}$ and $v \in \cA^{1 \times m}$, 
 \begin{align*}
 \langle f(W) v^{*}, y \rangle_{\cY^{m}} & = \sum_{i,j=1}^{S} \left( 
 G^{-1} \right)_{ij} \langle f, f_{j} \rangle_{\cH} \langle f_{i}(W) 
 v^{*}, y \rangle_{\cY^{m}}  \\
 & = \langle f, \sum_{i,j = 1}^{S} \left( G^{-1} \right)_{ji} \langle 
 y, f_{i}(W)(v^{*}) \rangle_{\cY^{m}} f_{j} \rangle_{\cH} \\
 & = \langle f, K_{W,v,y} \rangle_{\cH}
 \end{align*}
 with $K_{W,v,y}$ given by
 $$
 K_{w,v,y} = \sum_{i,j = 1}^{S} \left( G^{-1} \right)_{ji} f_{j} 
 \left( f_{i}(W) (v^{*}) \right)^{*} y
 $$
 where we again make use of the convention \eqref{conv}.
 Then 
 \begin{align*}
   &  \langle K(W',W)(v^{\prime *} v) y, y' \rangle_{{\mathbb C}^{m'}} 
    = \langle K_{W,u,y}, K_{W',v',y'} \rangle_{\cH(K)} \\
     & =\left \langle \sum_{i,j=1}^{S} \left( G^{-1} \right)_{ji} f_{j} 
     \left( f_{i}(W)(v^{*}) \right)^{*}y,
     \sum_{i',j' = 1}^{S} \left( G^{-1} \right)_{j' i'} f_{j'} \left( 
     f_{i'}(W') (v^{\prime *}) \right)^{*} y' \right \rangle  \\
     & = \left\langle \sum_{i,j,i',j' = 1}^{S} \left( G^{-1} \right)_{ji} G_{j'j}
     \left(G^{-1} \right)_{i' j'} \left( (f_{i'}(W') (v^{\prime *}) 
     \right) \left( f_{i}(W) (v^{*}) \right)^{*}y, y' \right\rangle \\
     & = \left\langle \sum_{i',i = 1}^{S} \left( G^{-1} \right)_{i' i}
     \left( f_{i'}(W')(v^{\prime *}) \right)^{*} \left( f_{i}(W)(v') 
     \right)^{*} y, y' \right\rangle
 \end{align*} 
 which after some minor rearrangement agrees with \eqref{Ker-FinDim}.
 \end{proof}

\subsection{Lifted norm spaces}   \label{S:LiftedNorm}

 In this section we present a different way of viewing our global/nc 
 reproducing kernel Hilbert spaces.
 
 The ingredients for the construction are as follows:
 \begin{itemize}
     \item 
 $\Omega$ is a nc subset of $\cS_{\rm nc}$ where $\cS$ is a set, 
 
  \item $\cA$ is a $C^{*}$-algebra,
 
 \item $\cH$ is a Hilbert space equipped with a unital 
 $*$-representation $\sigma_{\cH}$ mapping $\cA$ to $\cL(\cH)$,
 
 \item $\cY$ is a coefficient Hilbert space, 
 
 \item $H \colon \Omega \to \cL(\cH, \cY)_{\rm nc}$ is a global 
 function.  In case $\cS$ is taken to be a vector space $\cV$, we 
 shall also consider the case where $H$ is a nc function.
 \end{itemize}
 
Given these ingredients we define a Hilbert space $\cH_{\ell} = 
\cH_{\ell}(H, \sigma_{\cH})$ (the \textbf{lifted norm space associated 
with $\cH$ and $\sigma_{\cH}$}) by
\begin{equation}  \label{Hl}
\cH_{\ell} = \{ f \colon \Omega \to \cL(\cA, \cY)_{\rm nc} \colon
f = f_{h} \text{ for some } h \in \cH\}
\end{equation}
where the function $f_{h} \colon \Omega \to \cL(\cA, \cY)_{\rm nc}$ is 
specified as follows:  given $Z \in \Omega_{n}$, $u \in \cA^{n}$, 
\begin{equation}   \label{fh}
    f_{h}(Z) u = H(Z) \left({\rm id}_{{\mathbb C}^{n}} \otimes 
    \sigma_{\cH} \right)(u) h
\end{equation}
with $\cH_{\ell}$-norm given by
$$
   \| f \|^{2}_{\cH_{\ell}} = \min \{ \| h \|^{2} \colon h \in \cH 
   \text{ with } f = f_{h} \}.
$$
Then we have the following result.

\begin{theorem}  \label{T:lifted}
    Suppose that the Hilbert space $\cH_{\ell}$ is defined as in 
    \eqref{Hl}.  
    Then $\cH_{\ell}$ is a global reproducing kernel Hilbert space 
    with reproducing kernel $K$ given by
 \begin{equation}   \label{Hsubl-K}
  K(Z,W)(P) = H(Z) ({\rm id}_{{\mathbb C}^{n \times m}} \otimes 
  \sigma_{\cH})(P) H(W)^{*}
 \end{equation}
 for $Z \in \Omega_{n}$, $W \in \Omega_{m}$, $P \in \cA^{n \times m}$.
 If $\cS$ is taken to be a vector space $\cV$ and $H$ is assumed to 
 be a nc function, then $K$ given by \eqref{Hsubl-K} is a cp nc 
 kernel.
 \end{theorem}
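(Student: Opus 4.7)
The strategy is to recognize formula \eqref{Hsubl-K} as exactly the Kolmogorov decomposition of Theorem \ref{T:cpker}(3) and then to identify $\cH_\ell$ with the reproducing kernel Hilbert space built in part (2) of that theorem. Since $\sigma_\cH$ is a unital $*$-representation of $\cA$ on $\cH$ and $H$ is a global function (respectively an nc function) from $\Omega$ to $\cL(\cH,\cY)_{\rm nc}$, the implication (3) $\Rightarrow$ (1) of Theorem \ref{T:cpker} immediately gives that $K$ as defined in \eqref{Hsubl-K} is a cp global (respectively cp nc) kernel, with associated RKHS $\cH(K)$. The remaining and main task is to show that $\cH_\ell$ and $\cH(K)$ coincide as Hilbert spaces of functions; once this is done, $K$ is the reproducing kernel of $\cH_\ell$ by construction.

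First I would introduce the linear map $\Phi \colon \cH \to \{$functions $\Omega \to \cL(\cA,\cY)_{\rm nc}\}$ given by $\Phi(h) = f_h$ and observe that its kernel $\cN := \ker\Phi$ is an intersection of kernels of the bounded operators $h \mapsto H(Z)({\rm id}_{\bbC^n}\otimes \sigma_\cH)(u)h$ and hence is closed. Consequently $\cH_\ell \cong \cH/\cN \cong \cN^\perp$ as Hilbert spaces. A routine check, using that $H$ respects direct sums (respectively intertwinings) and that ${\rm id}\otimes\sigma_\cH$ is compatible with block-matrix structure, confirms that each $f_h$ is indeed a global (respectively nc) function, so that $\cH_\ell$ sits in the correct function space.

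Next, for each prospective kernel element $K_{W,v,y}$ of $\cH(K)$, I would define the auxiliary vector $h_{W,v,y} := ({\rm id}\otimes\sigma_\cH)(v)\, H(W)^* y \in \cH$. Using the multiplicativity identity $({\rm id}\otimes\sigma_\cH)(u)({\rm id}\otimes\sigma_\cH)(v) = ({\rm id}\otimes\sigma_\cH)(uv)$ (valid because $\sigma_\cH$ is multiplicative), the reproducing formula $K_{W,v,y}(Z)(u) = K(Z,W)(uv)y$ from Theorem \ref{T:cpker}(2) together with \eqref{Hsubl-K} shows that $K_{W,v,y}$ and $f_{h_{W,v,y}} = \Phi(h_{W,v,y})$ agree literally as functions on $\Omega$. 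Moreover, the Kolmogorov formula combined with the $*$-representation property yields the isometry relation
\[
\langle K_{W,v,y},K_{W',v',y'}\rangle_{\cH(K)} = \langle h_{W,v,y}, h_{W',v',y'}\rangle_\cH
\]
on the linear span of kernel elements.

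The crucial step, which I expect to be the main obstacle, is to verify $\overline{\mathrm{span}}\{h_{W,v,y}\} = \cN^\perp$ in $\cH$. This follows from the chain of equivalences: $h \perp h_{W,v,y}$ for all admissible $W,v,y$ iff $\langle H(W)({\rm id}\otimes\sigma_\cH)(v^*)h, y\rangle_{\cY^m} = 0$ for all such data, which is exactly the defining condition $h \in \cN$. Given this, the partial isometry $K_{W,v,y}\mapsto h_{W,v,y}$ extends by continuity to a unitary from $\cH(K)$ onto $\cN^\perp \cong \cH_\ell$ which, at the level of underlying functions, is the identity. This yields $\cH_\ell = \cH(K)$ isometrically and completes the proof. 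The subtlety is exactly the possible non-injectivity of $\Phi$: the dense-span description of $\cN^\perp$ is what reconciles the quotient (lifted) norm on $\cH_\ell$ with the reproducing-kernel norm on $\cH(K)$.
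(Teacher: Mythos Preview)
Your proposal is correct and follows essentially the same route as the paper's proof: both identify the kernel $\cN$ of the map $h \mapsto f_h$, characterize $\cN^\perp$ as the closed span of the vectors $h_{W,v,y} = ({\rm id}\otimes\sigma_{\cH})(v)H(W)^* y$, verify that $K_{W,v,y} = f_{h_{W,v,y}}$ as functions, and use the Kolmogorov decomposition \eqref{Hsubl-K} to compute inner products of kernel elements. The only organizational difference is that you invoke Theorem \ref{T:cpker} at the outset to obtain $\cH(K)$ externally and then identify it with $\cH_\ell$, whereas the paper derives the reproducing property directly inside $\cH_\ell$ and reads off $K$ at the end; the substance is the same.
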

 
 \begin{proof} 
     We first verify that any function of the form $f_{h}$ as in 
     \eqref{fh} is a global function from $\Omega$ to $\cL(\cA, 
     \cY)_{\rm nc}$ given that $H$ is a global function from $\Omega$ 
     to $\cL(\cH, \cY)_{\rm nc}$.  Indeed, from \eqref{fh} we read off
     \begin{align*}
& f_{h} \left( \sbm{Z_{1} & 0 \\ 0 & Z_{2}} \right) \begin{bmatrix} u_{1} \\ 
u_{2} \end{bmatrix}    =
H\left( \sbm{Z_{1} & 0 \\ 0 & Z_{2}} \right) \begin{bmatrix} 
({\rm id}_{{\mathbb C}^{n_{1}}} \otimes \sigma_{\cH})(u_{1}) \\ 
({\rm id}_{{\mathbb C}^{n_{2}}} \otimes \sigma_{\cH})(u_{2}) \end{bmatrix} h \\
& \quad = \begin{bmatrix} H(Z_{1}) & 0 \\ 0 & H(Z_{2}) \end{bmatrix} 
\begin{bmatrix} ({\rm id}_{{\mathbb C}^{n_{1}}} \otimes \sigma_{\cH})(u_{1}) h
    \\ ({\rm id}_{{\mathbb C}^{n_{2}}} \otimes \sigma_{\cH})(u_{2}) h \end{bmatrix}   
     = \begin{bmatrix} f_{h}(Z_{1}) & 0 \\ 0 & f_{h}(Z_{2}) 
\end{bmatrix} \begin{bmatrix} u_{1} \\ u_{2} \end{bmatrix}.
\end{align*}

Similarly, if $\cS$ is a vector space $\cV$ and $H$ is a nc function, 
we check that each $f_{h}$ is a nc function as follows.
Suppose that $Z \in \Omega_{n}$, $\widetilde Z \in \Omega_{m}$, $\alpha 
\in {\mathbb C}^{m \times n}$ are such that $\alpha Z = \widetilde Z \alpha$.
Then the computation
\begin{align*}
    \alpha f_{h}(Z)(u) & = \alpha H(Z) ({\rm id}_{{\mathbb C}^{n}} \otimes 
    \sigma_{\cH})(u) h = H(\widetilde Z) \alpha ({\rm id}_{{\mathbb C}^{n}} 
    \otimes \sigma_{\cH})(u) h \\
    & = H(\widetilde Z) \alpha \sbm{ \sigma_{\cH}(u_{1}) \\ \vdots \\ 
    \sigma_{\cH}(u_{n})} h = H(\widetilde Z) \sbm{ \sigma_{\cH}(\sum_{j=1}^{n} 
    \alpha_{1j} u_{j}) \\ \vdots \\ \sigma_{\cH}(\sum_{j=1}^{n} \alpha_{mj} u_{j}) } 
    h \\
    & = H(\widetilde Z) \left( {\rm id}_{{\mathbb C}^{m}} \otimes 
    \sigma_{\cH} \right)(\alpha u) h 
    = f_{h}(\widetilde Z)(\alpha u)
\end{align*}
verifies that $f_{h}$ is a nc function.
 
Let us check next that, for each $Z \in \Omega_{n}$, the map $f 
\mapsto f(Z)$ is bounded from $\cH_{\ell}$ to $\cL(\cA^{n}, \cY^{n})$.
Indeed, if $h \in \cH$ is any choice of vector in $\cH$ such that $f 
= f_{h}$ and if $u \in \cA^{n}$, then
$$
    \| f(Z) u \|_{\cY^{n}}  = \| H(Z) ({\rm id}_{{\mathbb C}^{n}} 
    \otimes \sigma_{\cH})(u) h \| \le \| H(Z)\|_{\cL(\cH^{n}, \cY^{n})} \| u 
    \|_{\cA^{n}} \| h \|_{\cH}.
$$
Minimizing over all $h \in \cH$ for which $f = f_{h}$ then gives
$$
 \| f(Z) u \|_{\cY^{n}} \le \| H(Z)\|_{\cL(\cH^{n}, \cY^{n})} 
 \|u\|_{\cA^{n}} \| f \|_{\cH_{\ell}}
$$
and the boundedness claim follows as wanted.  We shall see below a 
more explicit verification of the boundedness of these point 
evaluations.

For $a \in \cA$ and $f \in \cH$, let us consider the new function 
$\sigma(a) f$ given by the formula \eqref{rep2} in Theorem 
\ref{T:RKHS}: for $Z \in \Omega_{n}$ and $u \in \cA^{n}$ define 
$\sigma(a) f \colon \Omega_{n} \to \cL(\cA^{n}, \cY^{n})$ so that
\begin{equation}   \label{sigma-concrete}
(\sigma(a) f)(Z)(u) = f(Z)(ua).
\end{equation}
Choose $h \in \cH$ so that $f = f_{h}$.  Then we compute
\begin{align*}
    \left( \sigma(a) f_{h} \right) (Z)(u) & = f_{h}(Z)(ua) = H(Z) 
    ({\rm id}_{{\mathbb C}^{n}} \otimes \sigma_{\cH})(ua) h \\
    & = H(Z) ({\rm id}_{{\mathbb C}^{n}} \otimes \sigma_{\cH})(u) 
    \sigma_{\cH}(a) h = f_{\sigma_{\cH}(a) h}(Z)(u)
\end{align*}
i.e., we have verified
\begin{equation} \label{sigmaH}
    \sigma(a) f_{h} = f_{\sigma_{\cH}(a) h}.
\end{equation}
In particular $\cH_{\ell}$ is invariant under the action of $\cA$ 
defined by $\sigma$. 
By Remark \ref{R:RKHS} we see that $\sigma \colon \cA \to 
\cL(\cH_{\ell})$ is a unital 
representation of $\cA$ on $\cH_{\ell}$.  We shall see below (after a 
little more work) that in fact $\sigma$ is a $*$-representation of 
$\cA$.  One could then apply Theorem \ref{T:RKHS} to see 
that there is a global/nc kernel $K$ so that $\cH_{\ell} = \cH(K)$.  
Rather than applying the existence result from Theorem \ref{T:RKHS}, 
we shall show directly that the kernel $K$ given by  
\eqref{Hsubl-K} satisfies all the properties required to be the 
reproducing kernel for the space $\cH_{\ell}$.

By definition the map $h \mapsto f_{h}$ is a coisometry of 
$\cH$ onto $\cH_{\ell}$.  Moreover, given $f \in \cH_{\ell}$, 
there is always a unique $h_{0} 
\in \cH$ so that $f = f_{h_{0}}$ and we have the equality of norms:  
$\| f \|_{\cH_{\ell}} = \| h_{0} \|_{\cH}$: simply take $h_{0} = 
P_{\cN^{\perp}} h_{1}$ where $h_{1}$ is any choice of vector in $\cH$ 
with $f = f_{h_{1}}$ and where $\cN = \{ h \in \cH \colon f_{h} = 0 
\}$.  The space $\cN$ can be characterized explicitly as 
\begin{equation}  \label{KernelSpace}
  \cN = \left({\rm span} \{ ({\rm id}_{{\mathbb C}^{1 \times m}} 
  \otimes\sigma_{\cH})(v) {\rm Ran} \, H(W)^{*} \colon W \in \Omega_{m}, 
  v \in \cA^{1 \times m},\, m \in {\mathbb N} \}\right)^{\perp}.
\end{equation}

Given $W \in \Omega_{m}$, $v \in \cA^{1 \times m}$, and $y \in 
\cY^{m}$, we have
\begin{align*}
    \langle f_{h}(W)(v^{*}), y \rangle_{\cY^{m}} & =
    \langle H(W) ({\rm id}_{{\mathbb C}^{m}} \otimes 
    \sigma_{\cH})(v^{*}) h, y \rangle_{\cY^{m}} \\
    & = \langle h, \, ({\rm id}_{{\mathbb C}^{1 \times m}} \otimes 
    \sigma_{\cH})(v) H(W)^{*} y \rangle_{\cH}.
\end{align*}
From the characterization \eqref{KernelSpace} of the space $\cN$, one can see 
that the vector $({\rm id}_{{\mathbb C}^{1 \times m}} \otimes 
    \sigma_{\cH})(v) H(W)^{*} y $ is in the initial space of the 
    coisometry $h \mapsto f_{h}$.  Alternatively we can always choose 
    $h$ in the initial space of $h \mapsto f_{h}$ so that we still 
    have $f  = f_{h}$.  In any case we may apply the map $h \mapsto 
    f_{h}$ to each vector in the inner product in the last 
    expression above and preserve the value of the inner product. 
 Thus we get
 \begin{align}
     \langle f_{h}(W)(v^{*}), y \rangle_{\cY^{m}} & = 
  \langle f_{h}, f_{({\rm id}_{{\mathbb C}^{1 \times m}} \otimes 
  \sigma_{\cH})(v) H(W)^{*}y} \rangle_{\cH_{\ell}} \notag \\
  & = \langle f_{h}, K_{W,v,y} \rangle_{\cH_{\ell}}
  \label{reprod4}
 \end{align}
 where we have set
 \begin{equation}   \label{KerEl}
    K_{W,v,y} = f_{({\rm id}_{{\mathbb C}^{1 \times m}} \otimes 
  \sigma_{\cH})(v) H(W)^{*}y}.
 \end{equation}
 As $K_{W,v,y}$ is clearly an element of $\cH_{\ell}$, the formula 
 \eqref{reprod4} exhibits the fact that the point evaluation $f_{h} 
 \mapsto f_{h}(W)$ is bounded as an operator from $\cH_{\ell}$ to 
 $\cL(\cA^{m}, \cY^{m})$, i.e., we have arrived at the promised 
 second proof of this fact. 
 Using the rule \eqref{fh}, we can calculate
 \begin{align}
     K_{W,v,y}(Z) u & = H(Z) \left( {\rm id}_{{\mathbb 
     C}^{n}} \otimes \sigma_{\cH}\right)(u) \left( {\rm id}_{{\mathbb 
     C}^{1 \times m}} \otimes \sigma_{\cH}\right)(v) H(W)^{*}y \notag \\
     & = H(Z) \left( {\rm id}_{{\mathbb C}^{n \times m}} \otimes 
     \sigma_{\cH}\right)(uv) H(W)^{*} y.
 \label{eval}
 \end{align}
 If $K_{W',v',y'}$ ($W' \in \Omega_{m'}$, $v' \in \cA^{1 \times m'}$, 
 $y' \in \cY^{m'}$) is a second kernel element as in \eqref{KerEl} 
 and we apply the formula \eqref{reprod4} with $K_{W',v',y'}$ in 
 place of $f_{h}$, as a consequence of the evaluation formula 
 \eqref{eval} we see that
 \begin{align}
     \langle K_{W,v,y},\, K_{W',v',y'} \rangle_{\cH_{\ell}} & =
     \langle K_{W,v,y}(W') v^{\prime *}, y' \rangle_{\cY^{m'}} \notag \\
     & = \langle H(W') \left( {\rm id}_{{\mathbb C}^{m' \times m}} 
     \otimes \sigma_{\cH} \right) (v^{\prime *} v) H(W)^{*} y, y' 
     \rangle_{\cY^{m'}} \notag \\
     & = \langle K(W',W)(v^{\prime *} v) y, y' \rangle_{\cY^{m'}}
     \label{KerElInnerProd}
 \end{align}
 where we have 
 \begin{equation}  \label{KolDecom'}
   K(Z,W)(P) = H(Z) \left( {\rm id}_{{\mathbb C}^{n \times m}} 
   \otimes \sigma_{\cH} \right)(P) H(W)^{*}
 \end{equation}
 for in general $Z \in \Omega_{n}$, $W \in \Omega_{m}$ and $P \in 
 \cA^{n \times m}$.  The definition \eqref{KolDecom'} of $K$ exhibits a Kolmogorov 
 decomposition for $K$.  Hence, as a consequence of (3) $\Rightarrow$ 
 (1) in Theorem \ref{T:cpker}, we see that $K$ is completely positive.
 Furthermore, $H$ being a global/nc function implies that $K$ is a 
 global/nc kernel, as was verified as part of the proof of (3) 
 $\Rightarrow$ (1) in Theorem \ref{T:cpker}.
 
 From the reproducing property \eqref{reprod4}, one can read off as 
 in the proof of Theorem \ref{T:cpker} or \ref{T:RKHS} the action of 
 the representation $\sigma(a)^{*}$ on a kernel element $K_{W,v,y}$:
 $$
   \sigma(a)^{*} \colon K_{W,v,y} \mapsto K_{W,a^{*}v,y}.
 $$
 Then one can use the fact that the inner product in \eqref{KerElInnerProd} as a 
 function of the pair $(v,v')$ depends only on the product $v^{\prime *} 
 v$ to see that $\sigma(a)^{*} = \sigma(a^{*})$, i.e., indeed 
 $\sigma \colon \cA \to \cL(\cH_{\ell})$ is a unital 
 $*$-representation.
 
 By uniqueness in the Riesz-Frechet theorem, it follows that the space 
 $\cH_{\ell}$ is isometrically identical to the global/nc reproducing 
 kernel spaces $\cH(K)$ with kernel $K$ given by \eqref{KolDecom'}.
\end{proof}

\begin{remark}  \label{R:3to2}
One can also view Theorem \ref{T:lifted} as an alternative direct 
path to the proof of (3) $\Rightarrow$ (2) in Theorem \ref{T:cpker}; 
one constructs the reproducing kernel Hilbert space $\cH(K)$ directly 
from the function $H$ and a unital $*$-representation $\sigma_{\cH}$  
in the Kolmogorov decomposition for $K$ rather than from $K$ itself 
as in (1) $\Rightarrow$ (2).

In the case where $\cH = \cH(K)$ and the function $H \colon \Omega 
\to \cL(\cH(K), \cY)$ is given concretely by \eqref{defH} with unital 
$*$-representation $\sigma_{\cH} = \sigma$ given by \eqref{rep1}, an
instructive exercise is to verify directly that indeed we recover 
$\cH(K)$ as $\cH(K) = \cH_{\ell}(H, \sigma)$.

Let $Z \in \Omega_{n}$, $\cH = \cH(K)$.  For $h \in \cH(K)$, $h(Z) 
\in \cL(\cA^{n}, \cY^{n})$.  We use \eqref{defH} to define $H(Z)$:
$$
  H(Z) \sbm{ h_{1} \\ \vdots \\ h_{n}} = h_{1}(Z) (E^{(n)}_{1} 
  \otimes 1_{\cA}) + \cdots + h_{n}(Z)(E^{(n)}_{n} \otimes 1_{\cA}).
$$
For $u = \sbm{ u_{1} \\ \vdots \\ u_{n}} \in \cA^{n}$ and $h \in \cH 
= \cH(K)$, we use \eqref{fh} to define $f_{h}$:
\begin{align*}
    f_{h}(Z)(u) & = H(Z) \sbm{  \sigma(u_{1})(h) \\ \vdots \\ 
    \sigma(u_{n})(h) } \\
    & = (\sigma(u_{1})h)(Z)(E^{(n)}_{1} \otimes 1_{\cA}) + \cdots + 
    (\sigma(u_{n})h)(E^{(n)}_{n} \otimes 1_{\cA}) \\
    & = h(Z) \sbm{ u_{1} \\ 0 \\ \vdots \\  0} + \cdots + h(Z) \sbm{ 0 \\ 
    \vdots \\ 0 \\ u_{n}} \\
    & = h(Z) \sbm{u_{1} \\ \vdots \\ u_{n}}.
\end{align*}
i.e., the identification map $h \mapsto f_{h}$ between $\cH = \cH(K)$ 
and $\cH_{\ell}$ in this case is the identity:
$$
    f_{h}(Z) = h(Z) \text{ for all } Z \in \Omega.
$$
Furthermore, the relation \eqref{sigmaH} identifies the 
representation $\sigma = \sigma_{\cH(K)}$ already specified on $\cH = 
\cH(K)$ with the representation $\sigma$ on $\cH_{\ell}$ specified by
\eqref{sigma-concrete}. Furthermore, in the proof of (2) 
$\Rightarrow$ (3) in Theorem \ref{T:cpker}, we identified the 
Kolmogorov decomposition \eqref{Koldecom} with $H$ being given by 
\eqref{defH}.  On the other hand, we have seen that this same 
expression gives the reproducing kernel for the space $\cH_{\ell}$ 
(see \eqref{KolDecom'}).  We conclude that $\cH(K) = \cH_{\ell}(H, 
\sigma)$ identically and isometrically. 
 \end{remark}
 
\subsection{Special cases and examples}  \label{S:special}

\subsubsection{The special case $\cS = \{s_{0}\}$} 
Let us consider the special case where
$\cS = \{ s_{0}\}$, $\Omega_{n} = \{ s_{0} \otimes 
I_{n}\}$, $\cA$ = a $C^{*}$-algebra.  Let $K \colon \Omega 
\times \Omega \to \cL(\cA, \cL(\cY))_{\rm nc}$ be a cp global 
kernel.  Then there is only one choice of $Z \in \Omega_{n}$, namely 
$Z = s_{0} \otimes I_{n}$; let $\varphi^{(n,m)} = K(s_{0} \otimes 
I_{n}, s_{0} \otimes I_{m}) \in \cL(\cA^{n \times m}, \cL(\cY)^{n 
\times m})$. The content of the ``respects direct sums'' property 
\eqref{kerds} for the kernel $K$ is that
$$
  K(s_{0} \otimes I_{n}, s_{0} \otimes I_{m}) = {\rm id}_{{\mathbb 
  C}^{n \times m}} \otimes K(s_{0}, s_{0}),
$$
i.e., 
$$
  K(s_{0} \otimes I_{n}, s_{0} \otimes I_{m}) = \varphi^{(n,m)}
$$
where we set $\varphi = K(s_{0}, s_{0})$ and define $\psi^{(n,m)} =
{\rm id}_{{\mathbb C}^{n\times m}} \otimes \psi$ as in \eqref{cp} and 
\eqref{cb}:
$$
  \varphi^{(n,m)} = {\rm id_{{\mathbb C}^{n \times m}} \otimes \varphi} 
  \colon [P_{ij}] \mapsto [ \varphi(P_{ij})]. 
$$
Thus, for this case where $\cS = \{s_{0}\}$ is a singleton set, $K$ 
being a completely positive kernel is the same as $K(s_{0}, s_{0}) =
\varphi \colon \cA \to \cL(\cY)$ being a cp map and conversely,
$\varphi \colon \cA \to \cL(\cY)$ being a cp map is equivalent to 
$\varphi$ having a unique extension to a cp global kernel $K$ on the nc 
envelope $[\{s_{0}\}]_{\rm nc}$ of the singleton set $\{s_{0}\}$, 
where we set $K(s_{0}, s_{0}) = \varphi$.
Moreover the Kolmogorov decomposition 
\eqref{Koldecom} for this case, when restricted 
to level 1, becomes just a formulation of  
the Stinespring dilation theorem \cite{Stinespring} 
(see also \cite[Theorem 3.1]{Paulsen}):  {\em given a 
completely positive map $\varphi \colon \cA \to \cL(\cY)$, there is a 
Hilbert space $\cX$ equipped with a unital 
$*$-representation $\sigma \colon \cA \to \cL(\cX)$  and an operator 
$H(s_{0}) \in \cL(\cX, \cY)$ so that $\varphi(a) = H(s_{0}) 
\sigma(a) H(s_{0})^{*}$}.

Our  general results concerning cp global 
kernels gives the following finer structure for the Stinespring 
dilation space.  We note that the following result is simply the 
specialization of Theorem \ref{T:cpker} to the case where 
$\Omega$ is the  nc envelope of the singleton set $\Omega_{1} 
= \{s_{0}\}$.

\begin{theorem}  \label{T:Stinespring}
    Let $\cA$ be a $C^{*}$-algebra and let $\cY$ be a Hilbert space.
    Suppose that $\varphi \colon \cA \to \cL(\cY)$ is a given linear 
    map.  Then the following are equivalent.
    \begin{enumerate}
	\item $\varphi$ is completely positive, i.e., ${\rm 
	id}_{{\mathbb C}^{n \times n}} \otimes \varphi \colon \cA^{n 
	\times n} \to \cL(\cY)^{n \times n}$ maps positive elements 
	to positive elements for each $n \in {\mathbb N}$.
	
\item There is a Hilbert space $\cH(\varphi)$ whose elements $f$ are in the space 
$\cL(\cA, \cY)$ of bounded linear operators from $\cA$ to $\cY$ such 
that:
\begin{enumerate}
    \item for $v \in \cA$ and $y \in \cY$, the 
    kernel element $K_{v,y}$, identified as an element of $\cL(\cA, 
    \cY)$ via the formula
    $$
      K_{v,y}(u) = \varphi(uv) y
   $$
   for $u \in \cA$, belongs to $\cH(\varphi)$.
   
   \item The kernel elements $K_{v,y}$ have the reproducing property:
   for $f \in \cH(\varphi)$, 
   $v  \in \cA$ and  $y \in  \cY$,
   $$
  \langle  f(v^{*}), y \rangle_{\cY} = \langle f, K_{v,y} 
   \rangle_{\cH(\varphi)}.
   $$
   
   \item $\cH(\varphi)$ is equipped with a unital $*$-representation 
   $\sigma$ mapping $\cA$ to $\cL(\cH(\varphi))$ such that
   $$
    (\sigma(a) f)(u) = f(ua)
    $$
    for $a \in \cA$, $u \in \cA$, with action on kernel 
    elements $K_{v,y}$ ($v \in \cA$, $y \in \cY$) given by
    $$
     \sigma(a) \colon K_{v,y} \mapsto K_{av,y}.
    $$
 \end{enumerate}
 
 \item $\varphi$ has a Stinespring dilation: there is a Hilbert space 
 $\cX$ equipped with a unital $*$-representation $\sigma_{\cX} \colon 
 \cA \to \cL(\cX)$ and a bounded linear operator $H(s_{0}) \in 
 \cL(\cX, \cY)$ so that $\varphi(a) = H(s_{0}) \sigma_{\cX}(a) 
 H(s_{0})^{*}$.
\end{enumerate}
\end{theorem}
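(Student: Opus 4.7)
The plan is to apply Theorem \ref{T:cpker} with $\Omega = [\{s_{0}\}]_{\rm nc}$, so that $\Omega_{n} = \{s_{0} \otimes I_{n}\}$ is a single point at each level. The key observation is that in this special case a global function $f \colon \Omega \to \cL(\cA, \cY)_{\rm nc}$ is completely determined by its value $f(s_{0}) \in \cL(\cA, \cY)$, because grading together with respect for direct sums forces $f(s_{0} \otimes I_{n}) = I_{n} \otimes f(s_{0})$; similarly, a global kernel $K$ on $\Omega$ is determined by $\varphi := K(s_{0}, s_{0})$ via $K(s_{0} \otimes I_{n}, s_{0} \otimes I_{m}) = \varphi^{(n,m)}$, and the complete positivity condition \eqref{kercp} for $K$ at level $n$ is exactly the complete positivity condition \eqref{cp} for $\varphi$ at level $n$. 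This yields a bijective correspondence between cp maps $\varphi \colon \cA \to \cL(\cY)$ and cp global kernels on $\Omega$, identifying condition (1) of Theorem \ref{T:Stinespring} with condition (1) of Theorem \ref{T:cpker} applied to $\Omega$.

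For (1) $\Leftrightarrow$ (2), I would take $\cH(\varphi)$ to be the reproducing kernel Hilbert space $\cH(K)$ provided by Theorem \ref{T:cpker}, identified via restriction $f \mapsto f(s_{0})$ with a subspace of $\cL(\cA, \cY)$. The kernel elements $K_{s_{0}, v, y}$ of Theorem \ref{T:cpker} at level $m = 1$ become exactly the $K_{v,y}$ asserted in Theorem \ref{T:Stinespring}(2); higher-level kernel elements $K_{s_{0} \otimes I_{m}, v, y}$ reduce at level $1$ to $\sum_{j=1}^{m} K_{v_{j}, y_{j}}$ by the iterated direct-sum formula \eqref{kerds-iterated}, so they lie in the span of the level-$1$ kernel elements and no additional functions on $s_{0}$ are produced. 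The reproducing property \eqref{reprod}, the representation formula \eqref{rep1}, and the action on kernel elements \eqref{kernelaction} all restrict at level $1$ to the corresponding statements in Theorem \ref{T:Stinespring}(2).

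For (1) $\Leftrightarrow$ (3), specializing the Kolmogorov decomposition \eqref{Koldecom} to $Z = W = s_{0}$ and $P = a \in \cA$ yields $\varphi(a) = H(s_{0}) \sigma(a) H(s_{0})^{*}$, which is precisely the Stinespring dilation of Theorem \ref{T:Stinespring}(3). Conversely, given a Stinespring dilation of $\varphi$, one extends $H$ globally by setting $H(s_{0} \otimes I_{n}) := I_{n} \otimes H(s_{0})$ (automatically a global function on $\Omega$), and formula \eqref{Koldecom} then produces the cp global kernel $K$ on $\Omega$ corresponding to $\varphi$.

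No substantive obstacle appears in the argument; the work has already been done in Theorem \ref{T:cpker}. The only minor technical point to verify is that the identification $f \mapsto f(s_{0})$ of $\cH(K)$ with a subspace of $\cL(\cA, \cY)$ is well-defined, injective, and isometric, which reduces to the observation noted above that on $\Omega = [\{s_{0}\}]_{\rm nc}$ the whole space is generated densely by level-$1$ kernel elements, together with the inner product formula $\langle K_{v,y}, K_{v',y'}\rangle_{\cH(\varphi)} = \langle \varphi(v^{\prime *}v) y, y'\rangle_{\cY}$ specializing \eqref{kerinnerprod'}.
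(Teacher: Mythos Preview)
Your proposal is correct and matches the paper's approach exactly: the paper states explicitly that Theorem \ref{T:Stinespring} ``is simply the specialization of Theorem \ref{T:cpker} to the case where $\Omega$ is the nc envelope of the singleton set $\Omega_{1} = \{s_{0}\}$,'' and gives no separate proof. Your observation that the higher-level kernel elements $K_{s_{0}\otimes I_{m},V,Y}$ collapse to sums $\sum_{j} K_{v_{j},y_{j}}$ of level-$1$ kernel elements is precisely the content of Remark \ref{R:Stinespring}, which the paper includes to explain why part (2) of Theorem \ref{T:Stinespring} appears simpler than part (2) of Theorem \ref{T:cpker}.
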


\begin{remark} \label{R:Stinespring}
     As a consequence of Remark \ref{R:modelKol} specialized to 
    the setting here, we see that we may take the Stinespring dilation 
    space $\cX$ in part (3) of Theorem \ref{T:Stinespring} to be the 
    reproducing kernel space $\cH(\varphi)$ with associated map 
$H(s_{0}) \in \cL(\cH(\varphi), \cY)$ given by
$$
  H(s_{0}) \colon f \mapsto f(1_{\cA})
$$
with adjoint $H(s_{0})^{*}$ given by
$$
  H(s_{0})^{*} \colon y \mapsto K_{1_{\cA}, y}.
$$
This gives a new geometric picture for the Stinespring dilation space 
$\cX$; we refer to the paper of Muhly-Solel \cite{MS2002} for an alternative 
geometric picture.

Also already noted in the original construction of Stinespring \cite{Stinespring}, 
the space $\cX = \cH(\varphi)$ can itself be considered as 
an Aronszajn reproducing kernel Hilbert space with the algebra $\cA$ 
taken to be the point set, with kernel $ K_{\varphi} \colon \cA 
\times \cA \to \cL(\cY)$ given by
$$
   K_{\varphi} (a,b) = \varphi(b^{*}a).
$$

At first impression it appears that part (2) of Theorem 
\ref{T:Stinespring} is an oversimplification of part (2) of Theorem 
\ref{T:cpker} since part (2) of Theorem \ref{T:Stinespring} mentions 
only kernel elements of the form $K_{v,y}$ with $v \in \cA$ and $y 
\in \cY$ but not with $v \in \cA^{1 \times m}$ and $y \in \cY^{m}$.  
The explanation is that the additional kernel elements $K_{V, Y}$ (say
with $V = \begin{bmatrix} v_{1} & \cdots & v_{m}\end{bmatrix} \in 
\cA^{1 \times m}$ and $y = \sbm{ y_{1} \\ \vdots \\ y_{m}} \in 
\cY^{m}$) are present, but do not add any information since in this 
case we have the linearity relations
$$
 K_{V, Y} = \sum_{i=1}^{m} K_{v_{i}, y_{i}}.
$$
This is a consequence of the fact that the level-$m$ point-set 
$\Omega_{m}$ consists only of the single diagonal point $s_{0} \otimes I_{m}$.
Note first that $K_{V,Y}$ is really $K_{s_{0} \otimes I_{m}, V, Y}$ 
in the notation of Theorem \ref{T:cpker}.
Identifying $f$ with $f(s_{0})$ when convenient (the meaning should 
be clear from the context) and using that 
$f(s_{0} \otimes I_{m}) = f(s_{0}) \otimes I_{m}$ (since $f \in 
\cH(\varphi)$ respects direct sums), 
we see from the general reproducing property \eqref{reprod} that
\begin{align*}
    \langle f, K_{V,Y} \rangle_{\cH(\varphi)} 
    & =  \langle f(s_{0} \otimes I_{m})(V^{*}), y \rangle_{\cY^{m}} \\
    & = \left\langle \sbm{ f(s_{0}) & & \\ & \ddots & \\ & & f(s_{0}) }
    \sbm{ v_{1}^{*} \\ \vdots \\ v_{m}^{*}}, \, \sbm{ y_{1} \\ \vdots 
    \\ y_{m}} \right\rangle_{\cY^{m}} \\
    & = \sum_{i=1}^{m} \langle f, K_{v_{i}, y_{i}} \rangle_{\cH(K)} 
    \\  & = \left\langle f, \sum_{i=1}^{m} K_{v_{i}, y_{i}} \right\rangle_{\cH(K)}
\end{align*}
whence we conclude that $K_{V,Y} = \sum_{i=1}^{m} K_{v_{i}, y_{i}}$.
\end{remark}

A notion closely related to complete positivity is that of complete 
boundedness which can be formulated more generally for maps between 
operator spaces $\cV_{1}$ and $\cV_{0}$ (recall the definitions from 
Subsection \ref{S:ncsets}).  Given operator spaces 
$\cV_{1}$ and $\cV_{0}$ and a linear map $\varphi \colon \cV_{1} \to 
\cV_{0}$, we say that $\varphi$ is \textbf{completely bounded} (cb) if 
there is a constant $M < \infty$ so that $\| \varphi^{(n)}\| \le M$ 
for all $n \in {\mathbb N}$; the smallest such $M$ is defined to be 
the \textbf{completely bounded norm} of $\varphi$, denoted as 
$\|\varphi\|_{\rm cb}$.  As yet another piece of useful terminology, 
let us say that any Hilbert space $\cH$ whose elements consist of 
global functions $f \colon \Omega \to \cL(\cA, \cY)_{\rm nc}$ as in 
the hypotheses of Theorem \ref{T:RKHS} is a \textbf{nc functional 
Hilbert space equipped with the $*$-representation} $\sigma_{\cH} \colon \cA 
\to \cL(\cH)$ given by formula \eqref{rep2}.  In the setup of Theorem 
\ref{T:RKHS}, it was only assumed that the point evaluation maps 
$\boldsymbol{\rm ev}_{Z} \colon f \mapsto f(Z)$ were bounded.  In fact it 
turns out that each $\boldsymbol{\rm ev}_{Z} \colon \cH \to \cY$  as well as the 
factor $H(Z) \colon \cH \to \cY^{n}$ (for each $Z \in \Omega_{n}$) are 
completely bounded, as summed up in the following result.   
Here the Hilbert spaces $\cH$ and $\cY$ are given their 
natural column-space operator structure, i.e., we identify $\cH$ with 
$\cL({\mathbb C}, \cH)$ and $\cY$ with $\cL({\mathbb C}, \cY)$ in the 
natural way:
$$ h \in \cH \cong {\mathbf h} \colon c \in {\mathbb C} \mapsto c h 
\in \cH.
$$

\begin{proposition}  \label{P:cbRKHS}
    \begin{enumerate}
	\item Suppose that $K$ is a cp global kernel and that $Z \in 
	\Omega_{n}$ and $W \in \Omega_{m}$.  Then $K(Z,W)$ is cb 
	with 
	\begin{align}  
	\|K(Z,W)\|_{\rm cb} & \le \| K\left( \sbm{ Z & 0 \\ 0 & 
	W}, \sbm{Z & 0 \\ 0 & W} \right)(1_{\cA^{(n+m) \times 
	(n+m)}})\|   \notag \\
	& = \max \{ \|K(Z,Z)(1_{\cA^{n \times n}})\|,  \,
	\|K(W,W)(1_{\cA^{m \times m}}) \|\}.
	 \label{Kercb}
	\end{align}
Moreover, 
    \begin{equation}   \label{Kercb'}
	\| K \left( \sbm{ Z & 0 \\ 0 & Z}, \sbm{ Z & 0 \\0 & Z} \right) \|_{\rm cb} = 
	\| K(Z,Z)(1_{\cA^{n \times n}}) \|.
\end{equation}
	
\item Suppose that $\cH = \cH(K)$ is a nc functional Hilbert space equipped with 
    canonical $*$-representation $\sigma_{\cH}$ as defined above and 
    let $W \in \Omega_{m}$.
   Then $f(W) \in \cL(\cA^{m}, \cY^{m})$ is completely bounded with cb-norm 
  satisfying
  \begin{equation}  \label{cbnormf(W)}
   \| f(W) \|_{\cL_{\rm cb}(\cA^{m}, \cY^{m})} \le \| f \|_{\cH(K)} \, 
   \| K(W,W)(1_{\cA^{m \times m}}) \|^{1/2}.
   \end{equation}
 Moreover, if  $\boldsymbol{\rm ev}^{(r,s)}_{W} \colon \cH(K) \to 
 \cL((\cA^{m})^{r \times s}, (\cY^{m})^{r \times s})$ is the 
 point-evaluation operator given by
 $$
 \boldsymbol{\rm ev}^{(r,s)}_{W} \colon f \mapsto {\rm id}_{{\mathbb 
 C}^{r \times s}} \otimes f(W), 
$$
 then
  \begin{equation}   \label{normevW}
   \| \boldsymbol{\rm ev}_{W}^{(r,s)} \|_{\cL(\cH, \cL((\cA^{m})^{r 
   \times s}, (\cY^{m})^{r \times s}))} = 
\sup_{ V, \bc} \| K(W,W)( V \bc \bc^{*} V^{*}) \|^{1/2}
\end{equation}
where the supremum is taken over $V \in (\cA^{m})^{r \times s}$ of 
norm at most 1 and over $\bc \in {\mathbb C}^{s}$ of norm at most 1.  
In particular, we have the estimate
\begin{equation}  \label{est-normevW}
    \| \boldsymbol{\rm ev}_{W}^{(r,s)} \|_{\cL(\cH, \cL((\cA^{m})^{r 
   \times s}, (\cY^{m})^{r \times s}))} \le \|K(W,W)(1_{\cA^{m \times 
   m}}) \|^{1/2}
\end{equation}
and, for the case $m=1$, we have the equalities
\begin{equation} \label{m=1}
    \| \boldsymbol{ev}_{W}\|_{\cL(\cH, \cL(\cA, \cY))} 
    =  \| \boldsymbol{ev}_{W}\|_{\cL_{\rm cb}(\cH, \cL(\cA, \cY))} 
    = \| K(W,W)(1_{\cA})\|^{1/2}
\end{equation}
where we have set $\boldsymbol{\rm ev}_{W} = \boldsymbol{\rm 
ev}_{W}^{(1,1)}$.

    \item Suppose that $K \colon \Omega \times \Omega \to \cL(\cA, 
    \cL(\cY))$ is a cp global kernel with nc function $H$ in its Kolmogorov decomposition 
    \eqref{Koldecom} and suppose that $Z$ be a point in $\Omega_{n}$. Then the 
    map $H(Z) \colon \cX^{n} \to \cY^{n}$ is completely bounded with 
    cb norm given by
    \begin{equation}   \label{normH(Z)}
	\|H(Z)\|_{\rm cb} = \|H(Z)\| = \| K(Z,Z)(1_{\cA^{n \times 
	n}})\|^{1/2}.
\end{equation}
 \end{enumerate}
 \end{proposition}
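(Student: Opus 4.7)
The plan is to address the three parts in order, each reducing to a structural property established earlier.

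For part (1), set $\widehat S = \sbm{Z & 0 \\ 0 & W} \in \Omega_{n+m}$. Then $K(\widehat S, \widehat S) \colon \cA^{(n+m) \times (n+m)} \to \cL(\cY^{n+m})$ is a cp map between operator systems, so by Remark \ref{R:cp-cb} it is completely bounded with $\|K(\widehat S,\widehat S)\|_{\rm cb} = \|K(\widehat S,\widehat S)(1_{\cA^{(n+m)\times(n+m)}})\|$. The ``respects direct sums'' identity \eqref{kerds-iterated} renders this image block-diagonal with diagonal blocks $K(Z,Z)(1_{\cA^{n\times n}})$ and $K(W,W)(1_{\cA^{m\times m}})$, whose norm is the maximum of the two block norms. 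Since $K(Z,W)$ appears as the $(1,2)$-corner of $K(\widehat S, \widehat S)$ (feed in $P$ in the $(1,2)$-block of an otherwise zero matrix), its cb-norm is dominated by that of $K(\widehat S, \widehat S)$, which yields \eqref{Kercb}. Equation \eqref{Kercb'} is the same argument with $W = Z$.

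For part (2), the core step is the operator inequality
\begin{equation*}
f(W)(U)\,f(W)(U)^* \preceq \|f\|_{\cH(K)}^2 \, K(W,W)(UU^*), \quad U \in \cA^{m \times r},
\end{equation*}
in $\cL(\cY^m)$. To establish it, I use the lifted-norm representation of Theorem \ref{T:lifted} to write $f = f_h$ with $\|h\|_{\cH} = \|f\|_{\cH(K)}$, so that $f(W)(U) = H(W)\,({\rm id}_{m \times r} \otimes \sigma_{\cH})(U)\,(I_r \otimes h)$. Since $I_r \otimes hh^* \preceq \|h\|^2\, I_{\cX^r}$ and $\sigma_{\cH}$ is a $*$-representation (so $({\rm id}_{m \times r} \otimes \sigma_{\cH})(U)\,({\rm id}_{r \times m} \otimes \sigma_{\cH})(U^*) = ({\rm id}_{m \times m} \otimes \sigma_{\cH})(UU^*)$), conjugating by $H(W)$ and using the Kolmogorov form \eqref{Koldecom} yields the claim. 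Specializing $U = V\bc$ and taking the supremum over unit $V, \bc$ gives the $\le$ direction in \eqref{normevW}, the bound \eqref{cbnormf(W)}, and (via $V\bc\bc^*V^* \preceq 1_{\cA^{m \times m}}$ and complete positivity of $K(W,W)$) the estimate \eqref{est-normevW}. The reverse direction in \eqref{normevW} is obtained by choosing $f$ proportional to an appropriate linear combination of kernel elements built from the optimizing $V, \bc, y$; the reproducing formula then realizes $\|\bev_W^{(r,s)} f\|^2 / \|f\|^2$ as close as desired to $\|K(W,W)(V\bc\bc^*V^*)\|$. The $m=1$ equalities \eqref{m=1} are immediate, since no operator-space amplification is needed at level one and $f = K_{W,1_{\cA},y}/\|K_{W,1_{\cA},y}\|$ with $y$ near-saturating the spectrum of $K(W,W)(1_{\cA})$ attains the supremum.

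For part (3), set $P = 1_{\cA^{n \times n}}$ in the Kolmogorov decomposition \eqref{Koldecom}. Since $\sigma$ is unital, $({\rm id}_{n \times n} \otimes \sigma)(1_{\cA^{n \times n}}) = I_{\cX^n}$, so $K(Z,Z)(1_{\cA^{n \times n}}) = H(Z)\,H(Z)^*$, giving $\|H(Z)\| = \|K(Z,Z)(1_{\cA^{n \times n}})\|^{1/2}$. The equality $\|H(Z)\|_{\rm cb} = \|H(Z)\|$ is the standard fact that a bounded linear operator between Hilbert spaces, each given its column operator space structure, is automatically cb with cb-norm equal to the operator norm.

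The main obstacle I anticipate is the sharpness in \eqref{normevW}: constructing an $f \in \cH(K)$ that nearly attains the supremum on the right-hand side. This will require combining higher-rank kernel elements as in the proof of Theorem \ref{T:RKHS} with a spectral-theoretic selection of $y \in \cY^m$ approximating the operator norm of the positive operator $K(W,W)(V\bc\bc^*V^*)$, and keeping track of the various operator-space structures on $(\cA^m)^{r \times s}$ and $(\cY^m)^{r \times s}$.
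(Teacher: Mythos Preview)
Your proposal is correct, and for part (1) it matches the paper's argument exactly. Parts (2) and (3), however, take genuinely different routes from the paper.

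For part (3), the paper gives a direct hands-on computation: it uses the explicit formula \eqref{defH} for $H(Z)$ as a directional point-evaluation on $\cH(K)^n$, bounds $\|({\rm id}_{r\times s}\otimes H(Z))(F)\cdot c\|$ via the reproducing property and kernel elements, and closes with a squeeze argument. Your invocation of the general fact that bounded operators between column Hilbert spaces are automatically cb with equal norms is cleaner and sidesteps all of this; the paper itself concedes as much in the remark immediately following the proof (Remark \ref{R:EffrosRuan}, item 1). What the paper's direct argument buys is self-containment---no appeal to the Effros--Ruan result is needed.

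For part (2), the paper works entirely through the reproducing property: it expresses $\langle ({\rm id}\otimes f(W))(V)\bc, y\rangle$ as $\langle f, \sum_i K_{W,\bc^*\bv_i^*,y_i}\rangle_{\cH(K)}$ and reads off both directions of \eqref{normevW} from the norm of this kernel-element sum (the sup over $\|f\|\le 1$ is then \emph{exactly} that norm, not merely bounded by it). Your route via the Kolmogorov/lifted-norm representation instead produces the pointwise operator inequality $f(W)(U)\,f(W)(U)^* \preceq \|f\|^2\, K(W,W)(UU^*)$, which is an attractive structural statement and gives the upper bounds immediately. One small imprecision to repair: when you ``specialize $U = V\bc$'' with $V \in (\cA^m)^{r\times s}$, the product $V\bc$ lives in $(\cA^m)^r \cong \cA^{mr}$, not in $\cA^{m\times r'}$ for any $r'$, so your core inequality must be applied at the direct-sum point $\bigoplus_1^r W \in \Omega_{mr}$ rather than at $W$ itself; the paper does exactly this in its estimate \eqref{est2}, and the same fix works for your argument. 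Both approaches reach the same destination; yours makes the operator-theoretic mechanism (conjugation of a rank-one projection dominated by the identity) more transparent, at the cost of invoking the Kolmogorov machinery rather than staying within the reproducing-kernel calculus.
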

 
 \begin{proof}
     To prove (1), suppose that $K$ is a cp global kernel and we 
     fix $Z \in \Omega_{n}$ and $W \in \Omega_{m}$.
     A general fact is that a cp map 
     $\varphi \colon \cA \to \cL(\cY)$ is automatically cb with $\| \varphi \|_{\rm cb} = \| 
     \varphi(1_{\cA})\|$.  In particular, we conclude that, for 
     each fixed $Z \in \Omega_{n}$ and $W \in \Omega_{m}$, 
     $K\left(\sbm{Z & 0 \\ 0 & W}, \sbm{Z & 0 \\ 0 & W}\right) \colon 
     \cA^{(n+m) \times (n+m)} 
     \to \cL(\cY)^{(n+m) \times (n+m)}$ is cb with $\| K(\sbm{Z& 0 \\ 
     0 & W}, \sbm{Z & 0 \\ 0 & W}) \|_{\rm cb} = 
     \| K\left(\sbm{Z & 0 \\ 0 & W}, \sbm{Z & 0 \\ 0 & 
     W}\right)(I_{n+m} \otimes 1_{\cA})\|$.  But a consequence of the 
     ``respects direct sums'' property is that
     $$
       K\left( \sbm{ Z & 0 \\ 0 & W}, \sbm{Z & 0 \\ 0 & W} 
       \right)\left( \sbm{ 0 & P \\ 0 & 0} 
       \right) =
       \begin{bmatrix} 0 & K(Z,W)(P) \\ 
	  0 & 0 \end{bmatrix}.
 $$
 from which we read off that
 $$
 \|{\rm id}_{{\mathbb C}^{n \times n}} \otimes K(Z,W) \| \le
 \left\| {\rm id}_{{\mathbb C}^{n \times n}} \otimes K\left( \sbm{ Z 
 & 0 \\ 0 & W}, \sbm{ Z & 0 \\ 0 & W} \right) \right\|
 $$
 and hence
 $$
   \| K(Z,W) \|_{\rm cb} \le \left\| K\left( \sbm{Z & 0 \\ 0 & W}, 
   \sbm{Z & 0 \\ 0 & W} \right) \right\|_{\rm cb} = \left\| K\left( \sbm{ Z& 0  \\ 0 
   & W}, \sbm{ Z & 0 \\ 0 & W}\right)( 1_{\cA^{N \times N}}) 
   \right\|
 $$
 ($N = n + m $) and it follows that $K(Z,W)$ is cb with cb-bound as in \eqref{Kercb}.
 The equality in \eqref{Kercb} follows from the identity
 $$
    K\left( \sbm{ Z & 0 \\ 0 & W}, \sbm{ Z & 0 \\ 0 & W}\right) 
   (1_{\cA^{N \times N}})  = \begin{bmatrix} K(Z,Z)(1_{\cA^{n \times 
   n}}) & 0 \\ 0 &  K(W,W)(1_{\cA^{m \times m}}) 
   \end{bmatrix},
 $$
 a consequence of the ``respects direct sums'' property \eqref{kerds}.
 Similarly, the cb-norm of $K\left( \sbm{ Z & 0 \\ 0 & Z},  \sbm{ Z & 0 \\ 0 & Z} \right)$ is 
 $\|K\left( \sbm{ Z & 0 \\ 0 & Z}\right)(1_{\cA^{2n \times 2n}})\|$. 
Then the formula \eqref{Kercb'} follows from the  identity
 $$
 K\left( \sbm{ Z & 0 \\ 0 & Z},\sbm{ Z & 0 \\ 0 & Z} \right)(1_{\cA^{2n \times 2n}}) = 
 \begin{bmatrix} K(Z,Z)(1_{\cA^{n}}) & 0 \\ 0 & K(Z,Z)(1_{\cA^{n}}) 
     \end{bmatrix},
 $$

 To prove (2), we assume that $\cH = \cH(K)$ for the cp global kernel 
 $K$ and $f \in \cH$.  We wish to estimate the norm of 
 $$
 {\rm id}_{{\mathbb C}^{r \times s}} \otimes f(W) \colon [ v_{ij} ] 
 \mapsto [f(W)(v_{ij})] \in (\cY^{m})^{r \times s} \cong \cL({\mathbb 
 C}^{s}, (\cY^{m})^{r})
 $$
 where $V = [v_{ij}] \in (\cA^{m})^{r \times s}$.  Let us use the 
 notation $\bv_{i} = \begin{bmatrix} v_{i1} & \cdots & v_{is} 
\end{bmatrix}$ for the $i$-th row of $V$.  Choose $\bc = \sbm{ c_{1} 
\\ \vdots \\ c_{s}} \in {\mathbb C}^{s}$.  Viewing $({\rm 
id}_{{\mathbb C}^{r \times s}} \otimes f(W))(V)$ as an 
operator from ${\mathbb C}^{s}$ to $(\cY^{m})^{r}$, we have
\begin{align*}
& ({\rm id}_{{\mathbb C}^{r \times s}} \otimes f(W))(V) \colon
  \bc \mapsto {\rm col}_{1 \le i \le r} \left[ \sum_{j=1}^{s} f(W) (v_{ij}) 
  c_{j} \right]  \\
  & \quad = {\rm col}_{1 \le i \le r} \left[ \sum_{j=1}^{s} f(W) (v_{ij} 
  c_{j}) \right] = {\rm col}_{1 \le i \le r} \left[  f(W) (\bv_{i} 
  \bc)\right].
\end{align*}
If $y = \sbm{ y_{1} \\ \vdots \\ y_{r}}$ is an arbitrary element of 
$(\cY^{m})^{r}$, we then have
\begin{align}
&  \langle ({\rm id}_{{\mathbb C}^{r \times s}} \otimes f(W))(V) \bc, y 
  \rangle_{(\cY^{m})^{r}} = \sum_{i=1}^{r} \langle f(W)(\bv_{i} \bc), 
  y_{i} \rangle_{\cY^{m}} \notag \\
  & \quad = \sum_{i=1}^{r}\langle f, K_{W, \bc^{*} \bv_{i}^{*}, y_{i}} 
  \rangle_{\cH(K)} \notag \\
  & \quad = \left\langle f, \sum_{i=1}^{r} K_{W, \bc^{*} \bv_{i}^{*}, 
  y_{i}} \right\rangle_{\cH(K)}.
  \label{reprod'}
\end{align}
This leads to the estimate
\begin{equation}   \label{est1}
   \left|  \langle ({\rm id}_{{\mathbb C}^{r \times s}} \otimes 
   f(W))(V) \bc, y 
  \rangle_{(\cY^{m})^{r}} \right| \le
 \| f \|_{\cH(K)} \, \left\| \sum_{i=1}^{r} K_{W, \bc^{*} 
 \bv_{i}^{*},y_{i}} \right\|_{\cH(K)}.
\end{equation}
We next estimate
\begin{align}
  &  \left\| \sum_{i=1}^{r} K_{W, \bc^{*}\bv_{i}^{*},y_{i}} 
    \right\|^{2}_{\cH(K)}  = 
 \sum_{i,j=1}^{r}\left \langle K(W,W) \left(\bv_{i} \bc \bc^{*} 
 \bv_{j}^{*}\right)  y_{j}, y_{i} \right\rangle_{\cY^{m}}  \notag \\
 & \quad = \left\langle K\left( \bigoplus_{1}^{r} W, 
 \bigoplus_{1}^{r} W \right) (V \bc \bc^{*} V^{*}) y, \, y 
 \right\rangle_{(\cY^{m})^{r}} \notag \\
 & \quad \le \left\| K\left( \bigoplus_{1}^{r}W, \bigoplus_{1}^{r}W 
 \right) \right\|_{\cL(\cA^{mr}, \cL(\cY^{mr}))} \| V \bc \bc^{*} V^{*} 
 \|_{\cA^{mr \times mr}}\,  \| y \|^{2}_{(\cY^{m})^{r}}
 \label{est2}
\end{align}
By \eqref{Kercb'} and an easy induction, 
\begin{equation}   \label{est3}
  \left\| K\left(\bigoplus_{1}^{r} W, \bigoplus_{1}^{r} W \right) 
  \right\|_{\cL(\cA^{mr \times mr}, \cL(\cY^{mr}))} = \left\| 
  K(W,W)(1_{\cA^{m \times m}}) \right\|_{\cL(\cY^{m})}.
\end{equation}
Note next that $V \bc \bc^{*} V^{*} \preceq \| \bc \|^{2}_{{\mathbb 
C}^{r}} V V^{*}$ in    $\cA^{mr \times mr}$ and hence
\begin{equation}   \label{est4}
    \| V \bc \bc^{*} V^{*} \|_{\cA^{mr \times mr}} \le \| \bc 
    \|^{2}_{{\mathbb C}^{r}} \| V \|^{2}_{\cA^{mr \times s}}.
\end{equation}
Putting all the pieces \eqref{est1}, \eqref{est2}, \eqref{est2}, 
\eqref{est3} together, we arrive at
\begin{equation}   \label{est5}
    \| ({\rm id}_{{\mathbb C}^{r \times s}} \otimes f(W))(V) \bc \|
    \le \|f \|_{\cH(K)} \|K(W,W)(1_{\cA^{m \times m}}) \|^{1/2} \| V 
    \| \| \bc\|
\end{equation}
from which the estimate \eqref{cbnormf(W)} follows.

We use the generalized reproducing property \eqref{reprod'} to compute the norm of 
$\boldsymbol{\rm ev}_{W}^{(r,s)}$ as follows:
\begin{align*}
    \|\boldsymbol{\rm ev}_{W}^{(r,s)} \| & = 
    \sup_{\|f \|_{\cH} \le 1} \| \boldsymbol{\rm ev}_{W}^{(r,s)}(f) 
    \| \\
    & = \sup_{\|f \| \le  1,\, \| V \| \le 1,\, \| \bc \| \le 1}
    \|({\rm id}_{{\mathbb C}^{r \times s}} \otimes f(W))(V) \bc \|  \\
    & = \sup_{\| f \| \le 1, \, \| V \| \le 1, \| \bc \| \le 1, \| y 
    \| \le 1} | \langle ({\rm id}_{{\mathbb C}^{r \times s}} \otimes 
    f(W))(V) \bc, y \rangle | \\
    & = \sup_{\| f \| \le 1, \, \| V \| \le 1, \| \bc \| \le 1, \| y 
    \| \le 1} \left| \left\langle f, \, \sum_{i=1}^{r} K_{W, \bc^{*} 
    \bv_{i}^{*}, y_{i}} \right\rangle_{\cH(K)} \right| \text{ (by 
    \eqref{reprod'}) } \\
    & = \sup_{ \| V \| \le 1, \| \bc \| \le 1, \| y 
    \| \le 1}  \left\| \sum_{i=1}^{r} K_{W, \bc^{*} 
    \bv_{i}^{*}, y_{i}} \right\| \\
    & =  \sup_{ \| V \| \le 1, \| \bc \| \le 1, \| y 
    \| \le 1} \left\langle K\left(\bigoplus_{1}^{r} W, \bigoplus_{1}^{r} W 
    \right) (V \bc \bc^{*} V^{*}) y, y \right\rangle^{1/2}
\end{align*}
where the last step is by the first part of the calculation 
\eqref{est2}. This completes the verification of \eqref{normevW}.

From the rest of the calculation \eqref{est2} we 
arrive at the uniform estimate (independent of $r$ and $s$)
$$
\|\boldsymbol{\rm ev}_{W}^{(r,s)} \| \le \| K(W,W)(1_{\cA^{m \times 
 m}})\|^{1/2}.
 $$
 
 For the case $m=1$ and $r=s=1$, we may specialize $V$ to $1_{\cA}$ 
 and $\bc$ to $1 \in {\mathbb C}$ to get
 \begin{align*}
     \| \boldsymbol{\rm ev}_{W}\|&  = \sup_{ \| V \| \le 1, \| \bc \| \le 1, \| y 
    \| \le 1} \langle K( W, W) (V \bc \bc^{*} V^{*}) y, y \rangle^{1/2} \\
    & \ge \sup_{\| y \| \le 1} \langle K(W,W)(1_{\cA}) y, y 
   \rangle^{1/2}  \\
   & = \| K(W,W)(1_{\cA}) \|^{1/2}
 \end{align*}
 We therefore have the squeeze play
\begin{align*}
\|K(W,W)(1_{\cA})\|^{1/2} & = 
\|\boldsymbol{\rm ev}_{W}\|_{\cL(\cH, \cL(\cA, \cY))} \\
& \le \|\boldsymbol{\rm ev}_{W}\|_{\cL(\cH, \cL_{\rm cb}(\cA, \cY))}
\le \|K(W,W)(1_{\cA})\|^{1/2}
\end{align*}
from which the string of equalities \eqref{m=1} fo the case $r=s=1$ 
and $m=1$ follows.

To prove (3), we use the formula \eqref{defH} for $H(Z)$.  For $Z \in 
\Omega_{n}$ and $F = [F_{ij}] \in (\cH(K)^{n})^{r \times s}$, we have
$$
({\rm id}_{{\mathbb C}^{r \times s}} \otimes H(Z))(F) \in 
(\cY^{n})^{r \times s} \cong \cL({\mathbb C}^{s}, \cY^{nr}).
$$
For $c = [c_{j}]_{j=1}^{s} \in {\mathbb C}^{s}$, we therefore wish to estimate the norm of
\begin{align*}
({\rm id}_{{\mathbb C}^{r \times s}} \otimes H(Z))(F) \cdot c & =
[ H(Z)(F_{ij})] \cdot [c_{j}] =
\left[ \sum_{j=1}^{s} c_{j} H(Z) (F_{ij})  \right]_{i=1}^{r} \\
& = \left[ \sum_{j=1}^{s} c_{j} F_{ij}(Z) (1_{\cA^{n \times n}}) 
\right]_{i=1}^{r}. 
\end{align*}
For $y = [y_{i}]_{i=1}^{r} \in \cY^{nr}$, we compute
\begin{align*}
    & \left\langle \left[ H(Z)(F_{ij}) \right] \cdot [c_{j}], y \right\rangle_{\cY^{mr}} =
 \left\langle \left[ \sum_{j=1}^{s} c_{j} F_{ij}(Z)(1_{\cA^{n \times n}}) \right]_{i=1}^{r}, 
 [y_{i}]_{i=1}^{r}  \right\rangle_{\cY^{nr}} \\
& \quad = \sum_{i=1}^{r} \sum_{j=1}^{s} c_{j} \left\langle F_{ij},\, 
K_{Z,1_{\cA^{n \times n}},y_{i}} \right\rangle_{\cH(K)^{n}} \\
& =\left\langle \left[ \sum_{j=1}^{s} F_{ij} c_{j} 
\right]_{i=1}^{r}, \, K_{ \oplus_{1}^{r} Z,\, 1_{\cA^{nr \times nr}}, y} 
\right\rangle_{\cH(K)^{nr}}.
\end{align*}
We then estimate
\begin{align*}
   & | \langle  [H(Z)(F_{ij})] \cdot [c_{j}], y \rangle_{\cY^{mr}} |
    \le \| [ \sum_{j=1}^{s} F_{ij} c_{j} ]_{i=1}^{r} \| \cdot 
\|K(\oplus_{1}^{r} Z, \oplus_{1}^{r} Z)(1_{\cA^{nr \times nr}}) 
\|^{1/2} \\
& \quad = \| [ \sum_{j=1}^{s} F_{ij} c_{j} ]_{i=1}^{r} 
\|_{\cH(K)^{nr}} \cdot \| K(Z,Z)(1_{\cA^{n \times n}})\|^{1/2}
\end{align*}
and it follows that 
$$
  \|  {\rm id}_{{\mathbb C}^{r \times s}} \otimes H(Z) \| \le \| 
  K(Z,Z)(1_{\cA^{n \times n}}) \|^{1/2}.
$$
for all $r, s \in {\mathbb N}$.  In particular we get the estimate
$$
  \| H(Z) \|_{\rm cb}  \le \| K(Z,Z)(1_{\cA^{n \times n}}) \|^{1/2}.
$$
On the other hand, from \eqref{defH} and \eqref{bevWV**} we read off that
$$
  \| H(Z) \|  = \sup_{\|y\| \le 1} \|K_{Z,1_{\cA^{n \times n}}, 
  y}\|_{\cH(K)^{n}} = \|K(Z,Z)(1_{\cA^{n \times n}})\|^{1/2}.
$$
We again get a squeeze play
\begin{align*}
    \|K(Z,Z)(1_{\cA^{n \times n}})\|^{1/2} = \|H(Z)\| \le 
    \|H(Z)\|_{\rm cb} \le \| K(Z,Z)(1_{\cA^{n \times n}})\|^{1/2}
\end{align*}
from which the string of equalities \eqref{normH(Z)} follows.
\end{proof}

\begin{remark} \label{R:EffrosRuan} \textbf{Boundedness vs.\ Complete 
    Boundedness in general.}  We have several comments exploring the 
    connections of Proposition \ref{P:cbRKHS} with the operator 
    algebra literature.
    
    \smallskip

    \textbf{1.} We note that statement (3) in Proposition \ref{P:cbRKHS} assures 
    us that the bounded operator $H(Z)$ between Hilbert spaces 
    $\cH(K)^{n}$ and $\cY^{n}$ is in fact completely bounded with cb 
    norm equal to its operator norm in $\cL(\cH(K)^{n}, \cY^{n})$.  
    This in fact is a general phenomenon for Hilbert space operators:
    in our notation, 
    {\em for Hilbert spaces $\cH$ and $\cK$, a linear operator $T$ 
    from $\cH$ to $\cK$ is bounded if and only if it is completely 
    bounded (as an operator between the operator spaces $\cH_{\rm 
    col}$ and $\cK_{\rm col}$) and moreover the identity map from $\cL(\cH, \cK)$ to 
    $\cL_{\rm cb}(\cH_{\rm col}, \cK_{\rm col})$ is a complete isometry} (see the result 
    of Effros-Ruan \cite[Theorem 4.1]{ER}).  
   
    \smallskip 
    
   \textbf{2.} Similarly, in case $\cA = {\mathbb C}$, $W \in \Omega_{1}$ and $f 
    \in \cH(K)$,  $f(W) \in \cL({\mathbb C}, \cY)$ is a Hilbert space operator 
    and hence $f(W)$ bounded as an element of $\cY \cong \cL({\mathbb 
    C}, \cY)$ implies that $f(W)$ is completely bounded with cb norm 
    equal to its operator norm. However, 
    in case $\cA \ne {\mathbb C}$ and/or $W \in \Omega_{m}$ with $m>1$, it would appear that 
    statement (2) in Proposition \ref{P:cbRKHS} is not automatic from 
    more general considerations. However, according to another result 
    of Effros-Ruan \cite{ER} as reformulated by Pisier (see 
    \cite[page 3986]{Pisier2004}), an element $u$ of $\cL(\cA^{m}, 
    \cY^{m})$ has cb norm at most 1 if and only if, for any 
    finite sequence $x_{1}, \dots, x_{N}$ of elements from $\cA$, 
    it happens that
 $$
     \sum_{i=1}^{N} \| u(x_{i})\|^{2}_{\cY^{m}} \le \| \sum_{i=1}^{N} 
     x_{i}^{*} x_{i} \|_{\cA^{m}}.
$$
After a rescaling, we get:  {\em $u$ in $\cL(\cA^{m}, 
    \cY^{m})$ has cb norm at most $M$ if and only if, for 
    any finite sequence $x_{1}, \dots, x_{n}$ of elements from 
    $\cA^{m}$,}
    \begin{equation}   \label{ERcriterion}
     \sum_{i=1}^{N} \| u(x_{i})\|^{2}_{\cY^{m}} \le M^{2} \| \sum_{i=1}^{N} 
     x_{i}^{*} x_{i} \|_{\cA^{m}}.
\end{equation} 

Let us apply this criterion for the case where $u = f(W)$ for an $f 
\in \cH(K)$, $W \in \Omega_{m}$ and $x_{i} = \sbm{ x_{i1} \\ \vdots 
\\ x_{im} } \in \cA^{m}$ for $i = 1, \dots, N$.  Making use 
of the canonical $*$-action $\sigma_{\cH(K)} = \sigma$ of $\cA$ on $
\cH(K)$ given by \eqref{rep1}, we note that
\begin{align*}
    f(W)(x_{i}) &  = f(W) \left( \sum_{j=1}^{m} E^{(m)}_{j} \otimes 
    x_{ij} \right)  \\
    & = \sum_{j=1}^{m} \left( \sigma(x_{ij})f \right)\left(W \right) 
    \left( E_{j}^{(m)} \otimes 1_{\cA} \right).
\end{align*}
For $y \in \cY^{m}$ we then compute
\begin{align*}
    \langle f(W)(x_{i}),y \rangle_{\cY^{m}} & = 
\left\langle \sum_{j=1}^{m} \left(\sigma(x_{ij})f \right) \left( W 
 \right) \left(E^{(m)}_{j}  \otimes 1_{\cA}\right), y \right\rangle_{\cY^{m}}  \\
 & = \sum_{j=1}^{m} \left\langle \sigma(x_{ij}) f, K_{W, 
 E_{j}^{(m)*}\otimes 1_{\cA} , y } \right\rangle_{\cY^{m}}.
\end{align*} 
Hence
\begin{align*}
    | \langle f(W)(x_{i}), y  \rangle_{\cY^{m}} | & =
    \left| \left\langle \sum_{j=1}^{m} \sigma(x_{ij})f, K_{W, 
    E_{j}^{(m)*} \otimes 1_{\cA}, y} \right\rangle_{\cH(K)} \right|  \\
    & \le \left\| \sum_{j=1}^{m} \sigma(x_{ij})f \right\| 
    \left\langle K(W,W)\left(E_{j}^{(m)} E_{j}^{(m)*} \otimes 1_{\cA} \right) y, y 
    \right\rangle^{1/2}  \\
    & \le \left\| \sum_{j=1}^{m} \sigma(x_{ij}) f \right\| 
    \|K(W,W)(1_{\cA^{m \times m}}) \|^{1/2} \|y\| 
\end{align*}
and we conclude that
$$
  \| f(W) (x_{i})\|^{2} \le \| \sum_{j=1}^{m} \sigma(x_{ij}) f \|^{2}
   \|K(W,W)(1_{\cA^{m \times m}}) \|.
$$
Let us set $M: = \|K(W,W)(1_{\cA^{m \times m}})\|^{1/2}$ and now sum over 
$i$ to get
\begin{align*}
    \sum_{i=1}^{N} \| f(W) x_{i}) \|^{2} & \le M^{2} \sum_{i=1}^{N} 
    \left\langle \sum_{j,\ell = 1}^{m} \sigma(x_{ij}) f, 
    \sigma(x_{i\ell})f \right\rangle_{\cH(K)}  \\
    & = M^{2} \sum_{i=1}^{N} \left\langle \sigma\left( \sum_{j,\ell = 
    1}^{m} x_{il}^{*} x_{ij} \right) f,\, f \right\rangle_{\cH(K)} \\
     & \le M^{2} \left\| \sum_{i=1}^{N} x_{i}^{*} x_{i} 
    \right\|_{\cA^{m}} \| f \|^{2}_{\cH(K)}.
\end{align*}
As a consequence of the Effros-Ruan--Pisier criterion 
\eqref{ERcriterion}, we conclude that $f(W)$ is a cb map from 
$\cA^{m}$ to $\cY^{m}$ with cb norm at most $\|f\|_{\cH(K)} 
\cdot \|K(W,W)(1_{\cA^{m \times m}})\|$, thereby giving an alternate 
proof of statement (2) in Proposition \ref{P:cbRKHS}.

\smallskip

\textbf{3.} An equivalent formulation of the Effros-Ruan criterion is (see 
\cite[page 3986]{Pisier2004}):  {\em $u$ in $\cL(\cA, \cY)$ has 
cb norm at most 1 if and only if there is a state $\varphi$ on $\cA$ such 
that }
\begin{equation} \label{ERcriterion'}
\text{\em for all } x \in \cA,\, \| u(x)\|^{2}_{\cY} \le \varphi(x^{*}x).
\end{equation}
We can use our theory of global Reproducing Kernel Hilbert Spaces to 
prove the sufficiency side (presumably the easy side) of this 
criterion as follows.  Assume that there is a state $\varphi$ on $\cA$ so 
that \eqref{ERcriterion'} holds.  Use the state $\varphi$ to define an 
inner product on $\cA$:
\begin{equation}   \label{innerprod}
\langle a, b \rangle_{\cH^{\circ}} = \varphi(b^{*} a).
\end{equation}
View the elements of $\cA$ as elements of $\cL(\cA, \cY)$ according 
to the formula
$$
   a \cong f_{a} \colon x \mapsto u(xa).
$$
A consequence of the assumption \eqref{ERcriterion'} is that 
$$
   \| f_{a}(x) \|^{2}_{\cY} = \| u(xa)\|^{2}_{\cY} \le \varphi(a^{*}x^{*}xa) \le \| x 
   \|^{2}_{\cA} \varphi(a^{*} a) = \|x\|^{2}_{\cA} \| a \|^{2}_{\cH^{\circ}}
$$
for each $x \in \cA$.  Hence when we let $\cH$ be the completion of 
$\cH^{\circ}$ in the $\cH^{\circ}$ inner product, the elements $f$ of 
the completion can still be identified as elements of  $\cL(\cA, \cY)$
with the property that $\|f \|_{\cL(\cA, \cY)} \le \| f \|_{\cH}$. 

For $v$ a fixed element of $\cA$, one can check that the map
$\sigma(v) \colon f_{a} \mapsto f_{va}$ is a $*$-representation of 
$\cA$ on $\cL(\cH^{\circ})$ and extends to a $*$-representation of $\cA$ 
on $\cL(\cH)$ of the functional form:
$$
  (\sigma(v) f)(x) = f(xv).
$$

We use this $*$-representation to show that, for each $f \in \cH$,
the map 
$$
{\rm id}_{{\mathbb C}^{n}} \otimes f \colon \sbm{a_{1} \\ \vdots \\ a_{n}} \mapsto 
\sbm{ f(a_{1}) \\ \vdots \\ f(a_{n}) }
$$
has $\cL(\cA^{n}, \cY^{n})$-norm bounded by $\| f \|_{\cH}$ as well.
Indeed, note that
\begin{align*}
    \left\| \sbm{ f(a_{1}) \\ \vdots \\ f(a_{n}) } \right\|^{2}_{\cY^{n}} 
    & = \sum_{i=1}^{n} \| f(a_{i})\|^{2}_{\cY} \\
    & = \sum_{i=1}^{n} \| (\sigma(a_{i}) f)(1_{\cA}) \|^{2}_{\cH} \\
    & \le \sum_{i=1}^{n} \| \sigma(a_{i}) f\|^{2}_{\cH} \| 
    1_{\cA}\|^{2}_{\cA} \\
& = \sum_{i=1}^{n} \langle \sigma(a_{i}^{*} a_{i}) f, \, f \rangle_{\cH} \\ 
& = \left \langle \sigma\left( \sum_{i=1}^{n} a_{i}^{*} a_{i} \right) 
f, \, f \right\rangle_{\cH}  \\
& \le \left\| \sum_{i=1}^{n} a_{i}^{*} a_{i} \right\|_{\cA} \| f 
\|^{2}_{\cH}  \\
& = \left\| \sbm{ a_{1} \\ \vdots \\ a_{n}} \right\|^{2}_{\cA^{n}} \| 
f \|^{2}_{\cH}.
\end{align*}

We have now verified that $\cH$ satisfies all the hypotheses of 
Theorem \ref{T:RKHS} specialized to the case where $\Omega = 
\amalg_{n=0}^{\infty} \{ s_{0} \otimes I_{n}\}$. We conclude that 
$\cH = \cH(\varphi)$ is a nc reproducing kernel Hilbert space over 
the nc envelope  $\amalg_{n=1}^{\infty} \{ s_{0} \otimes 
I_{n}\}$ of a singleton-point set $\{ s_{0}\}$ for some completely 
positive map $\varphi \colon \cA \to 
\cL(\cY)$. By statement (2) 
in Proposition \ref{P:cbRKHS}, it follows that each $f \in \cH$ 
is actually completely bounded: 
$$
\| f \|_{\cL(\cA, \cY)} \le  \|f \|_{\cL_{\rm cb}(\cA, \cY)}   \le \|f \|_{\cH}.
$$
as an element of $\cL(\cA, \cY)$.  In 
particular, $u = f_{1_{\cA}}$ is completely bounded, and the 
sufficiency direction of the second Effros-Ruan criterion 
\eqref{ERcriterion'} follows.
\end{remark}

\subsubsection{Reproducing kernel Hilbert spaces in the sense of 
Aronszajn}  \label{S:Aronszajn}
Let $\Omega$ be a set of points (not necessarily having 
any noncommutative structure), let $\cY$ be a Hilbert space 
and suppose that $K$ is a function from $\Omega \times \Omega$ to 
$\cL(\cY)$.  We say that $K$ is a \textbf{positive kernel}  (in the 
sense of Aronszajn \cite{Aron} who worked out much of the theory for 
the case $\cY = {\mathbb C}$) if
$$
  \sum_{i,j= 1}^{N} \langle K(z_{i},z_{j}) y_{j}, y_{i} 
  \rangle_{\cY} \ge 0
$$
for all choices of points $z_{1}, \dots, z_{N}$ and vectors 
$y_{1}, \dots, y_{n} \in \cY$, $N=1,2,\dots$; in other words, the 
matrix $[ K(w_{i}, w_{j})]_{i,j=1}^{N}$ is positive in 
$\cL(\cY)^{N \times N}$ for all choices of $z_{1}, \dots, 
z_{N} \in \Omega$, for any $N=1,2,\dots$.   
Equivalently, if we let $\widetilde \Omega = [\Omega]_{\rm nc}$ be the nc envelope 
of $\Omega$ (see Section \ref{S:ncsets}), i.e., the nc set $\widetilde \Omega$ with
$$
 \widetilde \Omega_{n} = \left\{ \sbm{ z_{1} & & \\ & \ddots & \\ & & z_{n} } 
 \colon z_{1}, \dots, z_{n} \in \Omega \right\}
 $$
 and we define $\widetilde K \colon \widetilde \Omega \times 
 \widetilde \Omega \to \cL({\mathbb 
 C}, \cL(\cY))_{\rm nc}$ by
 $$
 \widetilde K(Z,W)(P) = [ K(z_{i}, w_{j}) p_{ij} ]
 $$
 if $Z = \sbm{ z_{1} & & \\ & \ddots & \\ & & z_{n}}$, 
 $W = \sbm{ w_{1} & & \\ & \ddots & \\ & & w_{m}}$, and
 $P = [ p_{ij}]_{1 \le i \le n, \, 1 \le j \le m}$,
 then $\widetilde K$ is a cp global kernel as defined in Section 
 \ref{S:main} and in fact is the unique extension of $K$ (defined on 
 $\Omega \times \Omega$ to a cp global kernel on $[\Omega]_{\rm nc} 
 \times [\Omega]_{\rm nc}$.  Conversely, if $\cA = {\mathbb C}$ and if
 $\widetilde K$ is any global kernel  on $\widetilde \Omega = [\Omega]_{\rm nc}$, 
 then $K(z,w): = \widetilde K(z,w)(1)$ is a positive kernel in the 
 sense of Aronszajn on $\Omega$.
 The versions of Theorems \ref{T:cpker} and \ref{T:RKHS} for this 
 special case have been mainstays in the operator theory literature 
 for many decades now (see e.g.\ \cite{AMcC-book}).
 
 Conversely, there are a couple of  ways to associate a positive kernel in 
 the sense of Aronszajn to a general cp global kernel which we now discuss. 
 
 \smallskip
 
 \textbf{1. Fix the $\cA$-argument.}  Let $K \colon \Omega \times 
 \Omega \to \cL(\cA, \cL(\cY))_{\rm nc}$ be a cp global kernel and let $P$ be a 
 fixed positive element of $\cA^{n \times n}$ for some $n \in {\mathbb N}$.
 Then it is easily seen that the kernel $K_{P} \colon \Omega_{n} \times 
 \Omega_{n} \to \cL(\cY^{n})$ defined by
 $$
   K_{P}(Z,W) = K(Z,W)(P)
 $$
 is a positive kernel in the sense of Aronszajn.  There is a 
 resulting reproducing kernel Hilbert space $\cH(K)$  whose elements 
 are $\cY^{n}$-valued functions defined on the set of points 
 $\Omega_{n}$.  Any Kolmogorov decomposition \eqref{Koldecom} for the 
 cp global kernel $K$ induces a standard Aronszajn-Kolmogorov 
 decomposition for $K_{P}$:  if $P \succeq 0$ in $\cA^{n \times n}$ and 
 we factor $P$ as $P = U U^{*}$ with $U \in \cA^{n \times k}$, then
 $$
   K_{P}(Z,W) = H_{P}(Z) H_{P}(W)^{*} \text{ where } H_{P}(Z) : = H(Z) 
   (I_{{\mathbb C}^{n \times k}} \otimes \sigma)(U).
 $$
 
 In the special case where $\cA = {\mathbb C}$, the next result shows 
 that complete positivity 
 of the nc kernel $K$ can often be checked just by looking at 
 various kinds of positivity for the kernels $K_{I_{n}}$ for each 
 $n=1,2,\dots$.
 
 \begin{theorem}  \label{T:cp=p} 
     Assume that $\cV$ is an operator space and that $\Omega$ is a nc 
     subset of $\cV$.
     Suppose that $K \colon \Omega \times \Omega \to \cL({\mathbb 
     C}, \cY)_{\rm nc}$ is a nc kernel. Assume either:
     \begin{enumerate}
	 \item[(a)]  (i) the underlying vector space $\cV$ is 
	 ${\mathbb C}^{d}$ (see the second bullet in the discussion 
	 in Subsection \ref{S:ncsets}) and the nc set $\Omega \subset 
	 {\mathbb C}^{d}_{\rm nc} \cong \amalg_{n=1}^{\infty} ({\mathbb 
	 C}^{n \times n})^{d}$ has the special form 
	$\Omega = \amalg_{n=0}^{\infty} N(0; \epsilon)_{n}$ where 
	$N(0; \epsilon)_{n}$ is the $\epsilon$-ball in $({\mathbb 
	C}^{n \times n})^{d}$ centered at the origin of radius 
	$\epsilon$ for some $\epsilon > 0$, and
	  (ii) 
	 $K_{I_{n}} \colon \Omega_{n} \times \Omega_{n} \to \cL(\cY^{n})$ 
	 given by $K_{I_{n}}(Z,W) = K(Z,W)(I_{n})$ is a positive 
	 kernel in the sense of Aronszajn on $\Omega_{n} = N(0; 
	 \epsilon)_{n}$ for each $n \in {\mathbb 
	 N}$, or
	 
	 \item[(b)]  (i) $\Omega$ is \textbf{similarity-invariant}:  given $W 
	 \in \Omega_{n}$ and $S \in {\mathbb C}^{n \times n}$ with 
	 $S$ invertible, it holds that $S W S^{-1} \in \Omega_{n}$, 
	 and (ii) $K_{I_{n}}(Z,Z) : = K(Z,Z)(I_{n})$ is positive 
	 semidefinite for all $Z \in \Omega_{n}$ for all $n \in 
	 {\mathbb N}$.
\end{enumerate}
Then $K$ is a cp nc kernel.
 \end{theorem}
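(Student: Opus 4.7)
The plan is to verify in both cases the single condition that $K(Z,Z)(P) \succeq 0$ in $\cL(\cY)^{n \times n}$ whenever $Z \in \Omega_n$ and $P \succeq 0$ in $\bbC^{n \times n}$; this is by definition the complete positivity property \eqref{kercp}, and no additional nc structure needs verification since $K$ is assumed to be a nc kernel from the outset.

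For case (b) I would exploit the identity obtained by specializing \eqref{kerHerintertwine} to $W=Z$, $\widetilde W=\widetilde Z$, $\beta=\alpha$ invertible with $\widetilde Z = \alpha Z \alpha^{-1}$, namely
\begin{equation*}
K(\alpha Z \alpha^{-1}, \alpha Z \alpha^{-1})(\alpha P \alpha^*) = \alpha\, K(Z,Z)(P)\, \alpha^*.
\end{equation*}
Given $\widetilde Z \in \Omega_n$ and $P \succ 0$, I would set $\alpha = P^{1/2}$ and $Z = P^{-1/2}\widetilde Z P^{1/2}$. Similarity invariance (b)(i) places $Z$ in $\Omega_n$, hypothesis (b)(ii) yields $K(Z,Z)(I_n) \succeq 0$, and the displayed identity applied with $P$ replaced by $I_n$ delivers $K(\widetilde Z, \widetilde Z)(P) = P^{1/2} K(Z,Z)(I_n) P^{1/2} \succeq 0$. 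For general $P \succeq 0$ I would approximate by $P+\varepsilon I_n \succ 0$ and let $\varepsilon \to 0$; continuity is automatic since $K(\widetilde Z, \widetilde Z)$ is a linear map from the finite-dimensional space $\bbC^{n \times n}$, so positivity of cones passes to the limit.

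For case (a), where similarity invariance fails for the $\varepsilon$-ball, the plan instead is to build a Kolmogorov decomposition of $K$ and then invoke the implication (3) $\Rightarrow$ (1) of Theorem \ref{T:cpker}. For each $n$, Aronszajn's classical theorem applied to the positive kernel $K_{I_n}$ on $\Omega_n$ yields a Hilbert space $\cX_n$ and a function $H_n \colon \Omega_n \to \cL(\cX_n, \cY^n)$ with $K(Z,W)(I_n) = H_n(Z) H_n(W)^*$. Specializing \eqref{kerHerintertwine} to $P=I$ gives $\alpha K(Z,W)(I_n) \beta^* = K(\widetilde Z, \widetilde W)(\alpha \beta^*)$, a coherence relation expressing $K$-values on matrices of the form $\alpha \beta^*$ in terms of level-$n$ data. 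I would use this relation to glue the $\cX_n$'s into one Hilbert space $\cX$ and the $H_n$'s into a single global nc function $H \colon \Omega \to \cL(\cX, \cY)_{\rm nc}$ delivering the Kolmogorov formula $K(Z,W)(P) = H(Z)\,(\mathrm{id}_{\bbC^{n \times m}} \otimes \sigma)(P)\,H(W)^*$ (with $\sigma$ the trivial representation of $\bbC$) on all $P$, first on the dense set of matrices of the form $\alpha \beta^*$ and then by linearity.

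The main obstacle is case (a): the level-wise Aronszajn factors $H_n$ are not canonical and carry no intrinsic nc structure, so assembling them into a coherent nc function $H$ requires the intertwining \eqref{kerHerintertwine} at intertwiners $\alpha,\beta$ connecting different levels $n \neq n'$, not only at those preserving a given level. A parallel route I would keep in reserve is to use the Taylor--Taylor power series expansion of $K$ on the nc ball from \cite{KVV-book}, thereby reducing complete positivity to a positivity property of the doubly-indexed matrix of Taylor coefficients, which can in turn be read off from the Aronszajn positivity of the $K_{I_n}$ as $n$ varies.
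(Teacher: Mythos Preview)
Your treatment of case (b) is correct and essentially identical to the paper's: both factor a strictly positive $P$ as $SS^{*}$, pull $\widetilde Z$ back to $Z=S^{-1}\widetilde Z S\in\Omega_n$ via similarity-invariance, apply the kernel similarity identity to get $K(\widetilde Z,\widetilde Z)(P)=S\,K(Z,Z)(I_n)\,S^{*}\succeq 0$, and then pass to general $P\succeq 0$ by approximation.

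For case (a) there is a genuine gap in your primary approach. The level-wise Aronszajn factors $H_n$ are determined only up to a unitary on $\cX_n$, and the intertwining identity $\alpha K(Z,W)(I_n)\beta^{*}=K(\widetilde Z,\widetilde W)(\alpha\beta^{*})$ only constrains the values of $K$ on matrices of the form $\alpha\beta^{*}$ with $\alpha,\beta$ \emph{intertwiners} between points of $\Omega$. You have not shown that such matrices span $\bbC^{n\times m}$ for every pair $(\widetilde Z,\widetilde W)$, nor explained how the $\cX_n$'s are to be identified so that the resulting $H$ respects direct sums and similarities. You flag this as the main obstacle, and indeed the paper does not attempt it.

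The paper follows your ``reserve route'' but with two ingredients you do not supply. First, it invokes a specific external result (Theorem~\ref{T:KVV-KI}, from \cite{KVV-PAMS}): Aronszajn positivity of $K_{I_n}$ on each ball $N(0;\epsilon)_n$ forces the associated formal kernel $\bK(z,w)=\sum_{\fa,\fb}K_{\fa,\fb}\,z^{\fa}\overline{w}^{\fb^{\top}}$ to be a positive formal kernel, hence to admit a formal Kolmogorov factor $\bH$. Substituting matrices yields a nc function $H$ with $K(Z,W)(I_n)=H(Z)H(W)^{*}$. Second, to pass from $P=I_n$ to general $P$, the paper does \emph{not} glue levels; instead it observes that the ball, while not similarity-invariant, is \emph{locally} so: for fixed $Z$ there is $\eta>0$ with $S^{-1}ZS\in\Omega_n$ whenever $\|S\|,\|S^{-1}\|$ are close enough to $1$. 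For such $S$ one gets $K(Z,Z)(SS^{*})=H(Z)\,SS^{*}\,H(Z)^{*}$ exactly as in case (b). Since both sides are linear (indeed entire) in $P$ and agree on an open set of Hermitian $P$ near $I_n$, they agree for all $P$, yielding the Kolmogorov decomposition and hence complete positivity.
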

 
 \begin{proof}
     The proof of case (a) which we have makes use of the connections 
     between nc RKHSs studied here and the formal nc RKHSs studied 
     in \cite{NFRKHS}; we therefore postpose the proof of case (a) to 
     Subsection \ref{S:NFRKHS} where formal nc RKHSs are reviewed.

     It remains to show that hypothesis (b) $\Rightarrow$ $K$ is cp.
     We therefore assume (b), so $K(Z,Z)(I_{n}) \succeq 0$ for each 
     $n \in {\mathbb N}$.  We must show that then $K(Z,Z)(P) \succeq 
     0$ for any $P \succeq 0$ in ${\mathbb C}^{n \times n}$.  Assume 
     first that $P$ is invertible.  Then $P$ factors as $P = S S^{*}$ 
     with $S \in {\mathbb C}^{n \times n}$ invertible.  By assumption 
     $\widetilde Z = S^{-1} Z S$ is again in $\Omega_{n}$. By 
     similarity-invariance of the nc kernel $K$, we have
     $$
     K(Z,Z)(P) = K(Z,Z)(S S^{*}) = S\,  K(\widetilde Z, \widetilde Z) 
     (I) \, S^{*}  \succeq 0.
     $$
     As any positive $P$ can be approximated by strictly positive 
     definite operators, it follows that $K$ is cp.
  \end{proof}  
  
  As an immediate corollary of case (a) of Theorem \ref{T:cp=p}, 
  we obtain the following result.
  
  \begin{corollary} \label{C:nilp}  
      Suppose that the nc subset $\Omega$ of ${\mathbb C}^{d}_{\rm 
      nc} \cong    \amalg_{n=1}^{\infty}({\mathbb C}^{n \times n})^{d}$ 
      is taken to be $({\rm Nilp})^{d} = \amalg_{n=1}^{\infty} ({\rm 
      Nilp})^{d}_{n}$ where $({\rm Nilp})^{d}_{n}$ consists of 
      $d$-tuples $Z = (Z_{1}, \dots, Z_{d})$ of $n \times n$ matrices 
      which are jointly nilpotent in the sense that $Z^{\alpha} = 0$ 
      as soon as the length $|\alpha|$ of the word $\alpha$ is 
      sufficiently large.  Suppose that $K$ is a nc kernel on $({\rm 
      Nilp})^{d}$ with the property that $K(Z,Z)(I_{n}) \ge 0$ for all 
      $Z \in ({\rm Nilp})^{d}_{n}$ for all $n \in {\mathbb N}$.  Then 
      $K$ is cp.
   \end{corollary}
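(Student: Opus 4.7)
The plan is a one-line deduction from Theorem \ref{T:cp=p}, once one observes that the nc set $(\mathrm{Nilp})^d$ is similarity-invariant. Given $Z = (Z_1,\ldots,Z_d) \in (\mathrm{Nilp})^d_n$ and an invertible $S \in {\mathbb C}^{n \times n}$, the identity $(SZS^{-1})^\alpha = S\,Z^\alpha\,S^{-1}$, valid for every word $\alpha$ on $d$ letters, shows $(SZS^{-1})^\alpha = 0$ whenever $Z^\alpha = 0$, so $SZS^{-1}$ is again jointly nilpotent; hence $SZS^{-1} \in (\mathrm{Nilp})^d_n$.

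Together with the standing hypothesis that $K(Z,Z)(I_n) \succeq 0$ for every $Z \in (\mathrm{Nilp})^d_n$ and every $n$, this matches the two hypotheses of case (b) of Theorem \ref{T:cp=p} exactly, and that theorem then yields the claim. (The preceding sentence of the paper refers to case (a) of Theorem \ref{T:cp=p}, but the natural application here appears to be case (b): case (a) presupposes the full Aronszajn-positivity of $K_{I_n}$ on an $\epsilon$-ball $N(0;\epsilon)$, whereas we are given only diagonal positivity on the genuinely non-ball set $(\mathrm{Nilp})^d$, together with the similarity-invariance just verified.)

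There is no real obstacle: the only substantive input is the observation of similarity-invariance, after which the conclusion is immediate. For orientation, the underlying mechanism of case (b) is to factor an invertible positive $P = SS^*$, introduce the similar tuple $\widetilde Z = S^{-1}ZS$ (which lies in $\Omega$ by similarity-invariance), apply ``respects similarities'' \eqref{kersim} to write $K(Z,Z)(SS^*) = S\,K(\widetilde Z,\widetilde Z)(I)\,S^* \succeq 0$, and then approximate a general positive $P$ by the strictly positive $P + \varepsilon I$ and let $\varepsilon \downarrow 0$. This argument transfers verbatim to the nilpotent setting once similarity-invariance of $(\mathrm{Nilp})^d$ is in hand.
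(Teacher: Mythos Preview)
Your argument is correct and matches the paper's own proof essentially verbatim: the paper simply observes that $(\mathrm{Nilp})^d$ is similarity-invariant and then quotes case (b) of Theorem~\ref{T:cp=p}. Your parenthetical observation is also on point --- the sentence introducing the corollary says ``case (a)'' but the proof (and the logic) uses case (b).
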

   
   \begin{proof}  It suffices to observe that $({\rm Nilp})^{d}$ is 
       similarity-invariant for each $n \in {\mathbb N}$ and then 
       quote case (b) of Theorem \ref{T:cp=p}.
   \end{proof}
 
  To make precise the connections between $\cH(K_{P})$ and $\cH(K)$ in 
 general, we use the model Kolmogorov decomposition for $K$ as in \eqref{defH} 
 and \eqref{modelKoldecom}.  Fix a factorization $P = U U^{*}$ of $P$ 
 where, say, $U \in \cA^{n \times k}$.
 It then follows that the space $\cH(K_{P})$ can be identified as the 
 lifted norm space
 $$
   \cH(K_{P}) = \{ H(\cdot) \sigma(U) f \colon f \in \cH(K)^{k} \}
 $$
 where $H$ is given by \eqref{defH}, i.e., as the space of functions
 $$
  Z \mapsto \left({\rm id}_{{\mathbb C}^{n \times k}} \otimes \sigma\right)(U) 
   \sbm{ f_{1} \\ \vdots \\ f_{k}}(Z) \left(1_{\cA^{n 
  \times n}}\right) =:  \sbm{ f_{1} \\ \vdots \\ f_{k}}  (Z) (U)
  $$
 with norm so that the map
 $$
 \Gamma_{U} \colon f \mapsto  \sbm{ f_{1} \\ \vdots \\ f_{k}}(Z) (U)
 $$
 is a coisometry:
 $$ \| \Gamma_{U} f \|_{\cH(K_{P})} = \| P_{({\rm Ker} 
 \Gamma_{U})^{\perp}} f\|_{\cH(K)^{k}}.
 $$
 Note that 
 \begin{align*}
     & {\rm Ker}\, \Gamma_{U} = \left\{ \sbm{ f_{1} \\ \vdots \\ f_{k}} 
 \in \cH(K)^{k} \colon \sbm{ f_{1} \\ \vdots \\ f_{k}}(Z)(U) = 0 
 \text{ for all } Z \in \Omega_{n} \right\}, \\
 & ({\rm Ker}\,  \Gamma_{U})^{\perp} = \overline{\rm span} \left\{ K_{W, 
 U^{*}, y} \colon W \in \Omega_{n},\, y \in \cY^{n}\right\}.
 \end{align*}
 
 Let us now assume that $\cA$ is a von Neumann algebra (e.g., $\cA = 
 \cL(\cK)$ for a Hilbert space $\cK$).  Then the Douglas lemma (see 
 \cite{Douglas}) holds 
 inside $\cA$: {\em $U' \in \cA^{n \times k'}$, $U \in \cA^{n \times k}$ 
 and $U' U^{\prime *} \preceq U U^{*}$ $\Rightarrow$ there is a $C \in \cA^{k 
 \times k'}$ with $C C^{*} \le 1_{\cA^{k \times k}}$ and $UC = U'$.}
 If $P' = U' U^{\prime *} \le P = U U^{*}$ in $\cA^{n \times n}$, 
 then $K(\cdot, \cdot)(P') \preceq K(\cdot, \cdot)(P)$ as 
 $\cL(\cY)$-valued kernels on $\Omega_{n}$ and it follows that 
 $\cH(K_{P'})$ is contained contractively in $\cH(K_{P})$.  Using the 
 factorization $P' = U' U^{\prime *}$ and $P = U U^{*}$, we then 
 have 
 \begin{align*}
  \cH(K_{P'}) = &\left\{ Z \mapsto \sbm{ f_{1} \\ \vdots \\ 
     f_{k'}}(Z)(U') = \left( ({\rm id}_{{\mathbb C}^{k \times k'}} \otimes 
     \sigma)(C) \sbm{ f_{1} \\ \vdots \\ f_{k'}} \right)(Z) (U) 
     \colon \right. \\
  & \quad \left.  \sbm{ f_{1} \\ \vdots \\ f_{k'} } \in \cH(K)^{k'}
     \right\}, \\
   \cH(K_{P})&  = \left\{ Z \mapsto \sbm{ f_{1} \\ \vdots \\ f_{k} }(Z) 
 (U) \colon \sbm{f_{1} \\ \vdots \\ f_{k}} \in \cH(K)^{k} \right\}
\end{align*}
and we see that $({\rm id}_{{\mathbb C}^{k \times k'}}  \otimes \sigma)(C) 
\colon \cH(K)^{k'} \to \cH(K)^{k}$ satisfies $\sigma(C) ({\rm Ker}\, 
\Gamma_{U'}) \subset {\rm Ker}\, \Gamma_{U}$ and hence defines a 
mapping from $\cH(K_{P'})$ to $\cH(K_{P})$.  However it is not the 
case that $({\rm id}_{{\mathbb C}^{k \times k'}} \otimes \sigma)(C)$ maps 
$({\rm Ker}\, \Gamma_{U'})^{\perp}$ into $({\rm Ker}\, 
\Gamma_{U})^{\perp}$ so, after the identifications
$$
   \cH(K_{P}) \cong \cH(K)^{k} \ominus {\rm Ker}\, \Gamma_{U}, \quad
   \cH(K_{P'}) \cong \cH(K)^{k} \ominus {\rm Ker}\, \Gamma_{U'},
$$
the map $(I_{{\mathbb C}^{k \times k'}} \otimes \sigma)(C) \colon 
\cH(K_{P'}) \to \cH(K_{P})$ becomes
$$
  P_{({\rm Ker}\, \Gamma_{U})^{\perp}} (I_{{\mathbb C}^{k \times k'}} 
  \otimes \sigma)(C)|_{({\rm Ker}\, \Gamma_{U'})^{\perp}}.
$$
On the other hand the map $({\rm id}_{{\mathbb C}^{k \times k'}} \otimes 
\sigma)(C)^{*}$ from $({\rm Ker}\, \Gamma_{U})^{\perp}$ to $({\rm 
Ker}\, \Gamma_{U'})^{\perp}$ is given explicitly on kernel elements by
$$
({\rm id}_{{\mathbb C}^{k \times k'}} \otimes \sigma)(C)^{*} \colon
K_{W,U^{*},y} \mapsto K_{W, U^{\prime *}, y} \text{ with } 
U^{\prime *} = C^{*} U^{*}.
$$
However this calculus of spaces $\cH(K_{P})$ has awkward aspects when 
one tries to apply it to get at more detailed structure of the 
original space $\cH(K)$.  Unresolved issues are:  
\begin{itemize}
    \item Analyze in terms of $P$ and $P'$ when $\cH(K_{P}) \cap 
    \cH(K_{P'}) = \{0\}$ and when $\cH_{P'} \subset \cH_{P}$.
    
    \item Analyze how to recover $\cH(K)$ from all the auxiliary 
    Aronszajn reproducing kernel Hilbert spaces
     $\{ \cH(K_{P}) \colon P \in \cA^{n \times n} \text{ with } 
    P \ge 0\}$.
 \end{itemize}
 
 \smallskip
 
 \textbf{2. Use the $\cA$-argument  to enlarge the set of points.}
 In this approach, we view the set of points as $\Omega \times \cA$ 
 rather than as $\Omega$; in case $\Omega$ is the nc envelope $[ \{ 
 s_{0}\}]_{\rm nc}$ of a singleton set $\{s_{0}\}$, this idea appears 
 in statement (2) of Theorem \ref{T:Stinespring} above.
 Given a cp global kernel $K$ from $\Omega 
 \times \Omega$ to $\cL(\cA, \cL(\cY))_{\rm nc}$ and $n \in {\mathbb N}$, 
  we define an Aronszajn-type kernel $K_{n}$ from $(\Omega_{n} 
  \times \cA^{n}) \times (\Omega \times \cA^{n})$ to $\cL(\cY)$ by
  $$
   \widetilde K_{n}((Z,u), (W,v)) = K(Z,W)(u v^{*}).
  $$
  Then $K_{n}$ has Aronszajn-Kolmogorov decomposition
  $$
  K_{n}((Z,u), (W,v)) = H_{n}(Z,u) H_{n}(W,v)^{*} \text{ with }
  H_{n}(Z,) = H(Z) ({\rm id}_{{\mathbb C}^{n}} \otimes \sigma)(u).
  $$
  Then $\cH(K_{n})$ can be identified as the range space of the map
  $\Gamma_{n} \colon \cH(K) \to \cH(K_{n})$ given by
  $$
   \Gamma_{n} \colon f \mapsto \left( (Z,u) \mapsto \left( ({\rm 
   id}_{{\mathbb C}^{n}} \otimes \sigma)(u) f 
   \right)(Z)(1_{\cA^{n \times n}} ) \right)
  $$
  with norm so that $\Gamma_{n}$ is a coisometry.  Note that
  $$
   ({\rm Ker}\, \Gamma_{n})^{\perp} = \overline{\rm span} \{K_{W, 
   v^{*}, y} \colon W \in \Omega_{n}, v \in \cA^{n}, y \in \cY^{n} \}
  $$
  This collection of subspaces forms an increasing sequence of 
  subspaces with dense union in $\cH(K)$, so the corresponding 
  orthogonal projection $P_{n}$ of $\cH(K)$ onto $({\rm Ker}\, 
  \Gamma_{n})^{\perp}$ converges strongly to the $I_{\cH(K)}$.  We 
  therefore have, for all $f \in \cH(K)$,
  $$
  \| f\|_{\cH(K)} = \lim_{n \to \infty} \| P_{n} f \|_{\cH(K)} = \lim \| 
  \Gamma_{n} f \|_{\cH(K_{n})}.
  $$
  
  It is also possible to consider instead functions on $\Omega_{n} 
  \times \cA^{n \times N}$ for any $N \in {\mathbb N}$ in 
  the Aronszajn-type reproducing kernel Hilbert space determined by 
  the kernel $K_{n,N}$ defined by
  $$
  K_{n,N}((Z,U), (W,V)) = K(Z,W)(U V^{*})
  $$
  and then obtain results analogous to those mentioned in the 
  previous paragraph, but with $\cH(K)^{N}$ replacing $\cH(K)$.
  In this way we arrive at a picture of the reproducing kernel 
  Hilbert space $\cH(K)$ associated with a cp global kernel as an asymptotic 
  limit of Aronszajn-type reproducing kernel Hilbert spaces 
  $\cH(K_{n})$, or more generally, $\cH(K)^{N}$ as the asymptotic 
  limit of the family $\cH(K_{n,N})$.

\subsubsection{Completely positive kernels in the sense of 
Barreto-Bhat-Liebscher-Skeide} \label{S:BBLS}
A unified setting for completely positive maps between 
$C^{*}$-algebras and Aronszajn-type positive kernels already appears 
in the work of Barreto-Bhat-Liebscher-Skeide \cite[Section 3]{BBLS}.
Given a point set $\Omega_{1}$, a $C^{*}$-algebra $\cA$ and a Hilbert 
space $\cY$, we say that the function $K$ from $\Omega_{1} \times 
\Omega_{1}$ 
to $\cL(\cA, \cL(\cY))$ is a \textbf{completely positive kernel} (in 
the sense of Barreto-Bhat-Liebscher-Skeide) if (one among several 
equivalent definitions), for all choices of $z_{1}, \dots, 
z_{N} \in \Omega_{1}$, $a_{1}, \dots, a_{N} \in \cA$, and $y_{1}, 
\dots, y_{N} \in \cY$ for any $N \in {\mathbb N}$,
\begin{equation}  \label{BBLS1}
\sum_{i=1}^{N} \langle K(z_{i}, z_{j})(a_{i}^{*} a_{j}) y_{j}, y_{i} 
\rangle_{\cY} \ge 0,
\end{equation}
or equivalently, the map from $\cA$ to $\cL(\cY)^{N \times N}$ given 
by
\begin{equation}   \label{BBLS2}
 a \mapsto [K(z_{i}, z_{j})(a)]_{i,j=1 \dots, N}
\end{equation}
is a completely positive map for any choice of $z_{1}, \dots, z_{N} 
\in \Omega_{1}$ for any $N=1,2,\dots$.
Indeed, in case $\cA = {\mathbb C}$ and we define $\widetilde K$ 
from $\Omega_{1} \times \Omega_{1}$ into $\cL(\cY)$ by $\widetilde 
K(z,w) = K(z,w)(1)$, then $\widetilde K$ is an Aronszajn-type kernel 
and any Aronszajn-type kernel arises in this way from a 
BBLS-completely positive kernel with $\cA = {\mathbb C}$.  Similarly, 
if $\Omega_{1}$ consists of a single point, say $z_{0}$, and we 
define $\varphi \colon \cA \to \cL(\cY)$ by $\varphi(a) = K(z_{0}, 
z_{0})(a)$, then $\varphi$ is a (linear) completely positive map and 
any completely positive map arises in this way from a BBLS-positive 
kernel over the 1-point set $\Omega_{1} = \{z_{0}\}$ (this last 
observation already appears in Stinespring's paper \cite[proof of 
Theorem 1]{Stinespring}).

To put the ideas into our framework of cp global kernels, we proceed as follows.
Let $\Omega$ be the nc envelope $[\Omega]_{\rm nc}$ of $\Omega_{1}$:
\begin{equation}   \label{globext}
\Omega = \amalg_{n=1}^{\infty}
 \left\{ \sbm{ z_{1} & & \\ & \ddots & \\ & & z_{n} } \colon z_{j} \in \Omega_{1}
 \text{ for } 1 \le j \le n \right\}.
\end{equation}
Given a function $K$ from $\Omega_{1} \times \Omega_{1}$ to $\cL(\cA, 
\cL(\cY))$, let $\widetilde K$ be the unique extension of $K$ to a 
global kernel on $\Omega$ (i.e., $\widetilde K$ is the unique 
extension of $K$ to a function mapping $\Omega_{n} \times \Omega_{m}$ 
into $\cL(\cA^{n\times m}, \cL(\cY)^{n \times m})$ for all $n,m \in 
{\mathbb N}$ which satisfies the ``respects direct sums'' property 
\eqref{kerds}). We leave it to the reader to verify: {\em Then 
$\widetilde K$ is  a cp global kernel.  
Conversely, for the special case where $\Omega$ is the nc envelope of $\Omega_{1}$,
any cp global kernel arises in this way from a BBLS-completely 
positive kernel on $\Omega_{1}$.}
Thus, from the point of view here, the notion of BBLS-completely 
positive kernel corresponds to the special case of a general cp global 
kernel where the set of points $\Omega$ is {\em commutative} (i.e., 
$\Omega_{n}$ consists only of commuting diagonal matrices).

We can use the BBLS-complete positivity condition to define a notion 
of completely positive global/nc kernel on subsets $\Omega$ which are 
not necessarily nc subsets of the ambient nc set $\cS_{\rm nc}$ and or 
$\cV_{\rm nc}$.  Suppose first that $\Omega$ is any subset of 
$\cS_{\rm nc}$ and that $K \colon \Omega \times \Omega \to \cL(\cA, 
\cL(\cY))_{\rm nc}$ is a graded kernel, i.e., $K$ satisfies condition 
\eqref{kergraded} with $\cV_{1} = \cA$ and $\cV_{0} = \cL(\cY)$.
Let us say that $K$ is a \textbf{cp global kernel} on $\Omega$ if
\begin{enumerate}
    \item $K$ \textbf{respects direct sums}, i.e., $K$ satisfies condition 
    \eqref{kerds} for any choice of $Z, \widetilde Z, W, \widetilde W 
    \in \Omega$ for which it is the case that $\sbm{Z & 0 \\ 0 & 
    \widetilde Z}$ and $\sbm{W & 0 \\ 0 & 
    \widetilde W}$ are also in $\Omega$, and
\item $K$ satisfies the BBLS-complete positivity condition 
\eqref{BBLS1} or equivalently \eqref{BBLS2}, i.e.,
\begin{equation}   \label{BBLS1'}
\sum_{i=1}^{N} \langle K(Z^{(i)}, Z^{(j)})(a_{i}^{*} a_{j}) y_{j}, y_{i} 
\rangle_{\cY} \ge 0,
\end{equation}
for all $Z^{(1)} \in \Omega_{n_{1}}$, $\dots$, $Z^{(N)} \in 
\Omega_{n_{N}}$, $a_{1} \in \cA^{\kappa \times n_{1}}$, $\dots$,
$a_{N} \in \cA^{\kappa \times n_{N}}$ for any $N, \kappa \in {\mathbb 
N}$, or equivalently,  the map from $\cA^{{\mathbf N} \times {\mathbf 
N}}$ to $\cL(\cY)^{{\mathbf N} 
\times {\mathbf N}}$ given by
\begin{equation}  \label{BBLS2'}
[a_{ij}]_{i,j =1, \dots, N} \mapsto [ K(Z^{(i)}, Z^{(j)})(a_{ij}) ]_{i,j=1, \dots,N}
\end{equation}
is completely positive for any choice of $Z^{(1)} \in \Omega_{n_{1}}$, $\dots$, 
$Z^{(N)} \in \Omega_{n_{N}}$ where we set ${\mathbf N} = \sum_{i=1}^{N} 
n_{i}$ (where $[a_{ij}]_{i,j=1, \dots N}$ with $a_{ij} \in \cA^{n_{i} 
\times n_{j}}$ is a generic element of $\cA^{{\mathbf N} \times 
{\mathbf N}}$).
\end{enumerate}
If $\Omega$ is not already a nc subset, we extend $K$ to 
$\Omega_{\rm nc} \times \Omega_{\rm nc}$ (where $\Omega_{\rm nc}$ is 
the nc envelope of $\Omega$ consisting of all possible finite direct 
sums of elements of $\Omega$) by
\begin{equation}   \label{tildeK}
\widetilde K\left( \sbm{ Z^{(1)} & & \\ & \ddots & \\ & & Z^{(N)}},
 \sbm{ W^{(1)} & & \\ & \ddots & \\ & & W^{(M)}}\right)([a_{ij}]) =
 [ K(Z^{(i)}, W^{(j)})([a_{ij}]) ]_{i = 1, \dots, N; j = 1, \dots, M}
\end{equation}
for any $Z^{(1)}, \dots, Z^{(N)}, W^{(1)}, \dots, W^{(N)} \in \Omega$.  If it 
happens that $\sbm{ Z^{(1)} & & \\ & \ddots & \\ & & Z^{(N)}}$ 
and  $\sbm{ W^{(1)} & & \\ & \ddots & \\ & & W^{(M)}}$ are already in 
$\Omega$, then the formula \eqref{tildeK} is consistent with how $K$ 
is already defined by the ``respects direct sums'' condition.
Then $\widetilde K$ is a cp global kernel on $\Omega_{\rm nc} \times 
\Omega_{\rm nc}$ which when restricted to $\Omega \times \Omega$ 
agrees with $K$.

Similarly, if $\Omega$ is a (not necessarily nc) subset of $\cV_{\rm 
nc}$  for a complex vector space $\cV$ and if $K$ is a 
graded function from $\Omega \times \Omega$ into $\cL(\cA, \cL(\cY))_{\rm 
nc}$ (so \eqref{kergraded} is satisfied, we say that $K$ is a \textbf{cp nc kernel}
on $\Omega$ if
\begin{enumerate}
    \item $K$ \textbf{respects intertwinings}, i.e., condition 
    \eqref{kerintertwine} holds, and 
    \item $K$ satisfies the BBLS-complete positivity condition 
    \eqref{BBLS1'} or equivalently \eqref{BBLS2'} on $\Omega$.
\end{enumerate}
Under these conditions, $K$ extends to a well-defined cp nc kernel $\widetilde K$ 
on the nc envelope $\Omega_{\rm nc}$ of of $\Omega$.

We mention that the reproducing kernel Hilbert space associated with 
a BBLS-completely positive kernel was studied in the paper of 
Ball-Biswas-Fang-ter Horst \cite{BBFtH} with the $\cA$-argument taken 
as part of the point set as in approach (2) in Subsection 
\ref{S:Aronszajn} above, and the BBLS-completely positive kernels 
appear prominently in the characterization of the generalized Schur 
classes and associated generalized Nevanlinna-Pick interpolation 
theory of Muhly-Solel (see \cite{MS2004, MS2008, MS2013}).

\subsection{Smoothness properties}  \label{S:smooth}

We now explore the extent to which smoothness on the 
kernel leads to smoothness for the functions in the associated 
reproducing kernel Hilbert space $\cH(K)$ and in the factor $H(Z)$ in 
the Kolmogorov decomposition for $K$.

Throughout this subsection we assume that $\Omega$ is a nc subset of 
$\cV_{nc}$ for a vector space $\cV$.
We consider three possible topologies for $\Omega$, the \textbf{finite 
topology}, the  \textbf{disjoint union topology}, and the \textbf{uniform 
topology} (see \cite{KVV-book}), described as follows.

\begin{itemize}
    \item We say that $\Omega$ is open in the \textbf{finite topology} if, 
given $W \in \Omega_{m}$ and $H_{W} \in \cV_{m}$, there exists 
$\epsilon > 0$ so that $W + t H_{W} \in \Omega_{m}$ for all $t \in 
{\mathbb C}$ with $|t| < \epsilon$.

\item Assume that $\cV$ is a Banach space equipped with an admissible 
family of matrix norms (a generalization of operator space defined in 
Section 7.1 of \cite{KVV-book}) and that $\Omega$ is an open subset 
of $\cV_{\rm nc}$.  We say that $\Omega$ is open in the \textbf{disjoint union 
topology} if, given $W \in \Omega_{m}$ there exists $\epsilon > 0$ so 
that $W + H_{W} \in \Omega_{m}$ as long as $H_{W} \in \cV_{m}$ has 
$\| H_{W} \| < \epsilon$.

\item Assume that $\cV$ and $\Omega$ are as in the immediately 
preceding definition.  We say that $\Omega$ is open in the \textbf{uniform topology} 
if, given $W \in \Omega_{m}$ there is $\epsilon > 0$ so that, for all 
$N \in {\mathbb N}$ and $D_{W}^{(N)} \in \cV^{mN \times mN}$ with $\| 
D^{(N)}_{W} \| < \epsilon $, it 
holds that $\bigoplus_{1}^{N}W + D_{W}^{(N)} \in \Omega_{nN}$.
\end{itemize}

Suppose now that $K$ is a nc cp kernel in $\widetilde \cT^{1}(\Omega; 
\cA_{\rm nc}, \cL(\cY)_{\rm nc})$.  We have three 
notions of local boundedness for $K$ (with fixed $\cA$-argument $P$) 
depending on the choice of topology on $\Omega$.

\begin{itemize}
    \item Assume that $\Omega$ is finitely open.  We say that the nc 
    cp kernel $K$ is \textbf{$P$-locally bounded along slices} if, 
    for each choice of points $Z \in \Omega_{n}$ and $W \in 
    \Omega_{m}$, $\cA$-matrix $P \in \cA^{ n \times m}$ and direction vectors 
$D_{Z} \in \cV^{n \times n}$ and $D_{W} \in  \cV^{m \times m}$, 
there is an $\epsilon > 0$ so that, for $t \in {\mathbb C}$ with $|t| < \epsilon$
we have not only $Z + t D_{Z} \in \Omega_{n}$ and $W + t D_{W} \in \Omega_{m}$ but also
$\| K(Z + t D_{Z}, W + t D_{W})(P)\|$ is bounded for $t \in {\mathbb C}$ with $|t| < 
    \epsilon$.

\item Assume that $\Omega$ is open in the disjoint union topology.  We 
say that the nc cp kernel $K$ is \textbf{$P$-locally bounded} if, 
given points $Z \in \Omega_{n}$ and $W \in \Omega_{m}$ and 
$\cA$-matrix $P \in \cA^{n \times m}$, there is an $\epsilon > 0$ so 
that $\|K(Z + D_{Z}, W + D_{W})(P)\|$ is uniformly bounded over all
direction vectors $D_{Z} \in \cV^{n \times n}$ and $D_{W} \in \cV^{m 
\times m}$ satisfying $\|D_{Z}\| < \epsilon$ and $\| D_{W} \| < \epsilon$.

\item Assume that $\Omega$ is open in the uniform topology.  We say 
that the nc cp kernel $K$ is \textbf{uniformly $P$-locally bounded} 
if, given points $Z \in \Omega_{n}$ and $W \in \Omega_{m}$ and 
$\cA$-matrix $P \in \cA^{n \times m}$, there is an $\epsilon > 0$ so 
that not only are $\bigoplus_{1}^{N} Z + D^{(N)}_{Z} \in \Omega_{Nn}$ and 
$\bigoplus_{1}^{N} W + D^{(N)}_{W} \in \Omega_{Nm}$ but also 
$\| K(\bigoplus_{1}^{N} Z + D^{(N)}_{Z}, \bigoplus_{1}^{N} W + 
D^{(N)}_{W}) ( \bigoplus_{1}^{N} P) \|$ is  uniformly bounded (independently 
of the choice of $N \in {\mathbb N}$) over all direction 
vectors $D^{(N)}_{Z} \in \cV_{Nn \times Nn}$ and $D^{(N)}_{W} \in 
\cV_{Nm \times Nm}$ 
satisfying $\| D^{(N)}_{Z} \| < \epsilon$ and $\| D^{(N)}_{W} \| < 
\epsilon$.  
\end{itemize}

Similar definitions apply to nc functions.  If $f$ is a nc function 
on nc set $\Omega$ which is open in the finite topology, we say that 
$f$ is \textbf{locally bounded on slices} if, given a point $Z \in 
\Omega_{n}$ and direction vector $D_{W} \in \cV^{n \times n}$, there 
is an $\epsilon > 0$ so that not only is $Z + t D_{Z} \in \Omega_{n}$ 
but also $\|f(Z + t D_{Z})\|$ is bounded for $t \in {\mathbb C}$ with $|t| < \epsilon$.  
Similarly, if $\Omega$ is open in the disjoint union topology and $f$ is a nc function on 
$\Omega$, we say that $f$ is \textbf{locally bounded} if, given a 
point $Z \in \Omega_{n}$ there is a $\epsilon > 0$ so that not only is
$Z + D_{Z} \in \Omega_{n}$ but also $\|f(Z + D_{Z})\| < \epsilon$ is bounded over all direction vectors 
$D_{Z} \in \cV^{n \times n}$ satisfying $\| D_{Z} \| < \epsilon$.  
Finally, if $\Omega$ is open in the uniform 
topology and $f$ is a nc function on $\Omega$, we say that $f$ is 
\textbf{uniformly locally bounded} if, given any $Z \in \Omega_{n}$,
there is an $\epsilon > 0$ so that not only is $\bigoplus_{1}^{N} Z + 
D^{(N)}_{Z} \in \Omega_{Nn}$ but also
$ \| f(\bigoplus_{1}^{N} Z + D^{(N)}_{Z})\|$ is bounded (independently 
of the choice of $N \in {\mathbb N}$) over all direction vectors 
$D^{(N)}_{Z} \in \cV^{nN \times nN}$ satisfying $\| D^{(N)}_{Z} \| < 
\epsilon$.  The significance of these various local boundedness 
conditions is that they imply corresponding analyticity properties 
for the nc function $f$: (1) if $f$ is locally 
bounded along slices,  then $f$ is G\^{a}teaux differentiable at 
each point $Z \in \Omega$ (see Theorem 7.2 in \cite{KVV-book}), (2) if 
$f$ is locally bounded, then $f$ is Frechet differentiable at each $Z 
\in \Omega$ (see Theorem 7.4 in \cite{KVV-book}), and (3) if $f$ is uniformly locally bounded, 
then $f$ is what is called ``uniformly analytic'' which in turn implies particularly 
nice convergence properties for its local Taylor-Taylor series (see 
Theorem 7.21 in \cite{KVV-book}).

Assume that $\Omega$ is finitely open and that the cp nc kernel $K$ is $P$-locally bounded
along slices. Then one can use the 
property of invariance with respect to direct sums to see that it 
suffices to assume that the $P$-local boundedness property holds with 
$Z = W \in \Omega_{n}$; indeed note that
$$ K\left( \sbm{ Z & 0 \\ 0 & W}, \sbm{ Z & 0 \\ 0 & W}\right) \left( 
\sbm{ P_{11} & P_{12} \\ P_{21} & P_{22} } \right) =
\begin{bmatrix} K(Z,Z)(P_{11}) & K(Z,W)(P_{12}) \\ K(W,Z)(P_{21}) & 
    K(W,W)(P_{22}) \end{bmatrix}
$$
If $K$ is a cp nc kernel, then $K\left( \sbm{ Z & 0 \\ 0 & W}, \sbm{ 
Z & 0 \\ 0 & W} \right)$ is a completely positive map from 
$\cA^{(n+m) \times (n+m)}$ to $\cL(\cY^{n+m})$.  This implies that 
$K(Z,W)$ is even a completely bounded map.  Thus, in working with the 
$P$-locally bounded condition, we may restrict to the case where 
$Z=W$.  It also suffices to work with $P$ equal to the identity 
matrix $P = I_{\cA^{n \times n}}$, since 
$\|K(Z,Z)\| = \| K(Z,Z)(1_{\cA^{n \times n}})\|$.  An analogous comment applies to the 
case where $K$ is locally $P$-bounded or uniformly $P$-locally 
bounded.

\begin{theorem}  \label{T:Gateaux}
    Let $\Omega \subset \cV_{nc}$ be a finitely open nc set.
    \begin{enumerate}
	\item Let $K$ be a cp nc kernel on $\Omega$ with values in 
	$\cL(\cA, \cL(\cY))_{\rm nc}$ that is $I$-locally bounded on slices.
	Let $(\cH(K), \sigma)$ be the corresponding nc reproducing 
	kernel Hilbert space and let $H$ be the factor in the 
	corresponding minimal Kolmogorov decomposition.  Then each $f 
	\in \cH(K)$ as well as $H$ are locally bounded on slices, 
	even when the completely bounded norm is used in the target 
	domain.
	More precisely, let $n \in {\mathbb N}$, $Z \in \Omega_{n}$, 
	$D_{Z} \in \cV^{n \times n}$ and assume that $K(Z + t 
	D_{Z}, Z + t D_{Z})(1_{\cA^{n \times n}})$ is bounded for $|t| < \epsilon$; then for each 
	$f \in \cH(K)$,  $\|f(Z+t D_{Z})\|$ and $\|f(Z + t 
	D_{Z})\|_{cb}$ as well as  $\|H(Z + t D_{Z})\|$ and 
	$\|H(Z + t D_{Z})\|_{\rm cb}$ are bounded for $|t| < \epsilon$.
	
	\item Conversely, let $\cH$ be a nc functional Hilbert space 
	on $\Omega$ with values in $\cL(\cA, \cY)_{\rm nc}$ with an 
	$\cA$-action.  Assume that for all $n \in {\mathbb N}$, $Z 
	\in \Omega_{n}$, $D_{Z} \in \cV^{n \times n}$ there exists an 
	$\epsilon > 0$ such that $\| f(Z + t D_{Z}) \|$ is bounded 
	for $|t| < \epsilon$ for all $f \in \cH_{K}$ (in this case we 
	say that the nc functional Hilbert space $\cH$ is {\rm \textbf{locally 
	bounded on slices}}).  Then the corresponding cp nc kernel $K$ 
	is $P$-locally bounded on slices, even when the completely 
	bounded norm is used in the target domain; more precisely, in the 
	notation of the previous sentence, $\| K(Z + t H_{Z}, Z + t 
	H_{Z})(1_{\cA^{n \times n}}) \| = \| K(Z + t D_{Z}, Z + t D_{Z})\|_{cb}$ is bounded for $|t| < \epsilon$.
\end{enumerate}
\end{theorem}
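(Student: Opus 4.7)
The strategy is to leverage the quantitative norm identities already established in Proposition \ref{P:cbRKHS} together with the uniform boundedness principle. Throughout, set $W_t := Z + tD_Z$, and recall that finite openness of $\Omega$ ensures $W_t \in \Omega_n$ for $|t|$ sufficiently small.

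For part (1), the hypothesis provides $\epsilon > 0$ and $M < \infty$ with $\|K(W_t, W_t)(1_{\cA^{n \times n}})\| \le M$ for $|t| < \epsilon$. Applying Proposition \ref{P:cbRKHS}(3) at each $W_t$ yields
\[
\|H(W_t)\|_{\rm cb} = \|H(W_t)\| = \|K(W_t, W_t)(1_{\cA^{n \times n}})\|^{1/2} \le M^{1/2},
\]
so the factor $H$ is bounded, indeed cb-bounded, along the slice. Similarly, for any $f \in \cH(K)$, Proposition \ref{P:cbRKHS}(2) immediately gives
\[
\|f(W_t)\| \le \|f(W_t)\|_{\cL_{\rm cb}(\cA^n, \cY^n)} \le \|f\|_{\cH(K)}\, M^{1/2},
\]
establishing slice-local boundedness of $f$ in both the operator and cb senses.

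For part (2), the hypothesis gives pointwise boundedness $\sup_{|t|<\epsilon}\|f(W_t)\| < \infty$ for each $f \in \cH$. Since $\cH$ is a nc functional Hilbert space, each point-evaluation $\bev_{W_t} \colon \cH \to \cL(\cA^n, \cY^n)$ is a bounded linear operator (condition (1) in Theorem \ref{T:RKHS}). Applying Banach--Steinhaus to the pointwise-bounded family $\{\bev_{W_t}\}_{|t|<\epsilon}$ produces a uniform constant $M$ such that $\|\bev_{W_t}\| \le M$ for all $|t| < \epsilon$. Unwinding the explicit formula \eqref{defH} for the associated factor $H$,
\[
H(W_t)\begin{bmatrix} f_1 \\ \vdots \\ f_n \end{bmatrix}
= \sum_{i=1}^n f_i(W_t)\bigl(E_i^{(n)} \otimes 1_{\cA}\bigr),
\]
a direct estimate (using $\|E_i^{(n)} \otimes 1_\cA\|_{\cA^n} = 1$ and Cauchy--Schwarz) gives $\|H(W_t)\| \le \sqrt{n}\, M$. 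Proposition \ref{P:cbRKHS}(3) applied in reverse then yields
\[
\|K(W_t, W_t)(1_{\cA^{n \times n}})\| = \|H(W_t)\|^2 \le n M^2,
\]
which is the desired slice-local boundedness of $K$; the claimed identity with the cb-norm is precisely \eqref{cpkercb}.

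The only genuinely delicate step is the appeal to Banach--Steinhaus in part (2): it is essential that each $\bev_{W_t}$ be a bounded operator a priori, which is built into the definition of a nc functional Hilbert space. Granting this, both implications reduce to organized bookkeeping on top of the quantitative results already encoded in Proposition \ref{P:cbRKHS}.
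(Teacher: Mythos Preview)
Your proof is correct and follows essentially the same approach as the paper: Proposition \ref{P:cbRKHS} supplies the quantitative identities linking $\|f(W)\|_{\rm cb}$, $\|H(W)\|$, and $\|K(W,W)(1_{\cA^{n\times n}})\|$, and the Principle of Uniform Boundedness handles the converse direction. The only cosmetic difference is that the paper applies Banach--Steinhaus directly to the family $\{H(W_t)\}$ acting on $\cH(K)^n$, whereas you apply it to $\{\bev_{W_t}\}$ acting on $\cH$ and then pass to $H$ via the explicit formula \eqref{defH}; both routes yield the same conclusion.
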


\begin{proof}
    Assume first that $K$ is a cp nc kernel on $\Omega$ and $f \in 
    \cH(K)$.  For a  point $W \in 
    \Omega_{m}$, direction vector $D_{W} \in \cV^{m \times m}$, row 
    $\cA$-tuple $v \in \cA^{m}$, and vector $y \in \cY^{m}$, by 
    Proposition \ref{P:cbRKHS} (specifically the estimate 
    \eqref{cbnormf(W)}), we have
 \begin{align*}
& \| f(W + t D_{W}) \|_{\cL(\cA^{m}, \cY^{m})} =
 \| f(W + t D_{W}) \|_{\cL_{\rm cb}(\cA^{m}, \cY^{m})} \\
& \quad  \le \| f \|_{\cH(K)} 
 \| K(W + t D_{W}, W + t D_{W})(1_{\cA^{m \times 
 m}})\|_{\cL(\cY^{m})}^{1/2}.
\end{align*}
The assumption that $K$ is $P$-locally bounded then leads to the 
conclusion that $f$ is locally bounded in cb-norm with  radius 
of tolerance $\epsilon$ equal to the radius of tolerance for $K$
independent of the choice of $f \in \cH(K)$. 

We next analyze the boundedness along slices for the minimal factor 
$H$ in the Kolmogorov decomposition for $K$. Again by Proposition 
\ref{P:cbRKHS} (specifically for this case the equality 
\eqref{normH(Z)}), we have
$$
\| H(W + t D_{W} \| = \| H(W + t D_{W} \|_{\rm cb} = \| K(W + tD_{W}, 
W + t D_{W})(1_{\cA^{m \times m}}) \|^{1/2}.
$$
We conclude that $H$ is locally bounded in cb-norm along slices as 
long as $K$ is $P$-locally bounded along slices with radius  of 
tolerance $\epsilon$ the same as for $K$.
This completes the proof of statement (1) in 
Theorem \ref{T:Gateaux}.

We next suppose that $\cH$ is a nc functional Hilbert space with 
$\cA$-action (as in Theorem \ref{T:RKHS}) which furthermore is 
bounded along slices.  More precisely, this means that for each $n 
\in {\mathbb N}$, $Z \in \Omega_{n}$, $D_{Z} \in \cV^{n \times n}$, 
there exists $\epsilon > 0$ such that $\|f(Z + t D_{Z})\|$ is bounded 
for $|t| < \epsilon$.  We emphasize that the assumption here is that 
the radius of tolerance $\epsilon$ depends  on $Z$ and $D_{Z}$ but not 
on the choice of $f$ in $\cH(K)$.  Consider the family of linear 
operators 
$$
\{ H(Z + t D_{Z}) \colon \cH(K)^{n} \to \cY^{n} \colon t 
\in {\mathbb C} \text{ with } |t| < \epsilon\}.
$$
Note that when we apply any operator in this family to a vector $f = 
\sbm{ f_{1} \\ \vdots \\ f_{n}}$ in $\cH(K)^{n}$ we get
\begin{align*}
& H(Z + t D_{Z}) \sbm{f_{1} \\ \vdots \\ f_{n} }
= \sbm{ f_{1} \\ \vdots \\ f_{n} } (Z  + t D_{Z})(1_{\cA^{n \times n}})  \\
& \quad = f_{1}(Z + t D_{Z})(E^{(n)}_{1} \otimes 1_{\cA}) + \cdots + 
    f_{n}(Z + t D_{Z})(E^{(n)}_{n} \otimes 1_{\cA}).
\end{align*}
As each term in the final expression is bounded for $|t| < \epsilon$, 
it follows that $\|H(Z + t D_{Z}) f\|$ is bounded for $|t| < 
\epsilon$ for each $f \in \cH(K)^{n}$.  It is now a consequence of 
the Principle of Uniform Boundedness (see e.g. \cite{Rudin}) that in 
fact $\|H(Z + t D_{Z}) \|$ (and hence also $\|H(Z + t D_{Z} \|_{\rm 
cb}$ by \eqref{normH(Z)}) is bounded for $|t| < \epsilon$.  This 
completes the proof of statement (2) in the Theorem.
\end{proof}

There are results analogous to that of Theorem \ref{T:Gateaux} when 
we replace the hypothesis that $\Omega$ is open in the finite 
topology by the hypothesis that $\Omega$ is open in the disjoint 
union or in the uniform topology, and replace the ``locally bounded 
on slices'' condition by ``locally bounded'' or by ``uniformly 
locally bounded'' respectively.  For the record we state these 
results.

\begin{theorem}   \label{T:queen}
 Let $\Omega \subset \cV_{\rm nc}$ be a nc set which is open in the 
 disjoint union topology.
    \begin{enumerate}
	\item Let $K$ be a cp nc kernel on $\Omega$ with values in 
	$\cL(\cA, \cL(\cY))_{\rm nc}$ that is $P$-locally bounded.
	Let $(\cH(K), \sigma)$ be the corresponding nc reproducing 
	kernel Hilbert space and let $H$ be the factor in the 
	corresponding minimal Kolmogorov decomposition.  Then each $f 
	\in \cH(K)$ as well as $H$ are both locally bounded and 
	locally completely bounded.
	More precisely, let $n \in {\mathbb N}$, $Z \in \Omega_{n}$, 
        and assume that $K(Z +  D_{Z}, 
	Z +  D_{Z})(1_{\cA^{n \times n}})$ is defined and bounded for all $D_{Z} \in \cV^{n \times 
	n}$ with $\|D_{Z}\| < \epsilon$; then for each 
	$f \in \cH(K)$,  $\|f(Z+ D_{Z})\|$ and $\|f(Z+ D_{Z})\|_{\rm 
	cb}$ as well as  $\|H(Z + D_{Z})\|$ and $\|H(Z 
	+  D_{Z})\|_{\rm cb}$ are bounded for $\|D_{Z}\| < \epsilon$.
	
	\item Conversely, let $\cH$ be a nc functional Hilbert space 
	on $\Omega$ with values in $\cL(\cA, \cY)_{\rm nc}$ with an 
	$\cA$-action.  Assume that for all $n \in {\mathbb N}$, $Z 
	\in \Omega_{n}$, there exists an 
	$\epsilon > 0$ such that $\| f(Z +  D_{Z}) \|$ is defined and bounded 
	for all $D_{Z} \in \cV^{n \times n}$ such that $\|D_{Z}\| < 
	\epsilon$ and for all $f \in \cH$ (in this case we 
	say that the nc functional Hilbert space $\cH$ is {\rm \textbf{locally 
	bounded}}).  Then the corresponding cp nc kernel $K$ 
	associated with $\cH$ by Theorem \ref{T:RKHS}
	is $P$-locally bounded and $P$-locally completely bounded; more precisely, 
	$\|K(Z +  D_{Z}, Z + D_{Z})(1_{\cA^{n \times n}}) \| = \|K(Z 
	+  D_{Z}, Z +  D_{Z})(1_{\cA^{n \times n}}) \|_{\rm cb}$ 
	is defined and  bounded for all $D_{Z} \in \cV^{n \times n}$  with $\|D_{Z}\| < \epsilon$.
\end{enumerate}
\end{theorem}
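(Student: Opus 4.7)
The proof will follow the same template as Theorem \ref{T:Gateaux}, substituting the ``locally bounded'' condition for ``locally bounded on slices.'' The two key ingredients are the norm/cb-norm estimates from Proposition \ref{P:cbRKHS} and the Uniform Boundedness Principle. Throughout we may (as argued just before Theorem \ref{T:Gateaux}) take $P = 1_{\cA^{n\times n}}$ and use $Z = W$, since $\|K(Z,W)\|_{\rm cb}$ is controlled by $\|K(\sbm{Z & 0 \\ 0 & W}, \sbm{Z & 0 \\ 0 & W})(1_{\cA^{(n+m)\times(n+m)}})\|$ via the ``respects direct sums'' property \eqref{kerds} and the identity \eqref{Kercb}.

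For the forward direction (1), fix $n$, $Z \in \Omega_n$ and suppose $\|K(Z + D_Z, Z + D_Z)(1_{\cA^{n\times n}})\|$ is bounded for $D_Z \in \cV^{n\times n}$ with $\|D_Z\| < \epsilon$. For any $f \in \cH(K)$, estimate \eqref{cbnormf(W)} of Proposition \ref{P:cbRKHS} gives
\begin{equation*}
\|f(Z + D_Z)\|_{\cL(\cA^n,\cY^n)} \le \|f(Z + D_Z)\|_{\cL_{\rm cb}(\cA^n,\cY^n)} \le \|f\|_{\cH(K)} \|K(Z + D_Z, Z + D_Z)(1_{\cA^{n\times n}})\|^{1/2},
\end{equation*}
which is bounded for $\|D_Z\| < \epsilon$. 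For the Kolmogorov factor $H$, the equality \eqref{normH(Z)} applied at the point $Z + D_Z$ yields
\begin{equation*}
\|H(Z + D_Z)\| = \|H(Z + D_Z)\|_{\rm cb} = \|K(Z + D_Z, Z + D_Z)(1_{\cA^{n\times n}})\|^{1/2},
\end{equation*}
so $H$ is likewise bounded on the $\epsilon$-ball, in both operator norm and cb-norm. Note that the radius of tolerance $\epsilon$ is the same one coming from the $P$-local boundedness of $K$, independent of the choice of $f$.

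For the converse direction (2), suppose $\cH$ is locally bounded and let $Z \in \Omega_n$; by hypothesis there is $\epsilon > 0$ such that, for each $f \in \cH$, $\|f(Z + D_Z)\|$ is bounded over all $D_Z$ with $\|D_Z\| < \epsilon$. Passing to the Kolmogorov factor $H \colon \Omega \to \cL(\cH(K),\cY)_{\rm nc}$ associated to $K$ (given concretely by \eqref{defH}), a direct computation as in the proof of Theorem \ref{T:Gateaux}(2) shows that for any $f = \sbm{f_1 \\ \vdots \\ f_n} \in \cH(K)^n$,
\begin{equation*}
H(Z + D_Z)\, f = \sum_{j=1}^n f_j(Z + D_Z)(E^{(n)}_j \otimes 1_\cA),
\end{equation*}
and each summand is bounded over $\|D_Z\| < \epsilon$ by hypothesis. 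Hence $\{H(Z + D_Z) : \|D_Z\| < \epsilon\}$ is a pointwise-bounded family of bounded linear operators from $\cH(K)^n$ to $\cY^n$, and the Uniform Boundedness Principle yields a uniform bound on $\|H(Z + D_Z)\|$. Invoking \eqref{normH(Z)} once more, we conclude that $\|K(Z + D_Z, Z + D_Z)(1_{\cA^{n\times n}})\| = \|H(Z + D_Z)\|^2$ is uniformly bounded on the $\epsilon$-ball, giving $P$-local boundedness (equivalently $P$-local complete boundedness, by \eqref{Kercb'}) of $K$.

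There is no genuine obstacle here; the only point requiring any care is the justification that the forward direction produces uniform control even in the cb-norm, which is precisely what estimate \eqref{cbnormf(W)} and equality \eqref{normH(Z)} deliver, and that in the converse direction the Uniform Boundedness Principle applies (as it does for any pointwise-bounded family of bounded linear operators between Banach spaces, regardless of whether the indexing set is countable). The argument is thus a verbatim transcription of the Theorem \ref{T:Gateaux} proof with ``$t D_Z$ for $|t| < \epsilon$'' replaced by ``$D_Z$ for $\|D_Z\| < \epsilon$'' throughout.
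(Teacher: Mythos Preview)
Your proof is correct and follows exactly the approach the paper takes: the paper's own proof merely remarks that the argument for Theorem~\ref{T:Gateaux} goes through verbatim once the parametrizing family $\{t D_Z : |t| < \epsilon\}$ is replaced by $\{D_Z : \|D_Z\| < \epsilon\}$. You have spelled out the details (invoking \eqref{cbnormf(W)} and \eqref{normH(Z)} for part (1), and the Uniform Boundedness Principle for part (2)) precisely as the paper does in the proof of Theorem~\ref{T:Gateaux}.
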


\begin{theorem}   \label{T:king}
     Let $\Omega \subset \cV_{nc}$ be a nc set which is open in the 
 uniform topology.
    \begin{enumerate}
	\item Let $K$ be a cp nc kernel on $\Omega$ with values in 
	$\cL(\cA, \cY)_{\rm nc}$ that is uniformly $P$-locally bounded.
	Let $(\cH(K), \sigma)$ be the corresponding nc reproducing 
	kernel Hilbert space and let $H$ be the factor in the 
	corresponding minimal Kolmogorov decomposition.  Then each $f 
	\in \cH(K)$ as well as $H$ are uniformly locally bounded.
	More precisely, let $Z \in \Omega_{n}$, 
        and assume that $K( \bigoplus_{1}^{N}Z +  D^{(N)}_{Z}, 
 	\bigoplus_{1}^{N} Z + D^{(N)}_{Z})(1_{\cA^{n \times n}})$ is defined and bounded for all 
	$D^{(N)}_{Z} \in \cV^{nN \times 
	nN}$ with $\|D_{Z}\| < \epsilon$ for all $N \in {\mathbb N}$; 
	then for each $f \in \cH(K)$,  $\|f(\bigoplus_{1}^{N} Z + D^{(N)}_{Z})\|$ 
	as well as  $\|H( \bigoplus_{1}^{N} Z +  
	D^{(N)}_{Z})\|$ are bounded for all $D_{Z} \in \cV^{nN \times 
	nN}$ with $\|D_{Z}\| < \epsilon$ for all $N \in {\mathbb N}$.
	
	\item Conversely, let $\cH$ be a nc functional Hilbert space 
	on $\Omega$ with values in $\cL(\cA, \cY)_{\rm nc}$ with an 
	$\cA$-action.  Assume that for each $Z \in \Omega_{n}$, there exists an 
	$\epsilon > 0$ such that $\| f(\bigoplus_{1}^{N}Z +  D^{(N)}_{Z}) \|$ is bounded 
	for all $D_{Z} \in \cV^{n \times n}$ such that $\|D_{Z}\| < 
	\epsilon$ and for all $f \in \cH_{K}$ (in this case we 
	say that the nc functional Hilbert space $\cH$ is {\rm 
	\textbf{uniformly locally 
	bounded}}).  Then the corresponding cp nc kernel $K$ 
	associated with $\cH$ as in Theorem \ref{T:RKHS}
	 is uniformly $P$-locally bounded; more precisely, $\| K(\bigoplus_{1}^{N} Z +  
	D^{(N)}_{Z}, \bigoplus_{1}^{N} Z +  
	D^{(N)}_{Z})(1_{\cA^{n \times n}}) \|$ is defined and bounded for all $D_{Z} \in \cV^{nN 
	\times nN}$ with $\|D_{Z}\| < \epsilon$ for all $N \in 
	{\mathbb N}$.
\end{enumerate}
\end{theorem}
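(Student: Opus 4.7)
The proof follows the same pattern as Theorems \ref{T:Gateaux} and \ref{T:queen}, with the ``uniform topology'' perturbation $W_{N} := \bigoplus_{1}^{N} Z + D_{Z}^{(N)}$ replacing the finite-topology slice $Z + tD_{Z}$ or the disjoint-union ball point $Z + D_{Z}$, and with all estimates tracked uniformly in both $N \in {\mathbb N}$ and $D_{Z}^{(N)}$ satisfying $\|D_{Z}^{(N)}\| < \epsilon$. As in the previous two theorems, one may reduce to the case $Z=W$ when bounding $K(Z,W)$, since the ``respects direct sums'' property and complete positivity give $\|K(Z,W)(P)\|$ in terms of $\|K(\sbm{Z & 0 \\ 0 & W}, \sbm{Z & 0 \\ 0 & W})(1_{\cA})\|$.

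Part (1) follows directly from Proposition \ref{P:cbRKHS}. Plugging $W = W_{N}$ into the cb-estimate \eqref{cbnormf(W)} gives
\begin{equation*}
 \|f(W_{N})\|_{\cL(\cA^{nN}, \cY^{nN})} \le \|f\|_{\cH(K)} \, \|K(W_{N}, W_{N})(1_{\cA^{nN \times nN}})\|^{1/2}
\end{equation*}
for every $f \in \cH(K)$, and identity \eqref{normH(Z)} reads $\|H(W_{N})\| = \|H(W_{N})\|_{\rm cb} = \|K(W_{N}, W_{N})(1_{\cA^{nN \times nN}})\|^{1/2}$. The uniform $P$-local boundedness of $K$ makes both right-hand sides uniformly bounded in $N$ and in $D_{Z}^{(N)}$ with $\|D_{Z}^{(N)}\| < \epsilon$, which is exactly the conclusion.

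For part (2), the strategy is to invoke the Principle of Uniform Boundedness. For each tuple $(N, D_{Z}^{(N)}, u, y)$ with $u \in \cA^{nN}$ and $y \in \cY^{nN}$ of norm at most $1$, consider the scalar functional $\Lambda_{N,D,u,y}(f) := \langle f(W_{N}) u, y\rangle$ on $\cH(K)$. By hypothesis, $|\Lambda(f)| \le \|f(W_{N})\| \le C_{f}$ uniformly in the parameters, so UBP provides a uniform bound $M$ on $\|\Lambda\|_{\cH(K)^{*}}$; via the reproducing property this translates into the uniform estimate $\langle K(W_{N}, W_{N})(uu^{*}) y, y\rangle = \|K_{W_{N}, u^{*}, y}\|_{\cH(K)}^{2} \le M^{2}$, i.e.\ a uniform bound on the action of the cp map $K(W_{N},W_{N})$ on every rank-one positive element of norm at most $1$.

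The main obstacle is the passage from this rank-one bound to the full identity $1_{\cA^{nN \times nN}}$, whose rank grows with $N$: although $\|K(W_{N},W_{N})\|_{\rm cb} = \|K(W_{N},W_{N})(1_{\cA^{nN \times nN}})\|$ by \eqref{cpkercb}, a naive rank-one bound does not control the cb-norm of a cp map. To handle this I will repeat the UBP argument at higher rank: for $V \in \cA^{nN \times nN}$ with $\|V\| \le 1$ and $y \in \cY^{nN}$, $\|y\|\le 1$, introduce the $\cH(K)^{nN}$-functional $\tilde\Lambda_{N,D,V,y}(g) = \langle \bev_{W_{N}, V^{*}} g, y\rangle$, whose norm equals $\|K_{W_{N}, V, y}\|_{\cH(K)^{nN}} = \langle K(W_{N}, W_{N})(V^{*}V) y, y\rangle^{1/2}$ by \eqref{bevWV**} and \eqref{Vprime*V'}; for $V = 1_{\cA^{nN \times nN}}$ this $\bev$-operator coincides with $H(W_{N})$ by \eqref{defH}, so the squared norm equals $\|K(W_{N}, W_{N})(1_{\cA^{nN \times nN}})\|$. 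The delicate step is verifying uniform pointwise boundedness of $\tilde\Lambda(g)$ in the parameters $(N, D_{Z}^{(N)}, V, y)$ for each fixed $g \in \cH(K)^{nN}$; here I will exploit both the scalar UBP-bound obtained above and the nc-function identity $H(\bigoplus_{1}^{N} Z) = I_{N} \otimes H(Z)$, which pins the base-point operator norm at $\|H(Z)\|$ independently of $N$ and lets the rank-$N$ estimates be glued into a bound uniform in $N$. A final application of UBP to the family $\{\tilde\Lambda_{N,D,1,y}\}$ then yields the uniform bound on $\|K(W_{N}, W_{N})(1_{\cA^{nN \times nN}})\|$ required by the definition of uniform $P$-local boundedness.
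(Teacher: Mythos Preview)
Your treatment of part~(1) is correct and is precisely the paper's approach: the estimates \eqref{cbnormf(W)} and \eqref{normH(Z)} from Proposition~\ref{P:cbRKHS}, applied at $W_{N} = \bigoplus_{1}^{N} Z + D_{Z}^{(N)}$, immediately transfer the uniform $P$-local boundedness of $K$ to uniform local boundedness of each $f\in\cH(K)$ and of $H$.

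For part~(2) the paper gives only a one-line sketch asserting that the Uniform Boundedness argument of Theorem~\ref{T:Gateaux} ``goes through in exactly the same way.'' You go further than the paper and correctly isolate the point at which a naive transcription is problematic: in Theorem~\ref{T:Gateaux} the operators $H(Z+tD_{Z})$ all act on the single space $\cH(K)^{n}$, whereas here $H(W_{N})$ acts on $\cH(K)^{nN}$ with $N$ varying, so the family $\{H(W_{N})\}_{N,D}$ does not sit in a single $\cL(X,Y)$ and UBP cannot be applied to it directly. Your first maneuver---applying UBP to the scalar functionals $f\mapsto\langle f(W_{N})u,y\rangle$ on the fixed space $\cH(K)$---is sound and yields $\|f(W_{N})\|\le M\|f\|$ uniformly, equivalently $\langle K(W_{N},W_{N})(uu^{*})y,y\rangle\le M^{2}$ for all unit $u\in\cA^{nN}$, $y\in\cY^{nN}$.

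The gap is in your passage from this rank-one bound to a bound on $K(W_{N},W_{N})(1_{\cA^{nN\times nN}})$. The higher-rank functionals $\tilde\Lambda_{N,D,V,y}$ you introduce are defined on $\cH(K)^{nN}$, a space that changes with $N$, so the phrase ``for each fixed $g\in\cH(K)^{nN}$'' has no meaning across varying $N$ and UBP cannot be invoked on that family as you describe. The basepoint identity $H(\bigoplus_{1}^{N}Z)=I_{N}\otimes H(Z)$ only controls $H$ at $D=0$, and you give no mechanism that propagates this to nonzero $D_{Z}^{(N)}$; the promised ``gluing'' of rank-$N$ estimates into a bound uniform in $N$ is never actually carried out. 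You have located the delicate step accurately, but as written the proposal does not supply it, and part~(2) remains incomplete.
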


\begin{proof}[Proof of Theorems \ref{T:queen} and \ref{T:king}]
 As the proofs of both theorems parallel closely the proof of 
 Theorem \ref{T:Gateaux}, we only sketch the main ideas.  The 
 hypothesis and conclusion of Theorem \ref{T:Gateaux}, for both 
 statements (1) and (2), involves the boundedness of a family of 
 operators parametrized by $t \in {\mathbb C}$ with $|t| < \epsilon$.
 The hypothesis and conclusion of Theorems \ref{T:queen} and 
 \ref{T:king} are the same, but with the modification that the family 
 of operators is parametrized by increment vectors $D_{Z} \in \cV^{n 
 \times n}$ with $\| D_{Z} \| < \epsilon$, or by increment vectors 
 $D^{(N)}_{Z} \in \cV^{nN \times nN}$ with $N \in {\mathbb N}$ 
 arbitrary with $\| D^{(N)}_{Z} \| < \epsilon$.  With this 
 modification of the set of operators whose boundedness is of 
 interest, the proofs of Theorems \ref{T:queen} and \ref{T:king} go 
 through in exactly the same way as in the proof of Theorem 
 \ref{T:Gateaux}.
 \end{proof}

\subsection{Functional versus formal noncommutative reproducing kernel 
Hilbert spaces}  \label{S:NFRKHS}

The goal of this subsection is to establish a dictionary between the 
global/nc functional reproducing kernel Hilbert spaces being discussed here and 
the notion of formal nc reproducing kernel Hilbert spaces introduced by 
two of the present authors in \cite{NFRKHS}.  Toward this end, we 
first need to review the setup from \cite{NFRKHS}.

We let $\free$ be the monoid on $d$ generators $\{1, \dots, d\}$ 
(also known as the unital free semigroup 
with $d$ generators).  Elements of $\free$ are written as words 
$\fa = i_{N} \cdots i_{1}$ with letters $i_{j}$ from the alphabet 
consisting of the first $d$ natural numbers $\{1, \dots, d\}$. 
Multiplication is by concatenation:
$$
  \fa \cdot \fb = i_{N} \cdots i_{1} j_{M} \cdots j_{1} \text{ if }
  \fa = i_{N} \cdots i_{1} \text{ and } \fb = j_{M} \cdots j_{1}.
$$
The empty word, denoted as $\emptyset$, serves as the unit element for 
$\free$.  For $\fa = i_{N} \cdots i_{1}$ an element of $\free$, we 
let $\fa^{\top} = i_{1} \cdots i_{N}$ denote the 
\textbf{transpose} of $\fa$ and let $|\fa| = N$ denote the 
length of (or number of letters in) $\fa$.

Given a collection of freely noncommuting indeterminates $z = (z_{1}, 
\dots, z_{d})$ and given a word $\fa = i_{N} \cdots i_{1}$ in 
$\free$, we let $z^{\fa}$ denote the noncommutative monomial
$z^{\fa} = z_{i_{N}} \cdots z_{i_{1}}$, where we take 
$z^{\emptyset} = 1$.   Given also a linear space $\cX$, 
we let $\cX\langle \langle z 
\rangle \rangle$ denote the space of all formal power series 
$f(z) = \sum_{\fb \in \free} f_{\fa} z^{\fa}$ where the 
coefficients $f_{\fa}$ are in $\cX$. Suppose that $\cX'$ is an algebra such 
that $\cX$ is a left module over $\cX'$.  Given
$$
F(z) = \sum_{\fa \in \free} F_{\fa} z^{\fa} \in \cX' \langle 
\langle z \rangle \rangle, \quad f(z) = \sum_{\fb \in \free} 
f_{\fb} z^{\fb} \in \cX\langle \langle z \rangle \rangle,
$$
we define the (noncommutative convolution) product $F \cdot f(z) \in 
\cX\langle \langle z \rangle \rangle$ by
$$
  (F \cdot f)(z) = \sum_{\gamma \in \free}\left( \sum_{\fa, \fb \in 
  \free \colon \gamma = \fa \fb} F_{\fa} f_{\fb} \right) 
  z^{\gamma}.
$$

We now suppose that we are given a Hilbert space $\cH$ 
whose elements $f(z)$ are formal power series $f(z) = \sum_{\fa 
\in \free} f_{\fa} z^{\fa}$ in $\cY \langle \langle z \rangle 
\rangle$ for a coefficient Hilbert space $\cY$. We say that $\cH$ is a 
NFRKHS ({\em noncommutative formal reproducing kernel Hilbert space})
if, for each $\fa \in \free$, the linear operator 
$\boldsymbol{\rm ev}_{\fa} \colon f \mapsto f_{\fa}$ mapping $f$ to its 
$\fa$-th formal power series coefficient in $\cY$ is continuous.  
As any such power series is completely determined by the list of its 
coefficients $\{f_{\fa} \colon \fa \in \free\}$,  equivalently 
we can view an element $f(z)$ as a function $\fa \mapsto 
f_{\fa}$ on $\free$.  Hence, by the Aronszajn theory of 
reproducing kernel Hilbert spaces (see Subsection \ref{S:Aronszajn} 
above),  there is an Aronszajn-type positive kernel $\bK \colon \free 
\times \free \to \cL(\cY)$ so that $\cH$ is the reproducing kernel 
Hilbert space associated with $\bK$.  To spell this out for this 
context, we denote the value of $\bK$ at the pair of words $(\fa, 
\fb)$ by $\bK_{\fa, \fb}$. Since we view an element $f \in \cH$ 
as a formal power series $\sum_{\fa \in \free} f_{\fa} z^{\fa}$ rather than as a 
function $\fa \mapsto f_{\fa}$, we write, for a given $\fb 
\in \free$ and $y \in \cY$, the element $\boldsymbol{\rm ev}_{\fb}^{*} y 
\in \cH$ as 
$$
  \left(\boldsymbol{\boldsymbol{\rm ev}}_{\fb}^{*}\, y \right)(z) = :  \bK_{\fb}(z) y = 
  \sum_{\fa \in \free} \bK_{\fa, \fb} y \, z^{\fa}.
$$
Then the reproducing kernel property assumes the form
\begin{equation}  \label{formal-reprod}
   \langle f,\, \bK_{\fb}(\cdot) y \rangle_{\cH} = \langle 
   f_{\fb}, y \rangle_{\cY}.
\end{equation}

Following \cite{NFRKHS}, we make the notation more suggestive of 
the classical case as follows.  We let $w = (w_{1}, \dots, w_{d})$ be 
a second $d$-tuple of freely noncommuting indeterminates.  
For suggestive formal reasons which will become clear below, we also introduce 
the conjugate $d$-tuple of freely 
noncommuting indeterminates
$$
\overline{w} = (\overline{w}_{1}, \dots, \overline{w}_{d})
$$
In general, given a coefficient Hilbert space $\cC$, we can use the 
$\cC$-inner product to define a pairing
$$
   \langle \cdot, \, \cdot \rangle_{\cC \times \cC\langle \langle 
   \overline{w} 
   \rangle \rangle} \to {\mathbb C} \langle \langle w \rangle \rangle
$$
by 
$$
  \left \langle c, \sum_{\fa \in \free} f_{\fa} \overline{w}^{\fa} 
  \right\rangle_{\cC \times \cC\langle \langle \overline{w} \rangle \rangle} =
\sum_{\fa \in \free} \langle c, f_{\fa}\rangle_{\cC} 
w^{\fa^{\top}}.
$$
We shall also have use of the reverse-order version of the pairing:
 \footnote{These can be seen as special cases of the more general pairing 
 (which we shall not need in the sequel)
$$
\left\langle \sum_{\fa \in \free} f_{\fa} w^{\fa}, \,
\sum_{\fb \in \free} g_{\fb} \overline{w}^{\fb} 
\right\rangle_{\cC\langle \langle w \rangle \rangle \times \cC\langle 
\langle \overline{w} \rangle \rangle}
= \sum_{\gamma \in \free}  \left[ \sum_{\fa, \fb \colon \gamma 
= \fb^{\top} \fa } \langle f_{\fa}, g_{\fb} \rangle_{\cC} 
\right] w^{\gamma},
$$
or equivalently
$$   
  \langle f(w), g(w) \rangle_{\cC \langle \langle w \rangle \rangle 
  \times \cC \langle \langle \overline{w} \rangle \rangle} =
  g(w)^{*} f(w) \in {\mathbb C}\langle \langle w \rangle \rangle
$$
where we set $\left(\sum_{\fa} g_{\fa} 
\overline{w}^{\fa}\right)^{*} = \sum_{\fa} g_{\fa}^{*} 
w^{\fa^{\top}} \in \cL(\cC, {\mathbb C})\langle \langle w \rangle 
\rangle$ where $g_{\fa}^{*} \in \cL(\cC, {\mathbb C})$ is given by
$g_{\fa}^{*} \colon c \mapsto \langle c, g_{\fa} \rangle_{\cC}$.
Using this formalism one can easily check the identity
$$  \langle S(w) f(w), g(w) \rangle_{\cY\langle \langle w \rangle 
\rangle \times \cY \langle \langle \overline{w} \rangle \rangle} =
\langle f(w), S(w)^{*} g(w) \rangle_{\cU\langle \langle w \rangle 
\rangle \times \cU\langle \langle \overline{w} \rangle \rangle}
$$
for $S(w) \in \cL(\cU,  \cY)\langle \langle w \rangle \rangle$,
$f(w) \in \cU\langle \langle w \rangle \rangle$ and $g(w) \in 
\cY\langle \langle \overline{w} \rangle \rangle$.}
$$
 \left\langle \sum_{\fa \in \free} f_{\fa} w^{\fa}, c 
 \right\rangle_{\cC\langle \langle z \rangle \rangle \times \cC} = 
 \sum_{\fa \in \free} \langle f_{\fa}, c \rangle_{\cC} 
 w^{\fa}.
 $$
Then the reproducing kernel property \eqref{formal-reprod} can be 
written more suggestively as
\begin{equation}   \label{formal-reprod'}
    \langle f, \bK(\cdot, w) y \rangle_{\cH \times \cH \langle \langle 
    \overline{w} \rangle \rangle} = \langle f(w), y 
    \rangle_{\cY\langle \langle w \rangle \rangle \times \cY}.
\end{equation}
Here $\bK(z,w)$ has the property that, for each $y \in \cY$, the formal power series in 
$\overline{w}$ given by
$$
\bK(z, w) y = \sum_{\fa, \fb \in \free} (\bK_{\fa, \fb} y)
\, z^{\fa} \overline{w}^{\fb^{\top}} = \sum_{\fb \in \free} \left[ 
\sum_{\fa \in \free} ( \bK_{\fa, \fb^{\top}} y) z^{\fa} 
\right] \overline{w}^{\fb}
$$
can be viewed as an element of $\cH\langle \langle \overline{w} 
\rangle \rangle$.  Whenever $\cH$ is a Hilbert space of formal power 
series with the structure as laid out above, we shall say that $\cH$ 
is the \textbf{noncommutative formal reproducing kernel Hilbert space} 
(NFRKHS) \textbf{with reproducing kernel} $\bK \in \cL(\cY)\langle \langle z, 
\overline{w} \rangle \rangle$.

The following result amounts to Theorem 3.1 from \cite{NFRKHS}.

\begin{theorem} \label{T:NFRKHS}
Let
$
\bK(z, w) = \sum_{\fa, \fb \in \free} \bK_{\fa, \fb} z^{\fa} 
\overline{w}^{\fb^{\top}}$ be a given 
element of
${\mathcal L}({\mathcal Y})\langle \langle z, \overline{w} \rangle 
\rangle$ where $z=(z_{1}, \dots, z_{d})$ 
and $w = (w_{1}, \dots, w_{d})$ are $d$-tuples of freely noncommuting 
indeterminates
Then the following conditions are equivalent: 
\begin{enumerate}
    
\item $\bK(z, w)$ is a {\em positive formal kernel} in the sense that
$$\sum_{\fa, \fb \in \free} \langle \bK_{\fa, \fb} 
y_{\fa}, y_{\fb} 
\rangle_{{\mathcal Y}} \ge 0 $$
for all finitely supported ${\mathcal Y}$-valued functions $ \fa 
\mapsto y_{\fa}$ on $\free$.
    
\item $\bK$ is the reproducing kernel for a uniquely determined NFRKHS 
${\mathcal H}(\bK)$ of formal power series in the set of noncommuting 
indeterminates $z=(z_{1}, \dots, z_{d})$.

\item There is an auxiliary 
Hilbert space $\cX$ and a noncommutative formal power series 
$\bH(z) \in {\mathcal L}(\cX, \cY)\langle \langle z \rangle \rangle$ such that 
\begin{equation} \label{formalKoldecom}
\bK(z, w) = \bH(z) \bH(w)^{*}
\end{equation}
where we use the convention $ (w^{\fb})^{*} = \overline{w}^{\fb^{\top}}$
so that 
$$  
\bH(w)^{*} = \sum_{\fb \in \free} (\bH_{\fb^{\top}})^{*} 
\overline{w}^{\fb} \text{ if }\bH(z) = \sum_{\fa \in \free} \bH_{\fa} 
z^{\fa}.
$$
\end{enumerate}
Moreover, in this case the NFRKHS ${\mathcal H}(\bK)$ can be defined 
directly in terms of the formal power series $\bH(z)$ appearing in 
condition (2) by
$$ {\mathcal H}(\bK) = \{ \bH(z) x \colon x \in {\mathcal X} \} $$
with norm taken to be the ``lifted norm'' 
\begin{equation}    \label{formalLiftedNorm}
    \| \bH(z) x \|_{{\mathcal H}(\bK)} = \| Q x \|_{{\mathcal H}} 
\end{equation}
where $Q$ is the orthogonal projection of ${\mathcal X}$ onto the 
orthogonal complement of the kernel of the map 
$M_{\bH} \colon {\mathcal X} \mapsto {\mathcal Y}\langle \langle z 
\rangle \rangle$ given by $M_{\bH}\colon x \mapsto 
\bH(z) x$.
\end{theorem}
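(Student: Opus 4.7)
The plan is to recognize that Theorem \ref{T:NFRKHS} is essentially a restatement of the classical operator-valued Aronszajn/Moore theorem once we make the right bookkeeping identification. A formal power series $f(z)=\sum_{\fa\in\free}f_{\fa}z^{\fa}$ in $\cY\langle\langle z\rangle\rangle$ is completely determined by the function $\fa\mapsto f_{\fa}$ on the monoid $\free$, and likewise the formal double series $\bK(z,w)$ corresponds to an $\cL(\cY)$-valued kernel $k\colon\free\times\free\to\cL(\cY)$ with $k(\fa,\fb)=\bK_{\fa,\fb}$. Under this identification, the formal positivity in condition (1) is literally the Aronszajn-type positive-semidefiniteness of $k$, and a NFRKHS of formal power series is exactly an $\cY$-valued RKHS of functions on $\free$. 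Thus the entire theorem reduces to the classical Aronszajn-Moore-Kolmogorov equivalence, with the one subtlety being the transpose convention $\fb\mapsto\fb^{\top}$ in the exponents, which must be tracked carefully.

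For (1)$\Rightarrow$(2), I would form the pre-Hilbert space $\cH^{\circ}(\bK)$ spanned by the formal elements $\bK_{\fb}(\cdot)y=\sum_{\fa}\bK_{\fa,\fb}y\,z^{\fa}$ for $\fb\in\free$, $y\in\cY$, endow it with the sesquilinear form
\begin{equation*}
\langle\bK_{\fb}(\cdot)y,\,\bK_{\fa}(\cdot)y'\rangle:=\langle\bK_{\fa,\fb}y,y'\rangle_{\cY},
\end{equation*}
which condition (1) makes positive semidefinite, mod out the nullspace and complete to get $\cH(\bK)$. Each completion element $f$ is identified with a formal power series whose coefficient $f_{\fa}$ is the unique vector in $\cY$ with $\langle f_{\fa},y\rangle=\langle f,\bK_{\fa}(\cdot)y\rangle$; this identification is well-defined and injective because the set $\{\bK_{\fa}(\cdot)y\}$ is total. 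The reproducing property \eqref{formal-reprod'} holds on the dense span and extends by continuity.

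For (2)$\Rightarrow$(3), I would take $\cX:=\cH(\bK)$ and define the coefficient-evaluation series $\bH(z):=\sum_{\fa}\bev_{\fa}\,z^{\fa}\in\cL(\cH(\bK),\cY)\langle\langle z\rangle\rangle$ with $\bev_{\fa}\colon f\mapsto f_{\fa}$. The reproducing property \eqref{formal-reprod} identifies $\bev_{\fb}^{*}y=\bK_{\fb}(\cdot)y$, so
\begin{equation*}
\bH_{\fa}\bH_{\fb}^{*}y=\bev_{\fa}\bigl(\bK_{\fb}(\cdot)y\bigr)=\bK_{\fa,\fb}y,
\end{equation*}
which, after matching with the transpose convention spelled out in the theorem statement, gives the Kolmogorov factorization \eqref{formalKoldecom}. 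For (3)$\Rightarrow$(1), starting from $\bK_{\fa,\fb}=\bH_{\fa}\bH_{\fb}^{*}$ I would rewrite a finite sum as
\begin{equation*}
\sum_{\fa,\fb}\langle\bK_{\fa,\fb}y_{\fa},y_{\fb}\rangle_{\cY}=\sum_{\fa,\fb}\langle\bH_{\fb}^{*}y_{\fb},\bH_{\fa}^{*}y_{\fa}\rangle_{\cX}=\Bigl\|\sum_{\fa}\bH_{\fa}^{*}y_{\fa}\Bigr\|_{\cX}^{2}\ge 0,
\end{equation*}
with the usual reindexing/adjoint step to align the summation with the stated convention.

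Finally, for the lifted-norm description, given any Kolmogorov factorization I would set $\cH:=\operatorname{Ran}(M_{\bH})$ with norm defined so that the map $M_{\bH}\colon\cX\ominus\ker M_{\bH}\to\cH$ is unitary, as in \eqref{formalLiftedNorm}, and verify directly that each element $\bH(z)x$ has coefficients $\bH_{\fa}x$ and that the reproducing property against $\bK(z,w)=\bH(z)\bH(w)^{*}$ holds; the uniqueness of the RKHS attached to a given kernel (which itself follows from (1)$\Rightarrow$(2) and a standard Riesz-Fr\'echet argument) then identifies $\cH$ with $\cH(\bK)$. The one mild obstacle in the whole argument is keeping the transpose convention $(w^{\fb})^{*}=\overline{w}^{\fb^{\top}}$ consistent throughout; once the translation to functions on $\free$ is fixed at the outset, everything else is the standard Aronszajn bookkeeping.
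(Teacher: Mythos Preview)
Your proposal is correct and is exactly the standard Aronszajn--Moore--Kolmogorov construction specialized to functions on the index set $\free$, which is what underlies the result. Note, however, that the paper does not actually give its own proof of this theorem: it is stated as a quotation of Theorem~3.1 from \cite{NFRKHS}, so there is nothing in the present paper to compare against beyond observing that your reduction to the classical RKHS framework on $\free$ is precisely the mechanism the authors invoke when they say (just before the theorem) that a NFRKHS ``is the reproducing kernel Hilbert space associated with $\bK$'' via the Aronszajn theory.
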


We now suppose that we are given a formal positive kernel $\bK$ and the 
associated NFRKHS $\cH(\bK)$ as in Theorem \ref{T:NFRKHS}. We wish 
to make the connection with nc reproducing kernel Hilbert spaces 
by evaluating formal power series $f(z) \in \cH(K)$ at matrix tuple 
points $Z = (Z_{1}, \dots, Z_{d}) \in ({\mathbb C}^{n \times n})^{d} 
\cong ({\mathbb C}^{d})^{n \times n}$.  We therefore introduce the nc 
set $\Omega  \subset ({\mathbb C}^{d})_{\rm nc}$ by
\begin{align}  
\Omega = & \{ Z \in ({\mathbb C}^{d})_{\rm nc} \colon \sum_{\ell = 
0}^{\infty}  \sum_{ \fa \in \free \colon |\fa| = \ell} 
 Z^{\fa} \otimes f_{\fa} \text{ converges}   \notag \\
 &  \text{ for all } f(z) = \sum_{\fa \in \free} f_{\fa} 
 z^{\fa} \in \cH(\bK) \}.
 \label{conv-set} 
\end{align}
Here the convergence is taken in the weak topology of $\cY^{m \times m}$ if $Z \in 
({\mathbb C}^{d})^{m \times m}$.  From the lifted norm 
characterization \eqref{formalLiftedNorm} of $\cH(K)$, it is clear 
that $\Omega$ can alternatively be characterized as
\begin{equation}  \label{conv-set'}
 \Omega = \{ Z \in ({\mathbb C}^{d})_{\rm nc} \colon 
 \sum_{\ell=0}^{\infty} \sum_{\fa \in \free \colon |\fa| = 
 \ell} Z^{\fa} \otimes \bH_{\fa} \text{ converges }\}
\end{equation}
where, if $Z \in ({\mathbb C}^{d})^{m \times m}$, the convergence is  in  
$\cL(\cX^{m}, \cY^{m})$ 
with the weak operator topology.

The next result is the main tool for arriving at a nc reproducing 
kernel Hilbert space from a NFRKHS.

\begin{proposition}   \label{P:f-to-nc}  Suppose that $\bK$ is a formal 
    positive kernel with associated {\rm NFRKHS} $\cH(\bK)$.  Define 
    the nc set $\Omega$ as in either \eqref{conv-set} or 
    \eqref{conv-set'}, and suppose that $Z \in \Omega_{n}$.  Then the 
    iterated series
\begin{equation}   \label{series}
\sum_{s=0}^{\infty} \sum_{\fa \in \free \colon |\fa| = s}
\left[ \sum_{t=0}^{\infty} \sum_{\fb \in \free \colon |\fb| = t} 
Z^{\fa} P W^{* \fb^{\top}} \otimes \bK_{\fa, \fb} \right]
\end{equation}
 converges for all $W \in \Omega_{m}$ and $P \in {\mathbb C}^{n 
 \times m}$ for all $m=1,2,\dots$.  The same result holds if the 
 order of iteration is reversed.
 \end{proposition}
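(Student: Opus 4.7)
The strategy will be to reduce everything to the convergence defining membership in $\Omega$. By Theorem~\ref{T:NFRKHS}, write $\bK(z,w) = \bH(z)\bH(w)^{*}$ for some $\bH \in \cL(\cX, \cY)\langle\langle z \rangle\rangle$; on coefficients this gives $\bK_{\fa,\fb} = \bH_{\fa}\bH_{\fb}^{*}$. The key algebraic observation is that each summand in the iterated series factors cleanly as
\begin{equation*}
Z^{\fa}PW^{*\fb^{\top}} \otimes \bK_{\fa,\fb} = (Z^{\fa} \otimes \bH_{\fa})(P \otimes I_{\cX})(W^{*\fb^{\top}} \otimes \bH_{\fb}^{*}),
\end{equation*}
understood as an operator $\cY^m \to \cY^n$ factoring through $\cX^m \to \cX^n$. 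Everything after this is bookkeeping with the weak operator topology, which is the topology the paper uses to define $\Omega$ via \eqref{conv-set'}.

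For the first iteration order, I would first evaluate the inner sum with $\fa$ fixed. The sum
\begin{equation*}
\sum_{t=0}^{\infty}\sum_{|\fb|=t}(W^{*\fb^{\top}} \otimes \bH_{\fb}^{*})
\end{equation*}
is precisely the termwise adjoint of the defining series $\sum_s\sum_{|\fa|=s}(W^{\fa} \otimes \bH_{\fa})$ for $\bH(W)$; since the adjoint operation is WOT-continuous (the identity $\langle S^{*}x,y\rangle = \overline{\langle Sy,x\rangle}$ preserves pointwise convergence), WOT-convergence of $\bH(W)$ guaranteed by $W \in \Omega_m$ delivers the above sum in WOT as $\bH(W)^{*}$. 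Left-composition with the fixed bounded operator $(Z^{\fa} \otimes \bH_{\fa})(P \otimes I_{\cX})$ preserves WOT-convergence, so the inner series converges in WOT to $(Z^{\fa} \otimes \bH_{\fa})(P \otimes I_{\cX})\bH(W)^{*}$. Summing these inner evaluations over $s$ and $\fa$ then presents the outer series as $\bH(Z) \cdot Q$ with $Q := (P \otimes I_{\cX})\bH(W)^{*}$ a fixed bounded operator; WOT-convergence of the defining series for $\bH(Z)$ (valid because $Z \in \Omega_n$), right-composed with $Q$, yields convergence of the full iterated series to $\bH(Z)(P \otimes I_{\cX})\bH(W)^{*}$.

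The reverse iteration order is entirely symmetric: fixing $\fb$ first, the $\fa$-sum converges in WOT to $\bH(Z)(P \otimes I_{\cX})(W^{*\fb^{\top}} \otimes \bH_{\fb}^{*})$ by WOT-convergence of $\bH(Z)$ right-composed with a fixed bounded operator; summing over $\fb$ then exhibits the result as the fixed bounded operator $\bH(Z)(P \otimes I_{\cX})$ left-composed with the WOT-convergent $\bH(W)^{*}$, giving the same limit $\bH(Z)(P \otimes I_{\cX})\bH(W)^{*}$. No real obstacle appears beyond keeping track of the tensor-product factorization of the summands and the transpose bookkeeping in the reindexing of $\bH(W)^{*}$; once those are in hand, the whole proof is a routine application of WOT-continuity of composition and adjoint.
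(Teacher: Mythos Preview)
Your proof is correct and follows essentially the same approach as the paper: factor $\bK_{\fa,\fb} = \bH_{\fa}\bH_{\fb}^{*}$ via the formal Kolmogorov decomposition, split each summand as $(Z^{\fa}\otimes\bH_{\fa})(P\otimes I_{\cX})(W^{*\fb^{\top}}\otimes\bH_{\fb}^{*})$, and use the WOT-convergence of $\bH(Z)$ and $\bH(W)^{*}$ separately. If anything you are more careful than the paper, which leaves the WOT-continuity of adjoints and one-sided composition implicit and writes the middle factor simply as $P$ rather than $P\otimes I_{\cX}$.
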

 
 \begin{proof}
     Given $Z \in \Omega_{n}$, $W \in \Omega_{m}$, and $P \in 
     {\mathbb C}^{n \times m}$, from the second characterization 
     \eqref{conv-set'} of $\Omega$ we see that $\sum_{s=0}^{\infty} 
     \sum_{\fa \in \free \colon |\fa| = s} Z^{\fa} \otimes 
     \bH_{\fa}$ converges and that 
     $$
    \left( \sum_{t=0}^{\infty} \sum_{\fb 
     \in \free \colon |\fb| = t} W^{\fb} \otimes 
     \bH_{\fb}\right)^{*} =
     \sum_{t=0}^{\infty} \sum_{\fb \in \free \colon |\fb| = t} 
     W^{* \fb^{\top}} \otimes \bH_{\fb}^{*}
     $$
 converges weakly, from which it follows that the iterated sum
 \begin{align*}
  & \sum_{s=0}^{\infty} \sum_{\fa \in \free \colon |\fa| = s}
\left[ \sum_{t=0}^{\infty} \sum_{\fb \in \free \colon |\fb| = t} 
Z^{\fa} P W^{* \fb^{\top}} \otimes \bK_{\fa \fb} \right] \\
& \quad =
\sum_{s=0}^{\infty} \sum_{\fa \in \free \colon |\fa| = s}
(Z^{\fa} \otimes \bH_{\fa}) P \left[ \sum_{t=0}^{\infty} 
\sum_{\fb \in \free \colon |\fb| = t}
  W^{*\fb^{\top}} \otimes \bH_{\fb}^{*}\right]\\
\end{align*}
converges in either order.
 \end{proof}
 
 Given the result of Proposition \ref{P:f-to-nc}, we can associate a cp 
 nc kernel $K$ with a given formal positive kernel $\bK$ as follows.  
 Suppose that $\bK$ has formal Kolmogorov decomposition $\bK(z,w) = 
 \bH(z) \bH(w)^{*}$ as in \eqref{formalKoldecom}. For $Z \in 
 \Omega_{n}$, we may use the convergent series in \eqref{conv-set'} 
 to define a function $Z \mapsto H(Z)$:
 $$
   H(Z) = \sum_{\ell = 0}^{\infty}
   \left[\sum_{\fa \in \free \colon |\fa| = \ell} \bH_{\fa} 
   \otimes Z^{\fa}\right]
   \in \cL(\cX^{n}, \cY^{n}).
 $$
 We then define a kernel function $K$ from $\Omega_{n} \times 
 \Omega_{m}$ to $\cL({\mathbb C}^{n \times m}, \cL(\cY)^{n \times 
 m})$ by
 \begin{equation}   \label{bK-to-K}
  K(Z,W)(P) = H(Z) (P \otimes {\rm id}_{\cL(\cX)}) H(W)^{*}.
 \end{equation}
 As $H$ is given in terms of a convergent tensor-calculus power 
 series, it follows that $H$ is a nc function from $\Omega$ to 
 $\cL(\cX, \cY)_{\rm nc}$ (see \cite{KVV-book}).
 Then $K$ is given in terms of a Kolmogorov decomposition (with $\cA 
 = {\mathbb C}$ and $\sigma$ the trivial representation of ${\mathbb C}$ 
 on ${\mathbb C}$), and hence is a cp nc kernel.  
 
 The following result establishes the precise correspondence between 
 the nc RKHS $\cH(K)$ and the formal nc RKHS $\cH(\bK)$ when $K$ and 
 $\bK$ are related as in \eqref{bK-to-K}.

 \begin{theorem}  \label{T:f-vs-nc}
 Suppose that $\bK$ is a formal positive kernel with formal 
 Kolmogorov decomposition as in \eqref{formalKoldecom}, and let $K$ be
 the function from $\Omega_{n} \times \Omega_{m}$ to $\cL({\mathbb 
 C}^{n \times m}, \cL(\cY)^{n \times m})$ given by \eqref{bK-to-K}.
 Then $K$ is a cp nc kernel.  Furthermore $\cH(K)$ is the isometric image of 
 $\cH(\bK)$ under the map 
 \begin{equation} \label{map:f-to-nonf}
      \sum_{\fa \in \free} f_{\fa} z^{\fa}   \mapsto
  \left( Z \in \Omega_{n} \mapsto\left( u \in {\mathbb C}^{n} \mapsto
  \sum_{\fa \in \free} Z^{\fa} u \otimes f_{\fa}\right)  
  \right).
 \end{equation}

 Conversely, suppose that $K$ from $\Omega_{n} \times \Omega_{m}$ to 
 $\cL({\mathbb C}^{n \times m}, \cL(\cY)^{n \times m})$ ($n , m \in 
 {\mathbb N}$ arbitrary) is a cp nc kernel defined on either (a) 
 the nc set $ \Omega$ taken to be ${\rm Nilp}^{d} = \amalg_{n=1}^{\infty}{\rm 
 Nilp}_{n}^{d}$ where 
 ${\rm Nilp}_{n}^{d}$ is defined to be the space of jointly nilpotent 
 $d$-tuples $Z = (Z_{1}, 
 \dots, Z_{d})$ of $n \times n$ matrices (so $Z^{\fa} = 0$ once 
 $|\fa|$ is sufficiently large), or (b) on the nc set $\Omega \subset 
 ({\mathbb C}^{d})_{\rm nc}$ where $\Omega$ is a finitely open set 
 containing $0$ on which $K$ is $I$-locally bounded along slices.
 Suppose that $K$ has nc Kolmogorov decomposition $K(Z,W)(P) = H(Z) 
 (P \otimes {\rm id}_{\cL(\cX)}) H(W)^{*}$ for a nc function $H \colon 
 \Omega \to \cL(\cX, \cY)_{\rm nc}$. (By the results of 
 \cite{KVV-book}, $H(Z)$ has a Taylor-Taylor series representation 
 $H(Z) = \sum_{\fa \in \free} Z^{\fa} \otimes H_{\fa}$ 
 either on all of ${\rm Nilp}^{d}$ in case (a), or on an appropriate 
 $\widetilde \Omega \subset \Omega$ in case (b)). Define a formal power 
 series $\bH(z) \in \cL(\cX, \cY) \langle \langle z \rangle \rangle$ 
 and a formal kernel $\bK \in \cY\langle \langle z, \overline{w} 
 \rangle \rangle$ by
 $$
  \bH(z) = \sum_{\fa \in \free} H_{\fa} z^{\fa}, \quad
  \bK(z,w) = \bH(z) \bH(w)^{*} = \sum_{\fa, \fb \in \free} 
  (H_{\fa} H_{\fb^{\top}}^{*}) z^{\fa} \overline{w}^{\fb}.
  $$
  Then $\bK$ is a formal positive kernel with convergence set 
  containing ${\rm Nilp}^{d}$ (in case (a)) or $\widetilde \Omega$ (in 
  case (b)), and we recover $\cH(\bK)$ from $\cH(K)$ via the inverse 
  of the map \eqref{map:f-to-nonf}. 
\end{theorem}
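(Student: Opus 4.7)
The approach is to leverage the parallel lifted-norm descriptions of $\cH(\bK)$ (from Theorem \ref{T:NFRKHS}) and of $\cH(K)$ (from Theorem \ref{T:lifted}) in order to reduce the desired isometric identification to a comparison of the kernels of the two natural generating maps, together with a Taylor-Taylor uniqueness argument on ${\rm Nilp}^{d}$ to carry out that comparison.

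For the forward direction, with $\cA = {\mathbb C}$ and $\sigma$ the trivial representation on $\cX$, formula \eqref{bK-to-K} is literally a Kolmogorov decomposition as in Theorem \ref{T:cpker}(3). Since $H(Z) = \sum_{\fa}Z^{\fa}\otimes\bH_{\fa}$ is given by a convergent tensor-calculus power series on $\Omega$, it is a nc function, so the implication (3) $\Rightarrow$ (1) of Theorem \ref{T:cpker} yields that $K$ is a cp nc kernel. Theorem \ref{T:lifted} then identifies $\cH(K)$ with the lifted-norm space $\cH_{\ell}(H,\sigma)$ whose typical element is $f_{x}(Z)u = H(Z)(ux) = \sum_{\fa}Z^{\fa}u\otimes\bH_{\fa}x$, while Theorem \ref{T:NFRKHS} describes $\cH(\bK)$ as the lifted-norm image of $M_{\bH}\colon x\mapsto \bH(z)x = \sum_{\fa}\bH_{\fa}x\,z^{\fa}$. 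The map \eqref{map:f-to-nonf} therefore carries $\bH(z)x$ to $f_{x}$, and its image is exactly $\cH(K)$. To upgrade this surjectivity to an isometry, I must verify that ${\rm Ker}(M_{\bH}) = {\rm Ker}(x\mapsto f_{x})$. The inclusion $\subseteq$ is immediate because vanishing of every formal coefficient $\bH_{\fa}x$ makes the convergent series $f_{x}$ vanish. For the reverse inclusion, note that the convergence set $\Omega$ defined by \eqref{conv-set'} automatically contains ${\rm Nilp}^{d}$, since for jointly nilpotent $Z$ the defining series for $H(Z)$ truncates to a finite sum. Standard Taylor-Taylor uniqueness on ${\rm Nilp}^{d}$ (see \cite{KVV-book})---concretely, evaluation of $f_{x}$ at suitable Jordan-block tuples $Z$ at level $|\fa|+1$ recovers $\bH_{\fa}x$ as an explicit matrix entry---then forces $\bH(z)x = 0$ whenever $f_{x}\equiv 0$ on $\Omega$.

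The converse direction inverts this argument. In case (a), the nc function $H$ in the minimal Kolmogorov decomposition is defined on all of ${\rm Nilp}^{d}$, and the respects-direct-sums and respects-similarities properties combined with nilpotence yield a global finite Taylor-Taylor expansion $H(Z) = \sum_{\fa}Z^{\fa}\otimes H_{\fa}$ on ${\rm Nilp}^{d}$ (see the treatment of Nilp in \cite{KVV-book}). In case (b), Theorem \ref{T:Gateaux}(1) upgrades the hypothesis that $K$ is $I$-locally bounded along slices to local boundedness along slices of the factor $H$; this G\^ateaux-analytic behaviour combined with the Taylor-Taylor series theory of \cite{KVV-book} produces a convergent expansion $H(Z) = \sum_{\fa}Z^{\fa}\otimes H_{\fa}$ on some $\widetilde\Omega\subseteq\Omega$ containing $0$. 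In either case, set $\bH(z) = \sum H_{\fa}z^{\fa}$ and $\bK(z,w) = \bH(z)\bH(w)^{*}$; the latter is a formal positive kernel by the implication (3) $\Rightarrow$ (1) of Theorem \ref{T:NFRKHS}, and its convergence set contains ${\rm Nilp}^{d}$ (case (a)) or $\widetilde\Omega$ (case (b)) by Proposition \ref{P:f-to-nc}. The forward-direction argument applied to this $\bK$ then delivers $\cH(\bK)\cong \cH(K)$ via the inverse of \eqref{map:f-to-nonf}.

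The principal obstacle is the isometry step in the forward direction, i.e., the identification of the two kernels; it requires a precise invocation of Taylor-Taylor uniqueness, and the Jordan-block construction has to be arranged so that each formal coefficient $\bH_{\fa}x$ is genuinely isolated as a matrix entry of some $f_{x}(Z)u$. A secondary technical point arises in case (b) of the converse: one must transfer the $I$-local boundedness along slices from $K$ to local boundedness along slices for $H$ in the minimal Kolmogorov decomposition (the content of Theorem \ref{T:Gateaux}(1)) and then invoke the analyticity results of \cite{KVV-book} to obtain a genuinely convergent Taylor-Taylor expansion for $H$ at the origin.
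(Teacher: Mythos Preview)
Your proposal is correct and follows essentially the same route as the paper: both arguments identify $\cH(\bK)$ and $\cH(K)$ via their parallel lifted-norm descriptions (Theorem \ref{T:NFRKHS}(3) and Theorem \ref{T:lifted}), reduce the isometry to the equality $\ker M_{\bH}=\ker(x\mapsto f_x)$, and settle the nontrivial inclusion by a Taylor--Taylor uniqueness argument from \cite{KVV-book}. The only cosmetic difference is that the paper phrases the uniqueness step via the higher difference-differential operators $\Delta_R^{\fa^{\top}}g(0,\dots,0)$ rather than your Jordan-block evaluation on ${\rm Nilp}^d\subseteq\Omega$, and in the converse the paper states (rather than derives) that $H$ is locally bounded along slices, whereas you correctly cite Theorem \ref{T:Gateaux}(1) for this.
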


\begin{proof}
    As is seen from part (3) of Theorem \ref{T:NFRKHS}, the space 
    $\cH(\bK)$ can be presented directly in terms of the formal 
    Kolmogorov decomposition \eqref{formalKoldecom} of the formal 
    positive kernel $\bK$ as the space
    $$
    \cH(\bK) = \{\boldf_{x}(z) \colon x \in \cX\}
    $$
    where we set $\boldf_{x}(z) = \bH(z) x$, with norm given by
    $$ 
    \| \boldf \|^{2}_{\cH(\bK)} = \inf \{ \| x \|^{2} \colon x \in \cX 
    \text{ such that } \boldf = \boldf_{x}\}.
    $$
    On the other hand, by Theorem \ref{T:lifted}, the nc RKHS $\cH(K)$ has a similar lifted norm 
    description in terms of its nc Kolmogorov decomposition 
    \eqref{Koldecom}, namely:
    $$
    \cH(K) = \{f_{x} \colon x \in \cX\}
    $$
    where we set $f_{x}(Z) u = H(Z) ({\rm id}_{{\mathbb C}^{n}} 
    \otimes \sigma_{\cX}) (u) x$
    with norm given by
    $$
     \| f \|^{2}_{\cH(K)} = \inf \{ \| x \|^{2} \colon f = f_{x}\}.
     $$
  For the case at hand here where the cp nc kernel $K$ is 
  derived from a formal positive kernel $\bK$ as in \eqref{bK-to-K}, the 
  $C^{*}$-algebra $\cA$ is just ${\mathbb C}$ and the representation 
  $\sigma_{\cX}$ is the trivial representation.  Then the function 
  $Z \mapsto f_{x}(Z)$, acting on a vector $u \in {\mathbb C}^{n}$ 
  for $Z \in \Omega_{n}$ can be written more concretely as
  $$
  f_{x}(Z) u =  H(Z) (u \otimes x) = 
  \sum_{\fa \in \free} Z^{\fa} u \otimes H_{\fa} x.
  $$
  In terms of these respective parametrizations of the spaces 
  $\cH(\bK)$ and $\cH(K)$ via the state space $\cX$ for the 
  Kolmogorov decompositions, we see that the map \eqref{map:f-to-nonf}
  is given by
  \begin{equation}  \label{map'}
  \phi \colon  \sum_{\fa \in \free} (\bH_{\fa} x) z^{\fa} \mapsto
   (Z \in \Omega_{n} \mapsto (u \in {\mathbb C}^{n} \mapsto
    \sum_{\fa \in \free} Z^{\fa} u \otimes \bH_{\fa} x)).
  \end{equation}
  Given the lifted-norm-space characterizations of $\cH(\bK)$ and 
  $\cH(K)$, we see that indeed the map  $\phi$ \eqref{map'} maps 
  $\cH(\bK)$ onto $\cH(K)$.  To see that $\phi$ is well-defined, note 
  that $ \sum_{\fa \in \free} (\bH_{\fa} x) z^{\fa} = 0$ in 
  $\cH(\bK)$ means that $\bH_{\fa} x = 0$ for all $\fa \in \free$.
  It then follows that $\sum_{\fa \in \free} Z^{\fa} u \otimes \bH_{\fa} x
  = 0$ for all $Z \in \Omega_{n}$ and all $u \in {\mathbb C}^{n}$, 
  i.e., the right-hand side of \eqref{map'} is the zero element of 
  $\cH(K)$.  Conversely, suppose that 
  $\sum_{\fa \in \free} Z^{\fa} u \otimes \bH_{\fa} x
  = 0$ for all $Z \in \Omega_{n}$ and all $u \in {\mathbb C}^{n}$.
  We identify the map $u \mapsto \sum_{\fa \in \free} Z^{\fa} u 
  \otimes \bH_{\fa} x$ with an element of ${\mathbb C}^{n \times 
  n} \otimes \cY$, namely, $\sum_{\fa \in \free} Z^{\fa} 
  \otimes \bH_{\fa} x$.  As a function of $Z$, this can be 
  identified as a nc function $g \colon \Omega \to \cY_{\rm nc}$:
  \begin{equation} \label{g}
    g(Z) = \sum_{\fa \in \free} Z^{\fa} \otimes \bH_{\fa} x.
  \end{equation}
  By results from \cite{KVV-book} on noncommutative Taylor series, we have the identification
  \begin{equation}   \label{id} 
  Z^{\fa} \otimes \bH_{\fa} x = Z^{\fa} 
  \Delta^{\fa^{\top}}_{R} g( \underbrace{ \hbox{0, \dots, 0}}_{ |\fa| + 1 \text{ times 
  }}).
  \end{equation}
  Whether we are in the case where 
  $\Omega = {\rm Nilp}^{d}$ or where $\Omega$ is  a nc ball 
  around the origin where all the series $f(Z) = \sum_{\fa \in 
  \free} Z^{\fa} \otimes f_{\fa}$ converge, the function $g$ is 
  also given as the sum of its Taylor series (finite in the nilpotent 
  case)
  $$
   g(Z) = \sum_{\fa \in \free} Z^{\fa}  
   \Delta_{R}^{\fa^{\top}}g(0, \dots, 0).
 $$
  Moreover, the higher-order nc derivatives 
  $\Delta^{\fa^{\top}}_{R} g$ are uniquely determined by the 
  associated nc function $g$.  Hence the function $Z \mapsto g(Z)$ in 
  \eqref{g} being identically equal to zero forces all the higher 
  order derivatives $\Delta^{\fa^{\top}}_{R} g(0, \dots, 0)$ to be 
  zero, which in turn, due to the identity \eqref{id}, forces 
  $\bH_{\fa} x = 0$ for all $\fa \in \free$. Hence
  $\sum_{\fa \in \free} (\bH_{\fa} x) z^{\fa}  = 0$ as an 
  element of $\cY\langle \langle z \rangle \rangle$.  In this way we 
  see that the map $\phi$ \eqref{map'} is injective as well.
  
  Finally to show that $\phi$ is an isometry from $\cH(\bK)$ onto 
  $\cH(K)$, it suffices to show the set identity
  $$
  \{ x \in \cX \colon \boldf_{x} = \boldf\} = \{ x \in \cX \colon 
  f_{x} = f\}
  $$
  whenever $\boldf \in \cH(\bK)$ and $f \in \cH(K)$ are related as in 
  \eqref{map:f-to-nonf}.  This is a consequence of the fact that 
  $\boldf_{x}= 0$ in $\cH(\bK)$ exactly when $f_{x} = 0$ in 
  $\cH(K)$.  This in turn is a consequence of the analysis done in 
  the previous paragraph.
  
  Conversely, let $K$ be a cp nc kernel either on $\Omega = {\rm 
  Nilp}^{d}$ (case (a)) or on a finitely open set $\Omega  \subset 
  {\mathbb C}^{d}$ containing the origin where $K$ is $I$-locally 
  bounded (case (b)). Let $K(Z,W)(P) = H(Z) (P \otimes {\rm 
  id}_{\cL(\cY)}) H(W)^{*}$ be a minimal Kolmogorov decomposition
  (i.e. $\overline{\rm span} \{ {\rm Ran}\, H(W)^{*} \colon W \in 
  \Omega_{1}\} = \cX$ from which it follows that
  $\overline{\rm span} \{ {\rm Ran}\, (P \otimes {\rm id}_{\cL(\cX)}) 
  H(W)^{*} \colon P \in {\mathbb C}^{n \times n}, \, W \in 
  \Omega_{n}\} = \cX^{n}$ for each $n \in {\mathbb N}$), where, in 
  case (b), $H$ is locally bounded along slices.  By results from 
  \cite{KVV-book}, $H(Z) = \sum_{\fa \in \free} Z^{\fa} \otimes 
  H_{\fa}$ either on all of ${\rm Nilp}^{d}$ (case (a)) or on an 
  appropriate $\widetilde \Omega \subset \Omega$ (in case (b)).  
  Let $\bH(z) = \sum_{\fa \in \free} H_{\fa} z^{\fa}$, 
  $\bK(z,w) = \sum_{\fa, \fb \in \free} K_{\fa, \fb} 
  z^{\fa} \overline{w}^{\fb^{\top}}$ where $K_{\fa, \fb} = 
  H_{\fa} H_{\fb}^{*}$.  Then $\bK$ is a formal positive 
  nc kernel with convergence set containing either ${\rm Nilp}^{d}$ or 
  $\widetilde \Omega$.
   \end{proof}

\begin{remark}  \label{R:KVV} We here note some results from the work 
    of Kaliuzhnyi-Verbovetskyi--Vinnikov \cite{KVV-PAMS} which may be 
    viewed as formal analogues of various parts of Theorem
    \ref{T:cp=p}.
    
    \smallskip
    
    \noindent
    \textbf{1. Formal version of Corollary \ref{C:nilp}:} 
   
    \begin{theorem}   \label{T:KVVnilp} 
	The formal power series 
    $$
    \bK(z,w) = \sum_{\fa, \fb} 
    \bK_{\fa, \fb} z^{\fa} \overline{w}^{\fb^{\top}} 
    \in \cL(\cY)\langle \langle z, \overline{w} \rangle \rangle
    $$
    is a formal positive kernel if and only if, for every $n \in 
    {\mathbb N}$ and $Z \in {\rm Nilp}^{d}_{n}$,
    $$
    K_{I_{n}}(Z,Z) = \sum_{\fa, \fb \in \free} \bK_{\fa, 
    \fb} \otimes Z^{\fa} Z^{* \fb^{\top}} \in \cL(\cY \otimes 
    {\mathbb C}^{n})
    $$
    (note that the a priori infinite series is actually finite since 
    $Z \in {\rm Nilp}^{d}_{n}$) is a positive semidefinite operator.
    \end{theorem}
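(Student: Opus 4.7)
The plan is to read Theorem~\ref{T:KVVnilp} as the formal-power-series shadow of Corollary~\ref{C:nilp}, leveraging the dictionary between formal positive kernels and cp nc kernels on the nilpotent nc set developed in Theorem~\ref{T:f-vs-nc}.

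For the ``only if'' direction, I would apply (1)~$\Rightarrow$~(3) of Theorem~\ref{T:NFRKHS} to obtain a formal Kolmogorov factorization $\bK(z,w) = \bH(z)\bH(w)^*$ with $\bH(z) = \sum_{\fa} \bH_{\fa} z^{\fa}$. For each $Z \in {\rm Nilp}^d_n$ the sum $H(Z) := \sum_{\fa \in \free} \bH_{\fa} \otimes Z^{\fa}$ is actually finite by nilpotency, and a direct expansion yields $K_{I_n}(Z,Z) = H(Z)H(Z)^* \succeq 0$.

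For the converse, I would first promote $\bK$ to a bona fide nc kernel $K$ on $({\rm Nilp})^d \times ({\rm Nilp})^d$ by
\[
K(Z,W)(P) := \sum_{\fa, \fb \in \free} \bK_{\fa, \fb} \otimes Z^{\fa}\,P\,W^{*\fb^{\top}},
\]
which is well-defined as a finite sum by joint nilpotency. It is graded by inspection, and the intertwining property~\eqref{kerHerintertwine} reduces term-by-term to the observations that $\alpha Z = \widetilde Z \alpha$ propagates to $\alpha Z^{\fa} = \widetilde Z^{\fa}\alpha$, while $\beta W = \widetilde W \beta$ propagates to $W^{*\fb^{\top}}\beta^* = \beta^*\widetilde W^{*\fb^{\top}}$. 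Hence $K$ is a nc kernel, and the hypothesis of the theorem amounts to $K(Z,Z)(I_n) = K_{I_n}(Z,Z) \succeq 0$ for every $Z \in {\rm Nilp}^d_n$, so Corollary~\ref{C:nilp} gives that $K$ is cp.

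With cp in hand, Theorem~\ref{T:cpker} (applied with $\cA = \mathbb{C}$) furnishes a Kolmogorov decomposition $K(Z,W)(P) = H(Z)(P \otimes I_{\cX})H(W)^*$ for a nc function $H \colon ({\rm Nilp})^d \to \cL(\cX,\cY)_{\rm nc}$; the theory invoked in the proof of Theorem~\ref{T:f-vs-nc}(a) provides the globally convergent Taylor--Taylor expansion $H(Z) = \sum_{\fa} \bH_{\fa} \otimes Z^{\fa}$ on all of ${\rm Nilp}^d$. Multiplying out the Kolmogorov identity yields $K(Z,W)(P) = \sum_{\fa,\fb} \bH_{\fa}\bH_{\fb}^* \otimes Z^{\fa}PW^{*\fb^{\top}}$; matching with the definition of $K$ and invoking uniqueness of Taylor--Taylor coefficients identifies $\bK_{\fa,\fb} = \bH_{\fa}\bH_{\fb}^*$, so $\bK(z,w) = \bH(z)\bH(w)^*$ is formal positive by (3)~$\Rightarrow$~(1) of Theorem~\ref{T:NFRKHS}. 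The one nontrivial step is this final coefficient identification, which rests on uniqueness of nc Taylor--Taylor expansions from~\cite{KVV-book}; the rest is careful bookkeeping with the transpose convention relating $W^{*\fb^{\top}}$ to the index $\bK_{\fa,\fb}$.
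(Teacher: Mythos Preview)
Your proposal is correct and follows precisely the route the paper itself indicates: the paper does not give a self-contained proof of Theorem~\ref{T:KVVnilp} but remarks that, via the dictionary in Theorem~\ref{T:f-vs-nc}, it is equivalent to Corollary~\ref{C:nilp}, and cites \cite[Theorem~3]{KVV-PAMS} for a direct argument. Your write-up is exactly a fleshed-out version of that equivalence---promoting $\bK$ to a nc kernel on $({\rm Nilp})^d$, invoking Corollary~\ref{C:nilp} (hence Theorem~\ref{T:cp=p}(b)) for complete positivity, and then reading off the formal Kolmogorov factorization from the nc one via Taylor--Taylor uniqueness---so there is nothing to add.
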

    
    Using the connection between formal positive kernels and cp nc 
    kernels given by Theorem \ref{T:f-vs-nc}, we see that Theorem 
    \ref{T:KVVnilp} may be considered as equivalent to Corollary 
    \ref{C:nilp}.  A direct proof of Theorem \ref{T:KVVnilp} can be 
    found in \cite[Theorem 3]{KVV-PAMS}.
    
    \smallskip
    
    \noindent
    \textbf{2. Formal version of Theorem \ref{T:cp=p} part (a):}
    The following formal version of Theorem \ref{T:cp=p} part (a) 
    appears as Theorem 2 in \cite{KVV-PAMS}.
    
    \begin{theorem} \label{T:KVV-KI}   Suppose that $\bK(z,w) \in 
	\cL(\cY) \langle \langle z, \overline{w} \rangle \rangle$ is 
	a formal power series which is uniformly convergent when a 
	$(n \times n)$-matrix $d$-tuple pair $(Z,W)$ is substituted 
	for the formal indeterminates $(z,w)$, as long as $Z,W$ are 
	in some norm-open ball around the origin $U_{n}$ in ${\mathbb 
	C}^{n \times n}$ for each $n \in {\mathbb N}$.  Suppose also 
	that the associated kernel $(Z,W) \mapsto K(Z,W)$ obtained by 
	this substitution is a positive kernel in the sense of 
	Aronszajn on each open ball $U_{n}$ for all $n \in {\mathbb 
	N}$.  Then it follows that $\bK$ is a positive kernel.
\end{theorem}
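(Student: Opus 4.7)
The strategy is to reduce Theorem \ref{T:KVV-KI} to Theorem \ref{T:KVVnilp}: once we establish that $K(Z_0, Z_0)(I_n) \succeq 0$ for every $n \in \mathbb{N}$ and every jointly nilpotent tuple $Z_0 \in {\rm Nilp}^d_n$, the formal positivity of $\bK$ follows at once. First note that substitution of matrix tuples into the uniformly convergent series $\bK$ produces on $\Omega := \amalg_n U_n$ a function $K$ with values in $\cL(\mathbb{C}, \cL(\cY))_{\rm nc}$ which is automatically a nc kernel, since the monomial identity $(SZS^{-1})^\fa = SZ^\fa S^{-1}$ ensures that ``respects direct sums'' and ``respects intertwinings'' pass through the substitution. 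The hypothesis of single-point Aronszajn positivity on each ball promotes to multi-point form: for every $Z^{(\mu)} \in U_n$, $v_\mu \in \cY^n$, and $M \in \mathbb{N}$,
\[
\sum_{\mu, \nu = 1}^M \langle K(Z^{(\mu)}, Z^{(\nu)})(I_n)\, v_\nu, v_\mu \rangle_{\cY^n} \;\ge\; 0.
\]

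The key step is a roots-of-unity test. Fix $Z_0 \in {\rm Nilp}^d_n$ and let $N$ be the nilpotency index, so that $Z_0^\fa = 0$ whenever $|\fa| > N$. Choose $M > N$, set $\zeta := e^{2\pi i / M}$, and take $\lambda_\mu := r\zeta^\mu$ with $r > 0$ small enough that every $\lambda_\mu Z_0$ lies in $U_n$. Apply multi-point Aronszajn positivity with $Z^{(\mu)} := \lambda_\mu Z_0$ and test vectors $v_\mu := \sum_{k=0}^N \zeta^{\mu k} x_k$, where $x_0, \ldots, x_N \in \cY^n$ are arbitrary. The nilpotency of $Z_0$ makes
\[
K(\lambda_\mu Z_0, \lambda_\nu Z_0)(I_n) \;=\; \sum_{p, q = 0}^N r^{p+q}\, \zeta^{\mu p - \nu q}\, A_{p, q}, \qquad A_{p, q} := \sum_{|\fa| = p,\, |\fb| = q} \bK_{\fa, \fb} \otimes Z_0^\fa (Z_0^\fb)^*,
\]
a finite sum. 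Expanding $\langle A_{p, q} v_\nu, v_\mu \rangle = \sum_{k, l} \zeta^{\nu k - \mu l} \langle A_{p, q} x_k, x_l \rangle$ and summing over $\mu, \nu$ using $\sum_{\mu = 1}^M \zeta^{\mu j} = M\,\delta_{j \equiv 0 \pmod{M}}$ (which, together with $|p - l|, |q - k| \le N < M$, forces $p = l$ and $q = k$) collapses the fourfold sum to
\[
M^2 \sum_{k, l = 0}^N r^{k+l}\, \langle A_{l, k} x_k, x_l \rangle_{\cY^n} \;\ge\; 0.
\]
Substituting $w_k := r^k x_k$ and invoking the Hermitian identity $A_{l, k} = A_{k, l}^*$ (a consequence of $\bK_{\fa, \fb}^* = \bK_{\fb, \fa}$, implied by the Hermitianicity built into Aronszajn positivity of $K$) rewrites the inequality as
\[
\sum_{k, l = 0}^N \langle A_{k, l} w_l, w_k \rangle_{\cY^n} \;\ge\; 0 \qquad \text{for all } w_0, \ldots, w_N \in \cY^n,
\]
i.e.\ the block operator matrix $[A_{k, l}]_{k, l = 0}^N$ is positive semidefinite. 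The constant choice $w_k = v$ then gives
\[
0 \;\le\; \sum_{k, l} \langle A_{k, l} v, v \rangle \;=\; \langle K(Z_0, Z_0)(I_n) v, v \rangle,
\]
so $K(Z_0, Z_0)(I_n) \succeq 0$, as required.

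Having established positivity at all nilpotent tuples across all levels, a direct invocation of Theorem \ref{T:KVVnilp} (the formal counterpart of Corollary \ref{C:nilp}) completes the proof that $\bK$ is a formal positive kernel. The main obstacle is the roots-of-unity computation: the coordinated choices $\lambda_\mu = r\zeta^\mu$ and $v_\mu = \sum_k \zeta^{\mu k} x_k$ must simultaneously encode the bidegree structure of the nilpotent monomials and inject enough degrees of freedom into the test vectors, so that the double Vandermonde sum diagonalizes and extracts \emph{block-operator} positivity of $[A_{k, l}]$ from what is a single scalar Aronszajn positivity condition. Without this diagonalization a naive substitution $Z = \lambda Z_0$ yields only positivity of a matrix polynomial on a small disk around the origin, which does not extend to $\lambda = 1$ in general; the availability of multi-point Aronszajn positivity at every level (equivalently, block-matrix positivity coming from the ``respects direct sums'' axiom) is what rescues the argument.
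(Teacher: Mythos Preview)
The paper does not give its own proof of Theorem~\ref{T:KVV-KI}; it simply cites it as Theorem~2 of \cite{KVV-PAMS}. So there is no in-paper argument to compare against.

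Your argument is correct. The reduction to Theorem~\ref{T:KVVnilp} is legitimate and non-circular: in the paper's logical structure, Theorem~\ref{T:KVVnilp} is equivalent (via Theorem~\ref{T:f-vs-nc}) to Corollary~\ref{C:nilp}, which in turn follows from case~(b) of Theorem~\ref{T:cp=p}, proved directly using similarity invariance of $({\rm Nilp})^{d}$. None of that relies on Theorem~\ref{T:KVV-KI}. The roots-of-unity computation is clean and does exactly what you claim: the double sum over $\mu,\nu$ picks out the diagonal $p=l$, $q=k$, and the resulting inequality $\sum_{k,l} r^{k+l}\langle A_{l,k} x_k, x_l\rangle \ge 0$ for arbitrary $x_k$ is equivalent (after the rescaling $w_k = r^k x_k$) to positive semidefiniteness of the block matrix $[A_{k,l}]$, whose sum over all blocks is precisely $K_{I_n}(Z_0,Z_0)$. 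This is in fact essentially the argument in \cite{KVV-PAMS}.

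Two minor cosmetic remarks. First, the appeal to the Hermitian identity $A_{l,k}=A_{k,l}^{*}$ is not actually needed at the point you invoke it: the passage from $\sum_{k,l}\langle A_{l,k} w_k, w_l\rangle$ to $\sum_{k,l}\langle A_{k,l} w_l, w_k\rangle$ is just a relabeling of the summation indices. (The Hermitian identity is, however, what guarantees that $\sum_{k,l} A_{k,l}$ is self-adjoint, so that nonnegativity of the quadratic form gives genuine operator positivity.) Second, your opening remark that $\Omega = \amalg_n U_n$ carries an nc-kernel structure is not quite right as stated, since the balls $U_n$ may have $n$-dependent radii and hence $\Omega$ need not be closed under direct sums; but you never use that claim, so it is harmless.
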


We are now ready to complete the proof of Theorem \ref{T:cp=p} 
part (a) by using its formal analogue Theorem \ref{T:KVV-KI} as the 
basic ingredient.

\begin{proof}[Proof of Theorem \ref{T:cp=p} part (a):]
  Suppose that the  nc set $\Omega$ is a uniform ball $N(0; 
  \epsilon)$ around $0$ in $({\mathbb C}^{d})_{\rm nc}$ and that the 
  kernel $K_{I_{n}}$ is Aronszajn-positive on $N(0; \epsilon)_{n}$ 
  for each $n \in {\mathbb N}$.  We also assume that $K$ is locally 
  $I$-bounded on slices.  Then we use the fact (not proven 
  in \cite{KVV-book} and verifiable by results done there for the 
  order-0 case)  that $K$ has a Taylor-Taylor series expansion 
  centered at the origin:
  $$
    K(Z,W)(P) = \sum_{\fa, \fb} K_{\fa, \fb} \otimes 
    Z^{\fa} P  W^{* \fb^{\top}}
  $$
  for some Taylor coefficient moments $K_{\fa, \fb} \in 
  \cL(\cY)$.
  We then may associate a formal kernel $\bK(z,w) = \sum_{\fa, 
  \fb} K_{\fa, \fb} z^{\fa} \overline{w}^{\fb^{\top}}$.
  The hypothesis that $K_{I_{n}}$ is a positive Aronszajn kernel on 
  each $N(0; \epsilon)_{n}$ implies that $\bK$ satisfies the 
  hypotheses of Theorem \ref{T:KVV-KI}.  We conclude that $\bK$ is a 
  positive formal kernel, hence has a formal Kolmorgorov decomposition
  $$
    \bK(z,w) = \bH(z) \bH(w)^{*} = \sum_{\fa, \fb} H_{\fa} 
    H_{\fb}^{*} \, z^{\fa} \overline{w}^{\fb^{\top}}.
  $$
  Plugging in matrix pairs $(Z,W)$ for the formal indeterminates 
  $(z,w)$ into this relation gives us
  $$
    K(Z,W)(I_{n}) = H(Z) H(W)^{*} = \sum_{\fa, \fb} H_{\fa} 
    H_{\fb}^{*} \otimes Z^{\fa} W^{* \fb^{\top}}.
  $$ 
  
  Our goal is to show that $K$ is a cp nc kernel, i.e., that 
  $K(Z,Z)(P) \succeq 0$ whenever $Z \in N(0; \epsilon)_{n}$ and $P 
  \succeq 0$ in ${\mathbb C}^{n \times n}$.  Fix $Z \in N(0; 
  \epsilon)_{n}$.  While $N(0; \epsilon)_{n}$ is not 
  similarity-invariant, it is invariant under local similarities, 
  i.e., given $Z \in N(0; \epsilon)$, there is a $\eta >0$ so that
  $S \in {\mathbb C}^{n \times n}$ with $\|S\|, \|S^{-1}\| $ both at 
  most  $\eta$ implies that $\widetilde Z = S^{-1} Z S \in N(0; 
  \epsilon)$.  If $P$ has the form $S S^{*}$ with $S$ as above, then
  the nc kernel properties of $K$ imply that
  $$
   K(Z,Z)(P) = K(Z,Z)(S S^{*}) = S\,  K(\widetilde Z, \widetilde Z)(I)\, 
   S^{*} \succeq 0.
   $$
   On the other hand, we also know that
   $$
    S \, K(\widetilde Z, \widetilde Z)(I)\, S^{*} = S\, H(\widetilde 
    Z) H(\widetilde Z)^{*} S^{*} = H(Z) \, S S^{*} \, H(Z)^{*}
  $$
  since $H$ is a nc function.  We conclude that 
  \begin{equation}   \label{id'}
    K(Z,Z)(P) = H(Z) \, P \, H(Z)^{*}
  \end{equation}
  for $P \succeq 0$ in a sufficiently small neighborhood around $I_{n}$ (where 
  the  neighborhood depends on the fixed point $Z \in N(0; \epsilon)$). 
  But both sides of \eqref{id'} are linear in $P$, in particular, 
  entire in the matrix entries of $P$.  We conclude that the identity 
  \eqref{id'} actually holds for all Hermitian $P \in {\mathbb C}^{n 
  \times n}$, and then, by linearity, for all $P \in {\mathbb C}^{n 
  \times n}$.
  We have thus exhibited a nc Kolmogorov decomposition for $K$ and we 
  conclude that $K$ is cp as wanted.
\end{proof}

As is pointed out in \cite{KVV-PAMS}, the hypothesis that $K_{I_{n}}$ 
is a positive kernel in the sense of Aronszajn cannot to weakened to 
the hypothesis that $K(Z,Z)(I_{n}) \succeq 0$ for all $Z \in N(0; 
\epsilon)$; one must use the full force of Aronszajn-positivity of 
$K_{I_{n}}$ to deduce that the associated formal kernel $\bK$ is a 
positive formal kernel.

\smallskip

\noindent
\textbf{3.  The polynomial case.}  Another interesting case of 
Theorem \ref{T:f-vs-nc} is the case where $\bK$ is a formal 
polynomial, i.e., $\bK_{\fa, \fb} = 0$ for all but finitely 
many $\fa, \fb \in \free$.  By Theorem \ref{T:KVVnilp}, it 
follows that the assumption that $K_{I_{n}}(Z,Z) \succeq 0$ for all 
$Z \in {\rm Nilp}^{d}_{n}$ for all $n \in {\mathbb N}$ is enough to 
imply that $\bK$ is a positive kernel and hence has a formal 
Kolmogorov decomposition $\bK(Z,W) = \bH(Z) \bH(W)$ for some $\bH \in 
\cL(\cX, \cY)\langle \langle z \rangle \rangle$.  The result of 
Theorem 4 from \cite{KVV-PAMS} is that the Kolmogorov factor $\bH$ 
can be taken also to be a polynomial; in addition there are estimates 
on the degree of $\bH$ in terms of th degree of $\bK$.  By the 
correspondence between  formal positive kernels and cp nc kernels 
given by Theorem \ref{T:f-vs-nc},  it is clear that one can also 
formulate a non-formal version of this result: {\em if $K(Z,W)(P) = 
K_{\fa, \fb} \otimes Z^{\fa} P W^{* \fb^{\top}}$ is a nc 
kernel such that $K(Z,Z)(I_{n}) \succeq 0$ for $Z \in ({\rm 
Nilp})^{d}_{n}$ and $n \in {\mathbb N}$, then $K$ has a nc Kolmogorov 
decomposition $K(Z,W)(P) = H(Z) P H(W)^{*}$ where $H(Z) = 
\sum_{\fa \in \free} H_{\fa} \otimes Z^{\fa}$ is also a 
polynomial ($H_{\fa} = 0$ for all but finitely many words $\fa 
\in \free$).}
\end{remark}
  
  We note some additional corollaries of Theorem \ref{T:f-vs-nc}.

\begin{corollary}  \label{C:f-vs-nc}
    Let $K$ be a nc kernel in either case (a) or case (b) as in 
    the converse side of Theorem \ref{T:f-vs-nc}, so 
    $K$ itself has a Taylor-Taylor series (a verifiable fact not proved in \cite{KVV-book})
    $$
      K(Z,W)(P) = \sum_{\fa, \fb \in \free} Z^{\fa} P W 
      ^{\fb^{\top}} \otimes \bK_{\fa, \fb}.
      $$
      Then $K$ is a cp nc kernel if and only if $[\bK_{\fa, 
      \fb}]_{\fa, \fb \in \free} \ge 0$ in the sense that 
      statement (1) in Theorem \ref{T:NFRKHS} holds.
\end{corollary}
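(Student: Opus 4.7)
The plan is to establish this as essentially a repackaging of Theorems \ref{T:cpker}, \ref{T:NFRKHS}, and \ref{T:f-vs-nc}. The basic idea is that each of the three equivalences passes through a Kolmogorov decomposition --- a nc Kolmogorov decomposition for cp nc kernels (Theorem \ref{T:cpker}) and a formal Kolmogorov decomposition for formal positive kernels (Theorem \ref{T:NFRKHS}) --- and Theorem \ref{T:f-vs-nc} already sets up a correspondence between these two notions of Kolmogorov decomposition via substitution of matrix tuples into formal indeterminates.

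For the forward direction, suppose $K$ is a cp nc kernel. The converse side of Theorem \ref{T:f-vs-nc} (in whichever of cases (a) or (b) applies) then produces a nc Kolmogorov decomposition $K(Z,W)(P) = H(Z)(P \otimes {\rm id}_{\cL(\cX)}) H(W)^{*}$ where $H$ itself admits a Taylor--Taylor expansion $H(Z) = \sum_{\fa \in \free} Z^{\fa} \otimes H_{\fa}$ on the relevant domain. Forming $\bH(z) = \sum_{\fa} H_{\fa} z^{\fa}$ one obtains the formal factorization $\bH(z)\bH(w)^{*} = \sum_{\fa, \fb} (H_{\fa} H_{\fb^{\top}}^{*}) z^{\fa} \overline{w}^{\fb^{\top}}$, and plugging matrix tuples back in recovers $K(Z,W)(P)$, so by uniqueness of the Taylor--Taylor coefficients of $K$ (which in case (a) is immediate from the polynomial character of the expansion, and in case (b) follows from \cite{KVV-book}) we get $\bK_{\fa, \fb} = H_{\fa} H_{\fb^{\top}}^{*}$. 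The implication (3) $\Rightarrow$ (1) in Theorem \ref{T:NFRKHS} then yields the positivity of the moment matrix $[\bK_{\fa, \fb}]_{\fa, \fb \in \free}$.

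For the reverse direction, assume that $[\bK_{\fa,\fb}]_{\fa,\fb \in \free} \ge 0$. By (1) $\Rightarrow$ (3) of Theorem \ref{T:NFRKHS}, the formal kernel $\bK(z,w) = \sum_{\fa, \fb} \bK_{\fa,\fb}\, z^{\fa} \overline{w}^{\fb^{\top}}$ admits a formal Kolmogorov decomposition $\bK(z,w) = \bH(z)\bH(w)^{*}$ for some $\bH \in \cL(\cX, \cY)\langle\langle z \rangle\rangle$. Proposition \ref{P:f-to-nc} then guarantees that substituting matrix tuples from the domain $\Omega$ of $K$ (which is either ${\rm Nilp}^{d}$ or a finitely open neighborhood of $0$ with appropriate local boundedness, exactly the hypotheses under which $K$'s own series converges) produces convergent series $H(Z) = \sum_{\fa} Z^{\fa} \otimes H_{\fa}$ and a well-defined function $\widetilde K(Z,W)(P) = H(Z)(P \otimes {\rm id}_{\cL(\cX)}) H(W)^{*}$ with matching Taylor--Taylor coefficients $\bK_{\fa, \fb}$. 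By uniqueness, $\widetilde K = K$, which therefore possesses a nc Kolmogorov decomposition; the implication (3) $\Rightarrow$ (1) in Theorem \ref{T:cpker} (with $\cA = \bbC$ and $\sigma$ the trivial representation) then gives that $K$ is cp.

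The main obstacle I anticipate is not in either implication per se but in justifying the uniqueness of the Taylor--Taylor expansion used at both steps: in case (b) one needs that two nc functions $K$ and $\widetilde K$ on $\Omega$ with matching Taylor--Taylor coefficients centered at $0$ must agree on the relevant convergence subdomain, which rests on the results of \cite{KVV-book} together with the local boundedness hypothesis transferred from $K$ to its Kolmogorov factor $H$ (as was the point of Remark \ref{R:KVV} and the latter portion of Theorem \ref{T:f-vs-nc}). In case (a), this uniqueness issue evaporates since Taylor--Taylor series on $\mathrm{Nilp}^{d}$ are finite at each level and the matching of formal coefficients is literally matching of evaluations. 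Once this uniqueness point is granted, the corollary follows by combining the cited theorems without further calculation.
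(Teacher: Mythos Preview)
Your proposal is correct and is exactly the approach the paper intends: the corollary is stated without proof immediately after Theorem~\ref{T:f-vs-nc} as one of its ``additional corollaries,'' and your argument simply unpacks how the Kolmogorov-decomposition equivalences in Theorems~\ref{T:cpker} and~\ref{T:NFRKHS} pass to each other via Theorem~\ref{T:f-vs-nc}. Your flagged ``main obstacle'' about uniqueness/convergence in case~(b) is real but is precisely the same soft spot the paper itself acknowledges (``a verifiable fact not proved in \cite{KVV-book}'') and handles only at the level of Remark~\ref{R:KVV}; you are not missing anything the paper supplies.
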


\begin{corollary}   \label{C:f-vs-nc'}
    Let  $\cH$ be a Hilbert space of nc functions with values in 
    $\cY_{\rm nc}$ with bounded point evaluations on a nc set $\Omega 
    \subset {\mathbb C}^{d}_{\rm nc}$ containing $0$.  Let $f(z) = 
  \sum_{\fa \in \free} Z^{\fa} \otimes f_{\fa} $ be the 
  Taylor-Taylor series for $f$ centered at $0$.  Then the 
  Taylor-Taylor-coefficient maps $f \mapsto f_{\fa}$ and $f \mapsto
  \Delta^{\fa}f(0.\dots, 0)$ are all bounded.
 \end{corollary}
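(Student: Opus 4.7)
The plan is to identify $\cH$ as a noncommutative formal reproducing kernel Hilbert space (NFRKHS), whereupon the boundedness of the Taylor--Taylor coefficient functionals is essentially built into the definition via Theorem~\ref{T:NFRKHS}. First, since $\cY_{\rm nc} \cong \cL({\mathbb C}, \cY)_{\rm nc}$, the space $\cH$ is a Hilbert space of nc functions with values in $\cL(\cA, \cY)_{\rm nc}$ in the special case $\cA = {\mathbb C}$. The $\cA$-action here is just scalar multiplication and so trivially defines a unital $*$-representation as required by Theorem~\ref{T:RKHS}; combined with the bounded point-evaluation hypothesis, this lets us invoke Theorem~\ref{T:RKHS} to conclude that $\cH = \cH(K)$ isometrically for some cp nc kernel $K \colon \Omega \times \Omega \to \cL({\mathbb C}, \cL(\cY))_{\rm nc} \cong \cL(\cY)_{\rm nc}$.

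Next I would apply the converse direction of Theorem~\ref{T:f-vs-nc}. Depending on the shape of $\Omega$, we are either in case~(a) (where $\Omega$ is the nilpotent cone ${\rm Nilp}^{d}$) or case~(b) (a finitely open neighborhood of $0$ on which $K$ is $I$-locally bounded along slices). In case~(b) the required $I$-local boundedness of $K$ follows from Theorem~\ref{T:Gateaux}(2) once one verifies that $\cH$ is locally bounded on slices near $0$; this in turn follows from the principle of uniform boundedness, since the existence of a convergent Taylor--Taylor expansion of each fixed $f \in \cH$ near $0$ gives pointwise boundedness along each slice of the family of point-evaluation operators, which by Theorem~\ref{T:RKHS} are already individually bounded. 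Theorem~\ref{T:f-vs-nc} then produces a formal positive kernel $\bK \in \cL(\cY) \langle \langle z, \overline{w} \rangle \rangle$ together with an isometric identification $\phi \colon \cH(\bK) \to \cH(K) = \cH$ that sends a formal power series $\sum_{\fa} f_{\fa} z^{\fa}$ precisely to the nc function $Z \mapsto \sum_{\fa} Z^{\fa} \otimes f_{\fa}$, i.e.\ to its Taylor--Taylor expansion.

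Under this identification the Taylor--Taylor coefficient functional $f \mapsto f_{\fa}$ on $\cH$ corresponds to the $\fa$-th formal-coefficient evaluation $\boldsymbol{\rm ev}_{\fa}$ on $\cH(\bK)$, which by the definition of NFRKHS (equivalently, by the reproducing property~\eqref{formal-reprod} together with Theorem~\ref{T:NFRKHS}) is bounded, with operator norm equal to $\|\bK_{\fa,\fa}\|^{1/2}$. Transferring this bound back through the isometry $\phi$ shows that $f \mapsto f_{\fa}$ is bounded on $\cH$ for each $\fa \in \free$, and the identity $f_{\fa} = \Delta_{R}^{\fa^{\top}} f(0, \dots, 0)$ from the Taylor--Taylor theory of \cite{KVV-book} (already used in~\eqref{id}) immediately upgrades this to boundedness of the nc higher-order difference-differential operators at zero. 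The only non-routine step is verifying the hypotheses of Theorem~\ref{T:f-vs-nc} case~(b) via Theorem~\ref{T:Gateaux}(2); everything else is a matter of reading off results already in hand.
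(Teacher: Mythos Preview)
Your proposal is correct and follows exactly the route the paper intends: the corollary is presented without proof as an immediate consequence of Theorem~\ref{T:f-vs-nc} (the paper only adds that a direct proof via the techniques of \cite{KVV-book} is also possible). Your argument---identify $\cH$ with $\cH(K)$ via Theorem~\ref{T:RKHS}, pass to the associated NFRKHS $\cH(\bK)$ via the converse direction of Theorem~\ref{T:f-vs-nc}, and read off boundedness of the coefficient maps from the defining property of an NFRKHS---is precisely this.
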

 
 We note that it is also possible to prove Corollary 
 \ref{C:f-vs-nc'} directly, a good exercise for those 
 having some facility with the techniques developed in 
 \cite{KVV-book}.
 
 \begin{remark}  \label{R:NFRKHSquestion}   The following two 
     questions remain open.
     \begin{itemize}
	 \item We suppose that we are
    given a formal positive kernel $\bK$ with associated 
     convergence set $\Omega$ \eqref{conv-set} or \eqref{conv-set'}
     and assume that the point $Z \in ({\mathbb C}^{d})^{n \times n} \cong ({\mathbb 
     C}^{n \times n})^{d}$ is such that the iterated series 
     \eqref{series} converges for all $W \in \Omega$. Does it then 
     follow that $Z$ itself is in $\Omega$?
     
     \item If $Z \in ({\mathbb C}^{n \times n})^{d}$ is such that 
     $K(Z,Z)$ converges, does it follow that $Z \in \Omega$?
     \end{itemize}
     In the corresponding commutative situation, the answer to both 
     questions is positive, as can be seen by using the fact that the 
     domain of convergence of the kernel function is a logarithmically 
     convex complete Reinhard domain combined with the symmetry 
     property $K(Z,W)(P)^{*} = K(W,Z)(P^{*})$ of the kernel function.
     \end{remark}
     
\section{Multipliers between nc reproducing kernel 
Hilbert spaces}  \label{S:mult}

\subsection{Characterization of contractive multipliers}

Let us suppose that we are given two cp nc kernels $K'$ and 
$K$, both defined on the Cartesian product $\Omega \times \Omega$ of 
a nc set $\Omega \subset \cS_{nc}$ with values in $\cL(\cA, \cL(\cU))_{\rm nc}$ 
and $\cL(\cA, \cL(\cY))_{\rm nc}$ respectively, where $\cA$ is a 
$C^{*}$-algebra and $\cU$ and $\cY$ are two 
auxiliary Hilbert spaces.  Suppose next that $S$ is a global function 
on $\Omega$ with values in $\cL(\cU, \cY)_{\rm nc}$.  We say that $S$ 
is a \textbf{multiplier} from $\cH(K')$ to $\cH(K)$, 
written as $S \in \cM(K',K)$ if the operator $M_{S}$ given by
$$
  (M_{S} f)(W) = S(W) f(W) \text{ for }  W \in \Omega_{m}
$$
maps $\cH(K')$ boundedly into $\cH(K)$.  Here are a few preliminary 
observations concerning such operators $M_{S}$.
\begin{itemize}
    \item
{\em If $S$ is a global function with values in $\cL(\cU, \cY)_{\rm nc}$ and 
$f$ is a global function with values in $\cL(\cA, \cU)_{\rm nc}$, then 
$W \mapsto S(W) f(W)$ is  a global function with values in 
$\cL(\cA, \cY)_{\rm nc}$.} Simply compute, for $Z \in \Omega_{n}$ and $W \in 
\Omega_{m}$,
\begin{align*}
    \left( M_{S} f\right) \left( \sbm{ Z & 0 \\ 0 & W}\right) & =
    S\left( \sbm{ Z & 0 \\ 0 & W} \right) f\left( \sbm{Z & 0 \\ 0 & 
    W} \right) \\
    & = \begin{bmatrix} S(Z) & 0 \\ 0 & S(W) \end{bmatrix}
    \begin{bmatrix} f(Z) & 0 \\ 0 & f(W) \end{bmatrix} \\
	& = \begin{bmatrix} S(Z) f(Z) & 0 \\ 0 & S(W) f(W) 
    \end{bmatrix} = \begin{bmatrix} (M_{S}f)(Z) & 0 \\ 0 & 
    (M_{S}f)(W) \end{bmatrix}.
\end{align*}

\item
{\em If $\cS$ is a vector space $\cV$ and we make the 
stronger assumption that both $S$ and $f$ are nc functions, then 
$M_{S}f$ is also a nc function.} If $Z \in \Omega_{n}$, $\widetilde Z \in 
\Omega_{\widetilde n}$ and $\alpha \in {\mathbb C}^{\widetilde n \times 
n}$ is such that $\alpha Z = \widetilde Z \alpha$, then 
\begin{align*}
    \alpha\cdot (M_{S}f)(Z) &  = \alpha S(Z) f(Z) = S(\widetilde Z) \alpha f(Z) = 
    S(\widetilde Z) f(\widetilde Z) \alpha \\
    & = (M_{S}f)(\widetilde Z) \cdot \alpha.
\end{align*}

\item {\em Let $\sigma'$ and $\sigma$ denote the canonical $\cA$-actions 
\eqref{rep1} on $\cH(K')$ and $\cH(K)$ respectively.  Suppose that 
$S$ is a global/nc function on $\Omega$ with 
values in $\cL(\cU, \cY)_{\rm nc}$ and  $f$ is a global/nc 
function on $\Omega$.  Then, for all $a \in \cA$,
$$ 
\sigma'(a)(M_{S} f) = M_{S} (\sigma(a) f),
$$
i.e., $M_{S}$ intertwines $\sigma(a)$ with $\sigma'(a)$ for all $a 
\in \cA$.}
Indeed, compute
\begin{align*}
    \left( \sigma'(a) (M_{S}f) \right)(W)(v) & = 
    \left(M_{S}f\right)(W)(va) = S(W)f(W)(va) \\
    & = \left(M_{S} \sigma(a)f\right)(W)(v).
\end{align*}

\item
{\em Suppose  that $S$ is a global/nc function on $\Omega$ with values 
$\cL(\cU, \cY)_{\rm nc}$ with the property that $M_{S} f \in \cH(K)$ 
for each $f \in \cH(K')$, i.e., $M_{S}$ is well defined as an 
operator from $\cH(K')$ into $\cH(K)$.  Then $S \in \cM(K',K)$.}
By the Closed Graph Theorem (see e.g.\ \cite{Rudin}), it suffices to 
check that $M_{S}$ is closed as an operator from $\cH(K')$ into 
$\cH(K)$.  We therefore assume that $\{f_{n}\}$ is a sequence in $\cH(K')$ 
with $\lim_{n \to \infty} f_{n} = f$ in $\cH(K')$ and that $M_{S}f_{n}$ 
converges in $\cH(K)$ to  the global/nc function $g \in \cH(K)$. Then, due to the 
boundedness of the point-evaluation maps we have, for each $W \in 
\Omega_{m}$ and $v \in \cA^{m}$,
\begin{align*}
g(W) v  & = \lim_{n \to \infty} (M_{S} f_{n})(W) v  =
\lim_{n \to \infty} S(W) f_{n}(W) v = S(W) \left( \lim_{n \to \infty} 
f_{n}(W) v\right)  \\
& = S(W) f(W) v = (M_{S} f)(W) v
\end{align*}
from which it follows that $M_{S}f = g$ in $\cH(K)$.
\end{itemize}

Given $K'$, $K$, and $S$ as above, we shall say that $S$ is a 
\textbf{contractive multiplier} from $\cH(K')$ to $\cH(K)$, written 
as $S \in \overline{\cB}\cM(K',K)$, if $S \in \cM(K',K)$ with  
operator norm of $M_{S}$ at most 1:  $\|M_{S}\|_{\cL(\cH(K'), \cH(K))} \le 1$.
Our main result concerning contractive multipliers is the following.

\begin{theorem}  \label{T:contmult}
    Given global/nc kernels $K'$ and $K$ from $\Omega \times \Omega$ 
    to $\cL(\cA, \cL(\cU))_{\rm nc}$ and $\cL(\cA, \cL(\cY))_{\rm nc}$ respectively 
    and given a global/nc function 
    $S$ from $\Omega$ to $\cL(\cU, \cY)_{\rm nc}$, the following are 
    equivalent:
    \begin{enumerate}
	\item $S \in \overline{\cB}\cM(K',K)$.
	
	\item The kernel $K_{S}$ from $\Omega$ to $\cL(\cA, 
	\cL(\cY))_{\rm nc}$ given by
\begin{equation}   \label{KS}
  K_{S}(Z,W)(P) = K(Z,W)(P) - S(Z) \, K'(Z,W)(P)\, S(W)^{*}
\end{equation}
is a cp global/nc kernel.
\end{enumerate}
\end{theorem}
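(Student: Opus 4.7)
The strategy is the standard one from the classical theory: compute the adjoint $M_{S}^{*}$ on kernel elements, and then identify the positive operator $I_{\cH(K)}-M_{S}M_{S}^{*}$ with the kernel $K_{S}$ via the inner products of kernel elements. The main technical point specific to this noncommutative operator-system setting is that the ``respects direct sums / intertwinings'' properties of $K_{S}$ will need to be checked from those of $K$, $K'$, and $S$, and we must keep the $\cA$-argument straight throughout.

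For the forward direction (1) $\Rightarrow$ (2), I would first verify that, for any kernel element $K_{W,v,y}\in\cH(K)$ (with $W\in\Omega_{m}$, $v\in\cA^{1\times m}$, $y\in\cY^{m}$) and any $f\in\cH(K')$, the reproducing formula \eqref{reprod} yields
\[
\langle M_{S}f,\,K_{W,v,y}\rangle_{\cH(K)}
=\langle S(W)f(W)(v^{*}),y\rangle_{\cY^{m}}
=\langle f(W)(v^{*}),S(W)^{*}y\rangle_{\cU^{m}}
=\langle f,\,K'_{W,v,S(W)^{*}y}\rangle_{\cH(K')},
\]
so that $M_{S}^{*}\colon K_{W,v,y}\mapsto K'_{W,v,S(W)^{*}y}$. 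Then, given a finite linear combination $g=\sum_{i=1}^{N}K_{W_{i},v_{i},y_{i}}$, I would expand $\|g\|_{\cH(K)}^{2}-\|M_{S}^{*}g\|_{\cH(K')}^{2}$ using \eqref{kerinnerprod'} applied to each space to obtain
\[
\sum_{i,j=1}^{N}\langle K(W_{i},W_{j})(v_{i}^{*}v_{j})y_{j},y_{i}\rangle_{\cY^{m_{i}}}
-\sum_{i,j=1}^{N}\langle S(W_{i})K'(W_{i},W_{j})(v_{i}^{*}v_{j})S(W_{j})^{*}y_{j},y_{i}\rangle_{\cY^{m_{i}}},
\]
which is exactly $\sum_{i,j}\langle K_{S}(W_{i},W_{j})(v_{i}^{*}v_{j})y_{j},y_{i}\rangle$. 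Since $\|M_{S}^{*}\|\le 1$ this quantity is $\ge0$, giving the complete positivity of $K_{S}$ in the form \eqref{cpker-expanded}. Finally, the ``respects direct sums'' (or ``respects intertwinings'' in the nc case) properties for $K_{S}$ follow termwise from those of $K$, $K'$, and $S$: for the direct-sum property one uses that $S$ respects direct sums (so $S(Z\oplus\widetilde Z)=S(Z)\oplus S(\widetilde Z)$) together with \eqref{kerds}; for the intertwining property in the nc case one uses \eqref{funcintertwine} for $S$ together with \eqref{kerHerintertwine} for both $K$ and $K'$.

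For the converse (2) $\Rightarrow$ (1), suppose $K_{S}$ is a cp global/nc kernel, so on the dense span of kernel elements in $\cH(K)$ the expression
\[
\Bigl\| \sum_{i=1}^{N} K'_{W_{i},v_{i},S(W_{i})^{*}y_{i}}\Bigr\|_{\cH(K')}^{2}
=\sum_{i,j=1}^{N}\langle S(W_{j})K'(W_{j},W_{i})(v_{j}^{*}v_{i})S(W_{i})^{*}y_{i},y_{j}\rangle
\]
is at most $\sum_{i,j}\langle K(W_{j},W_{i})(v_{j}^{*}v_{i})y_{i},y_{j}\rangle=\|\sum_{i}K_{W_{i},v_{i},y_{i}}\|_{\cH(K)}^{2}$. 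This inequality simultaneously shows that the prescription $T\colon K_{W,v,y}\mapsto K'_{W,v,S(W)^{*}y}$ is well defined on the dense linear span (vectors of norm zero map to vectors of norm zero) and extends to a contraction $T\colon\cH(K)\to\cH(K')$. A short calculation using the reproducing property \eqref{reprod} on both sides then identifies $T^{*}$ with $M_{S}$: for $f\in\cH(K')$,
\[
\langle T^{*}f, K_{W,v,y}\rangle_{\cH(K)}=\langle f, K'_{W,v,S(W)^{*}y}\rangle_{\cH(K')}
=\langle f(W)(v^{*}),S(W)^{*}y\rangle_{\cU^{m}}
=\langle (M_{S}f)(W)(v^{*}),y\rangle_{\cY^{m}},
\]
which, combined with the density of kernel elements and the injectivity of the $f\mapsto(W\mapsto f(W))$ correspondence, forces $T^{*}=M_{S}$; hence $\|M_{S}\|=\|T\|\le 1$ and $S\in\overline{\cB}\cM(K',K)$.

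The only real obstacle is a bookkeeping one rather than a conceptual one: keeping the roles of $\cU$ and $\cY$ (and the operators $S(W)$, $S(W)^{*}$ acting between $\cU^{m}$ and $\cY^{m}$) aligned in the sesquilinear calculations, and verifying that $K_{S}$ inherits the ``respects intertwinings'' clause \eqref{kerHerintertwine} in the nc case. For the latter, one applies \eqref{kerHerintertwine} separately to $K$ and to $K'$, and uses $\alpha S(Z)=S(\widetilde Z)\alpha$ and $\beta S(W)=S(\widetilde W)\beta$ (intertwining for the nc function $S$) to move $\alpha$ and $\beta^{*}$ through the sandwich $S(Z)K'(Z,W)(P)S(W)^{*}$; nothing deeper than formula manipulation is required.
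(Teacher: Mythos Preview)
Your proposal is correct and follows essentially the same approach as the paper: compute $M_{S}^{*}$ on kernel elements via the reproducing property to get $M_{S}^{*}K_{W,v,y}=K'_{W,v,S(W)^{*}y}$, then identify the quadratic form of $I-M_{S}M_{S}^{*}$ on spans of kernel elements with the expanded cp condition \eqref{cpker-expanded} for $K_{S}$; for the converse, define the contraction on kernel elements and check its adjoint is $M_{S}$. The only cosmetic difference is that the paper, having first noted that $K_{S}$ is a global/nc kernel, verifies complete positivity using a single point $W$ (the form \eqref{kercp}) and appeals implicitly to Proposition~\ref{P:cpkernel}, whereas you work directly with the multi-point form \eqref{cpker-expanded}; both are equivalent here.
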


\begin{proof}  It is easily verified that if $K$ and $K'$ are 
    global/nc kernels and $S$ is a global/nc function, then $K_{S}$ 
    is a global/nc kernel.  
    
    If $M_{S} \in \cM(K',K)$, we compute the 
    action of $M_{S}^{*}$ on a kernel element $K_{W,v,y} \in \cH(K)$
    \eqref{kerel} of $\cH(K)$ as follows: for $f \in  \cH(K')$, $W 
    \in \Omega_{m}$, $v \in \cA^{1 \times m}$ we have
    \begin{align*}  
	\langle f, (M_{S})^{*} K_{W,v,y} \rangle_{\cH(K)} & =
	\langle M_{S}f, K_{W,v,y} \rangle_{\cH(K)}  \\
	& = \langle (M_{S}f)(W)(v^{*}), y \rangle_{\cY^{m}} \text{ (by 
	\eqref{reprod} for $\cH(K)$)}  \\
& = \langle S(W) f(W) (v^{*}), y \rangle_{\cY^{m}}  \\
& = \langle f(W)(v^{*}), S(W)^{*} y \rangle_{\cU^{m}}  \\
& = \langle f, K_{W,v, S(W)^{*}y} \rangle_{\cH(K')} \text{ (by 
\eqref{reprod} for $\cH(K')$)}
\end{align*}
and hence 
\begin{equation}  \label{MSadj}
  (M_{S})^{*}\colon K_{W,v,y} \mapsto K_{W,v, S(W)^{*} y}.
\end{equation}
It follows that
\begin{align*} & \langle K_{S}(W,W)(v^{*}v) y, y \rangle_{\cY^{m}} \\ 
 & \quad   = \langle K(W,W)(v^{*}v) y, y \rangle_{\cY^{m}} - \langle 
    K''(W,W)(v^{*}v) S(W)^{*} y, S(W)^{*} y \rangle_{\cU^{m}} \\
    & \quad = \| K_{W,v,y} \|^{2}_{\cH(K)} - \langle 
    K'_{W,v,S(W)^{*}y}, K'_{W,v,S(W)^{*}y} \rangle_{\cH(K')} \\
    & \quad = \| K_{W,v,y} \|^{2}_{\cH(K)} - \| M_{S}^{*} K_{W,v,y} 
    \|^{2}_{\cH(K')} \ge 0
 \end{align*}
and we conclude that $K_{S}$ is cp.  This completes the proof of 
necessity (1) $\Rightarrow$ (2).

Conversely, assume only that $K_{S}$ is cp.  As we are still assuming that 
$K'$ and $K$ are global/nc kernels and that $S$ is a global/nc 
function, we already know that $K_{S}$ is a global/nc kernel.  The 
proof of the necessity direction motivates us to define an operator 
$\Gamma$ on kernel elements $K_{W, v, y}$ of $\cH(K)$ by
$$
  \Gamma \colon K_{W,v,y} \mapsto K'_{W,v, S(W)^{*}y}
$$
and then extend to finite linear combinations of kernel elements by 
linearity. The computation, for $W^{(j)} \in \Omega_{m_{j}}$ and $v_{j} 
\in \cA^{1 \times m_{j}}$,
\begin{align*}
   & \left\| \sum_{j=1}^{N} K_{W^{(j)}, v_{j}, y_{j}} 
   \right\|^{2}_{\cH(K)} -
    \left\| \Gamma \left( \sum_{j=1}^{N} K_{W^{(j)}, v_{j}, y_{j}} 
    \right) \right\|^{2}_{\cH(K')} \\
 & = \sum_{i,j=1}^{N} \langle K(W^{(i)},W^{(j)})(v_{i}^{*} 
 v_{j})y_{j}, y_{i}\rangle_{\cY^{m_{i}}} \\
 & \quad \quad  -  \sum_{i,j=1}^{N} \langle  
 S((W^{(i)}) K'(W^{(i)}, W^{(j)})(v_{i}^{*}v_{j}) 
 S(W^{(j)})^{*} y_{j}, y_{i} \rangle_{\cY^{m_{i}}} \\
 & = \sum_{i,j=1}^{N} \langle K_{S}(W^{(i)}, W^{(j)})(v_{i}^{*} v_{j}) 
 y_{j}, y_{i} \rangle_{\cY} \ge 0 \text{ (by property 
 \eqref{cpker-expanded}).}
 \end{align*}
We conclude that $\Gamma$ is well-defined on the span of the kernel 
elements in $\cH(K)$ and extends by continuity to a well-defined 
contraction operator from all of $\cH(K)$ into $\cH(K')$.  
Furthermore, by reading the computation \eqref{MSadj} backwards, we see that 
$\Gamma^{*} \colon \cH(K') \to \cH(K)$ is given by
$M_{S} \colon f(Z) \mapsto S(Z) f(Z)$.  Hence $M_{S} = \Gamma^{*}$ is 
a contraction from $\cH(K')$ to $\cH(K)$, i.e., $S \in \overline{\cB} 
\cM(K',K)$.
\end{proof}

\begin{remark} \label{R:MSintertwaine} We note that the formula for $M_{S}^{*}$ on kernel 
    elements gives us a second way to see that $M_{S}$ intertwines 
    $\sigma(a)$ with $\sigma'(a)$ for each $a \in \cA$, once we 
    recall  the action of $\sigma(a)$ and $\sigma'(a)$ on kernel 
    elements given by \eqref{kernelaction}:
 \begin{align*}
     \sigma'(a) (M_{S})^{*} K_{W,v,y} & = \sigma'(a) K'_{W, v, 
     S(W)^{*}y}  = K'_{W, a v, S(W)^{*}y} \\
     & = (M_{S})^{*} K_{W,av, y} 
     = (M_{S})^{*} \sigma(a) K_{W,v,y}.
 \end{align*}
 \end{remark}
 
 \subsection{The global/nc reproducing kernel Hilbert spaces 
 associated with a contractive multiplier $S$}
 In the classical setting where $S$ is a contractive multiplier 
 between the Hardy spaces $H^{2}\otimes \cU$ and $H^{2} \otimes \cY$
 over the unit disk (equal to reproducing kernel Hilbert spaces with 
 Szeg\H{o} kernel $k(z,w) = \frac{1}{1-z \overline w}$ tensored with 
 either $I_{\cU}$ or $I_{\cY}$), the associated kernel $K_{S}(z,w) = 
 \frac{I_{\cY} - S(z) S(w)^{*}}{1 - z \overline{w}}$ has become known 
 as a \textbf{de Branges-Rovnyak kernel} and the associated Hilbert 
 space $\cH(K_{S})$ as a \textbf{de Branges-Rovnyak space}, due to 
 the fundamental work of de Branges-Rovnyak \cite{dBR1,dBR2} (see 
 also \cite{BB-HOT1,BK,NV0,NV1,NV2,Sarason}).  Much of the theory associated 
 with these spaces goes through in our more general global/nc 
 setting. 
 
 As a starting point for the discussion, we review the
 general theory of minimal decompositions for Hilbert 
 spaces $\cH'$ which are contractively included in another Hilbert 
 space $\cH$, summarized as follows.  The following general 
 formulation comes from \cite{BB-HOT1}.
 
 \begin{proposition}  \label{P:BB-HOT1}  Let $\cK$ be a Hilbert 
     space which is contractively included in a larger Hilbert space 
     $\cH$ but with its own possibly distinct norm:
 $$ k \in  \cK \Rightarrow k \in \cH \text{ and then }
 \|k\|_{\cH} \le \| k\|_{\cK}.
 $$
 Define another Hilbert space $\cK^{\perp_{\rm dBR}}$ (the 
 \textbf{Brangesian complement to $\cK$}) by
 $$
  \cK^{\perp_{\rm dBR}} = \left\{  h \in \cH \colon \| h 
  \|^{2}_{\cK^{\perp_{\rm dBR}}}: = \sup \{ \| h + k \|^{2}_{\cH} - 
  \| k \|^{2}_{\cK} \colon k \in \cK \} < \infty\right\} 
$$
Then $\cK$ and $\cK^{\perp_{\rm dBR}}$ are complementary subspaces of 
$\cH$ in the sense that each $h \in \cH$ has a decomposition $h = k + 
k'$ with $k \in \cK$ and $k' \in \cK^{\perp_{\rm dBR}}$.  Then also 
the norm of any $h \in \cH$ is given by
\begin{equation}   \label{norm-ineq}
    \| h\|^{2}_{\cH} = \inf \{ \|k\|^{2}_{\cK} + \| 
    k'\|^{2}_{\cK^{\perp_{\rm dBR}}} \colon k \in \cK,\, k' \in 
    \cK^{\perp_{\rm dBR}} \text{ such that } h = k + k'\}.
\end{equation}
Moreover:
\begin{enumerate}
    \item There is a unique choice of vectors $(k,k') \in \cK \times 
    \cK^{\perp_{\rm dBR}}$ for which the infimum in \eqref{norm-ineq} 
    is attained, namely: $k = \iota \iota^{*} h$ and $k' = (I_{\cH} - 
    \iota \iota^{*}) h$ where $\iota \colon k \mapsto k$ is the 
    inclusion map considered as a contraction operator from $\cK$ 
    into $\cH$.
    
    \item The space $\cK$ can be characterized as the Brangesian 
    complement $\cK'': = (\cK^{\perp_{\rm dBR}})^{\perp_{\rm dBR}}$ of 
    $\cK': =\cK^{\perp_{\rm dBR}}$:
 $$
 \cK =
  \left\{ h \in \cH \colon \| h \|^{2}_{\cK''}: = 
  \sup \{ \| h + k'\|^{2}_{\cH} - \| 
  k'\|^{2}_{\cK^{\perp_{\rm dBR}}} \colon k' \in \cK^{\perp_{\rm 
  dBR}} \} < \infty \right\}
  $$
  and then $\| k \|^{2}_{\cK} = \| k \|^{2}_{\cK''}$.
 \end{enumerate}
 \end{proposition}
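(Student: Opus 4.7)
The plan is to reduce Proposition \ref{P:BB-HOT1} to the standard de Branges--Rovnyak complementation theorem by identifying $\cK$ and $\cK^{\perp_{\rm dBR}}$ as range spaces of the two complementary positive contractions $T = \iota\iota^{*}$ and $I_{\cH} - T$ on $\cH$, where $\iota \colon \cK \to \cH$ is the contractive inclusion. For any bounded operator $A \in \cL(\cE, \cH)$, recall the \emph{range space} $\cM(A) = \mathrm{ran}\,A$ carrying the norm $\|A\xi\|_{\cM(A)} = \|P_{(\ker A)^{\perp}}\xi\|_{\cE}$ that makes $A$ a coisometry from $\cE$ onto $\cM(A)$.

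First I would identify $\cK$ with $\cM(T^{1/2})$ isometrically: since $\iota\iota^{*} = T^{1/2}(T^{1/2})^{*}$, a Douglas factorization produces a partial isometry $W \colon \cK \to \cH$ with $\iota = T^{1/2}W$, so $\mathrm{ran}\,\iota = \mathrm{ran}\,T^{1/2}$ and the pullback norms agree. Next I would identify $\cK^{\perp_{\rm dBR}}$ with $\cM((I_{\cH}-T)^{1/2})$. Parametrizing $k \in \cK$ as $k = \iota g$ with $g \in (\ker\iota)^{\perp}$ and expanding gives
\[
\|h+k\|^{2}_{\cH} - \|k\|^{2}_{\cK} = \|h\|^{2}_{\cH} + 2\,\mathrm{Re}\,\langle \iota^{*}h, g\rangle_{\cK} - \|Dg\|^{2}_{\cK}
\]
with $D = (I_{\cK}-\iota^{*}\iota)^{1/2}$. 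Using the intertwining identity $\iota D^{2} = (I_{\cH}-T)\iota$ (obtained from $\iota(I_{\cK}-\iota^{*}\iota) = \iota - \iota\iota^{*}\iota = (I_{\cH}-T)\iota$) together with the calculation $(\iota D)^{*}(\iota D) = D^{2} - D^{4} = ((I_{\cH}-T)^{1/2}\iota)^{*}((I_{\cH}-T)^{1/2}\iota)$, one derives that $\iota^{*}h \in \mathrm{ran}\,D$ iff $h \in \mathrm{ran}(I_{\cH}-T)^{1/2}$. Completing the square in $Dg$ then shows that the supremum over $g$ of the above expression is finite precisely in this case, and equals $\|h\|^{2}_{\cM((I_{\cH}-T)^{1/2})}$; hence $\cK^{\perp_{\rm dBR}} = \cM((I_{\cH}-T)^{1/2})$ isometrically.

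With these range-space identifications in hand, I would prove the complementation, the norm formula \eqref{norm-ineq}, and uniqueness in a single stroke via the isometry $V \colon \cH \to \cH \oplus \cH$ defined by $V\xi = \bigl(T^{1/2}\xi, (I_{\cH}-T)^{1/2}\xi\bigr)$; indeed $V^{*}V = T + (I_{\cH}-T) = I_{\cH}$ and $V^{*}\binom{\xi}{\eta} = T^{1/2}\xi + (I_{\cH}-T)^{1/2}\eta$. Every decomposition $h = k + k'$ with $k \in \cK = \cM(T^{1/2})$ and $k' \in \cK^{\perp_{\rm dBR}} = \cM((I_{\cH}-T)^{1/2})$ arises as $k = T^{1/2}\xi$, $k' = (I_{\cH}-T)^{1/2}\eta$ for some $\binom{\xi}{\eta} \in \cH \oplus \cH$ satisfying $V^{*}\binom{\xi}{\eta} = h$, and minimizing over representatives gives
\[
\|k\|^{2}_{\cK} + \|k'\|^{2}_{\cK^{\perp_{\rm dBR}}} = \inf\bigl\{\|\xi\|^{2} + \|\eta\|^{2} : V^{*}\tbinom{\xi}{\eta} = h\bigr\}.
\]
Since $V$ is an isometry with $V^{*}V = I_{\cH}$, the infimum over the affine set $Vh + \ker V^{*}$ equals $\|Vh\|^{2}_{\cH\oplus\cH} = \|h\|^{2}_{\cH}$ and is uniquely attained at $\binom{\xi}{\eta} = Vh$, giving the unique extremal pair $k = T^{1/2}(T^{1/2}h) = Th = \iota\iota^{*}h$ and $k' = (I_{\cH}-T)h = (I_{\cH} - \iota\iota^{*})h$, which establishes assertion (1) and the formula \eqref{norm-ineq}. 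Assertion (2) is immediate upon applying the complementation result with $T$ replaced by $I_{\cH}-T$, noting that $\cK^{\perp_{\rm dBR}} = \cM((I_{\cH}-T)^{1/2})$ is itself contractively contained in $\cH$ and that $(I_{\cH}-(I_{\cH}-T))^{1/2} = T^{1/2}$.

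The main technical obstacle is the middle paragraph: rigorously passing from the supremum condition $\iota^{*}h \in \mathrm{ran}\,D$ internal to $\cK$ to the range-space condition $h \in \mathrm{ran}(I_{\cH}-T)^{1/2}$ internal to $\cH$, and matching the norms of these two descriptions. The core tool is the intertwining $\iota D^{2} = (I_{\cH}-T)\iota$ combined with uniqueness of positive square roots of the equal moduli $|(\iota D)|$ and $|(I_{\cH}-T)^{1/2}\iota|$; this bridges the two factorability conditions together with their intrinsic norms. Once this range-space identification is secured, the isometry argument in the third paragraph handles the decomposition, norm formula, and uniqueness cleanly.
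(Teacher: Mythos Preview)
The paper does not give a proof of Proposition \ref{P:BB-HOT1}; it is stated as background and attributed to \cite{BB-HOT1} (see the sentence immediately preceding the proposition: ``The following general formulation comes from \cite{BB-HOT1}''). So there is no in-paper argument to compare against.

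Your approach is the standard one and is sound in outline: identifying $\cK$ with the range space $\cM(T^{1/2})$ for $T=\iota\iota^{*}$, identifying $\cK^{\perp_{\rm dBR}}$ with $\cM((I_{\cH}-T)^{1/2})$, and then running the isometry $V\xi=(T^{1/2}\xi,(I_{\cH}-T)^{1/2}\xi)$ to get the decomposition, the norm formula, and uniqueness in one stroke. The third paragraph is clean, and assertion (2) does follow by the $T\leftrightarrow I_{\cH}-T$ symmetry once the range-space identification is in place.

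The one place that needs tightening is exactly the step you flag. From $(\iota D)^{*}(\iota D)=((I_{\cH}-T)^{1/2}\iota)^{*}((I_{\cH}-T)^{1/2}\iota)$ you can extract, via Douglas, that $\iota^{*}(I_{\cH}-T)^{1/2}=DW$ for some contraction $W\colon\cH\to\cK$; this gives the implication $h\in\mathrm{ran}\,(I_{\cH}-T)^{1/2}\Rightarrow\iota^{*}h\in\mathrm{ran}\,D$ and the contractive inclusion $\cM((I_{\cH}-T)^{1/2})\subset\cK^{\perp_{\rm dBR}}$. For the reverse inclusion and the matching of norms, the cleanest route is not to argue directly from $\iota^{*}h\in\mathrm{ran}\,D$ but rather to first establish the complementation of $\cM(T^{1/2})$ and $\cM((I_{\cH}-T)^{1/2})$ via your isometry $V$ (your third paragraph), and then observe that the Brangesian complement of a contractively included subspace is unique: if $\cL\subset\cK^{\perp_{\rm dBR}}$ contractively and $\cK,\cL$ already satisfy the minimal-decomposition identity \eqref{norm-ineq}, then $\cL=\cK^{\perp_{\rm dBR}}$ isometrically. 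That uniqueness follows immediately from the sup-definition of $\cK^{\perp_{\rm dBR}}$ once you know the infimum in \eqref{norm-ineq} is attained, so you can close the loop without the delicate direct range comparison.
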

 
 A particular application of Proposition \ref{P:BB-HOT1} is to the 
 case where $A$ is a contraction operator from a Hilbert space $\cK$ 
 to a Hilbert space $\cH$ and we define two linear submanifolds of 
 $\cH$ by
 \begin{equation}   \label{pb1}
 \cH_{A} = {\rm Ran}\, (I  - A A^{*})^{1/2}, \quad \cM_{A} = {\rm Ran} 
 \, A
 \end{equation}
 with respective pull-back norms
 \begin{equation}   \label{pb2}
 \| (I - A A^{*})^{1/2} g \|_{\cH_{A}} = \| Q g \|_{\cH}, \quad
 \| A h \|_{\cM_{A}} = \| Q' h \|_{\cK}
 \end{equation}
 where $Q \colon \cH \to ({\rm Ker}\, (I - A A^{*})^{1/2})^{\perp}$ and $Q' 
 \colon \cK \to  ({\rm Ker}\, A)^{\perp}$ are the orthogonal 
 projections.  Then it is easily verified that both $\cH_{A}$ and 
 $\cM_{A}$ are themselves Hilbert spaces (in particular, complete in 
 their respective norms) and are each contractively included in 
 $\cH$ with respective contractive adjoint inclusion maps
 $(\iota_{\cH_{A}})^{*} \colon \cH \to \cH_{A}$ and 
 $(\iota_{\cM_{A}})^{*} \colon \cH \to \cM_{A}$ given by
 $$
 (\iota_{\cH_{A}})^{*} \colon h \mapsto (I - A A^{*}) h, \quad
 (\iota_{\cM_{A}})^{*} \colon h \mapsto A A^{*} h.
 $$
 One can then verify that $\cH_{A}$ and $\cM_{A}$ are Brangesian 
 complements of each other.
 
 Suppose now that $K'$ and $K$ are cp global/nc kernels from $\Omega 
 \times \Omega$ to respectively $\cL(\cA_{\rm nc}, \cL(\cU)_{\rm 
 nc})$ and $\cL(\cA_{\rm nc}, \cL(\cY)_{\rm nc})$ respectively, 
 and that $S \in \overline{\cB}\cM(K',K)$ is a contractive 
 multiplier.  Our interest is to apply the discussion of the 
 preceding paragraph to the case where $A = M_{S} \colon \cH(K') \to 
 \cH(K)$.  The ensuing result gives an explicit geometric 
 characterization of the global/nc version of the de Branges-Rovnyak 
 space $\cH(K)$.
 
 \begin{theorem}  \label{T:KSspace}
     Suppose that $K',K$ are two cp global/nc kernels and $S \in 
     \overline{\cB}\cM(K',K)$ is a contractive multiplier as above.
     Then the global/nc reproducing kernel Hilbert space $\cH(K_{S})$ 
     associated with the cp global/nc kernel $K_{S}$ \eqref{KS} is 
     isometrically equal to the pull-back space $\cH_{M_{S}}$ defined 
     by \eqref{pb1}, \eqref{pb2} with $A = M_{S}$, specifically:
 $\cH(K_{S}) = {\rm Ran}\, (I - M_{S} M_{S}^{*})^{1/2}$ with 
 $$
  \| (I - M_{S} M_{S}^{*})^{1/2} h \|_{\cH(K_{S})} = \| Q h \|_{\cH(K)}
 $$
 where $Q\colon \cH(K) \to ({\rm Ker}\, (I - M_{S} 
 M_{S}^{*})^{1/2})^{\perp}$ is the orthogonal projection.
 
 Equivalently, $\cH(K_{S})$ is the Brangesian complement of the 
 Hilbert space $\cM_{M_{S}}$ contractively included in $\cH(K)$ defined by
 $\cM_{S} = {\rm Ran}\, M_{S}$ with pull-back norm given by
 $$
    \| M_{S} g \|_{\cM_{M_{S}}} = \| Q' g \|_{\cH(K')}
 $$
 where $Q' \colon \cH(K') \to ({\rm Ker}\, M_{S})^{\perp}$ is the 
 orthogonal projection.  The space $\cM_{S}$ is itself a global/nc 
 reproducing kernel Hilbert space with reproducing kernel $K^{0}_{S}$ 
 given by
 $$
   K^{0}_{S}(Z,W)(P): = S(Z) \, K'(Z,W)(P)\, S(W)^{*}.
 $$
 \end{theorem}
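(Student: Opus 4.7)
The strategy is to verify the identifications at the level of kernel elements, using the formula
$$
M_S^* K_{W,v,y} = K'_{W,v,S(W)^*y}
$$
established in the proof of Theorem \ref{T:contmult}. The starting point is the key algebraic identity
$(K_S)_{W,v,y} = (I - M_S M_S^*) K_{W,v,y}$ as elements of $\cH(K)$, which follows immediately from the definition \eqref{KS} of $K_S$ together with the computation
$$
(K_S)_{W,v,y}(Z)u = K(Z,W)(uv)y - S(Z) K'_{W,v,S(W)^*y}(Z) u = K_{W,v,y}(Z)u - (M_S M_S^* K_{W,v,y})(Z) u.
$$

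Next I would verify that $\cH_{M_S}$ is a global/nc RKHS with reproducing kernel $K_S$. The contractive inclusion $\cH_{M_S}\hookrightarrow \cH(K)$ gives bounded point evaluations; the intertwining $M_S\sigma'(a)=\sigma(a)M_S$ (Remark \ref{R:MSintertwaine}) yields $\sigma(a)M_SM_S^* = M_SM_S^*\sigma(a)$, so $(I-M_SM_S^*)^{1/2}$ commutes with every $\sigma(a)$ and the representation descends to $\cH_{M_S}$. Writing $(K_S)_{W,v,y} = (I-M_SM_S^*)^{1/2}\bigl[(I-M_SM_S^*)^{1/2}K_{W,v,y}\bigr]$, the inner factor automatically lies in $(\ker(I-M_SM_S^*))^\perp$ and so serves as the canonical preimage under the isometry $(I-M_SM_S^*)^{1/2}\colon (\ker)^\perp\to\cH_{M_S}$. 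For $f=(I-M_SM_S^*)^{1/2}h\in\cH_{M_S}$ with $h\in(\ker)^\perp$, the pullback inner product evaluates to
$$
\langle f,(K_S)_{W,v,y}\rangle_{\cH_{M_S}} = \langle h, (I-M_SM_S^*)^{1/2}K_{W,v,y}\rangle_{\cH(K)} = \langle f, K_{W,v,y}\rangle_{\cH(K)} = \langle f(W)(v^*),y\rangle_{\cY^m},
$$
so Theorem \ref{T:RKHS} yields $\cH_{M_S} = \cH(K_S)$ isometrically. The Brangesian-complement description of $\cH(K_S)$ relative to $\cM_{M_S}$ is then immediate from Proposition \ref{P:BB-HOT1} applied to $A = M_S$.

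For the final claim on $\cM_{M_S}$, one reads off a Kolmogorov decomposition $K_S^0(Z,W)(P) = [S(Z)H'(Z)](\mathrm{id}\otimes\sigma')(P)[S(W)H'(W)]^*$ from any Kolmogorov decomposition of $K'$, which shows $K_S^0$ is a cp global/nc kernel. To identify $\cM_{M_S}$ with $\cH(K_S^0)$, take the candidate kernel element $(K_S^0)_{W,v,y} := M_S K'_{W,v,S(W)^*y}\in\cM_{M_S}$; its evaluation $(K_S^0)_{W,v,y}(Z)u = S(Z)K'(Z,W)(uv)S(W)^*y = K_S^0(Z,W)(uv)y$ is correct. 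The decisive observation is that $K'_{W,v,S(W)^*y}\in(\ker M_S)^\perp\subset\cH(K')$, since for $g\in\ker M_S$,
$$
\langle g, K'_{W,v,S(W)^*y}\rangle_{\cH(K')} = \langle M_S g, K_{W,v,y}\rangle_{\cH(K)} = 0.
$$
Consequently for $f=M_S g\in\cM_{M_S}$ the pullback inner product $\langle f,(K_S^0)_{W,v,y}\rangle_{\cM_{M_S}} = \langle Q'g, K'_{W,v,S(W)^*y}\rangle_{\cH(K')} = \langle g, K'_{W,v,S(W)^*y}\rangle_{\cH(K')} = \langle f(W)(v^*),y\rangle_{\cY^m}$, giving the reproducing property and hence $\cM_{M_S} = \cH(K_S^0)$ by Theorem \ref{T:RKHS}. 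The main subtlety is precisely this last step—ensuring the apparent dependence on the particular preimage $g$ of $f=M_Sg$ collapses correctly—and it is resolved exactly by the orthogonality observation above, which is itself a direct consequence of the kernel-element formula for $M_S^*$.
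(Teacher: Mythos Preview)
Your proof is correct and rests on the same key identity as the paper's, namely $(K_S)_{W,v,y} = (I - M_S M_S^*) K_{W,v,y}$, derived from the formula \eqref{MSadj} for $M_S^*$ on kernel elements. The execution differs only in packaging: the paper computes the $\cH(K_S)$-norm of a finite linear combination of $(K_S)$-kernel elements directly and shows it coincides with the pull-back norm $\langle (I-M_SM_S^*)g,g\rangle_{\cH(K)}$, then passes to the completion; you instead verify that $\cH_{M_S}$ satisfies the hypotheses of Theorem \ref{T:RKHS} (bounded point evaluations, $*$-invariance under $\sigma$ via the commutation $[M_SM_S^*,\sigma(a)]=0$) and then identify its reproducing kernel as $K_S$ by checking the reproducing property. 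Both routes are short and essentially equivalent; yours has the mild advantage of treating the $\cM_{M_S}=\cH(K_S^0)$ case explicitly (the paper declares it ``similar'' and omits it), and your observation that $K'_{W,v,S(W)^*y}=M_S^*K_{W,v,y}\in(\ker M_S)^\perp$ is exactly the point needed to make the pull-back inner product well defined there.
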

 
 \begin{proof}
     All these results follow from the general discussion preceding 
     the theorem once we verify that $\cH(K_{S})$ is isometrically 
     equal to $\cH_{M_{S}}$ and that $\cH(K^{0}_{S})$ is 
     isometrically equal to $\cM_{M_{S}}$.  We do only the first case 
     as the second is similar.
     
The starting point is the following consequence of the formula \eqref{MSadj} 
for the action of $M_{S}^{*}$ on a kernel element in $\cH(K)$: 
for $W \in \Omega_{m}$, $v \in \cA^{1 \times m}$, and $y \in \cY^{m}$ we have
  $$  (K_{S})_{W,v,y} = (I - M_{S} M_{S}^{*}) K_{W,v,y}.
  $$
  Hence we can compute the norm-squared of a finite linear 
  combination of kernel elements in $\cH(K_{S})$ as follows.
  If $f = \sum_{j=1}^{N} (K_{S})_{W^{(j)}, v_{j}, y_{j}}$ for some 
  $W^{(j)} \in \Omega_{m_{j}}$, $v_{j} \in \cA^{1 \times m_{j}}$ and 
  $y_{j} \in \cY^{m_{j}}$, then $f$ has the form $f = (I - M_{S} 
  M_{S}^{*})g$ where $g = \sum_{j=1}^{N} K_{W^{(j)}, v_{j}, y_{j}}$ 
  is the corresponding finite linear combination of kernel elements 
  from $\cH(K)$.  Furthermore,
  \begin{align*}
    & \| f \|^{2}_{\cH(K_{S})} = \| (I - M_{S} M_{S}^{*}) g 
    \|^{2}_{\cH(K_{S})} \\
   & =  \| \sum_{j=1}^{N}  (K_{S})_{W^{(j)}, v_{j},y_{j}}\|^{2}_{\cH(K_{S})} 
  =\sum_{i,j=1}^{N} \langle K_{S}(W^{(i)}, W^{(j)})(v_{i}^{*} v_{j})  y_{j}, y_{i} 
  \rangle_{\cY^{m_{i}}}  \\
 & = \sum_{i,j=1}^{N}  \langle K(W^{(i)},W^{(j)})(v_{i}^{*} 
 v_{j})y_{j}, y_{i} \rangle_{\cY^{m_{i}}} \\
 & \quad \quad - \sum_{i,j=1}^{N}\langle  K'(W^{(i)}, W^{(j)})(v_{i}^{*} v_{j}) 
 S(W^{(j)})^{*}y_{j}, S(W^{(i)})^{*}y_{i} \rangle_{\cU^{m_{i}}}  \\
 & = \sum_{i,j=1}^{N} \left( \langle K_{W^{(j)},v_{j},y_{j}}, 
 K_{W^{(i)},v_{i},y_{i}} \rangle_{\cH(K)} - \langle M_{S}^{*} 
 K_{W^{(j)}, v_{J}, 
 y_{j}}, M_{S}^{*} K_{W^{(i)}, v_{i}, y_{i}} \rangle_{\cH(K')} \right)  \\
 & = \left\langle (I - M_{S} M_{S}^{*}) \left( \sum_{j=1}^{N} 
 K_{W^{(j)}, v_{j}, y_{j}}\right), \, 
 \sum_{i=1}^{N} K_{W^{(i)}, v_{i}, y_{i}} 
 \right\rangle_{\cH(K)}  = \| f \|^{2}_{\cH_{M_{S}}}.
 \end{align*}
 and we conclude that a dense subset of $\cH(K_{S})$ is isometrically equal 
 to a dense subset of $\cH_{M_{S}}$.  By taking limits and using the 
 boundedness of the respective point-evaluations, we get that $\cH(K_{S})$ is equal 
 to $\cH_{M_{S}}$ isometrically as claimed.
 \end{proof}
 
 As a special case, consider the situation where $K'$ and $K$ are two 
 cp global/nc kernels from $\Omega \times \Omega$ 
 to $\cL(\cA_{\rm nc}, \cL(\cY)_{\rm nc})$ (i.e., in this case $\cU = \cY$) and suppose 
 that $\cH(K')$ is contractively included in $\cH(K)$.  This is the 
 precise situation where $S = I$ is in the contractive multiplier 
 class $\overline{\cB}\cM(K',K)$, where we use the notation $I$ for the 
 global/nc function $S(Z) = I_{\cY^{n}}$ if $Z \in \Omega_{n}$.
 We thus arrive at the following corollary concerning contractive 
 inclusions of global/nc reproducing kernel Hilbert spaces.
 
 \begin{corollary} \label{C:cont-incl}
     Suppose that $\cH(K')$ and $\cH(K)$ are two global/nc 
     reproducing kernel Hilbert spaces of functions with values in 
     $\cL(\cA, \cY)_{\rm nc}$. Then $\cH(K')$ is contained contractively 
     in $\cH(K)$ if and only the global/nc kernel $K''$ given by
     $$
     K''(Z,W)(P) = K(Z,W)(P) - K'(Z,W)(P)
     $$
     is cp.  In this case the Brangesian complement 
     $\cH(K')^{\perp_{\rm dBR}} = \cH(K) \ominus_{\rm dBR} \cH(K')$ 
     is also a global/nc reproducing kernel Hilbert space with 
     associated cp kernel equal to $\cH(K - K')$:
  $$
  \cH(K) \ominus_{\rm dBR} \cH(K') = \cH(K - K').
  $$
 \end{corollary}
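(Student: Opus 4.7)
The plan is to deduce this corollary by specializing Theorem \ref{T:contmult} and Theorem \ref{T:KSspace} to the case $\cU = \cY$ with $S$ taken to be the ``identity'' global/nc function $S(Z) = I_{\cY^{n}}$ for $Z \in \Omega_{n}$. First I would observe that this $S$ really is a global/nc function from $\Omega$ to $\cL(\cY, \cY)_{\rm nc}$: the ``respects direct sums'' property is immediate from $I_{\cY^{n+m}} = I_{\cY^{n}} \oplus I_{\cY^{m}}$, and the ``respects intertwinings'' property reduces to the trivial identity $\alpha \, I_{\cY^{n}} = I_{\cY^{m}} \, \alpha$ for every $\alpha \in {\mathbb C}^{m \times n}$. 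For this $S$, the associated multiplication operator acts as $(M_{S} f)(W) = I_{\cY^{m}} f(W) = f(W)$ for $W \in \Omega_{m}$, so $M_{S}$ is nothing other than the set-theoretic inclusion of functions.

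With this identification, the hypothesis ``$\cH(K')$ is contained contractively in $\cH(K)$'' translates exactly to the statement that $M_{S} \colon \cH(K') \to \cH(K)$ is a well-defined operator of norm at most $1$, i.e.\ $S \in \overline{\cB}\cM(K', K)$. Theorem \ref{T:contmult} then asserts that this is equivalent to $K_{S}(Z,W)(P) = K(Z,W)(P) - S(Z) K'(Z,W)(P) S(W)^{*} = K(Z,W)(P) - K'(Z,W)(P)$ being a cp global/nc kernel, which is precisely the kernel $K''$ of the corollary. This handles the first assertion.

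For the description of the Brangesian complement, I would invoke Theorem \ref{T:KSspace} and the general pull-back discussion preceding it. That theorem identifies $\cH(K_{S}) = \cH(K - K')$ isometrically with $\cH_{M_{S}} = \mathrm{Ran}\,(I - M_{S} M_{S}^{*})^{1/2}$ equipped with its pull-back norm in $\cH(K)$, and identifies the corresponding ``range space'' $\cM_{M_{S}} = \mathrm{Ran}\, M_{S}$ as its Brangesian complement in $\cH(K)$ by Proposition \ref{P:BB-HOT1}. Since $M_{S}$ is the inclusion map and is therefore injective, the projection $Q'$ onto $(\mathrm{Ker}\, M_{S})^{\perp}$ is the identity on $\cH(K')$; consequently $\| M_{S} g \|_{\cM_{M_{S}}} = \| g \|_{\cH(K')}$ and $\cM_{M_{S}}$ is just $\cH(K')$ with its own norm, viewed inside $\cH(K)$. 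Therefore $\cH(K) \ominus_{\rm dBR} \cH(K') = \cM_{M_{S}}^{\perp_{\rm dBR}} = \cH_{M_{S}} = \cH(K - K')$, completing the argument.

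There is no real obstacle here beyond the bookkeeping: the whole corollary is essentially the specialization $S = I$, $\cU = \cY$ of the two preceding theorems, and the slight subtlety one should check is just that for the injective inclusion $M_{S}$ the pull-back norm on $\mathrm{Ran}\, M_{S}$ coincides with the original $\cH(K')$-norm, so that ``Brangesian complement of $\cH(K')$ in $\cH(K)$'' as used in the corollary matches ``Brangesian complement of $\cM_{M_{S}}$'' as appearing in the general framework.
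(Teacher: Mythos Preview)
Your proposal is correct and follows exactly the approach of the paper: the corollary is presented there as the specialization $\cU = \cY$, $S(Z) = I_{\cY^{n}}$ of Theorem~\ref{T:contmult} and Theorem~\ref{T:KSspace}, with $M_{S}$ identified as the inclusion map. Your added verifications (that $S$ is a global/nc function, that $M_{S}$ is injective so $\cM_{M_{S}} = \cH(K')$ isometrically) are the natural bookkeeping details that the paper leaves implicit.
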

 
 \begin{remark}  Section 3 of the \cite{NFRKHS}  develops many of the 
     results of this section for the setting of formal rather than 
     concrete nc RKHSs.  In particular \cite[Theorem 3.15]{NFRKHS} is the formal analogue of 
     Theorem \ref{T:contmult} for the special case where both $K$ and 
     $K'$ are taken to be the formal nc Szeg\H{o} kernel and 
     \cite[Theorem 3.4]{NFRKHS} is roughly a formal analogue of 
     Proposition \ref{P:BB-HOT1}. The formal setting of Theorem 
     \ref{T:KSspace} for the special case where $K'$ and $K$ are 
     formal Szeg\H{o} kernels is worked out in \cite[Proposition 
     4.1]{BBF3}.  Additional results for the formal setting can be 
     obtained by using the correspondence between functional and formal 
     RKHSs explained in Subsection \ref{S:NFRKHS} to transfer results 
     here for the functional setting to the formal setting.
\end{remark}

\noindent 
\textbf{Acknowledegment:}  The research of the first and third 
authors was partially supported by the US-Israel Binational Science 
Foundation.

\end{document}